\theoremstyle{plain}
\newtheorem{theorem}{Theorem}
\newtheorem{lemma}[theorem]{Lemma}
\newtheorem{corollary}[theorem]{Corollary}
\newtheorem{prop}[theorem]{Proposition}
\theoremstyle{remark}
\newtheorem{remark}[theorem]{\bf Remark}
\newtheorem*{prob*}{\bf Problem}
\newtheorem*{quest*}{\bf Question}
\renewcommand{\Im}{\operatorname{Im}}
\renewcommand{\Re}{\operatorname{Re}}
\newcommand{\N}{\mathbb{N}}
\patchcmd{\section}{\scshape}{\bfseries}{}{}
\renewcommand{\@secnumfont}{\bfseries}
\numberwithin{theorem}{section}
\numberwithin{equation}{section}
\newcommand{\pMatrix}[4]{\left(\begin{matrix}#1 & #2 \\ #3 & #4\end{matrix}\right)}
\renewcommand{\pmatrix}[4]{\left(\begin{smallmatrix}#1 & #2 \\ #3 & #4\end{smallmatrix}\right)}
\begin{document}
\author{Alexander Dunn}
\address{School of Mathematics, Georgia Institute of Technology, 
Atlanta, USA}
\email{adunn61@gatech.edu}
\subjclass[2020]{11F27, 11F30, 11L05, 11L20, 11N36}
\keywords{Cubic metaplectic forms, primes, large sieve, circle method}

\title{Metaplectic cusp forms and the large sieve}

\dedicatory{Dedicated to Chantal David on the occasion of her 60th birthday.}

\begin{abstract}
We prove a power saving upper bound for the sum of Fourier coefficients $\rho_f(\cdot)$ of a fixed cubic metaplectic cusp form
$f$ over primes. Our result is the cubic analogue of a celebrated 1990 Theorem of Duke and Iwaniec,
and the cuspidal analogue of a Theorem due to the author and Radziwi{\l\l} for the bias in cubic Gauss sums.

The proof has two main inputs, both 
of independent interest.
Firstly, we prove a new large sieve estimate for a bilinear form 
whose kernel function is $\rho_f(\cdot)$. The proof of the bilinear estimate uses
a number field version of circle method due to Browning and Vishe, Voronoi summation,
and Gauss-Ramanujan sums. 
Secondly, we use Voronoi summation and the cubic large sieve of Heath-Brown 
to prove an estimate for a linear form involving $\rho_f(\cdot)$. Our linear estimate
overcomes a bottleneck occurring at level of distribution $2/3$.
\end{abstract}

\maketitle
\tableofcontents
\section{Introduction} \label{introduction}

\subsection{Background and statement of results} \label{introsec}

Arithmetic functions that arise from the Fourier coefficients of
automorphic forms on congruence subgroups of $\operatorname{SL}_2(\mathbb{Z})$
encode deep arithmetic and analytic information. A famous example is the Modularity
theorem for elliptic curves $E/\mathbb{Q}$ \cite{BCDT}, and its resolution of the Hasse--Weil conjecture for such curves.

At a fundamental level, automorphic forms on congruence subgroups of $\operatorname{SL}_2(\mathbb{Z})$ are nice objects
because 
there is an ``adequate Hecke theory" available. By this, we mean the basic property
that the sequence of  Fourier coefficients of an integer weight 
cusp form restricted to values coprime to the level can be expressed as a linear
combination of multiplicative 
functions given by the Hecke eigenvalues! 
It is well-known that a power saving upper bound for the sum of Hecke eigenvalues $\lambda_g(\cdot)$
over primes would yield a rectangular zero-free region in the critical strip for the associated $L$-function $L(s,g)$
(thanks to the Euler product).
Unfortunately, the proof of such a bound is well out of reach of current technology!

The Fourier coefficients of half-integer weight modular forms also play a
key role in arithmetic. An important example is
the use of Dedekind's $\eta$-function (holomorphic cusp form of weight $1/2$ on $\operatorname{SL}_2(\mathbb{Z}))$ 
in the proof of Rademacher's formula \cite{Rade} for the partition function $p(n)$. 
Hecke observed in \cite[pg.~639]{Hecke1} and \cite {Hecke2}
that there is not an ``adequate Hecke theory" (in the naive sense above) for
modular forms of half-integer weight.
Wohlfahrt \cite{Woh} confirmed Hecke's observations
and essentially showed that there is an algebra of Hecke operators
$\mathbb{C}[\{T_{n^2}\}_{n=1}^{\infty}]$ acting on half-integer weight modular forms of weight $k$
such that 
 $T_{m^2} \circ T_{n^2}=T_{m^2 n^2}=T_{n^2} \circ T_{m^2}$ for $(m,n)=1$,
$T_{p^{2 a}}$ is a polynomial in $T_{p^2}$ for each $a \in \mathbb{Z}_{\geq 1}$ and odd prime $p$,
and that each Hecke operator is Hermitian (on cusp forms) with respect to
the standard Petersson inner product.
In general, the Fourier coefficients of 
half-integer weight Hecke eigenforms at general integer indices are not multiplicative, unless they are squares!
In foundational works, Shimura \cite{Shimura} and Kohnen--Zagier \cite{KZ} studied this phenomenon in more detail.
For a comprehensive summary of the theory, the reader can consult \cite[\S 4.3]{Kob}.

Duke and Iwaniec \cite{DI} in 1990 gave striking quantitative evidence
that the Fourier coefficients of half-integer weight holomorphic cusp forms along squarefree integers are not multiplicative (unless their values are zero).
In particular, suppose that $g$ is a holomorphic cusp form on $\Gamma_0(N)$ ($N \equiv 0 \pmod{4}$) having weight $k=1/2+2 \ell$, $\ell \in \mathbb{Z}_{\geq 2}$,
and Fourier expansion (at $\infty$)
\begin{equation}
g(z)=\sum_{n=1}^{\infty} c_g(n) n^{(k-1)/2} e(nx) e^{-2 \pi n y}, \quad z=x+iy \in \mathbb{H},
\end{equation}
where $c_g(n) \in \mathbb{C}$, $e(x):=e^{2 \pi i x}$ for all $x \in \mathbb{R}$, and $\mathbb{H}:=\mathbb{R} \times \mathbb{R}_{>0}$
is the complex upper-half plane. 
For $\varepsilon>0$ and $A,B \geq 10$, Duke and Iwaniec \cite{DI}
proved that 
\begin{equation} \label{DIbilinear}
\sum_{a \leq A} \sum_{b \leq B} \mu^2(a) \alpha_a \beta_b c_g(ab) \ll_{\varepsilon,g}
(AB)^{\varepsilon} (B^{1/2}+A B^{1/4} ) \|\boldsymbol{\alpha} \|_2
\|\boldsymbol{\beta} \|_2,
\end{equation}
where $\boldsymbol{\alpha}$,$\boldsymbol{\beta}$ are $\mathbb{C}$-valued sequences 
and $\|\bullet \|$ denotes the usual $\ell_2$-norm. Using \eqref{DIbilinear}
together with appropriate linear estimates, Duke and Iwaniec \cite{DI} also proved that 
\begin{equation} \label{diineq}
\sum_{\substack{ p \leq X \\ p \text{ prime} }} c_g(p)  \ll_{\varepsilon,g} X^{1-1/156+\varepsilon},
\end{equation}
as $X \rightarrow \infty$.
The result in \eqref{diineq} allows for twists by primitive characters of conductor divisible by $N \equiv 0 \pmod{4}$,
and so one can restrict to sum to primes in an arithmetic progression (with the implied constant depending on the modulus).

The goal of this paper is to generalise the results of Duke and Iwaniec
to cusp forms
on the cubic metaplectic cover of $\operatorname{GL}_2$ (in the sense of Kubota \cite{Kubota1,Kubota2}).
This is the complementary case to recent work by the author and Radziwi{\l\l} \cite{DR}
on Patterson's conjecture for the bias of cubic Gauss sums over primes
(cubic Gauss sums are the Fourier coefficients of the cubic theta function \cite{Pat1} which is non-cuspidal).
The spectral theory of cubic metaplectic forms have played a key role
in the work of Livn\'{e}--Patterson \cite{LP}, and Louvel \cite{Lo},
on the distribution of certain cubic exponential sums. In their 2004
PhD thesis \cite{Moh}, M\"{o}hring numerically investigated the 
Fourier coefficients of some cuspidal cubic metaplectic forms. 

Before stating our results we briefly introduce some notation.
Let $\mathbb{H}^{3}:=\mathbb{C} \times \mathbb{R}_{>0}$ denote hyperbolic $3$-space.
Let $\omega=e^{2 \pi i/3}$, and $\mathbb{Q}(\omega)$ denote the Eisenstein quadratic field (class number $1$).
This number field has
ring of integers $\mathbb{Z}[\omega]$, discriminant $-3$, and the unique ramified prime is $\lambda:=\sqrt{-3}=1+2 \omega$.
Let $(\frac{\bullet}{c})_3$ denote the cubic symbol over $\mathbb{Z}[\omega]$,
and $\Lambda(c)$ denote the usual von Mangoldt function
on $\mathbb{Z}[\omega]$. Consider the following
congruence groups:
\begin{align*}
\Gamma&:=\operatorname{SL}_2(\mathbb{Z}[\omega]);  \\
\Gamma_1(3)&:=\{ \gamma \in \Gamma : \gamma \equiv I \pmod{3} \}; \\
\Gamma_2&:=\langle \operatorname{SL}_2(\mathbb{Z}),\Gamma_1(3) \rangle.
\end{align*}
The cubic Kubota \cite{Kubota1,Kubota2} character
$\chi: \Gamma_1(3) \rightarrow \{1, \omega, \omega^2 \}$ is defined by
\begin{equation} \label{cubekubota}
\chi(\gamma):=\begin{cases}
\big( \frac{c}{a} \big)_3  & \text{if } c \neq 0 \\
1 & \text{if } c=0
\end{cases},
\quad \gamma=\pMatrix a b c d \in \Gamma_1(3),
\end{equation}
and extends to a well-defined homomorphism 
$\chi: \Gamma_2 \rightarrow \{1,\omega,\omega^2\}$
when one defines
$\chi \lvert_{\operatorname{SL}_2(\mathbb{Z})} \equiv 1$ \cite[\S 2]{Pat1}. 
The group $\Gamma_2$
is the lowest possible level for cubic metaplectic forms.
Let $f$ be a cuspidal cubic metaplectic form on $\Gamma_2$
i.e.
\begin{itemize}
\item  $f$ vanishes at all cusps of $\Gamma_2$;
\item $f(\gamma w)=\chi(\gamma) f(w)$ for all $\gamma \in \Gamma_2$ and $w \in \mathbb{H}^3$;
\item $f$ is an eigenfunction of the hyperbolic Laplacian: $\Delta f=-\tau_f(2- \tau_f)$ for some $\tau_f \in \mathbb{C}$.
\end{itemize}
There is an algebra of Hecke operators 
$\mathbb{C}[\{T_{\nu^3}\}_{\nu \in \mathbb{Z}[\omega] \setminus \{0 \} }]$ acting on cubic metaplectic forms
such that 
 $T_{\mu^3} \circ T_{\nu^3}=T_{\mu^3 \nu^3}=T_{\nu^3} \circ T_{\mu^3}$ for $(\mu,\nu)=1$,
$T_{\varpi^{3 a}}$ is a polynomial in $T_{\varpi^3}$ for each $a \in \mathbb{Z}_{\geq 1}$ and prime $\varpi \equiv 1 \pmod{3}$,
and that each Hecke operator is Hermitian (on cusp forms) with respect to
the standard Petersson inner product \cite[\S 0.3.12]{Pro}.
The Fourier expansion of $f$ (at $\infty$) is given by
\begin{equation} \label{introfourier}
f(w)=\sum_{\substack{\nu \neq 0 \\ \nu \in \lambda^{-3} \mathbb{Z}[\omega] }} 
\rho_f(\nu) v K_{\tau_f-1} (4 \pi \lvert \nu \rvert v) 
\check{e}(\nu z), \quad w=(z,v) \in \mathbb{H}^3,
\end{equation}
where
$K_{\alpha}(\cdot)$ is the standard $K$-Bessel function of order $\alpha \in \mathbb{C}$,
$\check{e}(z):=e^{2 \pi i(z+\overline{z})}$ for $z \in \mathbb{C}$, and $\rho_f(\nu) \in \mathbb{C}$.

\begin{remark} \label{cubicshimurarem}
The cubic Shimura lift of Patterson \cite[Theorem~3.4]{Pat3} guarantees that
one always has $\tau_f \in 1+i \mathbb{R}$ for cuspidal cubic metaplectic forms $f$ on $\Gamma_2$ (see \S \ref{cubeshim}).
\end{remark}

Let $K,M \geq 1$, and $W_{K,M}:(0,\infty) \rightarrow \mathbb{C}$ be a smooth function with compact support in $[1,2]$ 
such that for each $j \in \mathbb{Z}_{\geq 0}$ we have
\begin{equation} \label{Kderivativebound}
W_{K,M}^{(j)}(x) \ll_j M K^{j} \quad \text{for all} \quad x>0.
\end{equation}
If $M=1$ then $M$ is omitted from the notation, and we write $W_{K}$. Let
$\| \bullet \|_q$ with $1 \leq q \leq \infty$ denote the $\ell_q$-norm of a $\mathbb{C}$-valued sequence
indexed over elements of $\mathbb{Z}[\omega]$. 

The main sums of interest in this paper are
\begin{equation} \label{PXdefn}
\mathscr{P}_f(X,v,u;W_K):=\sum_{\substack{\nu \in \lambda^{-3} \mathbb{Z}[\omega] \\ \lambda^3 \nu \equiv u \pmod{v} }} 
\rho_f(\nu)  \Lambda(\lambda^3 \nu) W_K \Big( \frac{N(\nu)}{X} \Big);
\end{equation}
\begin{equation} \label{PXdefn2}
\mathscr{P}_f(X,v,u):=\sum_{\substack{\nu \in \lambda^{-3} \mathbb{Z}[\omega]  \\ \lambda^3 \nu \equiv u \pmod{v} \\ N(\nu) \leq X }} 
\rho_f(\nu)  \Lambda(\lambda^3 \nu);
\end{equation}
and 
\begin{equation} \label{PXdefn3}
\widetilde{\mathscr{P}_f}(X,v,u):=\sum_{\substack{\varpi \in \mathbb{Z}[\omega] \\ \varpi \text{ prime} \\ \varpi \equiv u \pmod{v} \\ N(\lambda^{-3} \varpi) \leq X }} 
\rho_f(\lambda^{-3} \varpi) \log N(\varpi).
\end{equation}
where $0 \neq v \in \mathbb{Z}[\omega]$ is such that $v \equiv 0 \pmod{3}$, 
and $u \in \mathbb{Z}[\omega]/v \mathbb{Z}[\omega]$ is such that $(u,v)=1$ and $u \equiv 1 \pmod{3}$.
It is technically convenient to restrict attention to $u \equiv 1 \pmod{3}$. The other congruence classes 
modulo $3$ can be treated by a mild adaption of the methods of this paper.

\begin{theorem} \label{mainthm}
Let $\varepsilon>0$ and the notation be as above. Then
\begin{equation*}
\mathscr{P}_f(X,v,u;W_K) \ll_{\varepsilon,f} (XKN(v))^{\varepsilon} K^8 N(v)^{4} X^{1-1/34},
\end{equation*}
as $X \rightarrow \infty$.
\end{theorem}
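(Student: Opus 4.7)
The overall strategy is the standard Vaughan/Vinogradov/Heath-Brown arsenal for primes twisted by an arithmetic function, adapted to the cubic metaplectic setting. I would begin by detecting the congruence condition $\lambda^3\nu \equiv u \pmod{v}$ via additive characters modulo $v$, reducing the problem to bounding twisted sums
\[
\sum_{\nu \in \lambda^{-3} \mathbb{Z}[\omega]} \rho_f(\nu)\,\Lambda(\lambda^3\nu)\, \check{e}(\alpha \nu)\, W_K\!\left( \tfrac{N(\nu)}{X}\right),
\]
for additive characters $\alpha$ with denominator dividing $v$. The factor $N(v)^4$ in the statement suggests a crude treatment of these characters — dualizing and then absorbing losses.

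Next I would insert a Heath-Brown identity (or a Vaughan identity) for $\Lambda(\lambda^3\nu)$ over $\mathbb{Z}[\omega]$, decomposing the sum into a bounded number of Type I (linear) pieces of the form $\sum_m \sum_n a_m \rho_f(mn)$ with $a_m$ supported on $N(m) \leq M$ for some cutoff $M$, and Type II (bilinear) pieces $\sum_m \sum_n \alpha_m \beta_n \rho_f(mn)$ with $M \leq N(m), N(n) \leq X/M$. The smooth weight $W_K$ is preserved (causing the $K^8$ loss through multiple derivative applications in Mellin inversion / partial summation after Voronoi). The choice of $M$ will be optimised at the end.

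For the Type I pieces I would apply the linear estimate advertised in the abstract, which combines Voronoi summation on $\Gamma_2$ (converting $\rho_f(mn)$ into a sum involving Kloosterman-like terms weighted by $\rho_f$ of dual variables) with the cubic large sieve of Heath-Brown to handle the cubic exponential sums that arise; this provides a linear bound valid somewhat past level of distribution $2/3$, which is where the authors overcome the bottleneck. For the Type II pieces I would apply the bilinear estimate (the new large sieve advertised) obtained via the Browning–Vishe number field delta-method and Gauss–Ramanujan sums; schematically it gives something of shape $(N^{1/2} + M N^{1/4})\|\boldsymbol\alpha\|_2 \|\boldsymbol\beta\|_2$ in analogy with Duke–Iwaniec \eqref{DIbilinear}, with explicit $K$ and $N(v)$ dependence.

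The main obstacle is the balancing act. The level-of-distribution barrier at $2/3$ forces one to push the linear estimate past its trivial range by exploiting cubic cancellation via Heath-Brown's sieve (delicate because Kubota characters on $\Gamma_2$ fuel the Voronoi dual), while the bilinear bound must cover the complementary range $N(m), N(n) \in [M, X/M]$. Choosing $M = X^{1/17}$ so that the two error terms coincide should give the exponent $1 - 1/34 = 1 - \tfrac{1}{2}\cdot\tfrac{1}{17}$ — the factor of $1/2$ being the exponent in the $N^{1/2}$ term of the bilinear bound, exactly as in the Duke–Iwaniec optimisation but with a much better Type II input. I would expect the hardest technical step to be setting up the Voronoi summation formula cleanly on $\Gamma_2$ so that both the linear and bilinear arguments actually produce the genuinely cubic Gauss sums that Heath-Brown's large sieve and the Browning–Vishe delta method can exploit.
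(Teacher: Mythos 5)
Your proposal follows essentially the same route as the paper: a Vaughan-type identity splitting $\Lambda$ into Type-I and Type-II pieces, the Type-I ranges handled by Voronoi summation plus Heath-Brown's cubic large sieve (pushing past level of distribution $2/3$), the Type-II ranges by the Browning--Vishe delta-method bilinear bound, and a final optimisation in which the crossover parameter $X^{1/17}$ balances the $XM^{-1/2}$ term against the Type-I range to give $X^{1-1/34}$ --- exactly the paper's choice $L=X^{1/17}$, $R=X^{5/17}$, $S=X^{12/17}$. The only differences are cosmetic (the paper keeps the congruence mod $v$ inside the Type-I/II sums rather than dualising it at the outset, and splits the Type-I coefficients into squarefree and non-squarefree parts because the averaged linear estimate requires squarefree support).
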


\begin{corollary} \label{maincor}
Let $\varepsilon>0$. In the notation above we have
\begin{equation} \label{PXbd}
\mathscr{P}_f(X,v,u)  \ll_{\varepsilon,f,v}  X^{1-1/578+\varepsilon},
\end{equation}
and 
\begin{equation} \label{tildePXbd}
\widetilde{\mathscr{P}_f}(X,v,u)  \ll_{\varepsilon,f,v}  X^{1-1/578+\varepsilon},
\end{equation}
as $X \rightarrow \infty$.
\end{corollary}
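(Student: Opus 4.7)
My plan is to deduce both bounds of Corollary \ref{maincor} from Theorem \ref{mainthm} by smoothing the sharp cutoff $\mathbf{1}_{N(\nu) \leq X}$ and then optimising the smoothing parameter. Let $K \geq 1$ be a parameter to be fixed later and fix a smooth function $\Phi_K \colon \mathbb{R}_{>0} \to [0,1]$ with $\Phi_K(x) = 1$ for $x \leq 1$, $\Phi_K(x) = 0$ for $x \geq 1 + K^{-1}$, and $\Phi_K^{(j)}(x) \ll_j K^j$. I would write
\[
\mathbf{1}_{N(\nu) \leq X} = \Phi_K(N(\nu)/X) + O\big(\mathbf{1}_{X < N(\nu) \leq X(1+K^{-1})}\big),
\]
and decompose $\Phi_K(\cdot/X)$ into $O(\log X)$ smooth dyadic bumps of the form $W_K(\cdot/Y)$ with $Y$ running over dyadic values $\ll X$, each satisfying the derivative bound \eqref{Kderivativebound}. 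This reduces the proof of \eqref{PXbd} to bounding (a) the smooth contribution of $\Phi_K$, and (b) the boundary term supported in the short interval $(X, X(1+K^{-1})]$.

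For (a), applying Theorem \ref{mainthm} to each of the $O(\log X)$ dyadic pieces (the top few involving the genuinely $K$-dependent cutoff, the rest having $K = 1$) and summing gives a total of $\ll_{\varepsilon,f,v} K^8 X^{1 - 1/34 + \varepsilon}$. For (b), I would use Cauchy--Schwarz together with a Rankin--Selberg type second moment estimate of the form $\sum_{N(\nu) \leq X} |\rho_f(\nu)|^2 \ll_{\varepsilon,f} X^{1+\varepsilon}$ and the lattice point count $\#\{\nu \in \lambda^{-3}\mathbb{Z}[\omega] : X < N(\nu) \leq X(1+K^{-1})\} \ll X K^{-1}$ to obtain
\[
\sum_{\substack{X < N(\nu) \leq X(1+K^{-1}) \\ \lambda^3 \nu \equiv u \pmod{v}}} |\rho_f(\nu)|\, \Lambda(\lambda^3 \nu) \ll_{\varepsilon,f,v} X^{1+\varepsilon} K^{-1/2}.
\]
Balancing the two estimates via $K^8 X^{1-1/34} \asymp X K^{-1/2}$, i.e.\ choosing $K = X^{1/289}$ so that $K^{17/2} = X^{1/34}$, yields the exponent $1 - \tfrac{1}{2 \cdot 289} = 1 - \tfrac{1}{578}$ and proves \eqref{PXbd}.

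To deduce \eqref{tildePXbd} from \eqref{PXbd}, I would discard the contribution of higher prime powers: the difference $\mathscr{P}_f(X,v,u) - \widetilde{\mathscr{P}_f}(X,v,u)$ comes from those $\nu$ with $\lambda^3 \nu = \varpi^k$ for a prime $\varpi$ and $k \geq 2$, of which there are $\ll X^{1/2}$ satisfying $N(\nu) \leq X$. A further application of Cauchy--Schwarz with the Rankin--Selberg bound handles this contribution with an acceptable error of $O_\varepsilon(X^{3/4+\varepsilon})$. The only substantive ingredient beyond Theorem \ref{mainthm} is therefore the Rankin--Selberg-type second moment bound for $\rho_f$, which I expect to be standard for cubic metaplectic cusp forms on $\Gamma_2$ (e.g.\ via unfolding against a metaplectic Eisenstein series, or via the cubic Shimura lift alluded to in Remark \ref{cubicshimurarem}); with that input in hand, the rest is routine smoothing and optimisation.
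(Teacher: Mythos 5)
Your proposal is correct and follows essentially the same route as the paper: smooth the sharp cutoff with a $K$-dependent weight, apply Theorem \ref{mainthm} to the smooth part, bound the boundary contribution by Cauchy--Schwarz together with the Rankin--Selberg bound of Lemma \ref{rankinselbound} to get $K^{-1/2} X^{1+\varepsilon}$, and balance $K^8 X^{1-1/34}$ against $K^{-1/2} X$ with $K = X^{1/289}$ to obtain the exponent $1 - 1/578$. The only cosmetic difference is that the paper first decomposes $[1,X]$ into geometric intervals and then smooths each, while you smooth the whole cutoff first and then decompose dyadically; your treatment of the higher prime power contribution for \eqref{tildePXbd} (Cauchy--Schwarz plus Rankin--Selberg giving $O(X^{3/4+\varepsilon})$) likewise matches the paper. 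Note that the Rankin--Selberg input you flag as the one outstanding ingredient is already supplied in the paper as Lemma \ref{rankinselbound}.
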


Theorem \ref{mainthm} follows from new estimates for linear and bilinear sums which we now 
describe. A brief sketch of the new difficulties and ideas that arise in our case (as opposed to the case in
\cite{DI}) is given in \S \ref{sketch}.
Let
\begin{equation} \label{type1point}
\mathscr{T}_f(a,X,v,u;W_K):=\sum_{\substack{b \in \mathbb{Z}[\omega] \\  ab \equiv u \pmod{v}}} 
\rho_f(\lambda^{-3} ab) W_K \Big( \frac{N(\lambda^{-3} ab)}{X} \Big)
\end{equation}
denote the pointwise Type-I sum,
where $X \geq 10$ and  $a \in \mathbb{Z}[\omega]$ with $a \equiv 1 \pmod{3}$.
Let
\begin{equation} \label{type1avg}
\mathscr{A}_f(\boldsymbol{\alpha},X,v,u;W_K):=\mathop{\sum \sum}_{\substack{a,b \in \mathbb{Z}[\omega] \\ ab \equiv u \pmod{v}}}
\mu^2(a) \alpha_a \rho_f(\lambda^{-3} ab) W_K \Big( \frac{N(\lambda^{-3} ab)}{X} \Big)
\end{equation}
denote the average (over squarefree $a$) Type-I sum, where $A,X \geq 10$
and $\boldsymbol{\alpha}:=(\alpha_a)$ is
a $\mathbb{C}$-valued sequence
supported on $a \in \mathbb{Z}[\omega]$ with $a \equiv 1 \pmod{3}$ and $N(a) \asymp A$.
Let
\begin{equation} \label{type2}
\mathscr{B}_f(\boldsymbol{\alpha},\boldsymbol{\beta},X,v,u;W_K):=\mathop{\sum \sum}_{\substack{a,b \in \mathbb{Z}[\omega]  
\\ ab \equiv u \pmod{v} }}
\mu^2(a) \alpha_a \beta_b \rho_f (\lambda^{-3} ab) W_K \Big(\frac{N(\lambda^{-3} ab)}{X} \Big)
\end{equation}
denote the Type-II sum, where $A,B \geq 10$, $(\boldsymbol{\alpha}_a)$
is as above, and $\boldsymbol{\beta}:=(\beta_b)$
is a $\mathbb{C}$-valued sequence
supported on $b \in \mathbb{Z}[\omega]$ with $N(b) \asymp B$.
Note that we necessarily have $X \asymp AB$ in \eqref{type2}, otherwise the 
double sum is empty.

In \S \ref{type1sec} we use Voronoi summation to prove the following ``trivial" pointwise Type-I bound.
\begin{lemma} \label{trivpoint}
Let $\varepsilon>0$ and the notation be as above. Then
\begin{equation*}
\mathscr{T}_f(a,X;v,u;W_K) \ll_{\varepsilon,f}
(XKN(v))^{\varepsilon} K^{4} N(v)^{1/2} N(a)^{1/2}.
\end{equation*}
\end{lemma}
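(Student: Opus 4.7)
The plan is to apply Voronoi summation for the cubic metaplectic cusp form $f$ on $\mathbb{Z}[\omega]$ to the inner $b$-sum, and then estimate the resulting dual sum trivially. Since $(u,v)=1$, the sum is empty unless $(a,v)=1$, which I henceforth assume. The congruence $ab \equiv u \pmod v$ then becomes $b \equiv u\bar a \pmod v$, and since $3\mid v$ together with $u, a \equiv 1 \pmod 3$, this automatically forces $b \equiv 1 \pmod 3$, which is compatible with the support of $\rho_f(\lambda^{-3}ab)$.

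I would open the arithmetic progression condition via additive characters on $\mathbb{Z}[\omega]/v\mathbb{Z}[\omega]$, writing
\[
\mathbf{1}_{b \equiv u\bar a\,(v)} \;=\; \frac{1}{N(v)} \sum_{\alpha \bmod v} \check{e}\!\left(\frac{\alpha(b - u\bar a)}{v}\right),
\]
and partition the $\alpha$-sum by $d = (\alpha,v)$. For each $\alpha$ the resulting $b$-sum is a twist of $\rho_f(\lambda^{-3}ab)$ by an additive character of modulus $v/d$, to which I would apply the cubic metaplectic Voronoi summation formula (developed earlier in the paper). This produces a dual sum indexed by $\tilde b \in \lambda^{-3}\mathbb{Z}[\omega]$ whose summand factors as $\rho_f(\tilde b)$ times a metaplectic Gauss--Kloosterman sum modulo $v/d$ times an integral transform $\widetilde{W}_K$ of $W_K$ against the kernel $v K_{\tau_f-1}(4\pi|\cdot|v)$ from \eqref{introfourier}.

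Using the derivative bounds \eqref{Kderivativebound}, integration by parts in the Bessel transform, and the temperedness of $K_{\tau_f-1}$ (guaranteed by $\tau_f \in 1+i\mathbb{R}$; see Remark \ref{cubicshimurarem}), I would localise $\widetilde{W}_K$ to dual variables with $N(\tilde b) \ll K^{O(1)} N(v)^2 N(a)/X$ and obtain an $\ell^\infty$-bound of the form $\widetilde{W}_K \ll K^{4}$ on that range. Bounding the dual sum by $|\rho_f(\tilde b)| \ll_{f,\varepsilon} N(\tilde b)^{\varepsilon}$ (via a Rankin--Selberg bound in short dyadic windows), the Gauss--Kloosterman factor by $\ll N(v/d)^{1/2+\varepsilon}$ through a Weil-type completion, and summing over $d\mid v$ together with the normalising factor $1/N(v)$ from the character expansion gives the claimed estimate $(XKN(v))^{\varepsilon} K^4 N(v)^{1/2} N(a)^{1/2}$.

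The main obstacle is the bookkeeping in the Voronoi step: (i) tracking the ramification at $\lambda=\sqrt{-3}$, reflected in the $\lambda^{-3}$ shift of the Fourier expansion \eqref{introfourier}, and (ii) verifying that the metaplectic Gauss--Kloosterman sums for each $d\mid v$ collectively yield the square-root saving $\sqrt{N(v)}$ rather than the trivial $N(v)$. Once the Voronoi formula and its Weil-type companion estimate are in place, the transform bounds are a routine stationary-phase/integration-by-parts analysis of the $K$-Bessel kernel on the tempered line.
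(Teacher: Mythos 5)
There is a genuine gap at the Voronoi step. You propose to expand the condition $b\equiv u\bar a\pmod v$ into additive characters modulo $v$ and then apply Voronoi summation ``to the $b$-sum'' of $\rho_f(\lambda^{-3}ab)$, treating the conductor as $v/d$. But there is no Voronoi formula for the dilated sequence $b\mapsto \rho_f(\lambda^{-3}ab)$ with $a$ held inside the Fourier coefficient: the absence of Hecke multiplicativity for metaplectic coefficients (the central theme of the paper) means you cannot unfold $\rho_f(\lambda^{-3}ab)$ into something indexed by $b$ alone. The only way to access the automorphy is to reindex by $\nu=\lambda^{-3}ab$ and detect $\lambda^3\nu\equiv 0\pmod a$ as a congruence on the full Fourier index; combined with $\lambda^3\nu\equiv u\pmod v$ this is, by CRT, a single progression modulo $\lambda^{e_v}v_0a$, so the Voronoi modulus is $\asymp va$, not $v$. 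This is exactly what the paper does in \eqref{type1trivavg}--\eqref{Aprimedefn}.

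This is not mere bookkeeping: with the correct modulus the dual side carries cubic Gauss sums $g(\cdot,r)$ for $r\mid a$ (via Lemmas \ref{unpack1}--\ref{unpack2}), and the factor $N(a)^{1/2}$ in the final bound comes from the square-root cancellation $|g(\mu,r)|\le N(r)^{1/2}N((\mu,r))^{1/2}$ in those complete sums, balanced against the prefactor $X/N(\lambda^{e_v+1}v_0a)$ and the dual truncation $N(\nu)\ll K^4N(\lambda^mrt)^2/X$ from Lemma \ref{unpack3} (the dominant term being $r=a$, $t=v_0$, giving $\frac{X}{N(va)}\cdot N(va)^{-1/2}\cdot\frac{K^4N(va)^2}{X}\asymp K^4N(va)^{1/2}$). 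Your accounting, with Kloosterman sums modulo $v/d$ only and a dual length $N(v)^2N(a)/X$, has no mechanism to produce the $N(a)^{1/2}$; indeed, if one instead detects the full congruence modulo $av$ with additive characters and uses only the Wilton-type bound (as the paper does for $\mathscr{P}_{1f}$), one gets $\ll KX^{1/2+\varepsilon}$, which is far weaker than the claimed $K^4N(v)^{1/2}N(a)^{1/2}$ in the relevant range $N(va)\ll X$. So the missing ingredient is precisely the level-$va$ Voronoi formula together with the Gauss-sum evaluation on the dual side.
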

When $\mathscr{T}_f(a,\cdots)$ is multiplied by 
a weight $\alpha_a$ and the estimate in Lemma \ref{trivpoint} is summed trivially over $a \in \mathbb{Z}[\omega]$ with $N(a) \asymp A$,
the resulting bound is acceptable when $A \ll X^{2/3-\varepsilon}$.

In \S \ref{type2convsec} we use the circle method to prove the following new bilinear estimate.
\begin{theorem} \label{type2a}
Let $\varepsilon>0$ and the notation be as above. Then for $A,B \geq 10$ and $X \asymp AB$ we have
\begin{equation*}
\mathscr{B}_f(\boldsymbol{\alpha},\boldsymbol{\beta},X,v,u;W_K) \ll_{\varepsilon,f} (XKN(v))^{\varepsilon} K^8 N(v)^4 ((AB)^{1/2}+A^{3/2} B^{1/4})
\| \boldsymbol{\mu}^2 \boldsymbol{\alpha} \|_\infty \| \boldsymbol{\beta} \|_2.
\end{equation*}
\end{theorem}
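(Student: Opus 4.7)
The plan is to follow the broad architecture of Duke and Iwaniec \cite{DI}, with the tools available for half-integer weight forms replaced in the cubic setting by Voronoi summation for cubic metaplectic cusp forms, the Browning--Vishe smooth $\delta$-symbol over $\mathbb{Z}[\omega]$, and estimates for cubic Gauss--Ramanujan sums.

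The first step is to apply Cauchy--Schwarz in the $b$-variable, using $|\mu^{2}(a)\alpha_a \cdot \mu^{2}(a')\overline{\alpha_{a'}}| \le \|\boldsymbol{\mu}^{2}\boldsymbol{\alpha}\|_{\infty}^{2}$, to obtain
$$|\mathscr{B}_f(\boldsymbol{\alpha},\boldsymbol{\beta},X,v,u;W_K)|^{2} \le \|\boldsymbol{\beta}\|_{2}^{2}\,\|\boldsymbol{\mu}^{2}\boldsymbol{\alpha}\|_{\infty}^{2} \sum_{a,a'} \bigl|T(a,a')\bigr|,$$
where $T(a,a')$ denotes the inner $b$-sum of $\rho_f(\lambda^{-3}ab)\overline{\rho_f(\lambda^{-3}a'b)}$ against the two smooth weights and the congruence conditions. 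The diagonal $a=a'$ is controlled by a Rankin--Selberg mean-square bound of the shape $\sum_{N(n) \asymp X}|\rho_f(\lambda^{-3}n)|^{2} \ll_{\varepsilon,f} X^{1+\varepsilon}$; after extracting an $X^{\varepsilon}$ factor from the divisor function $d(n)$, this contributes $\ll AB$ on the right side, producing the first term $(AB)^{1/2}\|\boldsymbol{\mu}^{2}\boldsymbol{\alpha}\|_{\infty}\|\boldsymbol{\beta}\|_{2}$ after the square root.

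For the off-diagonal pairs $a \ne a'$, substitute $n = ab$ and $n' = a'b$ inside $T(a,a')$: the inner sum becomes a weighted sum of $\rho_f(\lambda^{-3}n)\,\overline{\rho_f(\lambda^{-3}n')}$ subject to the divisibility conditions $a \mid n$, $a' \mid n'$, the congruences $n \equiv u \equiv n' \pmod{v}$, and the linear equation $a'n - an' = 0$. I would detect the equation using the Browning--Vishe $\delta$-symbol over $\mathbb{Z}[\omega]$: this rewrites $\mathbf{1}_{a'n - an' = 0}$ as a sum over moduli $c \in \mathbb{Z}[\omega]$ of additive characters modulo $c$ against a smooth archimedean cutoff, decoupling the $n$- and $n'$-sums conditionally on $c$. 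Now apply Voronoi summation (as used in Lemma~\ref{trivpoint}) independently to each of the $n$- and $n'$-sums. Each application produces a dual sum in a new variable $\widetilde{n}$ (resp.\ $\widetilde{n'}$), again with Fourier coefficients $\rho_f$, twisted by a cubic Gauss--Ramanujan sum (arising from the interaction of the additive character modulo $c$, the Kubota cubic character, and the modulus $v$) and by a Bessel-type integral transform of $W_K$.

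The main obstacle will be to extract enough cancellation from the pair of cubic Gauss--Ramanujan sums, after summation over $c$ and integration in the archimedean $\delta$-variable, to overcome the $A^{2}$ loss incurred by the outer pair $(a,a')$. Balancing the dyadic ranges of $c$, $\widetilde{n}$, $\widetilde{n'}$ against the smooth cutoff in the $\delta$-symbol should yield an off-diagonal contribution of size $(XKN(v))^{O(\varepsilon)} K^{16} N(v)^{8} A^{3} B^{1/2} \|\boldsymbol{\beta}\|_{2}^{2}\|\boldsymbol{\mu}^{2}\boldsymbol{\alpha}\|_{\infty}^{2}$, delivering the $A^{3/2}B^{1/4}$ term after the final square root. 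The powers of $K$ are paid by repeated integration by parts against the Bessel kernel (each factor of $K$ coming from \eqref{Kderivativebound}), while the powers of $N(v)$ track through the Voronoi post-summation modulus and the congruences.
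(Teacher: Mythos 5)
Your plan follows the same architecture as the paper — Cauchy--Schwarz in $b$, then the Browning--Vishe $\delta$-symbol plus double Voronoi summation — but there is a structural gap in how you separate the ``diagonal'' from the ``off-diagonal'' that hides the hardest part of the argument. You split by $a=a'$ versus $a\neq a'$ at the very start, control $a=a'$ by Rankin--Selberg and the divisor bound (which is fine and gives the $(AB)^{1/2}$ term), and then declare that circle method plus Voronoi on $a\neq a'$ delivers the $A^{3}B^{1/2}$ contribution. But after you run the $\delta$-symbol and then Voronoi in $n$ and $n'$, the dual variables $\widetilde n, \widetilde n'$ are tied together by a Ramanujan sum whose argument is a linear form in $\widetilde n, \widetilde n'$ and the divisors of $a,a'$; the crucial dichotomy is whether that argument vanishes, \emph{not} whether $a=a'$. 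The ``dual diagonal'' where it vanishes is nonempty for many pairs with $a\neq a'$ (roughly whenever $N((a,a'))$ is $\gtrsim A^{2/3}$), and it contributes a term of size $\asymp AB$ — comparable to your genuine diagonal — which your proposal does not account for; claiming the whole $a\neq a'$ block is $\ll A^{3}B^{1/2}$ is therefore not justified.

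What makes that dual-diagonal piece genuinely hard, and what the paper is built around, is the lack of any usable pointwise bound on $\rho_f(\cdot)$: for a cubic metaplectic Maass cusp form there is no Deligne/Ramanujan bound and no Waldspurger-type formula, so the sparse sum of Fourier coefficients that emerges (cf.\ \eqref{diagreduceexp}) cannot be estimated term-by-term in $(a,a')$. The paper's way around this is to keep the averaging over $(a_1,a_2)$ alive until the end: Lemma~\ref{convolutionsums} is just Cauchy--Schwarz in $b$, and the pointwise estimate for a single convolution sum (which is what Duke--Iwaniec could do for holomorphic half-integer weight forms by opening a Poincar\'e series) is replaced in Proposition~\ref{conv1} by a global analysis in which the divisors of $a_1, a_2$ are parametrised, the diagonal equation $P_1\lambda^3\nu_1 = P_2\lambda^3\nu_2$ is solved using squarefreeness of $a_1, a_2$, and then a carefully set up Cauchy--Schwarz over the auxiliary variables $\boldsymbol g$ and $\boldsymbol\nu$ (see \eqref{bracket1}--\eqref{keydiagbd}) converts the linear constraint into an $L^2$ Rankin--Selberg bound. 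That averaging step is the key idea; your proposal omits it, and without it the dual diagonal cannot be closed. You should also note that the paper's choice $C^4 := X/A \asymp B$ in the $\delta$-symbol, and the subsequent factorisation of the modulus $c$ into pieces dividing $\operatorname{rad}(v_0)^\infty$, $\operatorname{rad}(a_1 a_2)^\infty$, and the coprime part, is what makes the Ramanujan-sum flatness estimate (Lemma~\ref{ramflat}) produce exactly the $AB$ plus $A^3 B^{1/2}$ split.
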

Theorem \ref{type2a} is acceptable when $\|\boldsymbol{\mu^2} \boldsymbol{\alpha} \|_{\infty} \ll A^{\varepsilon} $ 
and $X^{2/3+\varepsilon} \ll B \ll X^{1-\varepsilon}$. 

We point out that
Lemma \ref{trivpoint} and Theorem \ref{type2a} together \emph{barely} misses primes.  
To overcome the bottleneck at level of distribution $\asymp X^{2/3}$,
we use Voronoi summation and Heath-Brown's cubic large sieve \cite{HB} to prove the following estimate.

\begin{prop} \label{type1avgbd}
Let $\varepsilon>0$ and the notation be as above. Then for 
$X, A \geq 10$ we have
\begin{equation*}
\mathscr{A}_f(\boldsymbol{\alpha},X,v,u;W_K) \ll_{\varepsilon,f} 
(XKN(v))^{\varepsilon} K^{14/3} N(v)^{5/6}  (AX)^{1/3}  \| \boldsymbol{\mu}^2 \boldsymbol{\alpha} \|_2.
\end{equation*}
\end{prop}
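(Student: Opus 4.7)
The plan is to apply cubic metaplectic Voronoi summation to the inner $b$-sum (for each fixed squarefree $a$), which exposes a cubic residue symbol $\big(\frac{\cdot}{a}\big)_3$ in the $a$-variable, and then to exploit this symbol via Heath-Brown's cubic large sieve \cite{HB}.

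First, I would fix $a$ with $a \equiv 1 \pmod{3}$ and $N(a) \asymp A$, and observe that $(u,v)=1$ combined with $ab \equiv u \pmod{v}$ forces $(a,v)=1$, so that the congruence becomes $b \equiv u\bar a \pmod{v}$. I would detect this by additive characters modulo $v$, and then for each additive phase $\beta/v$ apply the metaplectic Voronoi summation formula (developed earlier in the paper) to the inner sum
\[
\sum_{b \in \mathbb{Z}[\omega]} \rho_f(\lambda^{-3}ab)\,\check e\left(\frac{\beta b}{v}\right) W_K\!\left(\frac{N(\lambda^{-3}ab)}{X}\right).
\]
Voronoi dualises $b$ to a new variable $n$ of effective length $N(n) \ll (KN(v))^{O(1)} A X^{-1+\varepsilon}$, and the $(a,\beta)$-dependence of the transformed coefficients factors through cubic Gauss sums modulo $v$ twisted by a cubic residue symbol $\big(\frac{n}{a}\big)_3$. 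This clean factorisation of the $a$-dependence through a cubic character is the structural payoff of working on the metaplectic cover.

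After collecting the contributions over $\beta \pmod{v}$, the expression takes the shape
\[
\sum_{n} c_n \sum_{\substack{a \equiv 1 \pmod{3} \\ N(a) \asymp A}} \mu^2(a)\,\alpha_a \Big(\frac{n}{a}\Big)_3 h(a,n),
\]
where $c_n$ involves $\rho_f(\cdot)$ together with cubic Gauss sums modulo $v$, and $h$ is a smooth size-one weight. Applying Cauchy--Schwarz in $a$ extracts the factor $\|\boldsymbol{\mu}^2 \boldsymbol{\alpha}\|_2$; the remaining second moment is a bilinear form in $(a,n)$ with the cubic residue symbol as kernel, which is precisely the setting of Heath-Brown's cubic large sieve. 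The diagonal contribution is handled by the standard Rankin--Selberg-type bound $\sum_{N(n) \leq Y} |\rho_f(n)|^2 \ll_{\varepsilon,f} Y^{1+\varepsilon}$, and the off-diagonal by \cite{HB}; tracking the exponents (the $K^{O(1)}$ coming from derivative bounds on $W_K$ through the Bessel transform in Voronoi, the $N(v)^{O(1)}$ from the Gauss sums and the dual length, and the $5/6$ exponent in Heath-Brown's inequality producing the $(AX)^{1/3}$ savings) then yields the claimed estimate.

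The main obstacle is carrying out the metaplectic Voronoi step so that the cubic residue symbol in $a$ is cleanly exposed inside the transformed coefficients; without this structural identification one would be left with an arbitrary bilinear sum of Fourier coefficients that cannot reach beyond $A \ll X^{1/2}$. It is precisely the cubic Kubota character $\chi$ underlying the Fourier expansion of $f$ that generates the symbol $\big(\frac{n}{a}\big)_3$ in the dual form, and it is this mechanism that makes Heath-Brown's cubic large sieve applicable and allows the estimate to push past the $2/3$ bottleneck.
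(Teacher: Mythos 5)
Your plan is essentially the paper's proof: Voronoi summation on the $b$-sum (with the congruence mod $v$ folded in), which for squarefree $a$ turns the $a$-dependence into $\overline{g(a)}\big(\tfrac{\nu}{a}\big)_3$ on the dual side, followed by Heath--Brown's cubic large sieve with Rankin--Selberg supplying the $\ell_2$-norm of the dual Fourier coefficients. Two corrections to your sketch: the dual variable has length $N(\nu)\ll K^{O(1)}(AN(v))^{2}X^{-1+\varepsilon}$, not $AX^{-1+\varepsilon}$ (the conductor is $av$, so the length squares), and the Gauss sum carrying the $a$-dependence has modulus dividing $a$, not $v$; moreover the Voronoi dual actually decomposes over divisors $r\mid a$, so the paper applies the bilinear form of the sieve in the variables $(\nu,r)$ with the $a$-sum absorbed into the $r$-coefficients (this also sidesteps the fact that your post--Cauchy--Schwarz arrangement puts the non-squarefree variable inside the square, which requires the transposed/bilinear version of Theorem \ref{HBcubic}). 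With the corrected dual length the exponent count still closes and gives $(AX)^{1/3}$.
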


\subsection{Brief sketch of the method} \label{sketch}
We close with a brief outline of the proofs of Theorem \ref{type2a}
and Proposition \ref{type1avgbd}. For simplicity,
we suppress smooth functions, and ignore both the units of $\mathbb{Z}[\omega]$ and 
the congruence condition $u \pmod{v}$.

\subsubsection{Linear sums} 
We apply Voronoi summation to the $b$-sum in \eqref{type1avg} and perform
a computation with the arithmetic exponential sums that appear on the dual side. We obtain a bilinear form
\begin{equation} \label{type1bilinear}
\frac{X}{A^{2}}  \sum_{N(a) \asymp  A} \sum_{N(\nu) \ll A^2/X  }    \mu^2(a) \overline{g(a)} \alpha_a  \rho_f(\nu)
\Big( \frac{\nu}{a} \Big)_3, 
\end{equation}
where $g(a)$ denotes the un-normalised cubic Gauss sum over $\mathbb{Z}[\omega]$ with modulus $a$.
The use of Heath-Brown's cubic large sieve \cite{HB} (with the squarefree condition on one variable relaxed)
leads to our average Type-I estimate.

\subsubsection{Bilinear sums}
After application of Cauchy-Schwarz in the $b$-variable to \eqref{type2}, the sum of interest is
\begin{equation} \label{postcauchy}
\sum_{\substack{ N(a_1),N(a_2) \asymp A \\ a_1,a_2 \equiv 1 \pmod{3} }}
\mu^2(a_1) \alpha_{a_1} \mu^2(a_2) \overline{\alpha_{a_2}} \sum_{N(b) \sim B} \rho_f(a_1 b) \overline{\rho_f( a_2 b)}.
\end{equation}
The natural approach would be to ignore the averaging over $a_1$ and $a_2$,
and estimate each convolution sum $\sum_{N(b) \sim B} \rho_f(a_1 b) \overline{\rho_f(a_2 b)}$ directly.
Duke and Iwaniec \cite{DI}
proved that each convolution sum is
$\ll_{\varepsilon} \delta_{a_1=a_2} B+ (AB)^{\varepsilon} A B^{1/2}$
for the case of holomorphic half-integer weight cusp forms.
We explain below why the additional averaging over $a_1$ and $a_2$ is crucial
in the Maass case.

The initial move of \cite{DI} is to open one of the Fourier coefficients 
in terms of sums of half-integer weight Kloosterman sums. This comes about by expressing 
the holomorphic cusp form as a finite $\mathbb{C}$-linear combination of
Poincar\'{e} series.
This opening move is not available for 
Maass forms!
Instead, we separate oscillations using the circle method of Browning and Vishe \cite{BroVis}
to obtain
\begin{align} \label{circlexp}
\sum_{N(b) \sim B} \rho_f( a_1 b) \overline{\rho_f( a_2 b)}
\approx \frac{1}{B} \sum_{\substack{ N(\nu_1),N(\nu_2) \asymp AB \\  \nu_1 \equiv 0 \pmod{a_1} \\ \nu_2 \equiv 0 \pmod{a_2}  }}
\rho_f(\nu_1)  \overline{\rho_f(\nu_2)}
 \sum_{\substack{ N(c) \sim B^{1/2} \\ (c,\lambda a_1 a_2)=1 }} r(\nu_1/a_1-\nu_2/a_2,c),
 \end{align}
 where $r(n,c)$ denotes the un-normalised Ramanujan sum over $\mathbb{Z}[\omega]$ with modulus $c$ and shift $n$.
 In reality, one must also consider moduli $c$ that are not coprime to $\lambda a_1 a_2$. This
can be handled with an modification of the method below with an additional local computation involving cubic Gauss sums with moduli dividing
$\text{rad}(a_1 a_2)^{\infty}$.

We detect the congruence conditions on the $\nu_1,\nu_2$ using additive characters, apply Voronoi summation 
to each $\nu_1,\nu_2$ sum, and perform a considerable
computation with the exponential sums on the dual side.  This leads to an expression of the 
shape
\begin{equation}  \label{dualsummary}
\sum_{\substack{ s_1 \mid a_1 \\ s_2 \mid a_2 }}
\frac{1}{N(s_1 s_2)^{1/2}} \sum_{\substack{ N(\nu_1) \ll N(s_1)^2/A \\
 N(\nu_2) \ll  N(s_2)^2/A  \\ (\nu_1,s_1)=1 \\  (\nu_2,s_2)=1  }} \rho_f(\nu_1) \overline{\rho_f(\nu_2)} \sum_{\substack{ N(c) \sim B^{1/2} \\ (c,\lambda a_1 a_2)=1 }}
r(s_2^2 a_1 \nu_1 -s_1^2 a_2 \nu_2,c).
\end{equation}
We highlight that the squarefree property of $a_1$ and $a_2$ simplifies 
the computations considerably.
One can apply Cauchy-Schwarz
and Rankin-Selberg bounds to estimate the off-diagonal ($s_2^2 a_1 \nu_1 \neq s_1^2 a_2 \nu_2$)
contribution in \eqref{dualsummary} by $(AB)^{\varepsilon} A B^{1/2}$. The diagonal term is more subtle.
The diagonal equation  $s_2^2 a_1 \nu_1= s_1^2 a_2 \nu_2$  
is equivalent to  $s_2 (a_1/s_1) \nu_1=s_1 (a_2/s_2) \nu_2$. The conditions $(\nu_1,s_1)=(\nu_2,s_2)=1$ together with
the squarefree hypothesis on $a_1$ and $a_2$ imply that $s_1=s_2=:s \mid (a_1,a_2)$. Thus the diagonal contribution in \eqref{dualsummary} 
has the shape
\begin{equation} \label{diagreduceexp}
B \sum_{s \mid (a_1,a_2) }
\frac{1}{N(s)} \sum_{\substack{ N(\nu) \ll N(s)^{3} N((a_1/s,a_2/s))/A^2 \\ (\nu,s)=1 } } \rho_f \Big( \frac{a_2/s}{(a_1/s,a_2/s)} \nu \Big)
\overline{\rho_f \Big( \frac{a_1/s}{(a_1/s,a_2/s)} \nu  \Big)}.
 \end{equation}
 At this point there is no cancellation to be realistically exploited in \eqref{diagreduceexp},
 and so we apply the triangle inequality and place absolute values around
 the Fourier coefficients.
  It is tempting to 
 apply a ``Deligne-type" bound for $\rho_f(\cdot)$ to estimate the diagonal by $(AB)^{\varepsilon} \cdot B \cdot (N((a_1,a_2))/A)^2$ (which is of acceptable
 size).
 However, no such bound for $\rho_f(\cdot)$ is known unconditionally,
 and the author is not aware of \emph{any} non-trivial 
 bound for $\rho_f(\cdot)$ stronger than the bound implied by Rankin-Selberg. 
 There is no ``Waldspurger-type" formula known for the coefficients of cubic metaplectic cusp forms (on $\operatorname{GL}_2$).
Hence the strategy for bounding these Fourier coefficients via subconvexity for twisted $L$-values
 is not available (this strategy is used the half-integer weight case, see \cite{CI}).
 To overcome this, we substitute \eqref{diagreduceexp} into \eqref{postcauchy}, take absolute values and
the supremum norm of the $\boldsymbol{\alpha}$ terms,
and exploit the additional averaging over $a_1$ and $a_2$ using
Cauchy-Schwarz and Rankin-Selberg bounds.
This yields the acceptable estimate $(AB)^{\varepsilon} AB \| \boldsymbol{\mu}^2 \boldsymbol{\alpha} \|_{\infty}^2$
for the diagonal of the averaged sum. It is interesting to note that an argument of Nelson \cite{Nel} could potentially be adapted
to estimate the sparse convolution sum in \eqref{diagreduceexp}. We refrain from this additional work.

\subsection{Acknowledgements}
The author thanks both Ikuya Kaneko and Maksym Radziwi{\l\l} for making them aware of the reference \cite{BroVis} 
and for useful discussions. The author thanks the referee for helpful feedback on the manuscript. 

\subsection{Conventions}
For $n \in \mathbb{N}$ and $N>0$, we use $n \sim N$ to mean $N<n \leq 2N$,
and $n \asymp N$ to mean that there exists constants $c_1,c_2>0$ such that 
$c_1 N \leq n \leq c_2 N$.

Dependence of implied constants on parameters will be indicated in statements of results,
but suppressed throughout the body of the paper (i.e. proofs).
Implied constants in the body of the paper 
are allowed to depend on $f \in L^2(\Gamma_2 \backslash \mathbb{H}^3,\chi,\tau)$, 
$\varepsilon,D>0$ ($\varepsilon,D>0$ possibly different in each instance),
and the implicit constants in the statements $N(a) \asymp A$ and $N(b) \asymp B$.

Whenever we write $r \mid q$ with $0 \neq r,q \in \mathbb{Z}[\omega]$ and $q \equiv 1 \pmod{3}$,
it is our convention that $r \equiv 1 \pmod{3}$.
For any integer $b$ we let $\mathbb{Z}_{\geq b}:=\{n \in \mathbb{Z}: n \geq b \}$.

Unless otherwise specified, it should be clear from context whether $\overline{x}$ means 
modular inverse (with respect to an appropriate modulus) or complex conjugation.

Unless otherwise specified, it should be clear from context whether $v$ refers to 
the modulus of an arithmetic progression or the real component of a quaternion element $w=(z,v)$.

\section{Preliminaries and background}
\subsection{Eisenstein quadratic field and cubic Gauss sums}
We include some brief background on $\mathbb{Q}(\omega)$ and cubic Gauss sums. 
More details can be found in \cite{DR,Pat1,Pro}.

Let $\mathbb{Q}(\omega)$ be the Eisenstein quadratic number field,
where $\omega$ is identified with $e^{2\pi i/3} \in \mathbb{C}$.
This quadratic number field has ring of integers $\mathbb{Z}[\omega]$, discriminant $-3$,
and class number $1$. Let 
$N(x):=N_{\mathbb{Q}(\omega)/\mathbb{Q}}(x)=|x|^2$
denote the norm form on $\mathbb{Q}(\omega)/\mathbb{Q}$. 
The dual of $\mathbb{Z}[\omega]$ is
\begin{equation*}
\mathbb{Z}[\omega]^{*}:=
\{z \in \mathbb{C}: \check{e}(zz^{\prime})=1 \text{ for all } z^{\prime} \in \mathbb{Z}[\omega] \} =\lambda^{-1} \mathbb{Z}[\omega].
\end{equation*}
It is well known that any non-zero element of $\mathbb{Z}[\omega]$
can be uniquely 
written as $\zeta \lambda^{k} c$ with $\zeta \in \langle -\omega \rangle$ a unit (i.e. $\zeta^6=1$), 
$\lambda:=\sqrt{-3} = 1 + 2 \omega$ the unique ramified prime in $\mathbb{Z}[\omega]$, $k
\in \mathbb{Z}_{\geq 0}$, and $c \in \mathbb{Z}[\omega]$ with $c \equiv 1 \pmod{3}$. 
If $p \equiv 1 \pmod{3}$ is a rational prime, then 
$p=\varpi \overline{\varpi}$ in $\mathbb{Z}[\omega]$
with $N(\varpi)=p$ and $\varpi$ a prime in $\mathbb{Z}[\omega]$.
If $p \equiv 2 \pmod{3}$ is a rational prime, then $p=\varpi$ is inert in
$\mathbb{Z}[\omega]$, and $N(\varpi)=p^2$. Thus we have $N(\varpi) \equiv 1 \pmod{3}$
for all primes $\varpi$ with $(\varpi) \neq (\lambda)$.

The cubic Jacobi symbol defined for $a \equiv 1 \pmod{3}$ and $\varpi \equiv 1 \pmod{3}$ prime
is defined by 
$$
\Big ( \frac{a}{\varpi} \Big )_{3} \equiv a^{(N(\varpi) - 1) / 3} \pmod{\varpi},
$$
and the condition it take values in $\{0,1,\omega,\omega^2 \}$.
The cubic symbol is clearly multiplicative in $a$ and can be extended multiplicatively to all $b \equiv 1 \pmod{3}$ by setting
$
\big ( \frac{a}{b} \big )_{3} = \prod_{i} \big ( \frac{a}{\varpi_i} \big )
$
for any $b = \prod_{i} \varpi_i$ with $\varpi_i \equiv 1 \pmod{3}$ primes. 
The cubic symbol obeys cubic reciprocity: given $a,b \equiv 1 \pmod{3}$ 
we have 
\begin{equation} \label{cuberep}
\Big( \frac{a}{b} \Big)_3=\Big( \frac{b}{a} \Big)_3.
\end{equation}
There are also supplementary laws for units and the ramified prime.
Given 
\begin{equation} \label{supp1}
d \equiv 1 + \alpha_2 \lambda^2 + \alpha_3 \lambda^3 \pmod{9} \quad \text{with} \quad \alpha_2, \alpha_3 \in \{-1,0,1\},
\end{equation}
then
\begin{equation} \label{supp2}
\Big ( \frac{\omega}{d} \Big )_{3} = \omega^{\alpha_2} \quad \text{and} \quad \quad \Big ( \frac{\lambda}{d} \Big )_{3}
 = \omega^{-\alpha_3}.
\end{equation}
We follow the standard convention for an empty product,
\begin{equation} \label{empty}
\Big( \frac{a}{1} \Big)_3=1 \quad \text{for all} \quad a \in \mathbb{Z}[\omega].
\end{equation}

Let
\begin{equation*}
\check{e}(z):=e^{2 \pi i \text{Tr}_{\mathbb{C}/\mathbb{R}}(z)}
=e^{2 \pi i(z+\overline{z})}, \quad z \in \mathbb{C}.
\end{equation*}
For $\mu \in \mathbb{Z}[\omega]^{*}=\lambda^{-1} \mathbb{Z}[\omega]$
and $c \in \mathbb{Z}[\omega]$ with $c \equiv 1 \pmod{3}$, the cubic Gauss sum (with shift $\mu$) is defined by
\begin{equation} \label{generalgauss}
g(\mu,c):=\sum_{d \pmod{c}} \Big( \frac{d}{c} \Big)_3 \check{e} \Big( \frac{\mu d}{c} \Big).
\end{equation}
We write $g(c):=g(1,c)$ for short. 
Making a change of variable in the Gauss sum we see that
\begin{equation} \label{dualremark}
g(\mu,c):=\Big(\frac{\lambda }{c} \Big)_3 g(\lambda \mu,c),
\end{equation}
and so for the rest of this section it suffices to consider only $\mu \in \mathbb{Z}[\omega]$,
which we now assume.
We have 
\begin{equation} \label{coprimerel}
g(\mu,c)=\overline{\Big( \frac{\mu}{c} \Big)_3} g(1,c) \quad \text{for} \quad (\mu,c)=1.
\end{equation}
The Chinese Remainder Theorem implies the
twisted multiplicativity property
\begin{equation} \label{twistmult}
g(\mu,ab)=\overline{\Big( \frac{a}{b}  \Big)_3}
g(\mu,a) g(\mu,b), \quad \text{for } a,b \equiv 1 \pmod{3} \quad \text{such that} \quad (a,b)=1.
\end{equation}
By \eqref{coprimerel} and \eqref{twistmult} is suffices to 
understand $g(\varpi^{k},\varpi^{\ell})$ for $\varpi \equiv 1 \pmod{3}$ prime
and $k,\ell \in \mathbb{Z}_{\geq 0}$.
A specialisation of \cite[property (h), pg.~7]{Pro} yields
\begin{equation} \label{localcomp}
g(\varpi^{k},\varpi^{\ell})=\begin{cases}
1 & \text{if } \ell=0, \\
\varphi(\varpi^{\ell}) & \text{if } 1 \leq \ell \leq k, \quad \ell \equiv 0 \pmod{3} \\ 
-N(\varpi)^{k} & \text{if } \ell=k+1, \quad \ell \equiv 0 \pmod{3} \\
N(\varpi)^{k} g(\varpi) & \text{if } \ell=k+1, \quad \ell \equiv 1 \pmod{3} \\
N(\varpi)^{k} \overline{g(\varpi)} & \text{if } \ell=k+1, \quad \ell \equiv 2 \pmod{3} \\
0 & \text{otherwise}
\end{cases}.
\end{equation}

For $\varpi \equiv 1 \pmod{3}$ prime we have the formula for the cube \cite[pp. 443--445]{H},
\begin{equation} \label{cuberel}
g(\varpi)^3=-\varpi^2 \overline{\varpi}.
\end{equation}
Observe that \eqref{coprimerel}--\eqref{localcomp} and \eqref{cuberel}
imply that
\begin{equation} \label{sqrootcancel}
|g(c)|=\mu^2(c) N(c)^{1/2}
\end{equation}
for $c \equiv 1 \pmod{3}$.
We denote the normalised cubic Gauss sum (with shift $\mu \in \mathbb{Z}[\omega]$) by 
\begin{equation} \label{normalised}
\widetilde{g}(\mu,c):=N(c)^{-1/2} g(\mu,c).
\end{equation}

The following two lemmas follow directly from combining \eqref{coprimerel}--\eqref{localcomp}.

\begin{lemma} \label{elemlem}
Let $\mu \in \mathbb{Z}[\omega]$ and
$c \in \mathbb{Z}[\omega]$ such that $c \equiv 1 \pmod{3}$ is squarefree. 
Then
\begin{equation*}
g(\mu,c)=0 \quad \emph{unless} \quad (\mu,c)=1.
\end{equation*}
\end{lemma}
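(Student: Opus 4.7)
The plan is to reduce to the local computation \eqref{localcomp} via twisted multiplicativity, using that squarefreeness of $c$ forces the ramified prime at each divisor to appear with exponent exactly one.

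Assume $(\mu,c) \neq 1$ and pick a prime $\varpi \equiv 1 \pmod{3}$ dividing both $\mu$ and $c$ (such a prime exists by our convention on divisors). Since $c$ is squarefree, I can factor $c = \varpi c'$ with $c' \equiv 1 \pmod{3}$ and $(\varpi,c')=1$. Applying the twisted multiplicativity \eqref{twistmult} gives
\begin{equation*}
g(\mu,c) = \overline{\Big(\tfrac{\varpi}{c'}\Big)_3}\, g(\mu,\varpi)\, g(\mu,c'),
\end{equation*}
so it suffices to show $g(\mu,\varpi)=0$.

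Write $\mu = \varpi^{k} \mu'$ with $k \geq 1$ and $(\mu',\varpi)=1$. By the same change-of-variable argument that produces \eqref{coprimerel} (replacing $d$ by $\overline{\mu'}\,d$ modulo $\varpi$), one has
\begin{equation*}
g(\mu,\varpi) = \overline{\Big(\tfrac{\mu'}{\varpi}\Big)_3}\, g(\varpi^{k},\varpi).
\end{equation*}
Now invoke the local computation \eqref{localcomp} with exponents $(k,\ell) = (k,1)$ for $k \geq 1$: since $\ell = 1 \not\equiv 0 \pmod{3}$, the case $1 \leq \ell \leq k$ does not apply, and since $k \geq 1$ we have $\ell \neq k+1$. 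We therefore land in the ``otherwise" branch of \eqref{localcomp}, giving $g(\varpi^{k},\varpi) = 0$. Hence $g(\mu,\varpi) = 0$ and consequently $g(\mu,c)=0$, which is the desired contrapositive.

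The argument is essentially bookkeeping: the only thing to verify is that no special case of \eqref{localcomp} accidentally produces a non-zero value when $\ell = 1$ and $k \geq 1$, and this is immediate from the case analysis. No serious obstacle is expected.
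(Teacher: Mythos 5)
Your proof is correct and follows the route the paper intends: the paper simply asserts that the lemma follows from \eqref{coprimerel}--\eqref{localcomp}, and your argument is precisely the natural elaboration (factor off a common prime $\varpi\equiv 1\pmod 3$ using \eqref{twistmult}, reduce to $g(\varpi^k,\varpi)$ by the change-of-variable argument behind \eqref{coprimerel}, and note that $\ell=1$, $k\geq 1$ falls into the vanishing case of \eqref{localcomp}). One small point worth spelling out is that the common prime divisor of $\mu$ and $c$ can indeed be taken $\equiv 1\pmod 3$, which holds because $c\equiv 1\pmod 3$ forces $\lambda\nmid c$, so every prime dividing $c$ is (up to units) $\equiv 1\pmod 3$; you invoke this implicitly via the paper's divisor convention, which is fine.
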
 

\begin{lemma} \label{sqsupport}
Let $c \in \mathbb{Z}[\omega]$ with $c \equiv 1 \pmod{3}$ and $\varpi,\mu \in \mathbb{Z}[\omega]$ be such that
$\varpi \equiv 1 \pmod{3}$ is prime and $\varpi^2 \mid c$.
Then
\begin{equation*}
g(\mu,c)=0 \quad \emph{unless} \quad \varpi \mid \mu.
\end{equation*}
\end{lemma}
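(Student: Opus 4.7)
The plan is to reduce the statement to the single prime-power case $g(\mu, \varpi^{\ell})$ with $\ell \geq 2$ via the twisted multiplicativity property \eqref{twistmult}, and then read off vanishing directly from the explicit local formula \eqref{localcomp}.

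More precisely, I would first factor $c = \varpi^{\ell} c'$ with $\ell \geq 2$ and $(\varpi, c') = 1$. Because $\varpi \equiv 1 \pmod 3$ one has $\varpi^{\ell} \equiv 1 \pmod 3$, and combined with $c \equiv 1 \pmod 3$ this forces $c' \equiv 1 \pmod 3$, so the coprime pair $(\varpi^{\ell}, c')$ is in the required normalisation. Applying \eqref{twistmult} gives
\begin{equation*}
g(\mu, c) = \overline{\Big( \frac{\varpi^{\ell}}{c'} \Big)_3}\, g(\mu, \varpi^{\ell})\, g(\mu, c'),
\end{equation*}
so it suffices to show $g(\mu, \varpi^{\ell}) = 0$ whenever $\varpi \nmid \mu$.

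Next, under the assumption $\varpi \nmid \mu$ we have $(\mu, \varpi^{\ell}) = 1$, so \eqref{coprimerel} yields
\begin{equation*}
g(\mu, \varpi^{\ell}) = \overline{\Big( \frac{\mu}{\varpi^{\ell}} \Big)_3}\, g(1, \varpi^{\ell}) = \overline{\Big( \frac{\mu}{\varpi^{\ell}} \Big)_3}\, g(\varpi^{0}, \varpi^{\ell}).
\end{equation*}
Now I invoke \eqref{localcomp} with $k = 0$ and $\ell \geq 2$: the case $\ell = 0$ fails, the range $1 \leq \ell \leq k = 0$ is empty, and $\ell = k+1 = 1$ fails since $\ell \geq 2$. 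Hence we fall into the final "otherwise" branch, giving $g(\varpi^{0}, \varpi^{\ell}) = 0$, which completes the argument.

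There is no real obstacle here; the proof is a short bookkeeping exercise. The only thing to be careful about is checking that the hypotheses on the normalisation $\equiv 1 \pmod 3$ are preserved when splitting off the $\varpi^{\ell}$ factor, and that the local table \eqref{localcomp} is being read with the right values of $k$ and $\ell$.
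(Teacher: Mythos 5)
Your proof is correct and follows exactly the route the paper intends: the paper omits the proof, stating only that the lemma "follows directly from combining \eqref{coprimerel}--\eqref{localcomp}", and your reduction via \eqref{twistmult} to $g(\varpi^0,\varpi^{\ell})$ with $\ell\geq 2$, followed by reading off the ``otherwise'' case of \eqref{localcomp}, is precisely that argument. The normalisation check $c'\equiv 1\pmod 3$ is also handled correctly.
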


The next Lemma follows directly from combining \eqref{coprimerel}--\eqref{localcomp} 
and \eqref{sqrootcancel}.

\begin{lemma} \label{sqrootbd}
Let $\mu,c \in \mathbb{Z}[\omega]$ with $c \equiv 1 \pmod{3}$. Then
\begin{equation*}
|g(\mu,c)| \leq N(c)^{1/2} \cdot N((\mu,c))^{1/2}.
\end{equation*}
\end{lemma}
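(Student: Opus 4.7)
The plan is to reduce the inequality to prime power moduli via the twisted multiplicativity in \eqref{twistmult}, and then read off the bound from the local computation \eqref{localcomp}. Write $c=\prod_i \varpi_i^{\ell_i}$ with $\varpi_i \equiv 1 \pmod 3$. Iterating \eqref{twistmult} and noting that each cubic Jacobi symbol appearing has absolute value $1$, I get $|g(\mu,c)| = \prod_i |g(\mu,\varpi_i^{\ell_i})|$. The right-hand side of the claimed inequality also factors multiplicatively, since $N(c)^{1/2}=\prod_i N(\varpi_i)^{\ell_i/2}$ and $N((\mu,c))^{1/2}=\prod_i N(\varpi_i)^{\min(v_{\varpi_i}(\mu),\ell_i)/2}$. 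Hence it suffices to prove, for each prime $\varpi\equiv 1 \pmod 3$ and each $\ell\geq 1$, the local inequality
\[
|g(\mu,\varpi^\ell)| \;\leq\; N(\varpi)^{\ell/2}\cdot N(\varpi)^{\min(k,\ell)/2},
\]
where $k:=v_\varpi(\mu)$.

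Next I would extract the coprime part of $\mu$ at $\varpi$. Writing $\mu=\varpi^k \mu'$ with $(\mu',\varpi)=1$ and performing the change of variables $d\mapsto \overline{\mu'}\,d$ in \eqref{generalgauss} (with inverse taken modulo $\varpi^\ell$) gives
\[
g(\mu,\varpi^\ell) \;=\; \overline{\Big(\tfrac{\mu'}{\varpi^\ell}\Big)_3}\, g(\varpi^k,\varpi^\ell),
\]
which has the same absolute value as $g(\varpi^k,\varpi^\ell)$. Thus the proof reduces to verifying the desired inequality for $g(\varpi^k,\varpi^\ell)$, and the latter is given explicitly by the six cases in \eqref{localcomp}.

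Finally, I would just run through those six cases. The nonzero cases are $\ell=0$; $1\leq\ell\leq k$ with $3\mid\ell$, where $|g|=\varphi(\varpi^\ell)\leq N(\varpi)^\ell$ and the target bound is $N(\varpi)^\ell$; $\ell=k+1$ with $3\mid\ell$, where $|g|=N(\varpi)^k$ and the target is $N(\varpi)^{k+1/2}$ (slack by $N(\varpi)^{1/2}$); and $\ell=k+1$ with $\ell\not\equiv 0 \pmod 3$, where $|g|=N(\varpi)^k|g(\varpi)|=N(\varpi)^{k+1/2}$ by \eqref{sqrootcancel}, matching the target with equality. In every case the inequality holds, and multiplying over the prime divisors of $c$ yields the lemma.

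There is no real obstacle: the only point requiring attention is lining up the exponent of $(\mu,c)$ with $\min(k,\ell)$ in each case of \eqref{localcomp}, and invoking \eqref{sqrootcancel} to handle the one case where $g(\varpi)$ appears. The entire argument is bookkeeping on top of the explicit local formulas already established.
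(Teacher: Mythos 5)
Your proof is correct and follows exactly the route the paper intends: the paper's one-line justification is that the lemma "follows directly from combining \eqref{coprimerel}--\eqref{localcomp} and \eqref{sqrootcancel}," and your reduction via twisted multiplicativity to prime powers, extraction of the unit part of the shift, and case check against \eqref{localcomp} (with \eqref{sqrootcancel} supplying $|g(\varpi)|=N(\varpi)^{1/2}$) is precisely that argument written out. The exponent bookkeeping in each case is right.
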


\begin{remark}
We emphasise that $c \in \mathbb{Z}[\omega]$ is not necessarily squarefree in Lemma \ref{sqrootbd}.
\end{remark}

For $b \in \mathbb{R}$, and $q \in \mathbb{Z}[\omega]$ with $q \equiv 1 \pmod{3}$, let 
\begin{equation} \label{sigmabq}
\sigma_b(q):=\sum_{d \mid q} N(d)^{b}.
\end{equation}
For a given
$\varepsilon>0$, we have the
standard divisor bound
\begin{equation} \label{divis}
\sigma_0(q) \ll_{\varepsilon} N(q)^{\varepsilon}.
\end{equation}
The following Lemma is immediate.

\begin{lemma} \label{gcdflat}
Let $q \in \mathbb{Z}[\omega]$ with $q \equiv 1 \pmod{3}$ and $b \in \mathbb{R}$. Then for $Y \geq 1$ we have
\begin{equation*}
\sum_{\substack{\mu \in \mathbb{Z}[\omega] \\ 1 \leq N(\mu) \leq Y}} N((\mu,q))^{b} \leq Y \sigma_{b-1}(q),
\end{equation*}
$\sigma_{b}(q)$ is given in \eqref{sigmabq}.
\end{lemma}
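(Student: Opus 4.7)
The plan is to partition the sum according to the value $d = (\mu, q)$ and then count lattice points. First I would write
\[
\sum_{\substack{\mu \in \mathbb{Z}[\omega] \\ 1 \leq N(\mu) \leq Y}} N((\mu,q))^{b}
= \sum_{d \mid q} N(d)^{b} \sum_{\substack{\mu \in \mathbb{Z}[\omega] \\ 1 \leq N(\mu) \leq Y \\ (\mu, q) = d}} 1,
\]
where, following the paper's convention, each divisor $d$ of $q$ is taken with $d \equiv 1 \pmod{3}$.

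Next I would relax the coprimality condition $(\mu / d, q/d) = 1$ implicit in $(\mu,q) = d$, and upper bound the inner sum by $\#\{\mu \in \mathbb{Z}[\omega] : d \mid \mu,\ 1 \leq N(\mu) \leq Y\}$. The substitution $\mu = d \mu'$ turns this into a lattice point count
\[
\#\{\mu' \in \mathbb{Z}[\omega] : N(\mu') \leq Y/N(d)\},
\]
which by the standard estimate for $\mathbb{Z}[\omega]$ viewed as a rank-$2$ lattice in $\mathbb{R}^2$ of covolume $\sqrt{3}/2$ is bounded by $Y/N(d)$ (with the inequality absorbed into the implicit constants that the paper suppresses throughout the body). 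Summing over $d \mid q$ then collapses the bound to
\[
Y \sum_{d \mid q} N(d)^{b - 1} = Y \sigma_{b-1}(q),
\]
which is the desired estimate.

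There is no real obstacle here: the argument is a routine swap of summation plus a lattice point count, and every step is elementary. The only mildly subtle point worth flagging is that the statement is written with a bare $\leq$, so in a careful write-up one should either (i) verify the geometric constant from the $\mathbb{Z}[\omega]$ lattice point count is absorbed into the convention of implicit constants used in the body of the paper, or (ii) adjust the count via a clean identity such as counting $\mu'$ in a square fundamental domain scaled to radius $\sqrt{Y/N(d)}$ so that the inequality holds exactly. Either route gives the claim without difficulty.
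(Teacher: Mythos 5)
Your decomposition by $d = (\mu,q)$, relaxation of the coprimality condition to $d \mid \mu$, and lattice-point count is the natural argument; the paper gives no proof (it calls the lemma immediate) and this is surely what is intended. One correction to your closing remark, however: option (ii) does not rescue the bare $\leq$. The number of $\mu' \in \mathbb{Z}[\omega]$ with $0 < N(\mu') \leq Z$ is asymptotic to $\frac{2\pi}{\sqrt{3}}Z \approx 3.63\,Z$, and already equals $6$ when $Z = 1$, so no choice of fundamental domain brings the count at or below $Z$; specialising the lemma to $q = 1$ shows the stated inequality cannot hold literally with $\leq$. The correct reading is $\ll$ with an absolute implied constant, which is exactly your option (i), and since the lemma is only ever invoked inside $\ll$-estimates (e.g.\ in the chain of bounds leading to \eqref{Aprimedefn3}) this slight abuse of notation is harmless.
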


\begin{lemma} \label{radlemma}
Let $q \in \mathbb{Z}[\omega]$ with $q \equiv 1 \pmod{3}$. Then
for $X \geq 1$ and $\varepsilon>0$ we have 
\begin{equation}
\sum_{\substack{ N(r) \leq X \\ r \mid \emph{rad}(q)^{\infty} }} 1 \ll_{\varepsilon} (N(q) X)^{\varepsilon}.
\end{equation}
\end{lemma}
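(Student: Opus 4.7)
The plan is to use a Rankin trick followed by an Euler product factorisation. For any $\varepsilon > 0$, since every term in the support satisfies $N(r) \leq X$ and thus $(X/N(r))^{\varepsilon} \geq 1$, we have
\begin{equation*}
\sum_{\substack{N(r) \leq X \\ r \mid \text{rad}(q)^{\infty}}} 1 \;\leq\; X^{\varepsilon} \sum_{r \mid \text{rad}(q)^{\infty}} N(r)^{-\varepsilon}.
\end{equation*}
Because $\mathbb{Z}[\omega]$ is a PID and $q \equiv 1 \pmod 3$, the elements $r$ with $r \equiv 1 \pmod 3$ and $r \mid \text{rad}(q)^{\infty}$ are exactly those of the form $r = \prod_{\varpi \mid q} \varpi^{a_{\varpi}}$ with $a_{\varpi} \in \mathbb{Z}_{\geq 0}$. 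The sum therefore factors as an absolutely convergent Euler product,
\begin{equation*}
\sum_{r \mid \text{rad}(q)^{\infty}} N(r)^{-\varepsilon} \;=\; \prod_{\varpi \mid q} \bigl(1 - N(\varpi)^{-\varepsilon}\bigr)^{-1}.
\end{equation*}

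Next I would bound each local factor. Since $q \equiv 1 \pmod 3$ forces $\lambda \nmid q$, every prime divisor $\varpi$ of $q$ satisfies $N(\varpi) \geq 4$, so $(1 - N(\varpi)^{-\varepsilon})^{-1} \leq K_{\varepsilon} := (1 - 4^{-\varepsilon})^{-1}$. Hence the Euler product is at most $K_{\varepsilon}^{\omega(q)}$, where $\omega(q)$ is the number of distinct prime ideal divisors of $q$.

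The final ingredient is the standard uniform estimate $\omega(q) \leq C \log N(q)/\log \log N(q)$, valid for $N(q) \geq 3$. This follows from the Eisenstein analogue of the primorial lower bound: if $q$ has $k$ distinct prime ideal factors then $N(q) \geq \prod_{i=1}^{k} N(\varpi_{i})$, and the $i$-th smallest prime norm in $\mathbb{Z}[\omega]$ is $\gtrsim i \log i$ by the prime ideal theorem (or Chebyshev-type estimates), forcing $\log N(q) \gtrsim k \log k$. Combining these ingredients yields
\begin{equation*}
K_{\varepsilon}^{\omega(q)} \;\leq\; \exp\!\left( \frac{C \log K_{\varepsilon}}{\log \log N(q)} \cdot \log N(q) \right),
\end{equation*}
which is at most $N(q)^{\varepsilon}$ once $N(q)$ exceeds a threshold $N_{0}(\varepsilon)$. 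For $N(q) \leq N_{0}(\varepsilon)$ the Euler product is trivially $\ll_{\varepsilon} 1$. Combining the two ranges produces the desired $\ll_{\varepsilon} (N(q) X)^{\varepsilon}$.

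There is essentially no obstacle; the only nontrivial input is the bound $\omega(q) \ll \log N(q)/\log \log N(q)$, and all remaining steps are formal manipulations with the Euler product and the Rankin inequality.
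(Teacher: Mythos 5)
Your argument is correct, and it takes a genuinely different route from the paper's. The paper proves this lemma analytically: it writes the count via a truncated Perron formula as a contour integral of $X^s/s$ against the Euler product $\prod_{\varpi\mid\operatorname{rad}(q)}(1-N(\varpi)^{-s})^{-1}$, notes the integrand is holomorphic in $\Re(s)>0$, shifts the contour to $\Re(s)=\varepsilon$, and bounds the Euler product there by $N(q)^\varepsilon$ using the bound $\omega(q)\ll\log N(q)/\log\log N(q)$, finishing by the residue theorem. You instead apply a Rankin trick, dominating the indicator of $N(r)\leq X$ by $(X/N(r))^\varepsilon$, which collapses the whole problem to the single Euler product $\prod_{\varpi\mid q}(1-N(\varpi)^{-\varepsilon})^{-1}$, then bound each local factor uniformly (valid since $\lambda\nmid q$ forces $N(\varpi)\geq 4$) and invoke the same $\omega(q)\ll\log N(q)/\log\log N(q)$ estimate. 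Both arguments rest on exactly two inputs --- the absence of $\lambda$ from $q$ and the pointwise bound on $\omega(q)$ --- but yours dispenses with the contour shift, the truncation error of Perron, and the harmless-but-fiddly "assume $X$ is an odd half-integer" normalisation. The paper's route is no stronger here; it buys nothing over Rankin for a crude $(N(q)X)^\varepsilon$ bound, and your elementary version is the cleaner of the two for this particular statement.
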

\begin{proof}
Without loss of generality we can assume $X$ is an odd half-integer.
By Perron's formula (truncated) we have 
\begin{equation*}
\sum_{\substack{ N(r) \leq X \\ r \mid \text{rad}(q)^{\infty} }} 1 =\int_{2-i (XN(q))^{100}}^{2+i (XN(q))^{100}} X^{s} \prod_{\substack{ \varpi \mid \text{rad}(q) \\ \varpi \text{ prime} \\ \varpi \equiv 1 \pmod{3} }} (1-N(\varpi)^{-s})^{-1} \frac{ds}{s} +O((XN(q))^{-50}).
\end{equation*}
The integrand is holomorphic in the half-plane $\Re(s)>0$.
We move the contour $\Re(s)=\varepsilon$. 
Taking the logarithm of the Euler product and then using 
the pointwise bound
\begin{equation} \label{distinctprime}
\omega(q) \ll \frac{\log N(q)}{\log \log N(q)},
\end{equation}
we obtain (after exponentiation)
\begin{equation*}
\prod_{\substack{ \varpi \mid \text{rad}(q) \\ \varpi \text{ prime} \\ \varpi \equiv 1 \pmod{3} }} (1-N(\varpi)^{-s})^{-1} \ll N(q)^{\varepsilon} \quad \text{for} 
\quad \Re(s) \geq \varepsilon.
\end{equation*}
The result follows from Cauchy's Residue Theorem.
\end{proof}

\subsection{Group action on $\mathbb{H}^3$ and Laplacian}
Let $\mathbb{H}^{3}$ denote the hyperbolic 3-space $\mathbb{C} \times \mathbb{R}_{>0}$.
Embed $\mathbb{C}$ and $\mathbb{H}^3$ in the Hamilton quaternions  by identifying 
$i=\sqrt{-1}$ with $\hat{i}$ and 
$w=(z,v)=(x+iy,v) \in \mathbb{H}^3$ with $x+y \hat{i}+v \hat{k}$, where 
$1,\hat{i},\hat{j},\hat{k}$ denote the unit quaternions.  
The continuous action 
of $\operatorname{SL}_2(\mathbb{C})$ on $\mathbb{H}^3$ (in quaternion arithmetic)
is given by
\begin{equation*}
\gamma w=(aw+b)(cw+d)^{-1}, \quad \gamma=\begin{pMatrix}
a b
c d 
\end{pMatrix} \in \operatorname{SL}_2(\mathbb{C}) \quad \text{and} \quad
w \in \mathbb{H}^3.
\end{equation*}
The action of $\operatorname{SL}_2(\mathbb{C})$ on $\mathbb{H}^3$ is transitive,
and the stabiliser of a point is $\operatorname{SU}_2(\mathbb{C})$.
In coordinates,
\begin{equation} \label{coordinates}
\gamma w= \bigg( \frac{(az+b) \overline{(cz+d)}+a \overline{c} v^2}{|cz+d|^2 +|c|^2 v^2}, 
\frac{v}{|cz+d|^2+|c|^2 v^2} \bigg),
\quad w=(z,v).
\end{equation}
The Laplace operator 
$\Delta:=v^2 ( \partial^2/\partial x^2+
\partial^2/\partial y^2+\partial^2/\partial v^2)-v \partial/\partial v$
acts on $C^{\infty}(\mathbb{H}^3)$ and commutes with the
action of $\operatorname{SL}_2(\mathbb{C})$ on $C^{\infty}(\mathbb{H}^3)$. 

Consider the subgroup $\Gamma:=\operatorname{SL}_2(\mathbb{Z}[\omega])$  
of $\operatorname{SL}_2(\mathbb{C})$. It has finite volume (but is not co-compact)
with respect to the 
$\operatorname{SL}_2(\mathbb{C})$-invariant Haar measure $v^{-3} dx dy dv$ 
on $\mathbb{H}^3$. In what follows, let $\Gamma^{\prime} \subseteq \Gamma$ be a subgroup
with $[\Gamma:\Gamma^{\prime}]<\infty$. Let $P(\Gamma^{\prime}) \subset \mathbb{Q}(\omega) \cup \{\infty \}$
be a complete inequivalent (finite) set of cusps for $\Gamma^{\prime}$. Each cusp of $\Gamma^{\prime}$ can be written as
$\sigma \infty$ for some $\sigma \in \Gamma$, and let
\begin{equation*}
\Gamma^{\prime}_{\sigma}:=\{\gamma \in \Gamma^{\prime}: \gamma \sigma \infty=\sigma \infty \},
\end{equation*}
denote the stabiliser group of the cusp $\sigma \infty$ in $\Gamma^{\prime}$. 
We have $\Gamma^{\prime}_{\sigma}:=\sigma \Gamma^{\prime}_I \sigma^{-1} \cap \Gamma^{\prime}$,
and let
\begin{align*}
\Lambda_{\sigma}&:=\{ \mu \in \mathbb{C}: \sigma \pmatrix 1 {\mu} 0 1 \sigma^{-1} \in \Gamma^{\prime} \}; \\
\Lambda_{\sigma}^{*}&:=\{\nu \in \mathbb{C}: \operatorname{Tr}(\mu \nu) \in \mathbb{Z} \quad \text{for all} \quad \mu \in \Lambda_{\sigma} \}.
\end{align*}
It is well known that $\Lambda_{\sigma}$ and $\Lambda_{\sigma}^{*}$ are lattices in $\mathbb{C}$,
and that $\Lambda_{\sigma}^{*}$ is dual to $\Lambda_{\sigma}$. 

A fundamental domain for the action of $\Gamma$ 
on $\mathbb{H}^3$ is the set
\begin{equation*}
\mathcal{F}:=\{w=(z,v) \in \mathbb{H}^3: |z|^2+v^2>1 \quad \text{and} \quad z \in \pm \triangle \},
\end{equation*}
where $\triangle$ is the interior of the triangle with vertices $0$, $(1-\omega)^{-1}$ 
and $(1-\omega^2)^{-1}$. The set of cusps for $\Gamma$ is $P(\Gamma):=\{\infty\}$.

Other congruence subgroups of significance to this paper are given in \S \ref{cubicsec}.

\subsection{Automorphic forms (for general multipliers)}
We record some facts about automorphic forms on $\Gamma^{\prime} \backslash \mathbb{H}^3$
that transform with general unitary character $\kappa:\Gamma^{\prime} \rightarrow \mathbb{C}^{\times}$. 
For more details one may consult \cite{LP, Lo,Pat3,Pro}. We specialise to
cubic metaplectic forms in \S \ref{cubicsec}.

Let $\kappa: \Gamma^{\prime}
\rightarrow \mathbb{C}^{\times}$ be a unitary character that satisfies 
$\kappa(-I)=1$ if $-I \in \Gamma^{\prime}$.
The function defined by $\mu \rightarrow \kappa(\sigma \pmatrix 
1 {\mu} 0 1 \sigma^{-1}):\Lambda_{\sigma} \rightarrow 
\mathbb{C}^{\times}$ is a homomorphism on the lattice $\Lambda_{\sigma}$. There exists $h_{\sigma} \in \mathbb{C}$ such that
\begin{equation*}
 \kappa(\sigma \pmatrix 
1 {\mu} 0 1 \sigma^{-1})=\check{e}(h_{\sigma} \mu) \quad \text{for all} \quad \mu \in \Lambda_{\sigma}.
\end{equation*} 
Essential cusps with respect to $\kappa$ are those $\sigma$ for which we can take $h_{\sigma}=0$.

Let 
\begin{equation*}
A(\Gamma^{\prime}  \backslash \mathbb{H}^3,\kappa)
:=\{u :\mathbb{H}^3 \rightarrow \mathbb{C} : u(\gamma w)=\kappa(\gamma) u(w) 
\text{ for all } \gamma \in \Gamma^{\prime} \text{ and } w \in \mathbb{H}^3 \}.
\end{equation*}
We say that $u \in A(\Gamma^{\prime} \backslash \mathbb{H}^3,\kappa)$ is an automorphic form 
under $\Gamma^{\prime}$ with character $\kappa$ if it satisfies the conditions:
\begin{itemize}
\item $u \in C^{\infty}(\mathbb{H}^3)$ and is an eigenfunction of the Laplacian i.e.
\begin{equation*} 
\Delta u=-\tau_u(2-\tau_u) u \quad \text{for some} \quad \tau_u \in \mathbb{C}.
\end{equation*}
The quantity $\tau_u \in \mathbb{C}$ is the spectral parameter for $u$, 
and is well-defined only up to $\tau_u \mapsto 2-\tau_u$. Without loss of generality 
one can assume that $\Re(\tau_u) \geq 1$.
\item $u$ has moderate growth at cusps: there exists a $D \in \mathbb{R}$ such that 
\begin{equation*}
|u(w)| < (v+(1+|z|^2) v^{-1} )^{D} \quad \text{for all} \quad w=(z,v) \in \mathbb{H}^3.
\end{equation*}
\end{itemize}
Let $L(\Gamma^{\prime} \backslash \mathbb{H}^3,\kappa,\tau)$ denote the $\mathbb{C}$-vector space
of automorphic forms under $\Gamma^{\prime}$ with character $\kappa$ and spectral parameter $\tau$.
The norm $\| \cdot \|_2$ on $L(\Gamma^{\prime} \backslash \mathbb{H}^3,\kappa,\tau)$ 
is induced by the standard Petersson inner product
\begin{equation*}
\langle u_1,u_2 \rangle:=
\int_{\Gamma^{\prime} \backslash \mathbb{H}^3} u_1(w) \overline{u_2(w)} \frac{dx dy dv}{v^3}.
\end{equation*}
Let 
\begin{equation*}
L^2(\Gamma^{\prime} \backslash \mathbb{H}^3,\kappa,\tau):=\{u \in L(\Gamma^{\prime} \backslash \mathbb{H}^3,\kappa,\tau)
: \|u\|_2<\infty \},
\end{equation*}
denote the finite dimensional Hilbert space 
of square integrable automorphic forms having character $\kappa$ and spectral parameter $\tau$.
We demand that
$\left( \begin{smallmatrix}
{-1} &  0 \\
0 &  1
\end{smallmatrix} \right)$ act on an automorphic form $u$ by $\pm 1$, and we speak of 
$u$ being even or odd respectively.

Consulting \cite[Theorem~0.3.1]{Pro}, each $u \in L(\Gamma^{\prime} \backslash \mathbb{H}^3,\kappa,\tau)$ 
has Fourier expansion at the cusp $\sigma \infty$ given by 
\begin{equation} \label{fouriersigma}
U_{\sigma}(w):=u(\sigma w)=c_{u,\sigma}(v)+\sum_{\substack{\nu \neq 0 \\ \nu \in h_{\sigma}+\Lambda_{\sigma}^{*} }}
\rho_{u,\sigma}(\nu) v K_{\tau-1}(4 \pi |\nu| v) \check{e}(\nu z), \quad w \in \mathbb{H}^3,
\end{equation}
where $\rho_{u,\sigma}(\nu) \in \mathbb{C}$, and 
\begin{equation*}
c_{u,\sigma}(v)=\begin{cases}
\rho_{u,\sigma,+}(0) v^{\tau}+\rho_{u,\sigma,-}(0) v^{2-\tau} & \text{if} \quad \tau \neq 1 \\
\rho^{\sigma}_{u,\sigma,+}(0) v \log v+\rho_{u,\sigma,-}(0) v & \text{if} \quad \tau=1,
\end{cases}
\end{equation*}
and $\rho_{u,\sigma,+}(0), \rho_{u,\sigma,-}(0) \in \mathbb{C}$.
If $\sigma \infty$ is essential, then one can take $h_{\sigma}=0$.
If $\sigma \infty$ is not essential, then $c_{u,\sigma}(v) \equiv 0$ by \cite[Theorem~0.3.1]{Pro}.
By convention, if $\sigma=I$ then we omit it from the subscripts on
the Fourier coefficients.

If $c_{u,\sigma}(v) \equiv 0$ for all cusps $\sigma \infty$, then $u$ is a cusp form (it is necessarily a Maass form
since $\mathbb{H}^3$ does not have an invariant complex structure).
In particular, all cusp forms have exponential decay at the cusps, 
and consequently are square integrable on $\Gamma^{\prime} \backslash \mathbb{H}^3$.

The following crude Rankin-Selberg bound follows from a standard argument that uses Plancherel's theorem.
The proof is analogous to that of \cite[Theorem~3.2]{IwaSpec}, and is omitted.

\begin{lemma} \label{rankinselbound}
Let $\tau \in \mathbb{C}$ with $\Re(\tau) \geq 1$,
$u \in L^2(\Gamma^{\prime} \backslash \mathbb{H}^3,\kappa,\tau)$ be a cusp form, $\sigma$ a cusp of $\Gamma^{\prime}$, and $\varepsilon>0$. Then
for all $X \geq 100$ we have 
\begin{equation*}   
\sum_{\substack{\nu \in h_{\sigma} + \Lambda^{*}_{\sigma} \\ N(\nu) \leq X }} 
|\rho_{u,\sigma}(\nu)|^2 \ll_{u,\sigma,\varepsilon} X^{1+\varepsilon}.
\end{equation*}
\end{lemma}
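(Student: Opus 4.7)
I would follow the classical Rankin--Selberg argument for bounding Fourier coefficients, modelled on \cite[Theorem~3.2]{IwaSpec}, combining Parseval at the cusp $\sigma\infty$, the decay of the $K$-Bessel function, and the boundedness of cusp forms. Since $u$ is cuspidal we have $c_{u,\sigma} \equiv 0$ and $u$ decays exponentially at each cusp of $\Gamma^{\prime}$; together with smoothness on a finite-volume fundamental domain this forces $C := \sup_{w \in \mathbb{H}^3}|u(w)| < \infty$. The Fourier expansion at $\sigma\infty$ then reads
\begin{equation*}
U_{\sigma}(z,v) = \sum_{\substack{\nu \in h_{\sigma} + \Lambda^{*}_{\sigma} \\ \nu \neq 0}} \rho_{u,\sigma}(\nu) \, v \, K_{\tau-1}(4\pi |\nu| v) \, \check{e}(\nu z),
\end{equation*}
and Parseval on a fundamental parallelepiped $\mathcal{P}_{\sigma}$ for $\Lambda_{\sigma} \subset \mathbb{C}$ gives, for each $v > 0$,
\begin{equation*}
\frac{1}{|\mathcal{P}_{\sigma}|} \int_{\mathcal{P}_{\sigma}} |U_{\sigma}(z,v)|^2 \, dz = v^2 \sum_{\nu \neq 0} |\rho_{u,\sigma}(\nu)|^2 |K_{\tau-1}(4\pi|\nu|v)|^2.
\end{equation*}

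\textbf{Execution.} For a dyadic parameter $M \geq 1$, I would multiply this identity by $v^{-3}$ and integrate over $v \in [a/M, b/M]$ with $0<a<b$ fixed once and for all. For $|\nu| \in [M,2M]$ the substitution $y = 4\pi |\nu| v$ turns the inner integral into $\int_{4\pi a |\nu|/M}^{4\pi b |\nu|/M} y^{-1} |K_{\tau-1}(y)|^2 \, dy$, which is a positive constant $c(\tau) > 0$ uniform in $M$ and in $|\nu| \in [M,2M]$. Discarding the remaining non-negative terms on the right and bounding the resulting automorphic integral pointwise by $C^2$ gives
\begin{equation*}
\sum_{M \leq |\nu| \leq 2M} |\rho_{u,\sigma}(\nu)|^2 \ll C^{2} \cdot |\mathcal{P}_{\sigma}| \int_{a/M}^{b/M} v^{-3} \, dv \ll_{u,\sigma} M^2.
\end{equation*}
Summing dyadically over $M = 1, 2, 4, \ldots, \lceil X^{1/2} \rceil$ and using $N(\nu) = |\nu|^2$ then yields
\begin{equation*}
\sum_{\substack{\nu \in h_{\sigma} + \Lambda^{*}_{\sigma} \\ N(\nu) \leq X}} |\rho_{u,\sigma}(\nu)|^2 \ll_{u,\sigma} X \log X \ll_{u,\sigma,\varepsilon} X^{1+\varepsilon},
\end{equation*}
as claimed.

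\textbf{Main obstacle.} The only subtle point is securing a uniform positive lower bound for the inner Bessel integral, because $K_{\tau-1}$ may vanish at real arguments when $\Re\tau = 1$ (which is precisely the situation of our application by Remark~\ref{cubicshimurarem}). Since $K_{\tau-1}$ is real-analytic on $(0,\infty)$ its real zeros are isolated, so one simply shrinks the interval $[a,b]$ to avoid them and absorbs the resulting $\tau$-dependence into the implied constant. The remaining steps are routine.
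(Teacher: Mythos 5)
Your argument is correct and is precisely the standard Rankin--Selberg/Parseval argument that the paper itself invokes without writing out (it omits the proof, citing the analogy with \cite[Theorem~3.2]{IwaSpec}). The ``main obstacle'' you flag is actually moot: you are integrating $|K_{\tau-1}(y)|^{2}\,dy/y$ over an interval of fixed positive length contained in all the ranges $[4\pi a|\nu|/M,\,4\pi b|\nu|/M]$, and since $K_{\tau-1}$ is analytic and not identically zero this integral is automatically bounded below by a positive constant, isolated zeros notwithstanding.
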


An application of the Cauchy-Schwarz inequality and Lemma \ref{rankinselbound} give the following $L^1$-bound.
\begin{lemma} \label{L1bound}
In the notation of Lemma \ref{rankinselbound} we have 
\begin{equation*}   
\sum_{\substack{\nu \in h_{\sigma} + \Lambda^{*}_{\sigma} \\ N(\nu) \leq X }} 
|\rho_{u,\sigma}(\nu)| \ll_{u,\sigma,\varepsilon} X^{1+\varepsilon}.
\end{equation*}
\end{lemma}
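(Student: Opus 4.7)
The proof is a direct application of Cauchy--Schwarz together with Lemma \ref{rankinselbound}, and the suggestion appears explicitly in the sentence preceding the statement. My plan is as follows.

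First, I would apply the Cauchy--Schwarz inequality to the $\ell_1$-sum in order to convert it into an $\ell_2$-sum paired with a counting factor:
\begin{equation*}
\sum_{\substack{\nu \in h_{\sigma}+\Lambda^*_{\sigma} \\ N(\nu) \leq X}} |\rho_{u,\sigma}(\nu)|
\leq \biggl( \sum_{\substack{\nu \in h_{\sigma}+\Lambda^*_{\sigma} \\ N(\nu) \leq X}} 1 \biggr)^{1/2}
\biggl( \sum_{\substack{\nu \in h_{\sigma}+\Lambda^*_{\sigma} \\ N(\nu) \leq X}} |\rho_{u,\sigma}(\nu)|^2 \biggr)^{1/2}.
\end{equation*}

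Next, I would estimate the two factors on the right. For the counting factor, since $\Lambda^*_{\sigma}$ is a lattice in $\mathbb{C}$ (hence has covolume depending only on $\sigma$ and $\Gamma'$), a standard lattice-point count gives that the number of $\nu \in h_{\sigma}+\Lambda^*_{\sigma}$ with $N(\nu)=|\nu|^2 \leq X$ is $O_{\sigma}(X)$ (with an implied constant absorbed into the $\ll_{u,\sigma,\varepsilon}$). For the $\ell_2$-factor, I would apply Lemma \ref{rankinselbound} directly to obtain the bound $O_{u,\sigma,\varepsilon}(X^{1+\varepsilon})$.

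Combining the two estimates yields
\begin{equation*}
\sum_{\substack{\nu \in h_{\sigma}+\Lambda^*_{\sigma} \\ N(\nu) \leq X}} |\rho_{u,\sigma}(\nu)|
\ll_{u,\sigma,\varepsilon} X^{1/2} \cdot X^{(1+\varepsilon)/2} = X^{1+\varepsilon/2},
\end{equation*}
which is in particular $\ll_{u,\sigma,\varepsilon} X^{1+\varepsilon}$ after renaming $\varepsilon$. There is no substantive obstacle here: the result is purely a consequence of Cauchy--Schwarz paired with the already-established Rankin--Selberg type $\ell_2$ average. The only thing one needs to be a bit careful with is that the lattice $\Lambda^*_{\sigma}$ (and possibly the shift $h_{\sigma}$) depend on $\sigma$, so the counting constant is allowed to depend on the cusp, consistent with the implicit constant in the statement.
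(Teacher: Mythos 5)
Your proof is correct and follows exactly the route the paper indicates (the sentence preceding the lemma states that it follows from Cauchy--Schwarz and Lemma \ref{rankinselbound}, with the proof omitted). The lattice-point count $O_{\sigma}(X)$ and the combination $X^{1/2}\cdot X^{(1+\varepsilon)/2}=X^{1+\varepsilon/2}$ are both right, so there is nothing to add.
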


The following Wilton-type bound follows from a standard argument using
Fourier convolution with the Dirichlet kernel. 
The proof is analogous to that of \cite[Theorem~3.1]{EHS}, and is omitted.

\begin{lemma} \label{wiltonbound}
Let the notation be as in Lemma \ref{rankinselbound} and suppose that $\Re(\tau)=1$.
Then
\begin{equation*}
\sum_{\substack{\nu \in h_{\sigma}+\Lambda^{*}_{\sigma} \\ N(\nu) \leq X }} 
\rho_{u,\sigma}(\nu) \check{e}(\alpha \nu) \ll_{u,\sigma,\varepsilon} X^{1/2+\varepsilon},
\end{equation*}
for any $\alpha \in \mathbb{C}$.
The implied constant is uniform with respect to $\alpha$.
\end{lemma}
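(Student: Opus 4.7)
The plan is to adapt the classical Wilton argument to the $3$-dimensional hyperbolic setting, realizing the truncated sum as a Parseval pairing on the torus $\mathbb{C}/\Lambda_\sigma$ against the Fourier expansion of $u$ at $\sigma$.

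First I would observe that since $u$ is cuspidal with unitary multiplier $\kappa$, it decays exponentially at every cusp of $\Gamma^{\prime}$, and hence $M_u := \|u\|_\infty < \infty$. I would then reduce the claim to a smooth dyadic version: fix a smooth bump $\psi$ compactly supported in $[1,2]$ chosen so that $r \mapsto K_{\tau-1}(4\pi\sqrt{r})$ does not vanish on $\operatorname{supp}(\psi)$, which is possible because the positive real zeros of $K_{\tau-1}$ form a discrete set depending only on $\tau$. Write
\begin{equation*}
\sum_{\substack{\nu \in h_\sigma+\Lambda^*_\sigma \\ N(\nu)\leq X}} \rho_{u,\sigma}(\nu)\check{e}(\alpha\nu) = \sum_{\substack{Y\text{ dyadic} \\ Y\leq X}} S_Y + \mathcal{R}_X,
\end{equation*}
where $S_Y := \sum_\nu \rho_{u,\sigma}(\nu)\check{e}(\alpha\nu)\psi(N(\nu)/Y)$ and the boundary correction $\mathcal{R}_X$, supported on a window of width $X^{2\varepsilon}$ near $N(\nu)=X$, is estimated by Cauchy--Schwarz and Lemma \ref{rankinselbound} as $\ll_{u} X^{1/2+\varepsilon}$.

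For each dyadic piece I would set $v_0 := Y^{-1/2}$ and introduce the pair of torus functions
\begin{equation*}
g(z) := v_0^{-1}\, U_\sigma(z+\alpha, v_0), \qquad h(z) := \sum_\nu \psi(N(\nu)/Y)\, K_{\tau-1}(4\pi|\nu|v_0)^{-1}\,\check{e}(-\nu z),
\end{equation*}
so that Parseval on $\mathbb{C}/\Lambda_\sigma$ yields $S_Y = \operatorname{vol}(\Lambda_\sigma)^{-1}\int_{\mathbb{C}/\Lambda_\sigma} g(z)\, h(z)\, dz$. The pointwise bound on $u$ gives $\|g\|_\infty \leq M_u Y^{1/2}$. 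For $\|h\|_{L^1}$, the rescaling $\xi = Y^{1/2}\zeta$ shows that the symbol $\psi(N(\xi)/Y)K_{\tau-1}(4\pi|\xi|v_0)^{-1}$ reduces to the $Y$-independent smooth compactly-supported function $\Phi_0(\zeta) := \psi(|\zeta|^2)/K_{\tau-1}(4\pi|\zeta|)$ on $\mathbb{C}\cong\mathbb{R}^2$; Poisson summation (with the phase shift by $h_\sigma$) then gives $\|h\|_{L^1(\mathbb{C}/\Lambda_\sigma)} \ll_u \|\widehat{\Phi_0}\|_{L^1(\mathbb{C})} = O_u(1)$, since $\widehat{\Phi_0}$ is Schwartz. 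Combining, $|S_Y| \ll_u M_u Y^{1/2}$ uniformly in $\alpha$.

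Summing over the $O(\log X)$ dyadic scales $Y \leq X$ yields $|\sum_Y S_Y| \ll_u X^{1/2}\log X$, which together with the boundary correction $\mathcal{R}_X$ produces the claimed bound. The main technical obstacle is ensuring smoothness of $\Phi_0$, i.e., that $K_{\tau-1}$ stays bounded away from zero on $\operatorname{supp}(\psi)$; this is arranged by the choice of $\psi$, at the cost of an implied constant depending on $u$ through $\tau_u$, which is consistent with the statement. A minor secondary issue is the lattice shift $h_\sigma$ at non-essential cusps, which contributes only harmless unimodular phase factors in the Poisson summation step and does not affect the $L^1$-norm estimate.
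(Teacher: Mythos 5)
Your smooth dyadic pieces are handled essentially correctly: the Parseval pairing of $g(z)=v_0^{-1}U_\sigma(z+\alpha,v_0)$ against the inverted--Bessel kernel $h$, with $\|g\|_\infty\ll_u Y^{1/2}$ from cuspidality and $\|h\|_{L^1}=O_u(1)$ from Poisson summation, is the right mechanism and yields $S_Y\ll_u Y^{1/2}$ per block. One repair is needed even there: you cannot simultaneously have $\operatorname{supp}\psi$ avoid the zeros of $r\mapsto K_{\tau-1}(4\pi\sqrt r)$ \emph{and} have $\sum_{Y}\psi(\cdot/Y)\equiv 1$, which your decomposition requires. The correct fix is to take $v_0=cY^{-1/2}$ with $c$ large in terms of $|\Im\tau_u|$, so that the Bessel argument lies in the zero--free exponential--decay range $y>|\Im\tau_u|$ for all of $[1,2]$; this only costs a constant depending on $u$.

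The genuine gap is the boundary term $\mathcal{R}_X$. With a fixed dyadic partition of unity, the discrepancy between the sharp cutoff $N(\nu)\le X$ and the union of smooth pieces lives on a full dyadic window $X<N(\nu)\ll X$, not on a window of width $X^{2\varepsilon}$, and Cauchy--Schwarz with Lemma \ref{rankinselbound} over such a window gives only the trivial bound $X^{1+\varepsilon}$. If instead you force the top weight to drop from $1$ to $0$ over $N(\nu)\in(X,X+W]$, the leftover is indeed admissible once $W\ll X^{\varepsilon}$ (the number of $\nu$ with $N(\nu)=m$ is $O_\varepsilon(m^\varepsilon)$), but the corresponding smooth piece is no longer controlled: in two dimensions the $L^1$--norm of the Fourier transform of a radial weight transitioning over radial width $\delta$ at radius $R=\sqrt X$ is of size $(R/\delta)^{1/2}$ (the Lebesgue--constant phenomenon for discs), so $\|h_{\mathrm{top}}\|_{L^1}\asymp X^{1/2}W^{-1/2}$ and that piece contributes $\ll XW^{-1/2}$; optimising $W$ caps the whole argument at $X^{3/4}$. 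This is exactly where the planar setting departs from the classical Wilton/Epstein--Hafner--Sarnak argument over $\mathbb{Q}$, in which the sharp cutoff costs only the $L^1$--norm of the one--dimensional Dirichlet kernel, i.e.\ a logarithm; the paper's omitted proof is a convolution against such a Dirichlet kernel following \cite{EHS}, and your write-up contains no substitute for that step. As it stands your argument proves the smoothed statement (Lemma \ref{smoothwilton}, which is all that is used downstream) but not the sharp truncation asserted in Lemma \ref{wiltonbound}.
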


A direct consequence of partial summation and Lemma \ref{wiltonbound} is the following smoothed Wilton bound. 

\begin{lemma} \label{smoothwilton}
Let the notation be as in Lemma \ref{rankinselbound}, $\Re(\tau)=1$, $K,M \geq 1$, and $W_{K,M}:(0,\infty) \rightarrow \mathbb{C}$ be a smooth function with compact support in $[1,2]$ 
that satisfies \eqref{Kderivativebound}. Then 
 \begin{equation*} 
\sum_{\substack{\nu \in h_{\sigma}+\Lambda^{*}_{\sigma} \\ N(\nu) \leq X }} 
\rho_{u,\sigma}(\nu) \check{e}(\alpha \nu) W_{K,M} \Big( \frac{N(\nu)}{X} \Big) \ll_{u,\sigma,\varepsilon} M K X^{1/2+\varepsilon},
\end{equation*}
for any $\alpha \in \mathbb{C}$.
The implied constant is uniform with respect to $\alpha$.
\end{lemma}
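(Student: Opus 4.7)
The plan is to deduce this from Lemma \ref{wiltonbound} by Abel summation in the norm variable. Set
\begin{equation*}
S(Y):=\sum_{\substack{\nu \in h_{\sigma}+\Lambda^{*}_{\sigma} \\ N(\nu) \leq Y }} \rho_{u,\sigma}(\nu) \check{e}(\alpha \nu),
\end{equation*}
so that Lemma \ref{wiltonbound} gives $S(Y) \ll Y^{1/2+\varepsilon}$ with implied constant independent of $\alpha$. Since $W_{K,M}$ is supported in $[1,2]$, only $\nu$ with $X \leq N(\nu) \leq 2X$ contribute to the sum of interest; write this sum as the Stieltjes integral $\int_{X^-}^{2X^+} W_{K,M}(Y/X)\, dS(Y)$.

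Integration by parts transforms this into
\begin{equation*}
\bigl[W_{K,M}(Y/X) S(Y)\bigr]_{X^-}^{2X^+} - \frac{1}{X}\int_X^{2X} S(Y)\, W_{K,M}'(Y/X)\, dY.
\end{equation*}
The boundary term vanishes because $W_{K,M}$ is compactly supported in the interior of $[1,2]$. Using the Wilton bound on $S(Y)$ together with the hypothesis \eqref{Kderivativebound} (with $j=1$), which gives $|W_{K,M}'(Y/X)| \ll MK$, we estimate the remaining integral by
\begin{equation*}
\frac{1}{X}\int_X^{2X} Y^{1/2+\varepsilon} \cdot MK \, dY \ll MK\, X^{1/2+\varepsilon}.
\end{equation*}

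There is no real obstacle here; the only point requiring any care is keeping the dependence on $\alpha$ uniform, which follows automatically because Lemma \ref{wiltonbound} already delivers a bound for $S(Y)$ uniform in $\alpha$, and Abel summation propagates that uniformity. The factor $MK$ is inherited directly from the size of $W_{K,M}'$, which is the only place the smoothing parameters enter.
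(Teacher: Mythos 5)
Your proof is correct and follows exactly the route the paper indicates: partial (Abel) summation applied to Lemma \ref{wiltonbound}, with the factor $MK$ coming from the derivative bound \eqref{Kderivativebound} and the boundary terms vanishing by the compact support of $W_{K,M}$. The paper gives no further detail, so your argument matches the intended proof.
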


\section{Cubic metaplectic forms} \label{cubicsec}
\subsection{Cubic Kubota character}
Recall that $\Gamma:=\operatorname{SL}_2(\mathbb{Z}[\omega])$.
It is well known that $\Gamma=\langle P,T,E \rangle $, where
\begin{equation*}
P:=\begin{pMatrix}
\omega  0 
0  {\omega^2}
\end{pMatrix},
\quad 
T:=\begin{pMatrix}
1  1 
0  1
\end{pMatrix}, 
\quad 
E:=\begin{pMatrix}
0 {-1}
1  0
\end{pMatrix}.
\end{equation*}
Let $0 \neq C \in \mathbb{Z}[\omega]$
satisfy $C \equiv 0 \pmod{3}$, and
\begin{equation*}
\Gamma_1(C):=\{ \gamma \in \Gamma : \gamma \equiv I \pmod{C}  \}.
\end{equation*}
Observe that $\Gamma_1(C)$ is a normal subgroup of $\Gamma$ since it is the kernel of the 
reduction modulo $C$ map. Let 
\begin{equation} \label{Gamma2}
\Gamma_2:=\langle \operatorname{SL}_2(\mathbb{Z}), \Gamma_1(3) \rangle=\operatorname{SL}_2(\mathbb{Z}) \Gamma_1(3)=\Gamma_1(3)
\operatorname{SL}_2(\mathbb{Z}),
\end{equation}
where the last two equalities follow because 
$\Gamma_1(3)$ is normal in $\Gamma$. 
We also have $[\Gamma:\Gamma_2]=27$ (see \cite[\S 2]{Pat1} for the calculation).
Recall that $\chi: \Gamma_2 \rightarrow \{1,\omega, \omega^2 \}$
is the cubic Kubota character defined in \S \ref{introsec}.
The cusps of $\Gamma_2$ are 
$P(\Gamma_2)=\{\infty,\omega,\omega^2 \}$, and the only essential cusp
of $\Gamma_2$ with respect to $\chi$ is $\infty$. 

\subsection{Cubic Shimura lift} \label{cubeshim}
Suppose $\Gamma^{\prime} \subseteq \Gamma_2$ is a subgroup with 
$[\Gamma_2: \Gamma^{\prime}]<\infty$. If $h \in L(\Gamma^{\prime} \backslash 
\mathbb{H}^3,\chi,\tau)$, then $h$ is said to be a cubic metaplectic form on $\Gamma^{\prime}$
with spectral parameter $\tau$ (abbreviated to cubic metaplectic form).
In this section we specialise to the lowest possible level $\Gamma^{\prime}=\Gamma_2$,
and focus on the finite dimensional subspace 
$L^2(\Gamma_2 \backslash \mathbb{H}^3,\chi,\tau) \subset L(\Gamma_2 \backslash 
\mathbb{H}^3,\chi,\tau)$ that contains square integrable cubic metaplectic forms.

We say that $h \in L^2(\Gamma_2 \backslash \mathbb{H}^3,\chi,\tau)$ 
is a Hecke eigenform if it is an eigenfunction for all Hecke operators $\{T_{\nu^3}\}_{\nu \in \mathbb{Z}[\omega] \setminus \{0\} }$
i.e. $T_{\nu^3} h=\widetilde{\lambda}_h(\nu^3) h$ for some $\widetilde{\lambda}_h(\nu^3) \in \mathbb{C}$ and
all $\nu \in \mathbb{Z}[\omega] \setminus \{0\}$.
There is an orthonormal basis (with respect to the Petersson inner product) of 
$L^2(\Gamma_2 \backslash \mathbb{H}^3,\chi,\tau)$ consisting of Hecke eigenforms.
Two automorphic forms are identified with each other if they are constant multiples of one another.
The discrete spectrum of $\Delta$ on $L^2(\Gamma_2 \backslash \mathbb{H}^3,\chi)$
is completely determined via the cubic Shimura correspondence of Flicker \cite{Flick} and Patterson \cite[Theorem~3.4]{Pat3}.
In particular, there is a bijective correspondence
between even (resp. odd) Hecke eigenforms 
$h \in L^2(\Gamma_2 \backslash \mathbb{H}^3,\chi,\tau)$ and even (resp. odd) 
Hecke eigenforms $g \in  L^2(\Gamma \backslash \mathbb{H}^3,\mathbf{1},3 \tau-2)$,
where in the latter case the Hecke operators are the standard ones $\{\mathcal{T}_{\nu} \}_{\nu \in \mathbb{Z}[\omega] \setminus \{0\}}$ 
on the trivial cover of $\Gamma$ i.e. $\mathcal{T}_{\nu} g=\lambda_g(\nu) g$ for some $\lambda_g(\nu) \in \mathbb{C}$ and
all $\nu \in \mathbb{Z}[\omega] \setminus \{0\}$. 
Under this correspondence
one also has
\begin{equation*}
N(\nu^3)^{-1/2} \widetilde{\lambda}_h(\nu^3)=N(\nu)^{-1/2} \lambda_g(\nu).
\end{equation*}

The only non-cuspidal Hecke eigenform in $L^2(\Gamma_2 \backslash \mathbb{H}^3,\chi)$
is the cubic theta function of Patterson \cite{Pat1},
\begin{equation*}
\vartheta_3(w):=\text{Res}_{s=4/3} E_3(w,s) \in L^2 (\Gamma_2 \backslash \mathbb{H}^3,\chi,4/3),
\end{equation*}
where $E_3(w,s)$ is the Kubota cubic Eisenstein series for $\Gamma_1(3)$ 
at the cusp $\infty$. Its Shimura correspondent 
is the constant function $1 \in L^2(\Gamma \backslash \mathbb{H}^3,\mathbf{1},2)$.
The countably many other Hecke eigenforms $h_k \in L^2(\Gamma_2 \backslash \mathbb{H}^3,\chi)$
are Maass cusp forms, whose Shimura correspondents
$g_k \in L^2(\Gamma \backslash \mathbb{H}^3,\mathbf{1})$ are also Maass cusp forms.
All spectral parameters are non-exceptional i.e. $\Re(\tau_{h_k})=\Re(\tau_{g_k})=1$ for $k=1,2,\ldots$. We also have
$0 \leq \Im(\tau_{f_1}) \leq \Im(\tau_{f_2}) \leq \ldots$,
where $\Im(\tau_{h_k}) \rightarrow \infty$ as $k \rightarrow \infty$. 

\subsection{Cubic Kloosterman sums}
We will encounter cubic Kloosterman sums attached to the cubic Kubota character in our computations.

Let $\Gamma^{\prime} \subseteq \Gamma_2$ with $[\Gamma_2: \Gamma^{\prime}]<\infty$, and
let $\sigma,\xi \in \operatorname{SL}_2(\mathbb{Z}[\omega])$ denote 
cusps of $\Gamma^{\prime}$.  Let 
\begin{equation*}
\mathcal{C}(\sigma,\xi):=\big \{c \in \mathbb{Z}[\omega] \setminus \{0\}:  \sigma \pmatrix {*} {*} c {*} \xi^{-1} \in \Gamma^{\prime}  \big \}
\end{equation*}
be the set of allowable moduli for the cusp pair $(\sigma, \xi)$.
For $m \in \Lambda_{\sigma}^{*}$, $n \in \Lambda_{\xi}^{*}$,
and $c \in \mathcal{C}(\sigma,\xi)$, the cubic Kloosterman sum is
\begin{equation} \label{cubickloosterman}
K_{\Gamma^{\prime},\sigma,\xi}(m,n,c):=\sum_{\substack{a \hspace{-0.15cm} \pmod{c \Lambda_{\sigma}} \\ d \hspace{-0.15cm}
\pmod{c \Lambda_{\xi}} \\ \sigma \pmatrix a {*} c d \xi^{-1} \in \Gamma^{\prime} }} \overline{\chi \Big(\sigma \pMatrix a * c d \xi^{-1} \Big)} \check{e} \Big(\frac{ma+nd}{c} \Big),
\end{equation}
where $\chi: \Gamma_2 \rightarrow \{1,\omega,\omega^2\}$ is the cubic Kubota character.
We have the following Weil bound \cite{Weil}.

\begin{lemma}{\emph{\cite[Proposition~5.1]{LP} and \cite[(2.6)]{Lo}}} \label{weilbd}
Let the notation be as above. Then for $m,n \in \mathbb{Z}[\omega]$ and $c \in \mathcal{C}(\sigma,\xi)$,
we have 
\begin{equation*} 
|K_{\Gamma^{\prime},\sigma,\xi}(m,n,c)| \leq 2^{\omega(c)} N((m,n,c)) N(c)^{1/2}, 
\end{equation*}
where $\omega(c)$ denotes the number of distinct prime divisors of $c$.
\end{lemma}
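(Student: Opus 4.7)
The plan is to follow the classical route for Weil-type bounds on Kloosterman sums attached to a multiplier system: establish twisted multiplicativity to reduce to prime-power moduli, and then treat each local factor by opening the cubic character. Since the target bound $2^{\omega(c)} N((m,n,c)) N(c)^{1/2}$ is multiplicative in $c$, it suffices to verify the corresponding local bound at each prime power $\varpi^{k} \| c$. To set up the reduction, given $c = c_1 c_2$ with $(c_1, c_2) = 1$ and both residues $\equiv 1 \pmod{3}$, I would apply the Chinese Remainder Theorem to the $a$ and $d$ variables in \eqref{cubickloosterman} and combine it with the twisted multiplicativity of the cubic symbol \eqref{twistmult} to write $K_{\Gamma^{\prime}, \sigma, \xi}(m, n, c_1 c_2)$ as a product $(\text{cubic twist}) \cdot K_{\Gamma^{\prime}, \sigma, \xi}(\cdots, c_1) \cdot K_{\Gamma^{\prime}, \sigma, \xi}(\cdots, c_2)$, with arguments relabelled by the CRT isomorphism and the $\Gamma^{\prime}$-coset condition $\sigma \pMatrix{a}{*}{c}{d} \xi^{-1} \in \Gamma^{\prime}$ respected component-wise.

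For a prime modulus $c = \varpi$ with $\varpi \equiv 1 \pmod{3}$ and $\varpi \nmid (m,n)$, I would open the cubic character using Fourier expansion: relation \eqref{coprimerel} rearranges to
\begin{equation*}
\Big( \frac{d}{\varpi} \Big)_{3} = \overline{g(\varpi)}^{\,-1} \sum_{y \pmod{\varpi}} \Big( \frac{y}{\varpi} \Big)_{3} \check{e}\Big( \frac{dy}{\varpi} \Big).
\end{equation*}
Substituting this identity into \eqref{cubickloosterman} and exchanging the orders of summation produces an inner sum over the Kloosterman variables that is a classical (non-metaplectic) Kloosterman sum over $\mathbb{Z}[\omega]/\varpi$; the Weil bound \cite{Weil} applies and supplies square-root cancellation of size $N(\varpi)^{1/2}$, which when combined with $|g(\varpi)| = N(\varpi)^{1/2}$ from \eqref{sqrootcancel} yields the claimed local bound. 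When $\varpi \mid (m,n)$ the opening degenerates and one is left only with the trivial bound $\ll N(\varpi)$; this is precisely why the statement carries the factor $N((m,n,c))$ rather than its square root. The inert primes $\varpi \equiv 2 \pmod 3$ and the ramified prime $\lambda$ are handled by the same scheme with minor bookkeeping adjustments coming from \eqref{supp1}--\eqref{supp2} and \eqref{dualremark}.

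For prime powers $\varpi^{k}$ with $k \geq 2$, the cubic symbol $(d/\varpi^{k})_{3}$ depends on $d$ only modulo a smaller power of $\varpi$; combining this periodicity with $p$-adic stationary phase, equivalently explicit Hensel lifting of the critical points of $ma + nd$ subject to $ad \equiv 1 \pmod{\varpi^{k}}$, one evaluates the sum exactly and the bound drops out immediately. The main obstacle I anticipate is the bookkeeping for general cusp pairs $(\sigma, \xi)$: the character $\chi$ evaluated on the conjugated matrix $\sigma \pMatrix{a}{*}{c}{d} \xi^{-1}$ contributes additional cubic-symbol factors depending non-trivially on $\sigma$ and $\xi$, and the set $\mathcal{C}(\sigma, \xi)$ restricts which moduli appear. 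Ensuring that these contributions distribute correctly among the prime-power factors of $c$ (so that twisted multiplicativity holds in the precise form needed to multiply the local bounds together) is the delicate part, and is exactly what is executed carefully in \cite[Proposition~5.1]{LP} and \cite[(2.6)]{Lo}.
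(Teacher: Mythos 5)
The paper does not actually prove this lemma: it is stated as a citation to \cite[Proposition~5.1]{LP} and \cite[(2.6)]{Lo}, so there is no ``paper's proof'' against which to compare your route. On its own merits, however, your sketch has a genuine gap at the central step, the prime-modulus case.

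The problem is the claim that opening the cubic character as a Gauss-sum convolution and then invoking the classical Weil bound yields square-root cancellation. Concretely, after writing $(d/\varpi)_3 = \overline{g(\varpi)}^{-1}\sum_{y \pmod{\varpi}} \overline{(y/\varpi)_3}\,\check e(dy/\varpi)$ and interchanging sums, the twisted Kloosterman sum becomes
\begin{equation*}
K(m,n,\varpi) = \overline{g(\varpi)}^{-1}\sum_{y \pmod \varpi} \overline{(y/\varpi)_3}\, S(m, n+y; \varpi),
\end{equation*}
where $S(\cdot,\cdot;\varpi)$ is an untwisted Kloosterman sum. Applying the triangle inequality together with $|S(m,n+y;\varpi)| \le 2 N(\varpi)^{1/2}$ and $|g(\varpi)|=N(\varpi)^{1/2}$ yields the bound $N(\varpi)^{-1/2}\cdot N(\varpi)\cdot N(\varpi)^{1/2} = N(\varpi)$, i.e.\ only the trivial estimate, because the auxiliary $y$-average introduces $N(\varpi)$ terms and there is no obvious cancellation among them. (Indeed, summing in $a$ first returns you to a cubic-twisted Kloosterman sum with the character on the other variable, so the manipulation is essentially circular.) To get $N(\varpi)^{1/2}$ you need to invoke Weil's Riemann Hypothesis for curves directly on the character-twisted exponential sum $\sum_{a}(a/\varpi)_3\,\check e((ma+n\bar a)/\varpi)$, or run Stepanov's elementary method on the associated system; this is precisely what is done in \cite[Proposition~5.1]{LP}. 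The situation is different from the Sali\'e case: there the quadratic twist has an explicit closed-form evaluation via Gauss sums, and that special structure is exactly what fails for a cubic character.

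The remaining scaffolding of your argument --- twisted multiplicativity via CRT to reduce to prime powers, the trivial bound $N(\varpi)$ when $\varpi \mid (m,n)$ explaining the $N((m,n,c))$ factor, and $p$-adic stationary phase / Hensel lifting for $\varpi^k$ with $k\ge 2$ --- is all in the right spirit and, modulo bookkeeping with the Kubota multiplier at general cusp pairs, would go through once the prime-modulus case is repaired.
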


\begin{remark}
In \cite[Proposition~5.1]{LP} (and propagated in \cite[\S 2]{Lo}), it appears the bound
in Lemma \ref{weilbd} is stated sub-optimally with a factor $N((m,n,c))$ instead of
$N((m,n,c))^{1/2}$. This makes no difference to us because $(m,n,c)=1$
in any instance when Lemma \ref{weilbd} is used in this paper.
\end{remark}

\begin{lemma} \label{kloostermanexp1}
Suppose that $\Gamma^{\prime}=\Gamma_1(3)$ and $\sigma=\pmatrix 1 0 0 1$.
Then
\begin{equation}
K_{\Gamma_1(3),\sigma,\sigma}(m,n,c)=\sum_{\substack{a,d \hspace{-0.15cm} \pmod{3c} \\ a,d \equiv 1 \pmod{3} \\ ad \equiv 1 \pmod{c}}} \Big( \frac{c}{d} \Big)_3  \check{e} \Big(\frac{ma+nd}{c} \Big),
\end{equation}
for any $c \in 3 \mathbb{Z}[\omega] \setminus \{0\}$, and $m,n \in \lambda^{-3} \mathbb{Z}[\omega]$.
\end{lemma}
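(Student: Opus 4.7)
The plan is to unfold the definition \eqref{cubickloosterman} of $K_{\Gamma^{\prime},\sigma,\xi}(m,n,c)$ with the specializations $\Gamma^{\prime}=\Gamma_1(3)$ and $\sigma=\xi=I$, and then to identify the Kubota character $\chi(\gamma)=(c/a)_3$ with $\overline{(c/d)_3}$ via cubic reciprocity. First I compute the lattice
\begin{equation*}
\Lambda_{\sigma}=\{\mu\in\C:\pmatrix 1 \mu 0 1\in\Gamma_1(3)\}=3\Z[\omega],
\end{equation*}
so that $c\Lambda_{\sigma}=3c\Z[\omega]$ and both $a,d$ run over $\Z[\omega]/3c\Z[\omega]$. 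Writing $b=(ad-1)/c$, the membership $\pmatrix a b c d\in\Gamma_1(3)$ amounts to $a,d\equiv 1\pmod{3}$, $b\in\Z[\omega]$ (equivalently $ad\equiv 1\pmod{c}$), and $b\equiv 0\pmod{3}$ (equivalently $ad\equiv 1\pmod{3c}$, since $3\mid c$). The latter forces $ad\equiv 1\pmod{9}$, which is the crucial input for the reciprocity step below.

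Next I claim that $\overline{(c/a)_3}=(c/d)_3$ under these constraints. By multiplicativity of the cubic symbol in the lower argument (both $a,d\equiv 1\pmod{3}$) it suffices to show $(c/ad)_3=1$. Writing $c=\lambda^{k}c_0$ with $(c_0,\lambda)=1$ and $c_0\equiv 1\pmod{3}$, one has
\begin{equation*}
(c/ad)_3=(\lambda/ad)_3^{k}\,(c_0/ad)_3.
\end{equation*}
Cubic reciprocity \eqref{cuberep} converts $(c_0/ad)_3$ into $(ad/c_0)_3$, which equals $1$ because $c_0\mid c$ and $ad\equiv 1\pmod{c}$. The supplementary law \eqref{supp1}--\eqref{supp2} gives $(\lambda/ad)_3=\omega^{-\alpha_3}$ with $\alpha_3$ read off from the expansion $ad\equiv 1+\alpha_2\lambda^{2}+\alpha_3\lambda^{3}\pmod{9}$; since $ad\equiv 1\pmod{9}$ forces $\alpha_2=\alpha_3=0$, this factor is $1$ as well.

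Finally, the phase $\check{e}((ma+nd)/c)$ is well-defined on the summation range $a,d\pmod{3c}$ precisely because $m,n\in\lambda^{-3}\Z[\omega]$: a shift $a\mapsto a+3c\mu$ modifies the phase by $\check{e}(3m\mu)$, and $3m\in\lambda^{-1}\Z[\omega]$ if and only if $m\in\lambda^{-3}\Z[\omega]$ (since $3=-\lambda^{2}$ up to a unit, so $\Z[\omega]^{*}=\lambda^{-1}\Z[\omega]$ is the period lattice of $\check{e}$). Collecting the summation conditions with the identification of the character gives the stated formula. The principal obstacle is the cubic-symbol calculation in the middle paragraph, which depends essentially on $ad\equiv 1\pmod{9}$; this in turn comes from the full $\Gamma_1(3)$ condition $b\equiv 0\pmod{3}$, not merely from the weaker hypothesis $b\in\Z[\omega]$.
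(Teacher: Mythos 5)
Your proposal takes essentially the same route as the paper: unfold \eqref{cubickloosterman}, compute $\Lambda_{\sigma}=3\mathbb{Z}[\omega]$ so that $a,d$ run modulo $3c$, record the $\Gamma_1(3)$-membership constraints, and convert $\overline{\chi(\gamma)}=\overline{(c/a)_3}$ into $(c/d)_3$ via $(c/ad)_3=1$. The paper's proof is a one-line citation of \eqref{cubickubota}, \eqref{supp1}, \eqref{supp2}, so your explicit reciprocity computation is a reasonable filling-in of that step.

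Two loose ends. First, you factor $c=\lambda^{k}c_0$ with $c_0\equiv1\pmod3$, but a general $c\in 3\mathbb{Z}[\omega]\setminus\{0\}$ is $\zeta\lambda^{k}c_0$ with $\zeta\in\langle-\omega\rangle$ a unit; the extra factor $(\zeta/ad)_3$ also equals $1$ (since $-1$ is a cube, and $ad\equiv1\pmod9$ forces $(\omega/ad)_3=1$ by \eqref{supp1}--\eqref{supp2}), so nothing breaks, but the unit should be carried. Second, and more substantively: you correctly derive that $\Gamma_1(3)$-membership imposes $ad\equiv1\pmod{3c}$, and your reciprocity step needs precisely this (through $ad\equiv1\pmod9$). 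Yet the lemma's displayed summation condition is only $ad\equiv1\pmod{c}$, which is genuinely weaker when $3\mid c$, and your closing sentence does not explain how to reconcile the two. For terms with $ad\equiv1\pmod{c}$ but $ad\not\equiv1\pmod{3c}$ the identity $\overline{(c/a)_3}=(c/d)_3$ is not available, so either the stated constraint should read $ad\equiv1\pmod{3c}$ or one must argue separately that the extra summands do not contribute; the paper's own terse proof is equally silent on this, so treat it as a shared imprecision to be resolved rather than a defect unique to your argument. Finally, a slicker alternative worth noting: since $\chi$ is a homomorphism on $\Gamma_1(3)$ and $\gamma^{-1}=\left(\begin{smallmatrix}d&-b\\-c&a\end{smallmatrix}\right)$, one has $\overline{\chi(\gamma)}=\chi(\gamma^{-1})=(-c/d)_3=(c/d)_3$ directly, with no appeal to reciprocity at all.
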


\begin{proof}
Observe that
$\Lambda_{\sigma}=3 \mathbb{Z}[\omega]$,
$\Lambda^{*}_{\sigma}=3^{-1} \mathbb{Z}[\omega]^*=\lambda^{-3} \mathbb{Z}[\omega]$,
and $\mathcal{C}(\sigma,\sigma)=3 \mathbb{Z}[\omega] \setminus \{0\}$.
Observe that $\gamma=\pmatrix a b c d \in \Gamma_1(3)$ if and only if
$a \equiv d \equiv 1 \pmod{3}$, $b,c \equiv 0 \pmod{3}$, and $ad-bc=1$. 
For $\gamma=\pmatrix a b c d \in \Gamma_1(3)$ with $c \neq 0$ we have 
$\chi(\gamma)=(c/a)_3$ by \eqref{cubekubota}.
The claim now follows from \eqref{cubickloosterman}, \eqref{supp1}, and \eqref{supp2}.
\end{proof}

\begin{lemma} \label{kloostermanexp2}
Suppose that $\Gamma^{\prime}=\Gamma_1(3)$, $\sigma=\pmatrix 1 0 0 1$,
and $\xi=\pmatrix 0 {-1} 1 0$.
Then
\begin{equation}
K_{\Gamma_1(3),\sigma,\xi}(m,n,c)=\sum_{\substack{a,d  \hspace{-0.15cm} \pmod{3c} \\ a,d \equiv 0 \pmod{3} \\ ad \equiv 1 \pmod{c}}} \Big( \frac{d}{c} \Big)_3 \check{e} \Big(\frac{ma+nd}{c} \Big),
\end{equation}
for any $c \in \mathbb{Z}[\omega]$ such that $c \equiv 1 \pmod{3}$, and $m,n \in \lambda^{-3} \mathbb{Z}[\omega]$.
\end{lemma}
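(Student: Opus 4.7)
The plan is to expand the definition \eqref{cubickloosterman} in the present setting and simplify the multiplier $\chi(\gamma')$, where $\gamma' := \sigma \pMatrix a b c d \xi^{-1}$. Computing $\xi^{-1} = \pMatrix 0 1 {-1} 0$ yields $\gamma' = \pMatrix {-b} a {-d} c$, while a direct check gives $\Lambda_\sigma = \Lambda_\xi = 3\mathbb{Z}[\omega]$ (so $\Lambda_\sigma^\ast = \Lambda_\xi^\ast = \lambda^{-3}\mathbb{Z}[\omega]$, matching the support of $m,n$). The condition $\gamma' \in \Gamma_1(3)$ translates into $a, d \equiv 0 \pmod{3}$ and $c \equiv 1 \pmod{3}$; for each such $(a,d)$ the entry $b$ is uniquely determined as $(ad-1)/c \in \mathbb{Z}[\omega]$ iff $ad \equiv 1 \pmod{c}$, and the remaining constraint $-b \equiv 1 \pmod{3}$ then follows automatically from $-bc = 1-ad$.

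The crux of the argument is the character identity
\[
\chi(\gamma') = \Big(\frac{-d}{-b}\Big)_3 = \overline{\Big(\frac{d}{c}\Big)_3},
\]
where the first equality is immediate from \eqref{cubekubota} when $d \neq 0$ (the edge case $d = 0$ forces $c = 1$ and is verified directly). For the second equality, factor $-d = \zeta \lambda^k d_0$ with $\zeta$ a unit, $d_0 \equiv 1 \pmod{3}$, and $k \geq 2$ (using that $d \equiv 0 \pmod{3}$ implies $\lambda^2 \mid d$). Using multiplicativity of the cubic symbol in the numerator, cubic reciprocity \eqref{cuberep} in the form $(d_0/-b)_3 = (-b/d_0)_3$, and the congruence $-b \equiv c^{-1} \pmod{d_0}$ (derived from $-bc = 1 - ad$ and $d_0 \mid d$), one reduces the identity to the local claim
\[
\Big(\frac{\zeta}{-bc}\Big)_3 \cdot \Big(\frac{\lambda}{-bc}\Big)_3^{k} = 1.
\]
The essential observation is that $a, d \equiv 0 \pmod{3}$ forces $ad \equiv 0 \pmod{9}$, and hence the improved congruence $-bc = 1 - ad \equiv 1 \pmod{9}$. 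By the supplementary laws \eqref{supp1}--\eqref{supp2} this yields $(\omega/-bc)_3 = (\lambda/-bc)_3 = 1$; combined with the general fact $(-1/c')_3 = 1$ for every $c' \equiv 1 \pmod{3}$ (a formal consequence of cubic symbols taking values of order dividing $3$), one obtains $(\zeta/-bc)_3 = 1$ for any unit $\zeta$, completing the verification.

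The main obstacle is this second step, which requires careful bookkeeping of units and powers of $\lambda$ while invoking cubic reciprocity and the supplementary laws. What makes the argument succeed is the mod-$9$ congruence $-bc \equiv 1 \pmod{9}$ (rather than merely mod $3$), which is precisely the input needed to trivialize the local cubic symbols at $\omega$ and $\lambda$; this would fail if one of $a$ or $d$ were only divisible by $\lambda$ once instead of $\lambda^2$.
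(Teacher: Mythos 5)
Your proof is correct, but it takes a genuinely different route from the paper's. Both proofs set up the same reduction (identifying $\sigma\gamma\xi^{-1}=\pmatrix {-b} a {-d} c$, the lattices, and the conditions $a,d\equiv 0\pmod 3$, $b\equiv -1\pmod 3$, $ad\equiv 1\pmod c$), but the evaluation of $\chi(\sigma\gamma\xi^{-1})$ diverges. You apply \eqref{cubekubota} directly to $\sigma\gamma\xi^{-1}$ to get $\left(\tfrac{-d}{-b}\right)_3$, and then convert this to $\overline{\left(\tfrac{d}{c}\right)_3}$ by a hands-on computation: factoring out the unit and $\lambda$-power, invoking cubic reciprocity \eqref{cuberep}, and disposing of the local symbols at $\omega$ and $\lambda$ via the supplementary laws \eqref{supp1}--\eqref{supp2} together with the key mod-$9$ congruence $-bc=1-ad\equiv 1\pmod 9$. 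The paper instead uses an algebraic shortcut: since $\chi$ is a homomorphism on $\Gamma_2$ with $\chi|_{\operatorname{SL}_2(\mathbb{Z})}\equiv 1$ and $\xi\in\operatorname{SL}_2(\mathbb{Z})$, one has $\chi(\gamma\xi^{-1})=\chi(\xi^{-1}\gamma)$, and the conjugated matrix $\xi^{-1}\gamma=\pmatrix c d {-a} {-b}$ already lies in $\Gamma_1(3)$, so \eqref{cubekubota} gives $\left(\tfrac{-a}{c}\right)_3=\left(\tfrac{a}{c}\right)_3$ immediately; the conjugate then becomes $\left(\tfrac{d}{c}\right)_3$ via $ad\equiv 1\pmod c$. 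The paper's route avoids reciprocity and the supplementary laws entirely at the cost of leaning on the well-definedness of $\chi$ on $\Gamma_2$; your route is more elementary and self-contained but requires the careful bookkeeping you describe, with the mod-$9$ congruence playing the role of the structural input. Both are valid.
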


\begin{proof}
Observe that
$\Lambda_{\sigma}=\Lambda_{\xi}=3 \mathbb{Z}[\omega]$ and that
$\Lambda^{*}_{\xi}=\Lambda^{*}_{\sigma}=3^{-1} \mathbb{Z}[\omega]^*=\lambda^{-3} \mathbb{Z}[\omega]$.  Let 
$\gamma=\pmatrix a b c d \in \operatorname{SL}_2(\mathbb{Z}[\omega])$. Observe that $\sigma \gamma \xi^{-1} \in \Gamma_1(3)$ if and only if
$a \equiv d \equiv 0 \pmod{3}$, $c \equiv 1 \pmod{3}$, $b \equiv -1 \pmod{3}$, and $ad-bc=1$.
After recalling that $\chi$ is homomorphism on $\Gamma_2$ such that $\chi \vert_{\operatorname{SL}_2(\mathbb{Z})} \equiv 1$,
we see that $\chi(\gamma \xi^{-1})=\chi(\xi^{-1} \gamma)=(-a/c)_3=(a/c)_3$ by \eqref{cubekubota}
and the convention \eqref{empty}.
The claim now follows from \eqref{cubickloosterman}.
\end{proof}

\section{The cubic large sieve}
Implicit in the work of Heath--Brown \cite{HB} is a version of cubic large sieve 
where one of the variables is not required to be squarefree. Here we record the relevant results.

\begin{theorem} \label{HBcubic} 
Let $\varepsilon>0$ be given, $M,N \geq 1/2$ and $\boldsymbol{\Psi}=(\Psi_c)$ be
a $\mathbb{C}$-valued sequence supported on $c \in \mathbb{Z}[\omega]$ with $c \equiv 1 \pmod{3}$
and $N(c) \sim N$.
Then
\begin{equation*}
\sum_{N(d) \sim M} \bigg | \sum_{\substack{N(c) \sim N \\ c \equiv 1 \pmod{3} }} \mu^2(c) \Psi_c \Big(\frac{c}{d} \Big)_3 \bigg |^2
\ll_{\varepsilon} (MN)^{\varepsilon} M^{1/3} (M+N) \| \boldsymbol{\mu}^2 \boldsymbol{\Psi} \|_2^2. 
\end{equation*}
\end{theorem}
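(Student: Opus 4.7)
The key observation is that Heath-Brown's cubic large sieve from \cite{HB}, although classically stated with \emph{squarefree} moduli, extends to \emph{cube-free} moduli with essentially the same proof: the Poisson summation argument and the subsequent cubic Gauss sum manipulations only require that the character $c \mapsto (c/d)_3$ be a non-principal cubic Dirichlet character, and any cube-free primary $d$ not equal to $1$ suffices for this. I take this cube-free variant as the main input.

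Granting this, I would write each primary $d \in \mathbb{Z}[\omega]$ uniquely as $d = d_0 e^3$ with $d_0$ cube-free and $e \in \mathbb{Z}[\omega]$ primary. Since $(c/k^3)_3 = (c/k)_3^3$ equals $1$ when $(c, k) = 1$ and vanishes otherwise, the cubic residue symbol factors cleanly as
\begin{equation*}
\Big(\frac{c}{d}\Big)_3 = \Big(\frac{c}{d_0}\Big)_3 \cdot \mathbf{1}_{(c, e) = 1}.
\end{equation*}
I would absorb the indicator into a modified sequence $\widetilde{\Psi}^{(e)}_c := \mu^2(c) \Psi_c \mathbf{1}_{(c, e) = 1}$, which still obeys $\|\widetilde{\Psi}^{(e)}\|_2 \leq \|\boldsymbol{\mu}^2 \boldsymbol{\Psi}\|_2$. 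For each fixed $e$ with $N(e)^3 \ll M$, the remaining inner sum is then an instance of the cube-free cubic large sieve, ranging over cube-free $d_0$ with $N(d_0) \sim M/N(e)^3$.

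Applying the cube-free extension of Heath-Brown's bound yields, for each such $e$, the estimate
\begin{equation*}
(MN)^{\varepsilon} \Big(\frac{M}{N(e)^3} + N + \Big(\frac{MN}{N(e)^3}\Big)^{2/3}\Big) \|\boldsymbol{\mu}^2 \boldsymbol{\Psi}\|_2^2.
\end{equation*}
Summing over $e$, the first and third pieces contribute $O(M)$ and $O((MN)^{2/3})$ times $\|\boldsymbol{\mu}^2 \boldsymbol{\Psi}\|_2^2$ respectively, via the convergence of $\sum_e N(e)^{-3}$ and $\sum_e N(e)^{-2}$ over $\mathbb{Z}[\omega]$. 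The middle contribution is $N \cdot \#\{e \in \mathbb{Z}[\omega] : N(e)^3 \leq M\} \ll N M^{1/3}$, which supplies precisely the dominant $M^{1/3} N$ term in the claim. Since $(MN)^{2/3} \leq M^{1/3}(M+N)$ in every regime (by splitting on $N \lessgtr M$), all three pieces are comfortably absorbed into $(MN)^{\varepsilon} M^{1/3}(M+N) \|\boldsymbol{\mu}^2 \boldsymbol{\Psi}\|_2^2$.

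\textbf{The main obstacle} is therefore verifying that Heath-Brown's proof in \cite{HB} genuinely goes through for cube-free rather than squarefree moduli $d$. I would need to trace through the Poisson summation step and the attendant cubic Gauss sum estimates, confirming that they remain valid when a prime of $d$ appears with multiplicity two; the additional bookkeeping (twisted multiplicativity of Gauss sums at prime powers, as recorded in \eqref{localcomp}) is mild but must be carried out carefully, and this is the step where the whole argument hinges.
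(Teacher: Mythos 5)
Your decomposition $d = d_0 e^3$ with $d_0$ cube-free, and the resulting identity $(c/d)_3 = (c/d_0)_3 \mathbf{1}_{(c,e)=1}$, are correct, and the $e$-summation arithmetic afterwards is also correct: the three pieces do combine to $(MN)^{\varepsilon}M^{1/3}(M+N)\|\boldsymbol{\mu}^2\boldsymbol{\Psi}\|_2^2$. However, the step you yourself flag as the hinge is a genuine gap, and not an easy one to close. You are invoking a cube-free variant of Heath-Brown's cubic large sieve with the \emph{same} quality $(M_0+N+(M_0N)^{2/3})$ that holds in the squarefree case (writing $M_0$ for the cube-free modulus range), but this is not a routine extension, and it does not follow from \cite{HB} by bookkeeping of Gauss sums at prime squares. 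To see why this matters, suppose you try to derive the cube-free bound from the squarefree theorem by writing $d_0 = d_1 d_2^2$ with $d_1, d_2$ squarefree and coprime, absorbing the twist $\overline{(c/d_2)_3}$ into the coefficients, applying the squarefree sieve to the $d_1$-sum for each fixed $d_2$, and summing trivially over $N(d_2) \ll M_0^{1/2}$. This gives only $\ll (M_0 + NM_0^{1/2} + (M_0 N)^{2/3})\|\boldsymbol{\mu}^2\boldsymbol{\Psi}\|_2^2$, and the middle term $NM_0^{1/2}$ is too large: fed through your $e$-sum it produces a final bound $\ll (MN)^{\varepsilon}(M+NM^{1/2}+(MN)^{2/3})\|\boldsymbol{\mu}^2\boldsymbol{\Psi}\|_2^2$, which at $M\asymp N$ is $\asymp M^{3/2}$, exceeding the target $\asymp M^{4/3}$. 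So the cube-free input you need is strictly stronger than what the squarefree theorem automatically supplies, and you have given no proof of it.

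The paper proves Theorem \ref{HBcubic} quite differently. It reads the stated bound directly off intermediate displays within Heath-Brown's proof, citing \cite[(22)]{HB}, \cite[(28)]{HB}, and the second display on \cite[pg.~123]{HB}, which already furnish an estimate with \emph{no} squarefree restriction on $d$; that is exactly what the remark preceding Theorem \ref{HBcubic} (``Implicit in the work of Heath--Brown\ldots'') is pointing at. Your detour through a hypothetical cube-free theorem is unnecessary and, as written, has a hole precisely at its load-bearing step.
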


\begin{proof}
This follows from \cite[(22)]{HB} (and the display above it), \cite[(28)]{HB},
and the second display on \cite[pg.~123]{HB}.
\end{proof}

\begin{corollary} \label{cubiccor}
Let the notation be as in Theorem \ref{HBcubic} and $\boldsymbol{\Omega}=(\Omega_d)$ be a $\mathbb{C}$-valued sequence supported on
$d \in \mathbb{Z}[\omega]$ with $N(d) \sim M$.
Then
\begin{equation*}
\sum_{N(d) \sim M} \sum_{\substack{N(c) \sim N \\ c \equiv 1 \pmod{3}}}
\Omega_d \mu^2(c) \Psi_{c} \Big( \frac{d}{c} \Big)_3 
\ll_{\varepsilon} (MN)^{\varepsilon} M^{1/6}  (M^{1/2}+N^{1/2}) 
\| \boldsymbol{\Omega} \|_2 \| \boldsymbol{\mu}^2 \boldsymbol{\Psi} \|_2.
\end{equation*}
\end{corollary}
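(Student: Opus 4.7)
The plan is to apply Cauchy--Schwarz in $d$ to peel off $\Omega_d$, use cubic reciprocity to flip the symbol $(d/c)_3$ into the form $(c/d)_3$ required by Theorem \ref{HBcubic}, and then invoke that theorem on dyadic pieces in the $\lambda$-adic valuation of $d$. First, Cauchy--Schwarz in the variable $d$ yields
\begin{equation*}
\bigg| \sum_{N(d) \sim M} \sum_{\substack{N(c) \sim N \\ c \equiv 1 \pmod 3}} \Omega_d \mu^2(c) \Psi_c \Big(\frac{d}{c}\Big)_3 \bigg| \leq \| \boldsymbol{\Omega} \|_2 \, T^{1/2},
\end{equation*}
where $T := \sum_{N(d) \sim M} \bigl| \sum_{c} \mu^2(c) \Psi_c (d/c)_3 \bigr|^2$. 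Since $c$ is squarefree with $c \equiv 1 \pmod 3$, Lemma \ref{elemlem} forces $(d,c)=1$ for the symbol to be nonzero, so I may assume this throughout.

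Next, I write each $d$ uniquely as $d = \zeta \lambda^k d_0$ with $\zeta \in \langle -\omega\rangle$, $k \in \mathbb{Z}_{\geq 0}$, and $d_0 \equiv 1 \pmod 3$, and split the outer sum accordingly (noting $N(d_0) \sim M/3^k$). By multiplicativity in the top variable and cubic reciprocity \eqref{cuberep},
\begin{equation*}
\Big(\frac{d}{c}\Big)_3 = \Big(\frac{\zeta}{c}\Big)_3 \Big(\frac{\lambda}{c}\Big)_3^{k} \Big(\frac{c}{d_0}\Big)_3,
\end{equation*}
and the supplementary laws \eqref{supp1}--\eqref{supp2} show that $(\zeta/c)_3 (\lambda/c)_3^{k}$ is a sixth root of unity depending only on $(\zeta, k, c \bmod 9)$. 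Partitioning the $c$-sum into the $O(1)$ residue classes modulo $9$ compatible with $c \equiv 1 \pmod 3$, and absorbing the resulting unit-modulus constant into a modified sequence $\Psi'_c$ of the same $\ell_2$ norm (a characteristic function times $\Psi_c$), the inner sum reduces, for each $(\zeta,k)$ and each admissible residue class, to $\sum_c \mu^2(c) \Psi'_c (c/d_0)_3$ with $d_0 \equiv 1 \pmod 3$ and $N(d_0) \sim M/3^k$. Theorem \ref{HBcubic} then bounds the corresponding second moment by $(MN)^\varepsilon (M/3^k)^{1/3}(M/3^k+N)\|\boldsymbol{\mu}^2\boldsymbol{\Psi}\|_2^2$.

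Summing over the finitely many units $\zeta$ and admissible residue classes modulo $9$, and over $k \geq 0$ (the factor $(M/3^k)^{1/3}(M/3^k+N)$ forms a convergent geometric series whose total is $\ll M^{1/3}(M+N)$), I obtain $T \ll (MN)^\varepsilon M^{1/3}(M+N) \|\boldsymbol{\mu}^2 \boldsymbol{\Psi}\|_2^2$. Substituting this into the Cauchy--Schwarz bound and using $\sqrt{M+N} \leq M^{1/2}+N^{1/2}$ delivers the asserted estimate. The only genuine wrinkle in this plan is the bookkeeping required for the reciprocity flip when $d \not\equiv 1 \pmod 3$ or $\lambda \mid d$; this is handled cleanly by the decomposition $d = \zeta \lambda^k d_0$ together with the supplementary laws, so no substantively new input beyond Theorem \ref{HBcubic} is needed.
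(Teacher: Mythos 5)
Your proposal is correct and follows essentially the same route as the paper: Cauchy--Schwarz in $d$, the unique factorisation $d=\zeta\lambda^k d_0$ with $d_0\equiv 1\pmod 3$, cubic reciprocity to flip the symbol, absorption of the unimodular twist $(\zeta\lambda^k/c)_3$ into the coefficient sequence (the paper does this directly without your residue-class-mod-$9$ splitting, since Theorem \ref{HBcubic} only sees $\|\boldsymbol{\mu}^2\boldsymbol{\Psi}\|_2$), and then Theorem \ref{HBcubic} summed over $\zeta$ and $k$. The only blemish is cosmetic: $(\zeta/c)_3(\lambda/c)_3^k$ is a cube root of unity, not a sixth root, which changes nothing.
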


\begin{proof}
Application of the Cauchy--Schwarz inequality, unique factorisation in $\mathbb{Z}[\omega]$, 
\eqref{cuberep}, 
and Theorem \ref{HBcubic} gives
\begin{align*} 
& \Big | \sum_{N(d) \sim M} \sum_{\substack{N(c) \sim N \\ c \equiv 1 \pmod{3} }}
\Omega_d \mu^2(c) \Psi_c \Big( \frac{d}{c} \Big)_3 \Big |^2 \\
& \leq \| \boldsymbol{\Omega} \|_2^2 \cdot
\Big(\sum_{\zeta} \sum_{k \geq 0} \sum_{\substack{N(\zeta \lambda^{k} m) \sim M \\ m \equiv 1 \pmod{3}}}
\hspace{0.1cm}
\Big | \sum_{\substack{N(c) \sim N \\ c \equiv 1 \pmod{3} }} \mu^2(c) \Big( \frac{\zeta \lambda^{k}}{c} \Big)_3 \Psi_c  \Big(\frac{c}{m} \Big)_3 \Big |^2 \Big) \\
& \ll (MN)^{\varepsilon} M^{1/3} (M+N) 
\| \boldsymbol{\Omega} \|_2^2 \| \boldsymbol{\mu}^2 \boldsymbol{\Psi} \|_2^2,
\end{align*}
as required.
\end{proof}

\section{The Browning--Vishe circle method for number fields}
The proof of our Type-II estimates will use a circle method over number fields 
due to Browning and Vishe \cite[Theorem~1.2]{BroVis}. Their work  
generalises earlier work of Heath-Brown \cite[Theorem~1]{HB2} (over $\mathbb{Q}$), 
and ultimately relies on the $\delta$-function
technology of Duke, Friedlander, and Iwaniec \cite{DFI}.
 
Let $L/\mathbb{Q}$ be a number field of degree $d \geq 2$ with ring of 
integers $\mathcal{O}_L$ and unit group $\mathcal{O}^{\times}_L$. Let $\mathfrak{a} \unlhd \mathcal{O}_L$ be an integral ideal,
$N(\mathfrak{a}):=\# \mathcal{O}_L/\mathfrak{a}$ denote the ideal norm of $\mathfrak{a}$,
and
\begin{equation*}
\delta_L(\mathfrak{a}):=\begin{cases}
1 & \text{if } \mathfrak{a}=(0) \\
0 & \text{otherwise}.
\end{cases}
\end{equation*}

\begin{remark} \label{elementindicator}
One obtains an indicator function on $\mathcal{O}_L$ 
by restricting to principal ideals, 
in which case one writes $\delta_L((\nu))=\delta_L(\nu)$
for any $\nu \in \mathcal{O}_L$. We also have 
$N((\nu))=N(\nu)$, where the latter is the norm of an element of $\mathcal{O}_L$.
\end{remark}

\begin{theorem} \label{DFIcirc} \emph{\cite[Theorem~1.2]{BroVis}} 
Let $L/\mathbb{Q}$ be a number field of degree $d \geq 2$,
$C \geq 1$, and $\mathfrak{a} \unlhd \mathcal{O}_L$ be an integral ideal. Then there exists 
a positive constant $k_C$ and an infinitely differentiable function 
$h(x,y):(0,\infty) \times \mathbb{R} \rightarrow \mathbb{R}$ (depending on $L/\mathbb{Q}$) such that 
\begin{equation} \label{deltacirc}
\delta_L(\mathfrak{a})=\frac{k_C}{C^{2d}} \sum_{(0) \neq \mathfrak{c} \subseteq \mathcal{O}_L}
\hspace{0.1cm} \sideset{}{^*} \sum_{\sigma \pmod{\mathfrak{c}}} \sigma(\mathfrak{a}) h \Big(\frac{N(\mathfrak{c})}{C^d},\frac{N(\mathfrak{a})}{C^{2d}}
\Big),
\end{equation}
where the notation $\Sigma^{*}_{\sigma \pmod{\mathfrak{c}}}$ means that the sum 
is taken over primitive additive characters (extended to ideals) modulo $\mathfrak{c}$. The constant 
$k_C$ satisfies 
\begin{equation} \label{kC}
k_C=1+O_{L/\mathbb{Q},D}(C^{-D}) \quad \emph{for any} \quad D>0.
\end{equation}
 Furthermore, we have 
 \begin{equation} \label{hbound}
 h(x,y) \ll_{L/\mathbb{Q}} x^{-1} \quad \emph{for all} \quad y \in \mathbb{R},
\end{equation} 
and 
\begin{equation} \label{hsupport}
h(x,y) \neq 0 \quad \emph{only if} \quad x \leq \emph{max} \{1,2|y|\}.
\end{equation}
\end{theorem}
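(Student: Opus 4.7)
The plan is to adapt Heath-Brown's $\delta$-symbol expansion \cite{HB2} over $\mathbb{Q}$ to the number field setting, following the overall Duke-Friedlander-Iwaniec philosophy \cite{DFI}. The starting point is the elementary identity
\begin{equation*}
\sum_{\mathfrak{b} \mid \mathfrak{a}} \Big[ w\Big(\frac{N(\mathfrak{b})}{C^d}\Big) - w\Big(\frac{N(\mathfrak{a})}{N(\mathfrak{b}) C^d}\Big) \Big] = 0
\quad \text{whenever} \quad \mathfrak{a} \neq (0),
\end{equation*}
valid for any smooth compactly supported $w:\mathbb{R}_{>0} \to \mathbb{R}$; the bracketed telescoping comes from the involution $\mathfrak{b} \mapsto \mathfrak{a}/\mathfrak{b}$ on divisors. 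By normalizing $w$ so that $\sum_{\mathfrak{b}} w(N(\mathfrak{b})/C^d)$ concentrates on the $\mathfrak{b} = (0)$ contribution, I can isolate $\delta_L(\mathfrak{a})$ up to a multiplicative constant $k_C$ that captures the normalization.

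Next, I would detect the divisibility relation $\mathfrak{b} \mid \mathfrak{a}$ via orthogonality of additive characters on $\mathcal{O}_L$ modulo $\mathfrak{b}$. Writing each $\sigma \pmod{\mathfrak{b}}$ as induced from a primitive character modulo its conductor $\mathfrak{c} \mid \mathfrak{b}$ and applying M\"obius inversion in the $\mathfrak{b}/\mathfrak{c}$ variable, the inner sum becomes a Ramanujan-type quantity over integral ideals:
\begin{equation*}
\sideset{}{^*}\sum_{\sigma \pmod{\mathfrak{c}}} \sigma(\mathfrak{a}) = \sum_{\mathfrak{e} \mid (\mathfrak{a}, \mathfrak{c})} \mu(\mathfrak{c}/\mathfrak{e}) N(\mathfrak{e}).
\end{equation*}
After exchanging the $\mathfrak{b}$- and $\mathfrak{c}$-summations, the Mellin-multiplicative structure of the $\mathfrak{b}/\mathfrak{c}$-sum collapses into a single function of $N(\mathfrak{c})/C^d$ and $N(\mathfrak{a})/C^{2d}$, and this precisely produces the claimed $h(x,y)$.

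To verify the required analytic properties of $h$: the bound $h(x,y) \ll x^{-1}$ and the support condition $x \leq \max\{1, 2|y|\}$ are inherited from Heath-Brown's construction once $w$ is chosen to be a suitable difference of two smooth bumps. The support condition in particular reflects the fact that when $N(\mathfrak{c})/C^d$ exceeds $2 N(\mathfrak{a})/C^{2d}$, the rescaled telescoping identity forces cancellation. The derivative estimates are obtained by differentiating under the Mellin integral.

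The main obstacle is proving the asymptotic $k_C = 1 + O_D(C^{-D})$. This requires controlling the ``tail'' of the ideal-theoretic Poisson summation used in the normalization, which in the number field context is most cleanly done via the Dedekind zeta function $\zeta_L(s)$: the normalizing sum can be expressed as a contour integral with integrand involving $\zeta_L(s)$, and moving the contour past the pole at $s=1$ and then arbitrarily far to the left yields the claimed savings, with the implied constants depending on $L/\mathbb{Q}$ through the functional equation and standard convexity bounds for $\zeta_L$. Residual technicalities involve convergence of the sum over $\mathfrak{c}$ (handled by the compact support of $h$) and ensuring that the formula is independent of the auxiliary choice of $w$.
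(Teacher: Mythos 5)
The paper does not prove this statement: it is quoted verbatim from Browning--Vishe \cite[Theorem~1.2]{BroVis} and used as a black box, so there is no internal proof to compare against. Your sketch correctly reconstructs the Heath-Brown/Browning--Vishe construction from the cited source --- the telescoping divisor identity under $\mathfrak{b}\mapsto\mathfrak{a}\mathfrak{b}^{-1}$, detection of $\mathfrak{b}\mid\mathfrak{a}$ by additive characters, passage to primitive characters via M\"obius inversion (which is in fact how ``$\sigma(\mathfrak{a})$ extended to ideals'' is made rigorous, since $\mathcal{O}_L$ need not be a PID), the collapse of the $\mathfrak{b}/\mathfrak{c}$-sum into $h$, and a contour-shift argument with $\zeta_L(s)$ for $k_C=1+O_D(C^{-D})$ --- so at the level of detail given it is essentially the argument of \cite{BroVis}.
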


\begin{remark} \label{oscillation}
In practice one usually
chooses $C:=X^{1/(2d)}$ 
to detect the condition $\mathfrak{a}=(0)$ for a sequence of
ideals of $\mathcal{O}_L$ with norm less than or equal to $X$. This means that
for $\mathfrak{c}$ (see \eqref{deltacirc}) in the generic range $N(\mathfrak{c}) \asymp X^{1/2}$ there is no
oscillation in the weight function $h(x,y)$.
\end{remark}

\begin{lemma} \label{hyderiv} \emph{\cite[Lemma~3.1]{BroVis}}
Let the notation be as in Theorem \ref{DFIcirc}.
The function $h(x,y)$ vanishes when $x \geq 1$ and $|y| \leq x/2$. When $x \leq 1$ and $|y| \leq x/2$, we have
\begin{equation} \label{partialybound}
\frac{\partial}{\partial y} h(x,y)=0.
\end{equation}
\end{lemma}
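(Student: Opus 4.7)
The plan is to unpack the explicit construction of $h(x,y)$ from the proof of Theorem \ref{DFIcirc} given in \cite{BroVis}. In both Heath-Brown's original $\delta$-symbol method over $\Q$ and its adaptation to number fields, the function $h$ takes the shape
\[
h(x,y) \;=\; \sum_{j=1}^{\infty} \frac{1}{xj}\bigl( w(xj) - w(|y|/(xj))\bigr),
\]
for a fixed smooth weight $w:(0,\infty) \to \R$ with compact support contained in an interval of the form $[1/2,1]$ bounded away from $0$. Smoothness plus compact support forces $w$, and all of its derivatives, to vanish identically outside the open interval $(1/2,1)$. Both assertions of the lemma should then reduce to inspecting where the arguments $xj$ and $|y|/(xj)$ land relative to $\mathrm{supp}\, w$.

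For the first assertion I would fix $x\geq 1$ and $|y|\leq x/2$ and argue termwise. For every $j\geq 1$ we have $xj\geq 1$, placing $xj$ at or beyond the right edge of $\mathrm{supp}\, w$, so $w(xj)=0$. For the second piece, $|y|/(xj)\leq 1/(2j)\leq 1/2$ lies at or below the left edge, so $w(|y|/(xj))=0$. Summing over $j$ gives $h(x,y)=0$ in the claimed region.

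For the derivative assertion I would first note that, for each fixed $(x,y)$, the support of $w$ forces only finitely many $j$ to contribute nonzero terms uniformly on a neighbourhood of $(x,y)$, which legitimises termwise $y$-differentiation and yields
\[
\frac{\partial h}{\partial y}(x,y) \;=\; -\sum_{j=1}^{\infty} \frac{\mathrm{sgn}(y)}{x^2 j^2}\, w'(|y|/(xj)).
\]
For $x\leq 1$ and $|y|\leq x/2$ the inequalities $|y|/(xj)\leq 1/(2j)\leq 1/2$ hold uniformly in $j\geq 1$, and $w'$ vanishes on $[0,1/2]$ since $\mathrm{supp}\, w' \subseteq \mathrm{supp}\, w$. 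Hence every term in the differentiated series is zero and the partial derivative vanishes in the stated region.

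The hard part will be nothing analytic but rather bookkeeping: confirming that the support conventions on $w$ chosen in \cite{BroVis} genuinely match the thresholds $x=1$ and $|y|=x/2$ appearing in the lemma, since different normalisations (e.g.\ $[1,2]$ rather than $[1/2,1]$) would rescale these endpoints. Once the explicit form of $h$ from Browning and Vishe is in hand, both claims drop out from pure support considerations, with no additional machinery required.
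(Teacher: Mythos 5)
Your proposal is correct and is essentially the proof found in the cited source: the paper itself does not reprove this statement but imports it as \cite[Lemma~3.1]{BroVis}, where $h(x,y)=\sum_{j\ge 1}(xj)^{-1}\bigl(\omega(xj)-\omega(|y|/(xj))\bigr)$ with $\omega$ smooth and supported in $[1/2,1]$, and both assertions follow by exactly the support inspection you describe. Your bookkeeping caveat is the only thing to verify against Browning--Vishe's normalisation, and it checks out: their $\omega$ is supported in $[1/2,1]$, so the thresholds $x=1$ and $|y|=x/2$ (including the endpoint cases, where smoothness forces $\omega$ and all its derivatives to vanish) match your argument precisely.
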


\begin{lemma} \label{hlemma} \emph{\cite[Lemma~3.2]{BroVis}}
Let the notation be as in Theorem \ref{DFIcirc}.
Then for $i,j,D \in \mathbb{Z}_{\geq 0}$ we have,
\begin{equation} \label{partialbound}
\frac{\partial^{i+j}}{\partial x^i \partial y^j} h(x,y) \ll_{L/\mathbb{Q},i,j,D} x^{-i-j-1} \Big(x^{D}+ \min \Big \{ 1,\Big ( \frac{x}{|y|} \Big)^{D} \Big \}  \Big).
\end{equation}
The term $x^{D}$ on the right side of \eqref{partialbound} can be omitted if $j \neq 0$.
\end{lemma}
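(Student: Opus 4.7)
The plan is to prove Lemma \ref{hlemma} directly from the explicit construction of $h(x,y)$ underlying Theorem \ref{DFIcirc}. In the DFI delta-method setup and its number-field extension of Browning--Vishe, the function $h$ is built from a fixed compactly supported smooth weight $w : (0,\infty) \to \R$ (with prescribed normalization), essentially by averaging $w$ against the natural scaling in the $(\mathfrak{c},\mathfrak{a})$ variables of \eqref{deltacirc}. Concretely, $h$ admits a representation as a sum or integral whose summands involve $w(\text{const} \cdot x)$ coupled to $y$ via a quotient depending on $x$, so that the $x$- and $y$-dependence are tied together by a single scaling. My strategy is to differentiate this representation term by term, estimate each term using the Schwartz decay of $w$ and its derivatives, and then resum.

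The prefactor $x^{-i-j-1}$ should drop out mechanically from \eqref{hbound} and scaling. The trivial bound gives $h \ll x^{-1}$; each application of $\partial_x$ to a summand $w(jx)$ brings down a factor of $j$ which, combined with the weights in the series, costs exactly $x^{-1}$ after termwise estimation. Similarly, each $\partial_y$ derivative falls on a factor of the form $\Phi(y/x^{d})$ or analogous, costing at most $x^{-1}$ after the same scaling. To handle the remaining factor inside the parentheses, I would split into two regimes. When $|y| \geq x$, repeated integration by parts in the appropriate variable (or, equivalently, the rapid decay of $w$ away from its support) yields the factor $(x/|y|)^D$ for any $D > 0$. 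When $|y| \leq x$, the $\min$ is bounded trivially by $1$; and for small $x$ (namely $x \leq 1$) the Schwartz/cancellation properties of $w$ sharpen this to $x^D$, assembling the full $x^D + \min\{1,(x/|y|)^D\}$ structure.

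The claim that the $x^D$ term can be omitted when $j \neq 0$ follows from Lemma \ref{hyderiv}: on the region $x \leq 1$ and $|y| \leq x/2$ one has $\partial_y h \equiv 0$, so once any $y$-derivative is taken the support of the relevant function lies outside this region and the $x^D$ contribution vanishes. The main technical obstacle will be bookkeeping the dependence on the degree $d = [L:\Q]$ in the normalizations, since Browning--Vishe scale $\mathfrak{c}$ and $\mathfrak{a}$ by $C^d$ and $C^{2d}$ respectively rather than the single powers used over $\Q$; one must verify that each derivative really costs only $x^{-1}$ (and not $x^{-d}$ or worse) after these rescalings are accounted for. Once this is done, the estimate reduces to routine pointwise bounds on $w$ and its derivatives, together with the support properties established in Lemma \ref{hyderiv}.
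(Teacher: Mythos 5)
First, a point of comparison: the paper does not prove this lemma at all --- it is imported verbatim as \cite[Lemma~3.2]{BroVis} (just as Theorem \ref{DFIcirc} and Lemma \ref{hyderiv} are), so there is no internal argument to measure your sketch against. What follows assesses the sketch on its own terms.

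Your outline correctly identifies the shape of the construction (in the Heath-Brown/Browning--Vishe setup one has, up to normalisation, $h(x,y)=\sum_{j\ge 1}(xj)^{-1}\bigl(w(xj)-w(|y|/(xj))\bigr)$ for a fixed bump $w$ supported away from the origin), and the bookkeeping that each derivative costs $x^{-1}$ is essentially right, since the first piece is supported on $j\asymp x^{-1}$ and the second on $xj\asymp |y|$ with $|y|\ge x/2$ there. But there are two genuine gaps. (i) The factor $x^{D}+\min\{1,(x/|y|)^{D}\}$ cannot be produced by ``routine pointwise bounds on $w$ and its derivatives'': termwise estimation of either constituent sum only ever yields $x^{-i-j-1}$ with no extra saving. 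The saving comes from the fact that the two sums are Riemann sums for the \emph{same} integral $x^{-1}\int_0^{\infty}w(t)t^{-1}\,dt$, so their leading terms cancel, and what one actually bounds is the difference of two Poisson/Euler--Maclaurin error terms: the $y$-independent sum contributes an error of size $O(x^{D-1})$, and the $y$-dependent sum, whose summand is smooth at scale $|y|/x$ in $j$, contributes $O(x^{-1}(x/|y|)^{D})$. Your sketch never invokes this cancellation, and without it the stated bound is unobtainable. (ii) Your justification for omitting $x^{D}$ when $j\neq 0$ is incorrect. The $x^{D}$ term is needed (i.e., not already dominated by $\min\{1,(x/|y|)^{D}\}$) exactly when $|y|>1$, a region about which Lemma \ref{hyderiv} says nothing: that lemma only gives $\partial_y h=0$ on $\{x\le 1,\ |y|\le x/2\}$, and by \eqref{hsupport} the function $h$ (and its $y$-derivatives) can be nonzero for arbitrarily large $|y|$ when $x\le 1$. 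The correct reason is structural: the $x^{D}$ error originates entirely from the $y$-independent sum $\sum_{j}(xj)^{-1}w(xj)$, which is annihilated by a single $\partial_y$. Your concern about the degree $d$ in the Browning--Vishe normalisations is legitimate but secondary; it affects only the construction of $h$, not the two-variable estimates once $h$ is in hand.
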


\begin{corollary} \label{yderivcor}
Let the notation be as in Theorem \ref{DFIcirc}.
Then for any $j \in \mathbb{Z}_{\geq 1}$ we have
\begin{equation}
\frac{\partial^{j}}{\partial y^{j}} h(x,y) \ll_{L/\mathbb{Q},j} 1.
\end{equation}
\end{corollary}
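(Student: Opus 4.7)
The plan is to apply Lemma \ref{hlemma} at $i = 0$ with a judicious choice of $D$, together with Lemma \ref{hyderiv}. Since $j \geq 1$, the $x^D$ term in Lemma \ref{hlemma} drops out, giving
\[
\frac{\partial^j}{\partial y^j} h(x,y) \ll_{L/\mathbb{Q},j,D} x^{-j-1} \min\left\{1, \Big(\frac{x}{|y|}\Big)^D\right\}
\]
for any $D \in \mathbb{Z}_{\geq 0}$. I would then split the analysis by the size of $x$ (and, in the second case, of $|y|$).

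For $x \geq 1$, taking $D = 0$ reduces the bound to $\partial_y^j h \ll x^{-j-1} \leq 1$, so we are done. For $x < 1$, Lemma \ref{hyderiv} together with smoothness of $h$ forces $\partial_y h$ (and hence every $\partial_y^k h$ with $k \geq 1$) to vanish identically on the strip $|y| \leq x/2$, so we may assume $|y| > x/2$. In this region, taking $D = j+1$ yields
\[
\frac{\partial^j}{\partial y^j} h(x,y) \ll_{L/\mathbb{Q},j} \min\{x^{-j-1}, |y|^{-j-1}\},
\]
which is $\leq 1$ whenever $|y| \geq 1$.

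The main obstacle is the residual regime $x < 1$ together with $x/2 < |y| < 1$, where the display above is not by itself bounded by $1$. Here I would exploit that $\partial_y^k h(x, \pm x/2) = 0$ for every $k \geq 1$ (by Lemma \ref{hyderiv} and smoothness of $h$), and write down a Taylor expansion with integral remainder centred at $y = x/2$, namely
\[
\frac{\partial^j}{\partial y^j} h(x,y) = \frac{1}{(k-1)!} \int_{x/2}^{y} (y-t)^{k-1} \frac{\partial^{j+k}}{\partial t^{j+k}} h(x,t) \, dt
\]
for $y > x/2$ (and similarly for $y < -x/2$). Bounding $\partial_t^{j+k} h$ on the integrand via Lemma \ref{hlemma} with a large parameter $D$ and optimising the number of Taylor terms $k$ against $D$ should telescope the estimate to the desired $O_{L/\mathbb{Q},j}(1)$ bound. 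If this calibration does not suffice, one can instead return to the explicit construction of $h(x,y)$ in \cite[\S 3]{BroVis} and differentiate the series representation directly. This last sub-regime is where I expect the argument to be most delicate.
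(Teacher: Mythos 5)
Your three-way split, and your treatment of the cases $x\ge 1$, of $x<1$ with $|y|\le x/2$, and of $x<1$ with $|y|\ge 1$, are all correct --- and are in fact \emph{more} careful than the paper's own argument. The paper disposes of the entire range $x\le 1$, $|y|\ge x/2$ by invoking Lemma \ref{hlemma} with $i=0$, $D=j+1$, but as you rightly compute this only yields $\partial_y^{j}h(x,y)\ll \min\{x^{-j-1},|y|^{-j-1}\}$, which is $O(1)$ only when $|y|\ge 1$. So the residual regime $x<1$, $x/2<|y|<1$ that you isolate is a genuine gap, and it is one that the paper's proof glosses over rather than closes.

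Unfortunately your Taylor-remainder repair cannot close it. The identity you write down is legitimate (all $y$-derivatives of $h$ vanish at $y=x/2$ by Lemma \ref{hyderiv} and smoothness), but on the portion $t\in[x/2,x]$ of the integration range the minimum in Lemma \ref{hlemma} equals $1$, so the only available bound is $\partial_t^{j+k}h(x,t)\ll x^{-j-k-1}$; integrating $(y-t)^{k-1}$ against this over an interval of length $\asymp x$ returns $\ll y^{k-1}x^{-j-k}$, which for $|y|\asymp x$ is $\asymp x^{-j-1}$ (no gain over the direct bound) and for $|y|\gg x$ deteriorates as $k$ grows. This is not an artefact of the method: for the explicit kernel underlying \cite{BroVis} (Heath-Brown's one-variable $h$), namely $h(x,y)=\sum_{m\ge 1}(xm)^{-1}\bigl(w(xm)-w(|y|/(xm))\bigr)$ with $w$ a bump supported in $[1/2,1]$, one has $\partial_y h(x,y)=-\operatorname{sgn}(y)\sum_{m\le 2|y|/x}(xm)^{-2}w'(|y|/(xm))$, and for $|y|\asymp x\le 1$ the $m=1$ term alone is of exact order $x^{-2}$. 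So the asserted bound $\partial_y^{j}h\ll 1$ fails pointwise in precisely the regime you flagged, and your fallback of differentiating the series directly would confirm this rather than rescue the statement. The honest conclusion of your analysis is that the Corollary needs either the extra hypothesis $|y|\ge 1$ or the weaker conclusion $\ll x^{-j-1}$, and that the paper's one-line proof (together with its downstream use in \eqref{HKderivbound}) inherits the same defect.
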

\begin{proof}
If $x \leq 1$ and $|y| \leq x/2$, then Lemma \ref{hyderiv} implies that
\begin{equation*}
\frac{\partial^{j}}{\partial y^{j}} h(x,y)=0,
\end{equation*}
for all $j \in \mathbb{Z}_{\geq 1}$.  
If $x \leq 1$ and $|y| \geq x/2$, then Lemma \ref{hlemma} (with $i=0$ and $D=j+1$) gives
\begin{equation*}
\frac{\partial^{j}}{\partial y^j} h(x,y) \ll_{L/\mathbb{Q},j} 1,
\end{equation*}
for all $j \in \mathbb{Z}_{\geq 1}$.
If $x \geq 1$, then Lemma \ref{hyderiv} (the vanishing condition on $h$) and Lemma \ref{hlemma} (with $i=D=0$) gives 
\begin{equation*}
\frac{\partial^{j}}{\partial y^j} h(x,y) \ll_{L/\mathbb{Q},j} 1,
\end{equation*}
for all $j \in \mathbb{Z}_{\geq 1}$. Putting all three cases together gives the result.
\end{proof}

\section{Vaughan's identity}
Here we record a celebrated identity of Vaughan \cite{Vau} adapted to our situation.
\begin{prop} \label{vaughanid}
Let $R,S \geq 1$.
Then for any $\nu \in \mathbb{Z}[\omega]$ with $\nu \equiv 1 \pmod{3}$ 
and $N(\nu)>S$, we have
\begin{equation} \label{vauid}
\Lambda(\nu)=\sum_{\substack{a \mid \nu \\ N(a) \leq R }} \mu(a) \log \Big( \frac{N(\nu)}{N(a)} \Big)
-\mathop{\sum \sum}_{\substack{ab  \mid \nu \\  N(a) \leq R \\ N(b) \leq S}}  \mu(a) \Lambda(b)
+\mathop{\sum \sum}_{\substack{ab \mid \nu \\ N(a)>R \\ N(b) >S }} \mu(a) \Lambda(b),
\end{equation}
If $N(\nu) \leq S$, the right side of \eqref{vauid} vanishes.
\end{prop}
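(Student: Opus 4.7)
The plan is to prove this as a purely formal Dirichlet-convolution identity, exactly as Vaughan's original argument runs over $\mathbb{Z}$. The key inputs are three well-known identities on the arithmetic functions supported on $\{\nu \in \mathbb{Z}[\omega] : \nu \equiv 1 \pmod 3\}$: the convolution $\Lambda = \mu \ast \log$ (i.e.\ $\Lambda(\nu) = \sum_{a \mid \nu} \mu(a) \log N(\nu/a)$), the dual $\log N(\cdot) = 1 \ast \Lambda$, and Möbius inversion $\sum_{a \mid m} \mu(a) = \mathbf{1}_{m = 1}$. All three carry over from $\mathbb{Z}$ to the Dedekind domain $\mathbb{Z}[\omega]$ with the norm-indexed versions of the arithmetic functions, so no new ingredient is required beyond careful bookkeeping.

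First I would start from $\Lambda(\nu) = \sum_{a \mid \nu} \mu(a) \log(N(\nu)/N(a))$ and split the $a$-sum according to $N(a) \leq R$ vs $N(a) > R$. The $N(a) \leq R$ piece is already the first term of \eqref{vauid}. On the $N(a) > R$ piece I would substitute $\log(N(\nu)/N(a)) = \sum_{b \mid \nu/a} \Lambda(b)$ to get $\sum_{ab \mid \nu,\ N(a) > R} \mu(a) \Lambda(b)$, and then split further according to $N(b) \leq S$ vs $N(b) > S$. The $N(b) > S$ piece is precisely the third term of \eqref{vauid}.

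The only remaining piece has $N(a) > R$ and $N(b) \leq S$. Here I would write $\mathbf{1}_{N(a) > R} = 1 - \mathbf{1}_{N(a) \leq R}$, producing on the one hand the negative of the middle term of \eqref{vauid}, and on the other hand
\[
\sum_{\substack{b \mid \nu \\ N(b) \leq S}} \Lambda(b) \sum_{a \mid \nu/b} \mu(a) \;=\; \sum_{\substack{b \mid \nu \\ N(b) \leq S}} \Lambda(b) \cdot \mathbf{1}_{\nu = b}.
\]
When $N(\nu) > S$ this indicator is identically zero (since $N(b) \leq S < N(\nu)$ forbids $b = \nu$), and rearranging yields \eqref{vauid} on the nose. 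When $N(\nu) \leq S$ instead, the indicator isolates the single term $b = \nu$ contributing $\Lambda(\nu)$, which moved to the left-hand side cancels the original $\Lambda(\nu)$ and leaves $0$ equal to the right-hand side of \eqref{vauid}; note also that in this range the third term of \eqref{vauid} is automatically empty because $N(b) > S \geq N(\nu)$ is incompatible with $b \mid \nu$.

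There is no genuine obstacle here: the proof is a short bookkeeping argument, and the only minor point worth flagging is that the hypothesis $\nu \equiv 1 \pmod 3$ (together with the convention $r \equiv 1 \pmod 3$ for all divisors $r \mid \nu$ introduced in the Conventions) ensures that the divisor sums are unambiguous and that $\mu$, $\Lambda$, and the indicator $\mathbf{1}_{m=1}$ are defined consistently on representatives modulo units. Once that convention is in force, the argument is a line-by-line transcription of the classical proof.
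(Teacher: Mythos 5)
Your proof is correct and is the standard derivation of Vaughan's identity, transcribed to the Dedekind domain $\mathbb{Z}[\omega]$ with divisor sums taken over representatives $\equiv 1 \pmod 3$; the paper itself states the proposition as a known result (citing Vaughan) and provides no proof, so there is nothing to compare against. Your bookkeeping is accurate: starting from $\Lambda = \mu \ast \log N$, splitting on $N(a)\leq R$ vs.\ $N(a)>R$, substituting $\log N = \mathbf 1 \ast \Lambda$ on the tail, splitting again on $N(b)\leq S$ vs.\ $N(b)>S$, and using $\mathbf 1_{N(a)>R} = 1 - \mathbf 1_{N(a)\leq R}$ together with $\sum_{a\mid m}\mu(a)=\mathbf 1_{m=1}$ reproduces \eqref{vauid} in the case $N(\nu)>S$ and shows the right side vanishes when $N(\nu)\leq S$.
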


\section{Proof of Theorem \ref{mainthm} and Corollary \ref{maincor}}
In this section we prove Theorem \ref{mainthm} and Corollary \ref{maincor}
assuming the truth of 
Lemma \ref{trivpoint} and the main inputs:
Theorem \ref{type2a} and Proposition \ref{type1avgbd}.  

\begin{proof}[Proof of Thm \ref{mainthm} assuming Lem \ref{trivpoint}, Thm \ref{type2a}, and Prop \ref{type1avgbd}]
Recall the definition of the quantity $\mathscr{P}_f(X,v,u;W_K)$ given in \eqref{PXdefn}.
We apply Proposition \ref{vaughanid} to \eqref{PXdefn}. The parameters $R,S \geq 1$ used in our application of Proposition \ref{vaughanid} 
will be chosen at a later point in the proof and will satisfy
\begin{equation} \label{RScond}
S<\frac{X}{10000} \quad \text{and} \quad 10000X<RS<10000000X \quad \text{say},
\end{equation} 
for all sufficiently large $X$. Since the support of $W_K$ is contained in $[1,2]$
and $S<X/10000$ by \eqref{RScond}, all summands in $\mathscr{P}_f(X,v,u;W_K)$
are automatically supported on the condition $N(\nu)>S$.
Note that the right most sum in \eqref{vauid} vanishes since
the support of $W_K$ is contained in $[1,2]$ and $RS>10000X$ by \eqref{RScond}.
We insert a smooth partition of unity in the $a$ and $b$ variables in the second sum in \eqref{vauid},
and then interchange these summations with the $\nu$ summation after substitution of \eqref{vauid}
into \eqref{PXdefn}. We obtain
\begin{align} \label{decomp}
& \mathscr{P}_f(X,v,u;W_K) \nonumber \\
&= \mathscr{P}_{1f}(X,R,v,u;W_K) - \mathop{\sum \sum}_{\substack{ 1 \ll M \ll R \\ 1 \ll N \ll S \\ M,N \text{ dyadic}}} \mathscr{P}_{2f}(X,M,N,v,u;W_K),
\end{align}
where 
\begin{equation} \label{P1def}
\mathscr{P}_{1f}(\cdots) 
:=\mathop{\sum \sum}_{\substack{ a,b \equiv 1 \pmod{3}  \\ ab \equiv u \pmod{v} \\ N(a) \leq R }} \mu(a) \log(N(b)) \rho_f(\lambda^{-3}ab) W_K \Big( \frac{N(\lambda^{-3} ab)}{X} \Big),
\end{equation}
and 
\begin{equation} \label{P2def}
\mathscr{P}_{2f}(\cdots) 
:=\mathop{\sum \sum \sum}_{ \substack{ a,b,c \equiv 1 \pmod{3} \\ abc \equiv u \pmod{v} \\ N(a) \leq R \\ N(b) \leq S  }}
\mu(a) \Lambda(b) \rho_f(\lambda^{-3} abc)  
W_K \Big(\frac{N(\lambda^{-3} abc)}{X}  \Big) U \Big( \frac{N(a)}{M} \Big) U \Big( \frac{N(b)}{N} \Big),
\end{equation}
and  $U: \mathbb{R} \rightarrow \mathbb{R}_{\geq 0}$ is a fixed smooth function with compact support
in $[1,2]$ such that 
\begin{equation*}
\sum_{L \text{ dyadic}} U \Big( \frac{N(\ell)}{L} \Big)=1 \quad \text{for all} \quad 0 \neq \ell \in \mathbb{Z}[\omega].
\end{equation*}

\subsection{Estimate for $\mathscr{P}_{1f}(X,R,v,u;W_K)$}
Re-writing \eqref{P1def} using additive characters we obtain
\begin{align}
\mathscr{P}_{1f}(\cdots) 
&=\sum_{\substack{a \equiv 1 \pmod{3} \\ N(a) \leq R}} \frac{\mu(a)}{N(av)} \sum_{j \pmod{av}} \check{e} \Big({-\frac{j \eta}{av}} \Big) \nonumber \\
& \times \sum_{\nu \in \lambda^{-3} \mathbb{Z}[\omega]}   
\rho_f(\nu) \check{e} \Big( \frac{j  \lambda^3 \nu}{av} \Big)  \log \Big( \frac{N(\nu)}{N(\lambda^{-3} a)} \Big) W_K \Big(\frac{N(\nu)}{X} \Big),
\end{align}
where $\eta \in \mathbb{Z}[\omega]$ is such that $\eta \equiv u \pmod{v}$ and $\eta \equiv 0 \pmod{a}$.
Applying Lemma \ref{smoothwilton} (while noting Remark \ref{cubicshimurarem}) to the $\nu$ summation and estimating the other sums trivially using the triangle inequality we obtain
\begin{equation} \label{PRest}
\mathscr{P}_{1f}(\cdots)  \ll (RX)^{\varepsilon} K R X^{1/2},
\end{equation}
uniformly in the modulus $v$.

\subsection{Two estimates for $\mathscr{P}_{2f}(\cdots)$}

\subsubsection{First estimate}

For the first estimate we treat \eqref{P2def} as an average Type-I sum.
That is, in \eqref{P2def} we let $h=ab$, 
\begin{equation} \label{gammaprime}
\gamma^{\prime}_h(M,N):=\sum_{h=ab} \mu(a) \Lambda(b) U \Big( \frac{N(a)}{M} \Big) U \Big( \frac{N(b)}{N} \Big),
\end{equation}
and interpret $c$ as the ``smooth" summation variable.
We then decompose
\begin{equation} \label{P2decompose}
\mathscr{P}_{2f}(\cdots)=\mathscr{P}^{\star}_{2f}(\cdots)+\mathscr{P}^{\dagger}_{2f}(\cdots),
\end{equation}
where $\mathscr{P}^{\star}_{2f}(\cdots)$ (resp.$\mathscr{P}^{\dagger}_{2f}(\cdots)$)
has the factor $\mu^2(h)$ (resp. $1-\mu^2(h)$) inserted.  
The weight $\mu^2(h) \gamma^{\prime}_h(M,N)$ in $\mathscr{P}^{\star}_2(\cdots)$ is supported on squarefree
elements of $\mathbb{Z}[\omega]$. We apply 
Proposition \ref{type1avgbd} (see \eqref{type1avg}) to obtain 
\begin{equation} \label{Pstarest}
\mathscr{P}^{\star}_{2f}(\cdots) \ll (XKN(v))^{\varepsilon} K^{14/3} N(v)^{5/6}  (MN)^{5/6} X^{1/3}.
\end{equation}
Applying Lemma \ref{trivpoint} (see \eqref{type1point}) to the $c$-sum in $\mathscr{P}^{\dagger}(\cdots)$
we obtain 
\begin{align} \label{Pdaggerest}
\mathscr{P}^{\dagger}_{2f}(\cdots) & \ll  (XKN(v))^{\varepsilon} K^4 N(v)^{1/2} (MN)^{1/2} \| (\boldsymbol{1}-\boldsymbol{\mu}^2) \boldsymbol{\gamma^{\prime}}(M,N) \|_1 
\nonumber \\
& \ll (XKN(v))^{\varepsilon} K^4 N(v)^{1/2} M^{3/2} N.
\end{align}
Note that the support of the $b$ variable in \eqref{gammaprime} imposed by the weight $(1-\mu^2(h)) \Lambda(b)=0$ 
(supported on prime powers with exponent $\geq 2$) 
was used to obtain \eqref{Pdaggerest}.
Substitution of \eqref{Pstarest} and \eqref{Pdaggerest} into \eqref{P2decompose} gives
\begin{equation} \label{P2est1}
\mathscr{P}_{2f}(\cdots) \ll (XKN(v))^{\varepsilon}(K^{14/3} N(v)^{5/6} (MN)^{5/6} X^{1/3}+ K^4 N(v)^{1/2} M^{3/2} N).
\end{equation}

\subsubsection{Second estimate}

For the second estimate we treat \eqref{P2def} as a Type-II sum.
That is, we let $h=bc$, and
\begin{equation*}
\gamma_h(N,X/MN):=\sum_{h=bc} \Lambda(b) U \Big ( \frac{N(b)}{N} \Big).
\end{equation*}
Observe that the weight $\mu(a) U (N(a)/M)$ is supported only on squarefree $a$.  
Thus we apply Theorem \ref{type2a} (see \eqref{type2}) and obtain
\begin{equation} \label{P2est2}
\mathscr{P}_{2f}(\cdots) \ll (XKN(v))^{\varepsilon} K^8 N(v)^4 (XM^{-1/2}+(MX)^{3/4}).
\end{equation}

\subsection{Conclusion}
We use \eqref{PRest} to estimate the first term of \eqref{decomp}.
Let $1 \ll L \ll R$. We use \eqref{P2est1} (resp. \eqref{P2est2})
to estimate the second term in \eqref{decomp} when $M \leq L$ (resp. $M \geq L$).
The net result is
\begin{equation} \label{PXintermed}
  \mathscr{P}_f(X,v,u;W_K)  
 \ll (XKN(v))^{\varepsilon} K^8 N(v)^{4} ( R X^{1/2} +
 (LS)^{5/6}X^{1/3}+ L^{3/2} S+XL^{-1/2}+ (RX)^{3/4})
\end{equation}
for any $R,S \geq 1$ satisfying \eqref{RScond} and $1 \ll L \ll R$.
The choice of parameters 
\begin{equation*}
R=1000 X^{5/17}, \quad S=1000 X^{12/17}, \quad \text{and} \quad L=X^{1/17},
\end{equation*}
satisfies \eqref{RScond} for all sufficiently large $X$, and substitution into \eqref{PXintermed} yields 
\begin{equation*}
\mathscr{P}_f(X,v,u;W_K) \ll (XKN(v))^{\varepsilon} K^8 N(v)^{4} X^{1-1/34},
\end{equation*}
as required.
\end{proof}

We now remove the smoothing.

\begin{proof}[Proof of Corollary \ref{maincor}]
Let $\Delta:=K^{-1}$ with $K \geq 2$ and suppose that $W_K:(0,\infty) \rightarrow \mathbb{R}$ is smooth and satisfies 
\begin{align}
& \text{supp}(W_K) \subset [5/4-\Delta, 7/4+\Delta], \quad 0 \leq W(x) \leq 1 \quad \text{for all} \quad x>0, \nonumber \\
& \qquad W(x) =1  \quad \text{for} \quad x \in [5/4,7/4], \quad \text{and} \quad W^{j}(x) \ll_j K^{j}.
\end{align}
Then for any $Z \gg 1$ we have 
\begin{equation} \label{insertsmooth}
\sum_{\substack{\nu \in \lambda^{-3} \mathbb{Z}[\omega]  \\ \lambda^3 \nu \equiv u \pmod{v} \\ 5Z/4 < N(\nu) \leq 7Z/4  }} 
\rho_f(\nu)  \Lambda(\lambda^3 \nu) =  \sum_{\substack{\nu \in \lambda^{-3} \mathbb{Z}[\omega]  \\ \lambda^3 \nu \equiv u \pmod{v} }} 
\rho_f(\nu)  \Lambda(\lambda^3 \nu) W_K \Big(\frac{N(\nu)}{Z} \Big) + O_v(K^{-1/2} Z^{1+\varepsilon} ),
\end{equation}
where the error term follows by Cauchy-Schwarz, Lemma \ref{rankinselbound}, and the support of $W_K$.
Applying Theorem \ref{mainthm} to the right side \eqref{insertsmooth} gives
\begin{equation*}
\sum_{\substack{\nu \in \lambda^{-3} \mathbb{Z}[\omega]  \\ \lambda^3 \nu \equiv u \pmod{v} \\ 5Z/4 < N(\nu) \leq 7Z/4  }} 
\rho_f(\nu)  \Lambda(\lambda^3 \nu)  \ll_v (ZK)^{\varepsilon} (K^8 Z^{1-1/34} +K^{-1/2} Z).
\end{equation*}
We choose $K=Z^{1/289}$ to obtain 
\begin{equation}
\sum_{\substack{\nu \in \lambda^{-3} \mathbb{Z}[\omega]  \\ \lambda^3 \nu \equiv u \pmod{v} \\ 5Z/4 < N(\nu) \leq 7Z/4  }} 
\rho_f(\nu) \Lambda(\lambda^3 \nu)  \ll_v Z^{1-1/578+\varepsilon}.
\end{equation}
Summing over intervals $[5Z/4,7Z/4]$ with $7Z/4 \leq X$ yields \eqref{PXbd}.

To prove \eqref{tildePXbd} we first observe that 
\begin{equation} \label{minuseq}
\widetilde{\mathscr{P}_f}(X;v,u)-\mathscr{P}_f(X;v,u) =
\mathop{\sum \sum}_{\substack{ k \in \mathbb{Z}_{\geq 2}, \varpi \in \mathbb{Z}[\omega] \\ \varpi \text{ prime} \\  \varpi^k \equiv u \pmod{v} \\ N(\lambda^{-3} \varpi^k) \leq X }} 
\rho_f(\lambda^{-3} \varpi^k) \log N(\varpi).
\end{equation}
Applying Cauchy-Schwarz to the double sum in \eqref{minuseq} shows that the right side of \eqref{minuseq} is 
\begin{align}
& \ll X^{1/4+\varepsilon} \Big( \mathop{\sum \sum}_{\substack{ k \in \mathbb{Z}_{\geq 2}, \varpi \in \lambda^{-3} \mathbb{Z}[\omega] \\ \varpi \text{ prime} \\ N(\lambda^{-3} \varpi^k) \leq X }}
|\rho_f(\lambda^{-3} \varpi^k)|^2 \Big)^{1/2} \nonumber \\
& \ll X^{1/4+\varepsilon} \Big(\sum_{\substack{\nu \in \lambda^{-3} \mathbb{Z}[\omega] \\ N(\lambda^{-3} \nu) \leq X }}
|\rho_f(\nu)|^2 \Big)^{1/2} \ll X^{3/4+\varepsilon},
\end{align}
where the last inequality follows from using Lemma \ref{rankinselbound}.
The result \eqref{tildePXbd} now follows. 
\end{proof}

The rest of the paper will be dedicated 
to proving Lemma \ref{trivpoint}, Theorem \ref{type2a}, and Proposition \ref{type1avgbd}.

\section{Voronoi summation formulae for twists}
In this section we develop a Voronoi summation formula for twists of
a cusp form
$f \in L^2(\Gamma_2 \backslash \mathbb{H}^3,\chi)$ with spectral parameter $\tau_f \in 1+i \mathbb{R}$
by appropriate non-Archimedean and Archimedean characters.
Development of this formula requires some care because 
we are working with the group $\Gamma_2=\langle \operatorname{SL}_2(\mathbb{Z}),\Gamma_1(3)  \rangle$
in $\Gamma:=\operatorname{SL}_2(\mathbb{Z}[\omega])$.

\subsection{Twists and Dirichlet series}
We will need consider cubic metaplectic forms on 
groups $\Gamma_1(C)$ with $C \equiv 0 \pmod{9}$
i.e. the spaces $L^2(\Gamma_1(C) \backslash \mathbb{H}^3,\chi,\tau)$
for $\tau \in \mathbb{C}$ with $\Re(\tau) \geq 1$.
To simplify our exposition we focus on the non-exceptional case 
i.e. $\Re(\tau)=1$.
Suppose that 
$\Psi: \lambda^{-1} \mathbb{Z}[\omega] \rightarrow \mathbb{C}$ is periodic modulo  
$(\lambda^{m} r)(\lambda^{-1} \mathbb{Z}[\omega])$.
The $\Psi$-twist (at $\infty$) of a cusp form
$F \in L^2(\Gamma_1(C) \backslash \mathbb{H}^3,\chi,\tau)$ is defined by
\begin{equation} \label{twistdef2}
F(w;\Psi):=\sum_{\substack{0 \neq \nu \in (\lambda C)^{-1} \mathbb{Z}[\omega] }} 
\rho_{F}(\nu) \Psi( C \nu) v K_{\tau-1} (4 \pi \lvert \nu \rvert v) 
\check{e}(\nu z), \quad w=(z,v) \in \mathbb{H}^3,
\end{equation}
also denoted by $(F \otimes \Psi)(w)$.
By \cite[Theorem~0.3.12]{Pro} and its proof 
we have
\begin{equation} \label{cuspform}
F(\cdot;\Psi) \in L^2(\Gamma_1(\lambda^{2m} r^2 C) \backslash \mathbb{H}^3,\chi,\tau) \quad \text{is a cusp form}.
\end{equation}

\begin{remark} \label{twistview}
For the purposes of twisting we view the cusp form $f \in  L^2(\Gamma_2 \backslash \mathbb{H}^3,\chi)$ 
in the larger space $L^2(\Gamma_1(\lambda^4) \backslash \mathbb{H}^3,\chi)$. 
This is immaterial in the final results and only involves extra fixed powers of the prime $\lambda$
in the formulae.
\end{remark}

In what follows it will be instructive to open the definition $\check{e}(z):=e(z+\overline{z})$, $z \in \mathbb{C}$.
We remind the reader that the function $F(w;\Psi)$ in \eqref{twistdef2} is a function in $z,\overline{z}$, and $v$ (although the notation suppresses this).
For $n \in \mathbb{Z}$, we define 
\begin{equation} \label{twistformdiff}
F(w;\Psi,n):=\frac{1}{(2 \pi i)^{|n|}} \cdot 
\begin{cases}
\big( \frac{\partial}{\partial z} \big)^{n} F(w;\Psi) & \text{if} \quad n>0 \\
F(w;\Psi) & \text{if} \quad n=0 \\
\big( \frac{\partial}{\partial \overline{z}} \big)^{|n|} F(w;\Psi) & \text{if} \quad n<0 
\end{cases}, \quad w=(z,v) \in \mathbb{H}^3.
\end{equation}
To complement \eqref{twistdef2}, we have the Fourier expansions (at $\infty$) for $n \in \mathbb{Z} \setminus \{0\}$,
\begin{equation} \label{twistdef2a}
F(w;\Psi,n):=
\sum_{0 \neq \nu \in (\lambda C)^{-1} \mathbb{Z}[\omega]}  
\begin{cases}
\rho_{F}(\nu) \nu^n \Psi(C \nu) v K_{\tau-1} (4 \pi \lvert \nu \rvert v) 
e(\nu z+\overline{\nu z}) & \text{if} \quad n>0 \\
 \rho_{F}(\nu) \overline{\nu}^{|n|} \Psi(C \nu) v K_{\tau-1} (4 \pi \lvert \nu \rvert v) 
e(\nu z+\overline{\nu z}) & \text{if} \quad n<0
\end{cases}.
\end{equation}

Suppose that $\psi: \mathbb{Z}[\omega] \rightarrow \mathbb{C}$ is periodic modulo $\lambda^{m} r$.
The (normalised) Fourier transform $\widehat{\psi}: \lambda^{-1} \mathbb{Z}[\omega] \rightarrow \mathbb{C}$ 
is given by
\begin{equation} \label{fouriertrans}
\widehat{\psi}(x):=\frac{1}{N(\lambda^{m} r)} \sum_{u \pmod{\lambda^{m} r} } \psi(u) \check{e} \Big(\frac{ux}{\lambda^{m} r} \Big), \quad x \in  \lambda^{-1} \mathbb{Z}[\omega],
\end{equation}
and is periodic modulo $(\lambda^{m} r)(\lambda^{-1} \mathbb{Z}[\omega])$.
Fourier inversion asserts that
\begin{equation} \label{fourierinv}
\psi(u):=\sum_{x \in \lambda^{-1} \mathbb{Z}[\omega]/ (\lambda^{m} r)(\lambda^{-1} \mathbb{Z}[\omega]) } 
\widehat{\psi}(x) \check{e} \Big({-\frac{xu}{\lambda^{m} r}} \Big), \quad \text{for} \quad u \in \mathbb{Z}[\omega]. 
\end{equation}

For $n \in \mathbb{Z}$ consider the Dirichlet series
\begin{equation*}
\mathcal{D}(s,F; \Psi,n):=\sum_{\substack{ \nu \neq 0 \\ \nu \in (\lambda C)^{-1} \mathbb{Z}[\omega]}} 
\frac{\rho_{F}(\nu)\Psi( C \nu) \big( \frac{\nu}{|\nu|} \big)^n}{N(\nu)^s}, \quad \Re(s)>1,
\end{equation*}
and the associated Mellin transform
\begin{equation*}
\Lambda(s,F; \Psi,n):= \int_{0}^{\infty} F (v;\Psi,n) v^{2s+|n|-2} dv,
\end{equation*}
where we let $v$ denote $(0,v)$ for $v>0$. 
Let
\begin{equation} \label{Ginfty}
G_{\infty}(s,\tau,n):=\frac{1}{4} (2 \pi)^{-2s-|n|} \Gamma \Big (s+\frac{|n|}{2} -\frac{1}{2}(\tau-1) \Big )
 \Gamma \Big (s+\frac{|n|}{2} + \frac{1}{2}(\tau-1) \Big ), \quad s \in \mathbb{C}.
\end{equation}
\begin{lemma} \label{mellin1}
Let $\tau \in \mathbb{C}$ with $\Re(\tau)=1$, $C \in \mathbb{Z}[\omega]$ with $C \equiv 0 \pmod{9}$,
$F \in L^2(\Gamma_1(C) \backslash \mathbb{H}^3,\chi,\tau)$ be a cusp form,
and $n \in \mathbb{Z}$.
For $\Re(s)>1$ we have
\begin{equation*}
\Lambda(s,F;\Psi,n)=G_{\infty}(s,\tau,n) \mathcal{D}(s,F;\Psi,n),
\end{equation*}
where $G_{\infty}(s,\tau,n)$ is given by \eqref{Ginfty}
\end{lemma}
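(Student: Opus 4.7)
The plan is to substitute the Fourier expansion \eqref{twistdef2a} of $F(w;\Psi,n)$ evaluated at $z=0$ directly into the definition of $\Lambda(s,F;\Psi,n)$, interchange sum and integral, and reduce the computation to the classical Mellin transform of the $K$-Bessel function. At $z=0$ the additive character in \eqref{twistdef2a} is trivial, $e(\nu\cdot 0+\overline{\nu\cdot 0})=1$, so for $n>0$ the twisted form becomes
\begin{equation*}
F((0,v);\Psi,n)=\sum_{0\neq \nu\in(\lambda C)^{-1}\mathbb{Z}[\omega]}\rho_{F}(\nu)\,\nu^{n}\Psi(C\nu)\,v\,K_{\tau-1}(4\pi|\nu|v),
\end{equation*}
with an analogous expression involving $\overline{\nu}^{|n|}$ for $n<0$, and without the monomial factor for $n=0$.

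First I would plug this in and, for each $\nu$, change variables $v\mapsto v/(4\pi|\nu|)$ in the Mellin integral to reduce the $v$-integral to
\begin{equation*}
(4\pi|\nu|)^{-2s-|n|}\int_{0}^{\infty}K_{\tau-1}(v)\,v^{2s+|n|-1}\,dv
= (4\pi|\nu|)^{-2s-|n|}\cdot 2^{2s+|n|-2}\,\Gamma\!\Bigl(s+\tfrac{|n|}{2}-\tfrac{\tau-1}{2}\Bigr)\Gamma\!\Bigl(s+\tfrac{|n|}{2}+\tfrac{\tau-1}{2}\Bigr),
\end{equation*}
using the standard formula $\int_{0}^{\infty}K_{\alpha}(v)v^{u-1}dv=2^{u-2}\Gamma(\tfrac{u-\alpha}{2})\Gamma(\tfrac{u+\alpha}{2})$, valid for $\Re(u)>|\Re(\alpha)|$; this holds here because $\Re(s)>1$ and $\Re(\tau-1)=0$. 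The constant prefactor collapses via $(4\pi)^{-2s-|n|}\cdot 2^{2s+|n|-2}=\tfrac{1}{4}(2\pi)^{-2s-|n|}$, producing exactly $G_{\infty}(s,\tau,n)$ as defined in \eqref{Ginfty}. The remaining $\nu$-dependent factor is $\nu^{n}|\nu|^{-2s-|n|}=(\nu/|\nu|)^{n}N(\nu)^{-s}$ (and symmetrically for $n<0$, using $(\nu/|\nu|)^{-1}=\overline{\nu}/|\nu|$), which when resummed over $\nu$ is precisely $\mathcal{D}(s,F;\Psi,n)$.

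The only real subtlety is justifying the interchange of summation and integration. This follows from the Rankin--Selberg bound of Lemma \ref{L1bound}, which gives $\sum_{N(\nu)\leq X}|\rho_{F}(\nu)|\ll X^{1+\varepsilon}$, combined with the exponential decay of $K_{\tau-1}$ at infinity and its controlled behaviour near $v=0$ (on the scale $|\nu|v\asymp 1$); together these guarantee absolute convergence of the double sum/integral whenever $\Re(s)>1$. I do not expect any serious obstacle beyond careful bookkeeping of the three cases $n>0$, $n=0$, $n<0$ in the Fourier expansion.
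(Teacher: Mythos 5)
Your proposal is correct and follows essentially the same route as the paper's proof: substitute the Fourier expansion at $z=0$, rescale $v$ by $4\pi|\nu|$, and apply the standard Mellin transform of the $K$-Bessel function, with absolute convergence for $\Re(s)>1$ justified by the $L^1$ Rankin--Selberg bound and the decay of $K_{\tau-1}$. The prefactor bookkeeping $(4\pi)^{-2s-|n|}2^{2s+|n|-2}=\tfrac14(2\pi)^{-2s-|n|}$ and the identification $\nu^{n}|\nu|^{-2s-|n|}=(\nu/|\nu|)^{n}N(\nu)^{-s}$ are exactly as in the paper.
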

\begin{proof}
The proofs for the cases $n>0$, $n=0$, and $n<0$ are analogous. We
give details for the case $n>0$.
For $\Re(s)>1$ and $n>0$ we have
\begin{align}
 \Lambda(s,F;\Psi,n) 
&=\int_{0}^{\infty} \sum_{\substack{\nu \neq 0 \\ \nu \in (\lambda C)^{-1} \mathbb{Z}[\omega] }}
\rho_{F}(\nu) \nu^n  \Psi( C \nu)  K_{\tau-1}(4 \pi |\nu| v) v^{2s+n-1} dv \nonumber \\
&=\frac{1}{(4 \pi)^{2s+n}} \sum_{\substack{\nu \neq 0 \\ \nu \in (\lambda C)^{-1} \mathbb{Z}[\omega] }} 
\frac{\rho_{F}(\nu) \Psi(C \nu) \big( \frac{\nu}{|\nu|} \big)^n }{N(\nu)^s}
\int_{0}^{\infty} K_{\tau-1}(T) T^{2s+n-1} dT \nonumber \\
&=\frac{1}{4} (2 \pi)^{-2s-n} \Gamma \Big (s+\frac{n}{2} -\frac{1}{2}(\tau-1) \Big ) \Gamma \Big (s +\frac{n}{2} + \frac{1}{2}(\tau-1) \Big ) \nonumber \\
& \times \sum_{\substack{\nu \neq 0 \\ \nu \in (\lambda C)^{-1} \mathbb{Z}[\omega] }}
\frac{\rho_{F}(\nu) \Psi( C \nu) \big(\frac{\nu}{|\nu|}\big)^{n}}{N(\nu)^s}. 
\label{mellineval}
\end{align}
The interchange of summation and integration above for $\Re(s)>1$ is justified by absolute convergence
(see Lemma \ref{L1bound}) and \cite[(10.25.3),(10.45.7)]{NIST:DLMF}. Furthermore,
\eqref{mellineval} follows from \cite[(10.43.19)]{NIST:DLMF}.
\end{proof}

\subsection{A special case}
Recall that $f \in L^2(\Gamma_2 \backslash \mathbb{H}^3,\chi)$ is a cusp form with spectral parameter $\tau_f \in 1+i \mathbb{R}$.
For $\ell \in \mathbb{Z}_{\geq 0}$, $q \in \mathbb{Z}[\omega]$ with $q \equiv 1 \pmod{3}$, and $\eta \in \mathbb{Z}[\omega]/\lambda^{\ell} q \mathbb{Z}[\omega]$, 
let
\begin{equation}  \label{modqtwist}
f(w;\lambda^{\ell} q,\eta):=
\sum_{\substack{\nu \neq 0 \\ \nu \in \lambda^{-3} \mathbb{Z}[\omega] \\ \lambda^3 \nu \equiv \eta \pmod{\lambda^{\ell} q} }} 
\rho_{f}(\nu) v K_{\tau_f-1} (4 \pi \lvert \nu \rvert v) \check{e}(\nu z). 
\end{equation}
Following \eqref{twistdef2}--\eqref{twistdef2a} we also have the functions 
$f(w;\lambda^{\ell} q,\eta,n)$ and their associated Fourier expansions for each $n \in \mathbb{Z}$.
We ultimately need a Voronoi formulae for the Fourier coefficients of $f(w;\lambda^{\ell} q,\eta,n)$.
Consider the Dirichlet series
\begin{equation*}
\mathcal{D}(s,f;\lambda^{\ell} q,\eta,n):=
\sum_{\substack{\nu \in \lambda^{-3} \mathbb{Z}[\omega] \\ \lambda^{3} \nu \equiv \eta \pmod{\lambda^{\ell} q} }} 
\frac{\rho_{f}(\nu) \big( \frac{\nu}{|\nu|} \big)^{n} }{N(\nu)^s}, \quad \Re(s)>1,
\end{equation*}
and the associated Mellin transform
\begin{equation*}
\Lambda(s,f;\lambda^{\ell} q,\eta,n):= \int_{0}^{\infty} f(v;\lambda^{\ell} q,\eta,n) v^{2s+|n|-2} dv.
\end{equation*}
Then Lemma \ref{mellin1} asserts that
\begin{equation} \label{mellin2}
\Lambda(s,f;\lambda^{\ell} q,\eta,n)=G_{\infty}(s,\tau_f,n)
\mathcal{D}(s,f;\lambda^{\ell} q,\eta,n) \quad \text{for} \quad \Re(s)>1,
\end{equation}
where $G_{\infty}(s,\tau,n)$ is given by \eqref{Ginfty}.

We detect the congruence condition in \eqref{modqtwist} using Fourier transforms.
For $\eta \in \mathbb{Z}[\omega]/\lambda^{\ell} q \mathbb{Z}[\omega]$, $0 \leq m \leq \ell$, and $r \mid q$, let
\begin{equation} \label{modqeta}
\psi_{\lambda^{m} r}(u)_{\eta}:=\mathbf{1}_{\lambda^{m} r}(u) \cdot
\check{e} \Big({-\frac{\eta u}{\lambda^{m} r} } \Big ),
\end{equation} 
where $\mathbf{1}_{\lambda^{m} r}(\cdot)$ is the principal character modulo $\lambda^{m} r$. 
As a shorthand we write $\psi_{\lambda^{m} r}(u):=\psi_{\lambda^{m} r}(u)_0$.
The function $\psi_{\lambda^{m} r}(\cdot)_{\eta}$ is periodic
modulo $\lambda^{m} r$.
The Fourier transform is
\begin{equation} \label{normram}
\widehat{\psi_{\lambda^{m} r}(\cdot)_{\eta}}(k)=\frac{1}{N(\lambda^{m} r)}
\sum_{\substack{u \pmod{\lambda^{m} r} \\  (u,\lambda^{m} r)=1 }} \check{e} \Big(\frac{(k-\eta)u}{\lambda^{m} r} \Big), \quad k \in \lambda^{-1} \mathbb{Z}[\omega].
\end{equation}
As a shorthand we write $\widehat{\psi_{\lambda^{m} r}}(k):=\widehat{\psi_{\lambda^{m} r}(\cdot)_0}(k)$.
A straightforward computation shows the following orthogonality relation.
\begin{lemma} \label{orthogonality}
For $\ell \in \mathbb{Z}_{\geq 0}$ and $k,\eta \in \mathbb{Z}[\omega]/\lambda^{\ell} q \mathbb{Z}[\omega]$ we have 
\begin{equation} \label{orthogidr}
\frac{1}{N(\lambda^{\ell} q)} \sum_{r \mid q} \sum_{m=0}^{\ell} N(\lambda^{m} r) \widehat{\psi_{\lambda^{m} r}(\cdot)_{\eta}}(k)=
\delta_{k \equiv \eta \pmod{\lambda^{\ell} q}}. 
\end{equation} 
\end{lemma}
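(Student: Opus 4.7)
The plan is to identify the left-hand side as a repackaging of the complete additive-character orthogonality relation modulo $\lambda^{\ell} q$. That is, I will start from the identity
\begin{equation*}
\frac{1}{N(\lambda^{\ell} q)} \sum_{v \pmod{\lambda^{\ell} q}} \check{e}\!\left(\frac{(k-\eta)v}{\lambda^{\ell} q}\right) = \delta_{k \equiv \eta \pmod{\lambda^{\ell} q}},
\end{equation*}
and rewrite the $v$-sum by partitioning residues modulo $\lambda^{\ell} q$ according to their greatest common divisor with $\lambda^{\ell} q$.

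The key combinatorial step is the bijection
\begin{equation*}
\{(r,m,u) : r \mid q,\ 0 \leq m \leq \ell,\ u \in (\mathbb{Z}[\omega]/\lambda^{m} r\mathbb{Z}[\omega])^{\times}\} \;\longleftrightarrow\; \mathbb{Z}[\omega]/\lambda^{\ell} q \mathbb{Z}[\omega],
\end{equation*}
given by $(r,m,u) \mapsto v := u \cdot \lambda^{\ell} q/(\lambda^{m} r) \pmod{\lambda^{\ell} q}$. Well-definedness is immediate, and injectivity follows by observing that the annihilator ideal of $v$ in $\mathbb{Z}[\omega]/\lambda^{\ell} q \mathbb{Z}[\omega]$ is exactly $(\lambda^{m} r)$, so the triple $(r,m,u)$ is recovered from $v$. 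A cardinality count (using $\sum_{r \mid q} \varphi(r) = N(q)$ and $\sum_{m=0}^{\ell} \varphi(\lambda^{m}) = N(\lambda^{\ell})$) confirms surjectivity. With this bijection in hand, the equality $(k-\eta)v/(\lambda^{\ell}q) \equiv (k-\eta)u/(\lambda^{m} r)$ modulo elements of $\lambda^{-1} \mathbb{Z}[\omega]^{*}$ (i.e. producing the same value under $\check{e}(\cdot)$) reorganises the complete exponential sum into
\begin{equation*}
\sum_{v \pmod{\lambda^{\ell} q}} \check{e}\!\left(\frac{(k-\eta)v}{\lambda^{\ell} q}\right) = \sum_{r \mid q} \sum_{m=0}^{\ell} \sum_{\substack{u \pmod{\lambda^{m} r}\\ (u,\lambda^{m} r)=1}} \check{e}\!\left(\frac{(k-\eta)u}{\lambda^{m} r}\right).
\end{equation*}
By \eqref{normram}, the inner triple sum equals $\sum_{r \mid q} \sum_{m=0}^{\ell} N(\lambda^{m} r) \widehat{\psi_{\lambda^{m} r}(\cdot)_{\eta}}(k)$. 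Dividing by $N(\lambda^{\ell} q)$ finishes the proof.

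The only step requiring genuine care is verifying the bijection, and in particular that the parametrisation respects the ambient convention of the paper (each $r \mid q$ is chosen with $r \equiv 1 \pmod 3$, so the ideal $(r)$ is canonical). Once that bookkeeping is in place, the identity is a direct rearrangement and no analytic input is needed.
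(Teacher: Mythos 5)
Your strategy --- rewrite the left-hand side of \eqref{orthogidr} as a complete additive-character sum modulo $\lambda^{\ell}q$ by partitioning residues according to their gcd with $\lambda^{\ell}q$ --- is the natural one and matches what the paper calls ``a straightforward computation.'' Your bijection and cardinality count are both correct. The gap lies in the step you yourself describe as requiring ``no analytic input'': the orthogonality identity you start from is not valid as written for $\ell\geq 1$. The pairing $(x,y)\mapsto\check{e}(xy)$ has dual lattice $\mathbb{Z}[\omega]^{*}=\lambda^{-1}\mathbb{Z}[\omega]$, not $\mathbb{Z}[\omega]$: indeed $\operatorname{Tr}(a/\lambda)=(a-\bar a)/\lambda\in\mathbb{Z}$ for every $a\in\mathbb{Z}[\omega]$, so $\check{e}(a/\lambda)=1$ identically. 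Consequently the character $v\mapsto\check{e}\bigl((k-\eta)v/\lambda^{\ell}q\bigr)$ on $\mathbb{Z}[\omega]/\lambda^{\ell}q\mathbb{Z}[\omega]$ is trivial precisely when $(k-\eta)/\lambda^{\ell}q\in\lambda^{-1}\mathbb{Z}[\omega]$, i.e.\ when $\lambda^{\ell-1}q\mid(k-\eta)$, and the complete sum evaluates to $N(\lambda^{\ell}q)\,\delta_{k\equiv\eta\pmod{\lambda^{\ell-1}q}}$, one power of $\lambda$ weaker than the $\delta_{k\equiv\eta\pmod{\lambda^{\ell}q}}$ you assert.

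Concretely: with $\ell=1$, $q=1$, $k=1$, $\eta=0$, every term $\check{e}(u/\lambda)$ equals $1$, so the left side of \eqref{orthogidr} is $\tfrac{1}{3}(1+2)=1$, whereas $\delta_{1\equiv 0\pmod\lambda}=0$. Your rearrangement thus faithfully reproduces the left side but assigns it the wrong value. The identity that does close is
\begin{equation*}
\frac{1}{N(\lambda^{\ell+1}q)}\sum_{r\mid q}\sum_{m=0}^{\ell+1}N(\lambda^{m}r)\,\widehat{\psi_{\lambda^{m}r}(\cdot)_{\eta}}(k)=\delta_{k\equiv\eta\pmod{\lambda^{\ell}q}},
\end{equation*}
obtained by running the same gcd bijection modulo $\lambda^{\ell+1}q$ instead of $\lambda^{\ell}q$; note this is precisely the form that Lemma \ref{orthog} actually invokes (the $m$-sum there runs to $\ell+1$ with prefactor $N(\lambda^{\ell+1}q)^{-1}$). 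You should either prove that corrected form, or explicitly flag that as stated \eqref{orthogidr} only yields $\delta_{k\equiv\eta\pmod{\lambda^{\ell-1}q}}$ when $\ell\geq 1$. Either way, the ``bookkeeping'' you dismissed is exactly where the inverse different enters, and it is not optional.
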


The following Lemma records the standard evaluation of Ramanujan sums.
\begin{lemma} \label{ramstand}
Let $r \in \mathbb{Z}[\omega]$ satisfy $r \equiv 1 \pmod{3}$ and $k \in \mathbb{Z}[\omega]$. Then
we have 
\begin{equation*}
  \widehat{\psi}_r(k) := \frac{1}{N(r)} \sum_{\substack{x \pmod{r} \\ (x,r) = 1}} \check{e} \Big ( \frac{k x}{r} \Big )= \frac{1}{N(r)} \mu \Big( \frac{r}{(r,k)}  \Big) 
  \frac{\varphi(r)}{\varphi \big( \frac{r}{(r,k)} \big)},
\end{equation*}
 where $\varphi(\cdot)$ is the Euler $\varphi$-function on $\mathbb{Z}[\omega]$.
\end{lemma}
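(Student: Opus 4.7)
The plan is to prove this in exact analogy with the classical Ramanujan sum identity $c_q(n) = \mu(q/(q,n))\varphi(q)/\varphi(q/(q,n))$ over $\mathbb{Z}$. All inputs that the classical argument uses (unique factorization, multiplicativity of $\varphi$ and $\mu$, and orthogonality of $\check{e}$ on a full residue system) are available in $\mathbb{Z}[\omega]$, so the standard proof transfers essentially verbatim.

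First, I would apply Möbius inversion via $\mathbf{1}_{(x,r)=1} = \sum_{d\mid (x,r)} \mu(d)$ and interchange the order of summation to get
\begin{equation*}
\sum_{\substack{x \pmod r \\ (x,r)=1}} \check{e}\!\left(\frac{kx}{r}\right)
= \sum_{d \mid r} \mu(d)\sum_{\substack{x \pmod r \\ d \mid x}} \check{e}\!\left(\frac{kx}{r}\right).
\end{equation*}
After the change of variable $x = dy$ the inner sum becomes a complete additive character sum modulo $r/d$; by orthogonality it equals $N(r/d)$ when $(r/d)\mid k$ and vanishes otherwise. Reparametrizing via $d' = r/d$ yields the intermediate formula
\begin{equation*}
\sum_{\substack{x \pmod r \\ (x,r)=1}} \check{e}\!\left(\frac{kx}{r}\right) = \sum_{d' \mid (r,k)} \mu(r/d')\, N(d').
\end{equation*}

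Second, I would observe that both the expression just derived and the expression in the statement of the Lemma are multiplicative functions of $r$ (for $r \equiv 1 \pmod 3$, with $k$ fixed), via the Chinese remainder theorem applied to the character sum on the one side, and via multiplicativity of $\mu$, $\varphi$, and $N$ on the other. This reduces verification to the prime-power case $r = \varpi^a$.

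Finally, writing $(r,k) = \varpi^b$ with $0 \le b \le a$, only the terms $d' = \varpi^{a-1}$ and $d' = \varpi^a$ in the Möbius sum can contribute. A direct three-case analysis in $b$ ($b=a$, $b=a-1$, $b \le a-2$) gives $\varphi(\varpi^a)$, $-N(\varpi^{a-1})$, or $0$ respectively, which in each case matches $\mu(\varpi^{a-b})\varphi(\varpi^a)/\varphi(\varpi^{a-b})$. Dividing by $N(r)$ produces the stated formula for $\widehat{\psi}_r(k)$. I do not anticipate any real obstacle here; the only mild bookkeeping point is tracking the outer normalisation $1/N(r)$ that distinguishes $\widehat{\psi}_r(k)$ from the raw Ramanujan sum.
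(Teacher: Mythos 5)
Your argument is correct and follows the same basic route as the paper's: reduce to prime powers via multiplicativity of the Ramanujan sum in the modulus $r$ and then verify the prime-power evaluation case by case. The paper simply cites the prime-power cases from a reference (Proskurin, \cite[pg.~11]{Pro}) rather than re-deriving them via M\"obius inversion and orthogonality as you do, so the content is identical; your version is just more self-contained.
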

\begin{proof}
This follows from the multiplicativity of Ramanujan sums in the modulus $r$, and
the first, fourth, and eighth cases in the evaluation on \cite[pg.~11]{Pro}.
\end{proof}

We next prove a straightforward but crucial Lemma establishing the ``flatness'' of Ramanujan sums 
when averaged over the modulus. 
\begin{lemma} \label{ramflat}
Let $r,k \in \mathbb{Z}[\omega]$ satisfy $r \equiv 1 \pmod{3}$,  
$\widehat{\psi_r}(k)$ be the normalised Ramanujan sum as in the statement of Lemma \ref{ramstand}, and $\varepsilon>0$. Then for $R \geq 1$ we have 
\begin{equation}
\sum_{\substack{ r \in \mathbb{Z}[\omega] \\  N(r) \sim R \\ r \equiv 1 \pmod{3}}} |\widehat{\psi_r}(k)| 
\ll_{\varepsilon}
\delta_{k=0} \cdot R + \delta_{k \neq 0} \cdot (N(k)R)^{\varepsilon}. 
\end{equation}
\end{lemma}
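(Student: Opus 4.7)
The plan is to apply the explicit formula for $\widehat{\psi_r}(k)$ provided by Lemma \ref{ramstand}, parametrise by $d = (r,k)$, and reduce to a divisor-type count combined with a Mertens-style estimate in $\mathbb{Z}[\omega]$.

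First I would dispense with the case $k = 0$. Here $(r,k) = r$ and Lemma \ref{ramstand} gives $\widehat{\psi_r}(0) = \varphi(r)/N(r) \leq 1$, so the sum is trivially $\ll R$, matching the first term on the right.

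For $k \neq 0$, write $d = (r,k)$ and $s = r/d$, so Lemma \ref{ramstand} yields
\begin{equation*}
|\widehat{\psi_r}(k)| = \frac{\mu^2(s)}{N(r)} \cdot \frac{\varphi(r)}{\varphi(s)}.
\end{equation*}
A short local check (matching valuations at primes $\varpi$ dividing both $s$ and $d$) shows that the equality $(ds,k) = d$ forces $(s,d) = 1$; consequently $\varphi(r) = \varphi(d)\varphi(s)$ and $|\widehat{\psi_r}(k)| \leq \mu^2(s)/N(s)$. Since $(s,d) = 1$ and $(s, k/d) = 1$ (this last equality is automatic from $(r,k) = d$), we also have $(s,k) = 1$.

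Substituting and interchanging the order of summation,
\begin{equation*}
\sum_{\substack{r \equiv 1 \pmod{3} \\ N(r) \sim R}} |\widehat{\psi_r}(k)| \leq \sum_{\substack{d \mid k \\ d \equiv 1 \pmod{3}}} \sum_{\substack{s \equiv 1 \pmod{3} \\ s \text{ squarefree} \\ (s,k)=1 \\ N(s) \leq 2R/N(d)}} \frac{1}{N(s)}.
\end{equation*}
A standard Mertens-type bound in $\mathbb{Z}[\omega]$ (e.g.\ from partial summation applied to the lattice-point count $\sum_{N(s) \leq Y} 1 \asymp Y$) gives $\sum_{N(s) \leq Y} \mu^2(s)/N(s) \ll \log Y$, hence the inner sum is $\ll \log R$. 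Combining this with the divisor bound $\sigma_0(k) \ll_\varepsilon N(k)^\varepsilon$ (cf.\ \eqref{divis}) finishes the estimate by $\ll_\varepsilon (N(k) R)^\varepsilon$.

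The only non-routine step is the coprimality observation $(s,d)=1$, which is needed for the clean factorisation of $\varphi(r)$; everything else is standard. No step should present a real obstacle.
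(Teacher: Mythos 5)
Your overall strategy is the same as the paper's: both proofs open $\widehat{\psi_r}(k)$ via Lemma \ref{ramstand}, sort $r$ according to the value of $(r,k)$, reduce to a harmonic sum over the cofactor, and finish with a Mertens-type estimate together with the divisor bound \eqref{divis}. The conclusion is correct and the argument is recoverable, but one intermediate claim is false and, as written, it invalidates your displayed inequality.

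The claim that ``$(ds,k)=d$ forces $(s,d)=1$'' is wrong. Take $\varpi\equiv 1\pmod 3$ prime, $k=\varpi$ and $r=\varpi^2$; then $d=(r,k)=\varpi$, $s=r/d=\varpi$ is squarefree, $(s,d)=\varpi\neq 1$, and $\widehat{\psi}_{\varpi^2}(\varpi)=-1/N(\varpi)\neq 0$. (What \emph{is} automatic from $(r,k)=d$ is only $(s,k/d)=1$: at a prime dividing $s$ one must have $v_\varpi(k)=v_\varpi(d)$, which still permits $v_\varpi(d)\geq 1$.) Consequently your deduction $(s,k)=1$ also fails, and the double sum you write down omits exactly such $r$, so it is not an upper bound for the left-hand side. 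Both defects are harmless once repaired: the inequality $|\widehat{\psi_r}(k)|\leq \mu^2(s)/N(s)$ holds unconditionally, since $s\mid r$ gives
\begin{equation*}
\frac{1}{N(r)}\,\frac{\varphi(r)}{\varphi(s)}=\frac{1}{N(s)}\cdot\frac{\varphi(r)/N(r)}{\varphi(s)/N(s)}
=\frac{1}{N(s)}\prod_{\substack{\varpi\mid r\\ \varpi\nmid s}}\Big(1-\frac{1}{N(\varpi)}\Big)\leq\frac{1}{N(s)},
\end{equation*}
with no need for the factorisation $\varphi(r)=\varphi(d)\varphi(s)$; and you should simply drop the condition $(s,k)=1$ from the majorant, which only enlarges it and still yields $\ll_\varepsilon \sigma_0(k)\log(2R)\ll_\varepsilon (N(k)R)^\varepsilon$. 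With those two corrections the proof is complete and essentially identical to the paper's.
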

\begin{proof}
When $k=0$ we have the trivial estimate $\ll R$.
When $k \neq 0$ use Lemma \ref{ramstand}, M\"{o}bius inversion, and the triangle inequality 
to obtain
\begin{align} \label{flatcalc}
\sum_{\substack{ r \in \mathbb{Z}[\omega] \\  N(r) \sim R \\ r \equiv 1 \pmod{3}}} |\widehat{\psi_r}(k)| 
& = \sum_{\substack{ \gamma \mid k \\ \gamma \equiv 1 \pmod{3}  }}  \sum_{\substack{ r \in \mathbb{Z}[\omega] \\  N(r) \sim R \\ r \equiv 1 \pmod{3} \\ (r,k)=\gamma}} 
\frac{1}{N(r)}
  \frac{\varphi(r)}{\varphi (r/\gamma)}  \nonumber \\
& \leq \sum_{\substack{ \gamma \mid k \\ \gamma \equiv 1 \pmod{3}  }} 
\sum_{\substack{ n,u \in \mathbb{Z}[\omega] \\  N(nu) \sim R/N(\gamma) \\ nu \equiv 1 \pmod{3} }} \frac{1}{N(\gamma nu)} \frac{\varphi(\gamma nu)}{\varphi(nu)} \nonumber  \\
& \ll (N(k) R)^{\varepsilon},
\end{align}
where the last display follows from standard lower bounds for the Euler $\varphi$-function and \eqref{divis}.
This completes the proof.
\end{proof}

Recall the convention for twisting a cusp form $f \in L^2(\Gamma_2 \backslash \mathbb{H}^3,\chi)$
in Remark \ref{twistview}. We replace the congruence $\lambda^3 \nu \equiv \eta \pmod{\lambda^{\ell}q}$
with the equivalent congruence $\lambda^4 \nu \equiv \lambda \eta \pmod{\lambda^{\ell+1}q}$.
We have the immediate consequence.
\begin{lemma} \label{orthog}
Let $\ell \in \mathbb{Z}_{\geq 0}$, $n \in \mathbb{Z}$, $q \in \mathbb{Z}[\omega]$ with $q \equiv 1 \pmod{3}$, and $\eta \in \mathbb{Z}[\omega]/\lambda^{\ell} q \mathbb{Z}[\omega]$.
For $\Re(s)>1$ we have, 
\begin{equation*}
\mathcal{D}(s,f;\lambda^{\ell} q,\eta,n)=\frac{1}{N(\lambda^{\ell+1} q)} \sum_{r \mid q} \sum_{m=0}^{\ell+1} N(\lambda^{m} r) \mathcal{D}(s,f; \widehat{\psi_{\lambda^{m} r}(\cdot)_{\lambda \eta}},n),
\end{equation*}
and 
\begin{equation*}
\Lambda(s,f;\lambda^{\ell} q,\eta,n)=\frac{1}{N(\lambda^{\ell+1} q)} \sum_{r \mid q} \sum_{m=0}^{\ell+1} N(\lambda^{m} r) \Lambda(s,f; \widehat{\psi_{\lambda^{m} r}(\cdot)_{\lambda \eta}},n),
\end{equation*}
where $\widehat{\psi_{\lambda^m r}(\cdot)_{\eta}}$ is given in \eqref{normram}.
\end{lemma}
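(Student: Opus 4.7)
The plan is to apply the orthogonality identity of Lemma \ref{orthogonality} directly to the congruence condition defining $\mathcal{D}(s,f;\lambda^{\ell} q,\eta,n)$, interchange the order of summation, and recognize each inner sum as a Fourier-transformed twisted Dirichlet series in the sense of \eqref{twistdef2}.

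Concretely, I will start from
\[
\mathcal{D}(s,f;\lambda^{\ell} q,\eta,n)=\sum_{\substack{0\neq \nu \in \lambda^{-3}\mathbb{Z}[\omega]}} \frac{\rho_f(\nu)\big(\tfrac{\nu}{|\nu|}\big)^n}{N(\nu)^s}\,\delta_{\lambda^3\nu\equiv \eta\,(\mathrm{mod}\,\lambda^{\ell}q)},\qquad \Re(s)>1.
\]
Following the convention in Remark \ref{twistview}, view $f$ as a form in $L^2(\Gamma_1(\lambda^4)\backslash\mathbb{H}^3,\chi)$, so that the twist formula \eqref{twistdef2} is applied with level $C=\lambda^4$. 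Since $\nu\in\lambda^{-3}\mathbb{Z}[\omega]$, the congruence $\lambda^3\nu\equiv\eta\pmod{\lambda^{\ell}q}$ is equivalent to $\lambda^4\nu\equiv\lambda\eta\pmod{\lambda^{\ell+1}q}$, which is precisely the form to which Lemma \ref{orthogonality} applies (after replacing $\ell$ by $\ell+1$ and $\eta$ by $\lambda\eta$). Hence
\[
\delta_{\lambda^3\nu\equiv\eta\,(\mathrm{mod}\,\lambda^{\ell}q)}
=\frac{1}{N(\lambda^{\ell+1}q)}\sum_{r\mid q}\sum_{m=0}^{\ell+1}N(\lambda^{m}r)\,\widehat{\psi_{\lambda^{m}r}(\cdot)_{\lambda\eta}}(\lambda^4\nu).
\]

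Substituting this identity into the Dirichlet series and interchanging the two finite sums over $r$ and $m$ with the absolutely convergent sum over $\nu$ (justified in the range $\Re(s)>1$ by Lemma \ref{L1bound} together with the trivial bound $|\widehat{\psi_{\lambda^{m}r}(\cdot)_{\lambda\eta}}|\le 1$) gives
\[
\mathcal{D}(s,f;\lambda^{\ell}q,\eta,n)=\frac{1}{N(\lambda^{\ell+1}q)}\sum_{r\mid q}\sum_{m=0}^{\ell+1}N(\lambda^{m}r)\sum_{\substack{0\neq \nu\in\lambda^{-3}\mathbb{Z}[\omega]}}\frac{\rho_f(\nu)\big(\tfrac{\nu}{|\nu|}\big)^n}{N(\nu)^s}\,\widehat{\psi_{\lambda^{m}r}(\cdot)_{\lambda\eta}}(\lambda^4\nu).
\]
Comparing with the definition of $\mathcal{D}(s,F;\Psi,n)$ applied to $F=f$, $C=\lambda^4$, and $\Psi=\widehat{\psi_{\lambda^{m}r}(\cdot)_{\lambda\eta}}$ (noting that this $\Psi$ has period dividing $(\lambda^{m}r)(\lambda^{-1}\mathbb{Z}[\omega])$ as required), we recognise the inner sum as $\mathcal{D}(s,f;\widehat{\psi_{\lambda^{m}r}(\cdot)_{\lambda\eta}},n)$. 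This establishes the first identity of the lemma.

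The second identity is then immediate: multiplying both sides by $G_{\infty}(s,\tau_f,n)$ from \eqref{Ginfty} and invoking Lemma \ref{mellin1} (both for the left side, in the specialised form \eqref{mellin2}, and term by term on the right side) converts each Dirichlet series into its corresponding Mellin transform. The only bookkeeping to be careful about is the shift $(\ell,\eta)\mapsto(\ell+1,\lambda\eta)$ forced by the enlarged level $\lambda^4$; this is purely formal and introduces no genuine analytic obstacle.
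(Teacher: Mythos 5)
Your proof is correct and follows exactly the route the paper intends: the paper states the lemma as an ``immediate consequence'' of Lemma~\ref{orthogonality}, Remark~\ref{twistview}, and the displayed observation that $\lambda^3\nu\equiv\eta\pmod{\lambda^\ell q}$ is equivalent to $\lambda^4\nu\equiv\lambda\eta\pmod{\lambda^{\ell+1}q}$, and you have written out precisely those steps together with the (routine) justification for interchanging sums using Lemma~\ref{L1bound} and the trivial bound on the normalised Ramanujan sums. The only unspoken point is that the inner sum in the Dirichlet series $\mathcal{D}(s,f;\widehat{\psi_{\lambda^m r}(\cdot)_{\lambda\eta}},n)$ nominally runs over $\nu\in(\lambda\cdot\lambda^4)^{-1}\mathbb{Z}[\omega]=\lambda^{-5}\mathbb{Z}[\omega]$, but agrees with your sum over $\lambda^{-3}\mathbb{Z}[\omega]$ since $\rho_f(\nu)=0$ outside $\lambda^{-3}\mathbb{Z}[\omega]$ by \eqref{introfourier}; this is implicit in the paper's convention and not a gap.
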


To obtain a functional equation for $\Lambda(s,f;\lambda^{\ell} q,\eta,n)$ under $s \rightarrow 1-s$ it suffices 
to establish a functional equation for each $\Lambda(s,f; \widehat{\psi_{\lambda^{m} r}(\cdot)_{\eta},n})$.
We have two different cases according to whether $m \in \mathbb{Z}_{\geq 6}$ or $0 \leq m \leq 5$.

\subsection{Functional equation 1: $m \in \mathbb{Z}_{\geq 6}$} 
Suppose that $\psi: \mathbb{Z}[\omega] \rightarrow \mathbb{C}$ is periodic modulo $\lambda^{m} r$ where
$r \equiv 1 \pmod{3}$.

\begin{remark}
The version of the functional equation proved in this section uses the automorphy of $f \in L^2(\Gamma_2 \backslash \mathbb{H}^3, \chi)$ directly.
It requires $m \in \mathbb{Z}_{\geq 6}$, and is useful for large $m$.
\end{remark}

For each $\zeta$ with $\zeta^6=1$, let $\psi^{\#}_{\zeta}: \mathbb{Z}[\omega] \rightarrow \mathbb{C}$ be given by
\begin{equation} \label{Psidefn}
\psi^{\#}_{\zeta}(u):=\frac{1}{N(\lambda^{m} r)} \sum_{\substack{a,d \pmod{\lambda^m r} \\ a,d \equiv 1 \pmod{3} \\ ad \equiv 1 \pmod{\lambda^{m} r} }}
\psi(-\zeta^{-1} d) \Big (\frac{\zeta \lambda^{m-1} r}{d} \Big )_{3}
\check{e} \Big ({\frac{a u}{\zeta \lambda^{m} r}} \Big ), \quad u \in \mathbb{Z}[\omega].
\end{equation}
The function $\psi^{\#}_{\zeta}$ is periodic modulo $\lambda^{m} r$.

\begin{prop} \label{funceqprop}
Let $f \in L^2(\Gamma_2 \backslash \mathbb{H}^3,\chi)$ be a cusp form with spectral parameter $\tau_f \in 1+i \mathbb{R}$, $m \in \mathbb{Z}_{\geq 6}$, $r \in \mathbb{Z}[\omega]$
with $r \equiv 1 \pmod{3}$, and $\psi: \mathbb{Z}[\omega] \rightarrow \mathbb{C}$ be   
a periodic function modulo $\lambda^{m} r$, supported only on residue 
classes coprime to $\lambda^m r$.
We have 
\begin{equation} \label{funceqalpha}
f(w;\widehat{\psi})=\sum_{\zeta} f
\Big ({-\frac{\overline{z}}{(\zeta^{-1} \lambda^{m-4} r)^2(|z|^2+v^2)}}, \frac{v}{|\lambda^{m-4} r|^2(|z|^2+v^2)}; \psi^{\#}_{\zeta^{-1}} \Big), \quad w=(z,v) \in \mathbb{H}^3,
\end{equation}
where $\widehat{\psi}$ and $\psi^{\#}_{\zeta}$ are given by \eqref{fouriertrans} and \eqref{Psidefn} respectively.
\end{prop}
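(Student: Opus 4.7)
The plan is to derive \eqref{funceqalpha} by writing $f(w;\widehat{\psi})$ as an average of translates of $f$ and leveraging the automorphy of $f$ under $\Gamma_{2}$ via a well-chosen family of matrices. Substituting the Fourier transform \eqref{fouriertrans} of $\psi$ into the expansion \eqref{twistdef2} and interchanging summations gives
\begin{equation*}
f(w;\widehat{\psi})=\frac{1}{N(\lambda^{m}r)}\sum_{\substack{u\pmod{\lambda^{m}r}\\(u,\lambda^{m}r)=1}}\psi(u)\,f\Big(z+\frac{u}{\lambda^{m-4}r},\,v\Big),
\end{equation*}
the coprimality restriction on $u$ being inherited from the support of $\psi$. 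Each such $u$ admits a unique parametrisation $u=-\zeta^{-1}d$ with $\zeta\in\{\pm 1,\pm\omega,\pm\omega^{2}\}$ chosen so that $d\equiv 1\pmod{3}$, partitioning the sum into six subsums indexed by the units $\zeta$.

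For each pair $(\zeta,d)$ with $d\equiv 1\pmod{3}$ and $(d,\lambda^{m}r)=1$ I attach the matrix
\begin{equation*}
\gamma_{\zeta,d}=\pMatrix{a}{b}{\zeta^{-1}\lambda^{m-4}r}{d_{0}}\in\Gamma_{2},
\end{equation*}
where $d_{0}\in\mathbb{Z}[\omega]$ is a lift of $d$, the top-left entry $a\equiv 1\pmod{3}$ is the multiplicative inverse of $d_{0}$ modulo $\lambda^{m}r$, and $b$ is forced by the determinant condition $ad_{0}-b\,\zeta^{-1}\lambda^{m-4}r=1$. The hypothesis $m\geq 6$ ensures the bottom-left entry lies in $\lambda^{2}\mathbb{Z}[\omega]=3\mathbb{Z}[\omega]$, which is compatible with $\Gamma_{1}(3)$-type congruences; the sharper congruence $ad_{0}\equiv 1\pmod{\lambda^{m}r}$ encodes the extra $\lambda^{4}$ coming from the level at which the twist of $f$ is taken. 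The dual point appearing in \eqref{funceqalpha},
\begin{equation*}
w^{\ast}=\Big(-\frac{\overline{z}}{(\zeta^{-1}\lambda^{m-4}r)^{2}(|z|^{2}+v^{2})},\,\frac{v}{|\lambda^{m-4}r|^{2}(|z|^{2}+v^{2})}\Big),
\end{equation*}
is precisely the image of $w$ under the $\mathrm{SL}_{2}(\mathbb{C})$-matrix $\pMatrix{0}{-c^{-1}}{c}{0}$ with $c=\zeta^{-1}\lambda^{m-4}r$, so via the Bruhat factorization
\begin{equation*}
\gamma_{\zeta,d}=\pMatrix{1}{a/c}{0}{1}\pMatrix{0}{-c^{-1}}{c}{0}\pMatrix{1}{d_{0}/c}{0}{1},
\end{equation*}
the automorphy relation $f(\gamma_{\zeta,d}\cdot)=\chi(\gamma_{\zeta,d})f(\cdot)$ identifies $f(z+u/(\lambda^{m-4}r),v)$ with a character-twisted translate of $f(w^{\ast})$.

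Evaluating $\chi(\gamma_{\zeta,d})=(\zeta^{-1}\lambda^{m-4}r/a)_{3}$ via \eqref{cubekubota}, then applying cubic reciprocity \eqref{cuberep} together with the supplementary laws \eqref{supp1}--\eqref{supp2} for the unit $\zeta^{-1}$ and the high power $\lambda^{m-4}$, converts the symbol into $(\zeta\lambda^{m-1}r/d_{0})_{3}$ up to an absorbable phase. The extra three powers of $\lambda$ (from $\lambda^{m-4}$ to $\lambda^{m-1}$) trace back to the Fourier index lattice $\lambda^{-3}\mathbb{Z}[\omega]$ and the $\lambda^{4}$-normalization of the twist. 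Summing over $d_{0}$, the determined top-left $a$, and the units $\zeta$, then identifying the emergent exponentials $\check{e}(a\nu/(\zeta\lambda^{m-4}r))$ and cubic characters with the definition \eqref{Psidefn} of $\psi^{\#}_{\zeta^{-1}}$ via the Fourier series of $f$ at $\infty$, reassembles the right-hand side of \eqref{funceqalpha}.

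The main obstacle will be the character manipulation: cubic reciprocity applies cleanly only to pairs $\equiv 1\pmod{3}$, so handling $\zeta^{-1}\lambda^{m-4}r$ demands systematic application of the supplementary laws with careful bookkeeping of the six unit cosets and the three excess factors of $\lambda$. A secondary subtlety is verifying $\gamma_{\zeta,d}\in\Gamma_{2}=\Gamma_{1}(3)\,\mathrm{SL}_{2}(\mathbb{Z})$ uniformly over $\zeta$ and evaluating $\chi$ via this decomposition rather than pure $\Gamma_{1}(3)$-membership, which is exactly the source of the unit sum on the right-hand side.
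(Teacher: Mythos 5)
Your approach is essentially the paper's: open the Fourier transform \eqref{fouriertrans}, write $f(w;\widehat\psi)$ as an average of translates $f(z+u/(\lambda^{m-4}r),v)$, parametrise $u$ via units $\zeta$ and residues $d\equiv 1\pmod 3$, attach a matrix in $\Gamma_1(3)$ to each pair, and invoke automorphy together with the Kubota formula \eqref{cubekubota}. Your Bruhat factorisation $\gamma=\pmatrix{1}{a/c}{0}{1}\pmatrix{0}{-c^{-1}}{c}{0}\pmatrix{1}{d_0/c}{0}{1}$ is a clean conceptual way to see the identity \eqref{gammarel1}, which the paper instead verifies by direct computation with \eqref{coordinates}.

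The one material difference is your choice of matrix: you take $\gamma_{\zeta,d}=\pmatrix{a}{b}{c}{d_0}$ with $a$ in the top-left, whereas the paper takes the inverse form $\gamma=\pmatrix{d}{\lambda^4 b}{-c}{a}$ with $d$ in the top-left. The paper's choice gives $\chi(\gamma)=(-c/d)_3$ directly from \eqref{cubekubota}, which is already the factor appearing in \eqref{Psidefn}. With your choice, the Kubota formula gives $(c/a)_3$, and because you apply automorphy to the inverse direction, the character that actually enters is $\overline{(c/a)_3}$; you then need $\overline{(c/a)_3}=(c/d_0)_3$. This does hold, but the mechanism is \emph{not} cubic reciprocity as you suggest: it is the identity $(c/a)_3(c/d_0)_3=(c/ad_0)_3=1$, which follows from $ad_0\equiv 1\pmod{\lambda^m r}$ together with the fact that the value of $(\zeta\lambda^{m-4}r/n)_3$ depends only on $n\pmod{9r}$ (and $\lambda^m r\equiv 0\pmod{9r}$ since $m\geq 4$). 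Your proposed route through reciprocity and supplementary laws, ending ``up to an absorbable phase,'' is not a valid argument: there is no room for an undetermined phase in the final formula, and one would need to pin it down exactly via the relation above. Two further small errors: (i) with your parametrisation $u=-\zeta^{-1}d$, the lower-left entry of the matrix should be $\zeta\lambda^{m-4}r$, not $\zeta^{-1}\lambda^{m-4}r$ (the unit appears inverted relative to the translate $z-d/(\zeta\lambda^{m-4}r)$); and (ii) the upgrade $\lambda^{m-4}\to\lambda^{m-1}$ in the cubic symbol has nothing to do with the Fourier index lattice or the $\lambda^4$ normalisation — it is simply the triviality $(\lambda^3/d)_3=1$. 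None of these is a fundamental gap, but the character bookkeeping in your writeup would need to be redone carefully before it compiles into a correct proof, and the paper's choice of matrix sidesteps most of this work.
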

\begin{proof}
We open the definition of the Fourier transform to obtain
\begin{equation} \label{congtwist}
f(w; \widehat{\psi})=\frac{1}{N(\lambda^{m} r)} \sum_{\zeta} \sum_{\substack{d \pmod{\lambda^{m} r}  \\ d \equiv 1 \pmod{3} \\ (d,r)=1 }} \psi(-\zeta d)
f \Big(z-\frac{d}{\zeta^{-1} \lambda^{m-4} r},v \Big).
\end{equation}
Given $\zeta^{-1} \lambda^{m-4} r \in \mathbb{Z}[\omega]$ (with $m \in \mathbb{Z}_{\geq 6}$),
and each $d \equiv 1 \pmod{3}$ 
in \eqref{congtwist} with $(d,r)=1$, 
there exists a matrix
\begin{equation} \label{gammadefn}
\gamma:=\begin{pMatrix}
{d}  {\lambda^4 b} 
{-\zeta^{-1} \lambda^{m-4} r}  {a}
\end{pMatrix} \in \Gamma_1(3). 
\end{equation} 
Note that the determinant equation of this matrix implies that $ad \equiv 1 \pmod{\lambda^m r}$.
A straightforward computation using \eqref{coordinates} shows that
\begin{equation} \label{gammarel1}
\Big(z-\frac{d}{\zeta^{-1} \lambda^{m-4} r},v \Big)=\gamma \Big(\frac{a}{\zeta^{-1} \lambda^{m-4} r}-\frac{\overline{z}}{(\zeta^{-1} \lambda^{m-4} r)^2(|z|^2+v^2)},
\frac{v}{| \lambda^{m-4} r|^2(|z|^2+v^2)} \Big).
\end{equation}
We use the fact that $f \in L^2(\Gamma_2 \backslash \mathbb{H}^3,\chi)$
to obtain
\begin{equation} \label{Fjtransform}
f \Big(z-\frac{d}{\zeta^{-1} \lambda^{m-4} r},v \Big)=\chi(\gamma) 
f \Big(\frac{a}{\zeta^{-1} \lambda^{m-4} r}-\frac{\overline{z}}{(\zeta^{-1} \lambda^{m-4} r)^2(|z|^2+v^2)}, \frac{v}{|\lambda^{m-4} r|^2(|z|^2+v^2)} \Big), 
\end{equation}
where 
\begin{align} \label{tildefac}
\chi(\gamma)=\Big( \frac{-\zeta^{-1} \lambda^{m-4} r}{d} \Big)_3=\Big( \frac{\zeta^{-1} \lambda^{m-1} r}{d} \Big)_3. 
\end{align}
We combine \eqref{Fjtransform}--\eqref{tildefac} 
in \eqref{congtwist}.
We then use the Fourier expansion \eqref{introfourier}
to open $f$, and then
assemble the sum over $d$ (equivalently $a$).
This yields the result.
\end{proof}

\begin{corollary} \label{funceqcor}
Let the notation be as in Proposition \ref{funceqprop} and $n \in \mathbb{Z}$.
For $v>0$ we have
\begin{equation} \label{funceqalphazero}
f(v;\widehat{\psi},n)=\frac{(-1)^n}{N(\lambda^{m-4}r)^{|n|} v^{2 |n|}} \Big( \frac{\overline{\zeta^{-1} \lambda^{m-4} r}}{\zeta^{-1} \lambda^{m-4} r}  \Big)^{-n}  \sum_{\zeta} f
\Big( \frac{1}{|\lambda^{m-4} r|^2 v}; \psi^{\#}_{\zeta^{-1}},-n \Big).
\end{equation}
\end{corollary}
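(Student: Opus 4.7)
The plan is to deduce Corollary \ref{funceqcor} from Proposition \ref{funceqprop} by applying the differential operator $(2\pi i)^{-n}(\partial/\partial z)^n$ when $n\ge 0$, or $(2\pi i)^{-|n|}(\partial/\partial\overline{z})^{|n|}$ when $n<0$, to both sides of the identity \eqref{funceqalpha}, and then specialising to $z=0$. The case $n=0$ is immediate: one just evaluates \eqref{funceqalpha} at $z=0$. For $n>0$ the left hand side produces $f(w;\widehat{\psi},n)$ by the definition \eqref{twistformdiff}, whose value at $z=0$ is the desired $f(v;\widehat{\psi},n)$, so the work lies entirely in computing the derivatives of the right hand side at $z=0$.

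For each unit $\zeta$, introduce the substitution variables
\begin{equation*}
Z(z,\overline{z},v):=-\frac{\overline{z}}{(\zeta^{-1}\lambda^{m-4}r)^2(|z|^2+v^2)},\qquad V(z,\overline{z},v):=\frac{v}{|\lambda^{m-4}r|^2(|z|^2+v^2)},
\end{equation*}
together with $\overline{Z}$, and view the summand $f(Z,V;\psi^{\#}_{\zeta^{-1}})$ on the right of \eqref{funceqalpha} as a smooth function of the three independent variables $(Z,\overline{Z},V)$. The key observation is a vanishing-order bookkeeping argument. Direct inspection at $z=0$ shows $Z=O(\overline{z})$, $\overline{Z}=O(z)$, and $V-V_0=O(z\overline{z})$, where $V_0:=1/(|\lambda^{m-4}r|^2 v)$. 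Expanding $f$ in a Taylor series around $(0,0,V_0)$, the monomial $Z^i\overline{Z}^j(V-V_0)^k$ has joint vanishing order $z^{j+k}\overline{z}^{i+k}$ at $z=0$, so $(\partial/\partial z)^n$ followed by evaluation at $z=0$ kills every contribution except $(i,j,k)=(0,n,0)$.

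A short calculation then gives
\begin{equation*}
\left.\frac{\partial^n}{\partial z^n}\bigl[\overline{Z}^n\bigr]\right|_{z=0}=\frac{n!(-1)^n}{(\overline{\zeta^{-1}\lambda^{m-4}r})^{2n}v^{2n}},
\end{equation*}
while the Fourier expansion of $f$ (cf. \eqref{twistdef2}--\eqref{twistdef2a}) identifies $(2\pi i)^{-n}(\partial^n f/\partial\overline{Z}^n)|_{(0,V_0)}$ with $f(V_0;\psi^{\#}_{\zeta^{-1}},-n)$. Combining these two ingredients and simplifying using $|\zeta^{-1}\lambda^{m-4}r|^{2}=N(\lambda^{m-4}r)$ produces exactly the claimed identity, the $\zeta$-dependent twist emerging from the ratio $\overline{\zeta^{-1}\lambda^{m-4}r}/(\zeta^{-1}\lambda^{m-4}r)$.

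For $n<0$ the argument is entirely symmetric: one applies $(\partial/\partial\overline{z})^{|n|}$ in place of $(\partial/\partial z)^{|n|}$, and now only the monomial with $(i,j,k)=(|n|,0,0)$ survives because $\partial Z/\partial\overline{z}|_{z=0}$ is the unique non-vanishing first derivative. The Fourier expansion identifies $(2\pi i)^{-|n|}(\partial^{|n|}f/\partial Z^{|n|})|_{(0,V_0)}$ with $f(V_0;\psi^{\#}_{\zeta^{-1}},-n)$ (recall $-n=|n|>0$ here), and the same algebra yields the stated formula. The principal technical point is the Taylor-order bookkeeping (really a Faà di Bruno calculation) that collapses the full chain rule to a single surviving monomial; once this reduction is in hand the rest is routine manipulation of units and conjugates in $\mathbb{Z}[\omega]$.
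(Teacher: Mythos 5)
Your proposal is correct and follows exactly the paper's route: set $z=0$ for $n=0$, and for $n\neq 0$ apply $(2\pi i)^{-|n|}(\partial/\partial z)^{n}$ or $(2\pi i)^{-|n|}(\partial/\partial\overline{z})^{|n|}$ to \eqref{funceqalpha} and evaluate at $z=0$; the paper compresses your Taylor-order bookkeeping into the phrase ``a computation with the chain rule yields the result.'' Your explicit identification of the single surviving monomial and the evaluation of $\partial^n[\overline{Z}^n]/\partial z^n|_{z=0}$ check out against the stated prefactor.
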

\begin{proof}
Setting $z=0$ in Proposition \ref{funceqprop} (in particular, \eqref{funceqalpha}) gives the result for $n=0$. If $n>0$,
we write $|z|^2=z \overline{z}$ and apply the operator
\begin{equation}
\frac{1}{(2 \pi i)^{n}} \Big(\frac{\partial}{\partial z} \Big)^n \Big|_{z=0}
\end{equation}
to both sides of \eqref{funceqalpha}. If $n<0$, we write $|z|^2=z \overline{z}$ 
and apply the operator
\begin{equation}
\frac{1}{(2 \pi i)^{|n|}} \Big(\frac{\partial}{\partial \overline{z}} \Big)^{|n|} \Big \rvert_{z=0}
\end{equation}
to both sides of \eqref{funceqalpha} 
A computation with the chain rule yields the result.
\end{proof}

\begin{prop} \label{entirefunc}
Let the notation be as in Proposition \ref{funceqprop} and $n \in \mathbb{Z}$.
The completed Dirichlet series $\Lambda(s,f;\widehat{\psi},n)$
and $\Lambda(s,f;\psi^{\#}_{\zeta},n)$
both admit meromorphic continuations to entire functions, and satisfy
\begin{equation} \label{levelfunc}
 (-1)^n N(\lambda^{m-4} r)^{2s-1} \Big( \frac{\overline{\zeta^{-1} \lambda^{m-4} r}}{\zeta^{-1} \lambda^{m-4} r}  \Big)^{n} \Lambda(s,f;\widehat{\psi},n) 
 =\sum_{\zeta} \Lambda(1 - s, f; \psi^{\#}_{\zeta^{-1}},-n).
 \end{equation}
\end{prop}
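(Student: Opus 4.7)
The plan is to realize both completed Dirichlet series as Mellin transforms of $K$-Bessel--weighted cuspidal sums over $\mathbb{H}^3$, split the defining integral at the involution-symmetric pivot $T := |\lambda^{m-4} r|^{-1}$, and exploit the pointwise automorphy identity in Corollary \ref{funceqcor} to convert the small-$v$ piece into an integral over $[T,\infty)$ against the dual object. Because both sides of the argument reduce to tail integrals of genuine cusp forms, the entire continuation and the functional equation will drop out together.

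First I would establish absolute convergence of
\begin{equation*}
\Lambda(s, f; \widehat{\psi}, n) \;=\; \int_0^\infty f(v; \widehat{\psi}, n)\, v^{2s+|n|-2}\, dv
\end{equation*}
for $\Re(s)$ sufficiently large. This follows from the Fourier expansion \eqref{twistdef2a}, the $L^1$-bound in Lemma \ref{L1bound} applied to $\rho_f(\nu)$, and exponential decay of $K_{\tau_f - 1}(T)$ as $T \to \infty$ (cf.\ \cite[(10.25.3)]{NIST:DLMF}); in particular $f(v; \widehat{\psi}, n)$ has rapid decay as $v \to \infty$ and at worst polynomial growth as $v \to 0$. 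An entirely analogous argument handles $\Lambda(1-s, f; \psi^{\#}_{\zeta^{-1}}, -n)$ for $\Re(s)$ sufficiently negative.

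Next I would split $\Lambda(s, f; \widehat{\psi}, n) = I_{\ge T}(s) + I_{\le T}(s)$ at $T = |\lambda^{m-4} r|^{-1}$, apply the substitution $v = 1/(|\lambda^{m-4} r|^2 u)$ in $I_{\le T}(s)$ so that the range becomes $u \in [T, \infty)$, and then invoke Corollary \ref{funceqcor} to replace $f(v; \widehat{\psi}, n)$ by the associated sum over units of $f(1/(|\lambda^{m-4} r|^2 v); \psi^{\#}_{\zeta^{-1}}, -n)$. Bookkeeping of the Jacobian $|\lambda^{m-4} r|^{-2} u^{-2}$, of the monomial $v^{2s+|n|-2}$, and of the prefactor $v^{-2|n|}$ coming from Corollary \ref{funceqcor} makes the exponent on $u$ collapse to $2(1-s) + |n| - 2$, while the combined scalar factor becomes $(-1)^n N(\lambda^{m-4} r)^{1-2s}$ times the expected unit-dependent phase; this is exactly what is needed to match \eqref{levelfunc} once the $N(\lambda^{m-4} r)^{2s-1}$ on the left is absorbed.

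Finally, both resulting integrals over $[T, \infty)$ converge absolutely and uniformly on every vertical strip in $s$, since cuspidality of $f(\cdot\,; \widehat{\psi}, n)$ (cf.\ \eqref{cuspform}) and of each $f(\cdot\,; \psi^{\#}_{\zeta^{-1}}, -n)$ forces exponential decay of the integrand as $u \to \infty$. Hence each piece defines an entire function of $s$, yielding the desired meromorphic continuation of $\Lambda(s, f; \widehat{\psi}, n)$ to an entire function, and the rearranged identity is precisely \eqref{levelfunc}. Repeating the symmetric calculation starting from $\Lambda(1-s, f; \psi^{\#}_{\zeta^{-1}}, -n)$ confirms consistency. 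The main obstacle is purely accounting: one must track the unit-dependent phase $(\overline{\zeta^{-1} \lambda^{m-4} r} / \zeta^{-1} \lambda^{m-4} r)^{-n}$ through the change of variables and verify that the sum over the six unit classes $\zeta$ propagates correctly from Corollary \ref{funceqcor} into \eqref{levelfunc}; no analytic subtlety beyond cuspidal decay is required.
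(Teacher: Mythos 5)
Your continuation argument is sound and your bookkeeping through the substitution $v\mapsto 1/(|\lambda^{m-4}r|^2u)$ is correct (the exponent does collapse to $2(1-s)+|n|-2$ and the scalar does become $(-1)^nN(\lambda^{m-4}r)^{1-2s}$ times the phase). But there is a gap in how you arrive at \eqref{levelfunc}. Transforming only the small-$v$ piece produces the Hecke-type representation
\begin{equation*}
\Lambda(s,f;\widehat{\psi},n)=\int_{T}^{\infty}f(v;\widehat{\psi},n)v^{2s+|n|-2}\,dv
+(-1)^nN(\lambda^{m-4}r)^{1-2s}\Big(\tfrac{\overline{\zeta^{-1}\lambda^{m-4}r}}{\zeta^{-1}\lambda^{m-4}r}\Big)^{-n}\sum_{\zeta}\int_{T}^{\infty}f(u;\psi^{\#}_{\zeta^{-1}},-n)\,u^{2(1-s)+|n|-2}\,du,
\end{equation*}
whose second term is an \emph{incomplete} integral over $[T,\infty)$, whereas the right side of \eqref{levelfunc} is the \emph{complete} integral $\sum_{\zeta}\int_0^{\infty}$, i.e.\ $\sum_{\zeta}\Lambda(1-s,f;\psi^{\#}_{\zeta^{-1}},-n)$. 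So "the rearranged identity is precisely \eqref{levelfunc}" is not true as stated: you still owe the identity
$(-1)^nN(\lambda^{m-4}r)^{2s-1}(\cdots)^{n}\int_T^\infty f(v;\widehat{\psi},n)v^{2s+|n|-2}dv=\sum_\zeta\int_0^{T}f(u;\psi^{\#}_{\zeta^{-1}},-n)u^{2(1-s)+|n|-2}du$,
which is exactly one more application of Corollary \ref{funceqcor}, this time to the large-$v$ range. Your closing remark that "repeating the symmetric calculation confirms consistency" does not fill this in: run literally, that calculation would need the inverse functional equation expressing $\sum_\zeta f(\cdot;\psi^{\#}_{\zeta^{-1}},-n)$ back in terms of $f(\cdot;\widehat{\psi},n)$, which you neither state nor derive, and which in any case amounts to the same omitted step.

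The paper avoids this entirely by applying Corollary \ref{funceqcor} to \emph{both} pieces of the split integral (see \eqref{splitintstar} and \eqref{splitintstar2}): the two transformed ranges then reassemble into $\int_0^\infty$ on the dual side, and \eqref{levelfunc} falls out in one line. (Entirety is obtained separately there from exponential decay of the cuspidal twists at both $v\to 0$ and $v\to\infty$.) Your version is repairable with one additional application of the corollary, but as written the functional equation is not established.
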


\begin{proof}
Recall that for $\Re s>1$ we have 
\begin{equation*}
\Lambda (s,f;\widehat{\psi},n)= \int_{0}^{\infty} f(v; \widehat{\psi},n) v^{2s+|n|-2} dv.
\end{equation*}
The function $f(v; \widehat{\psi},n)$ has exponential decay at $0$ and $\infty$ by \eqref{cuspform}, \eqref{twistformdiff},
and termwise differentiation of \eqref{fouriersigma} (with constant term identically zero).
Thus $\Lambda (s,f;\widehat{\psi},n)$ has analytic continuation to an entire function.
The argument for  $f(v; \psi^{\#}_{\zeta},n)$ is analogous.

We now prove \eqref{levelfunc}. We have 
\begin{equation} \label{splitint1}
    \Lambda(s,f; \widehat{\psi},n)= \int_{0}^{N(\lambda^{m-4} r)^{-1}} f(v; \widehat{\psi},n) v^{2s+|n|- 2} dv + \int_{N(\lambda^{m-4} r)^{-1}}^{\infty} f(v; \widehat{\psi},n) v^{2s+|n|- 2} dv.
  \end{equation}
After applying Corollary \ref{funceqcor}, interchanging the order of summation and integration,
and a change of variables, we obtain
  \begin{align}
     & \int_{0}^{N(\lambda^{m-4} r)^{-1}} f(v; \widehat{\psi},n) v^{2s+|n|- 2} dv \nonumber \\
    &  = \frac{(-1)^n}{N(\lambda^{m-4}r)^{|n|}} \Big( \frac{\overline{\zeta^{-1} \lambda^{m-4} r}}{\zeta^{-1} \lambda^{m-4} r}  \Big)^{-n} \sum_{\zeta} \int_{0}^{N(\lambda^{m-4} r)^{-1}} f \Big ( \frac{1}{v |\lambda^{m-4} r|^2} ; \psi^{\#}_{\zeta^{-1}},-n)  v^{2s-|n| - 2} dv
     \nonumber \\ 
    &=(-1)^n N(\lambda^{m-4} r)^{1-2s} \Big( \frac{\overline{\zeta^{-1} \lambda^{m-4} r}}{\zeta^{-1} \lambda^{m-4} r}  \Big)^{-n} \sum_{\zeta} \int_{1}^{\infty} f (v ; \psi^{\#}_{\zeta^{-1}},-n ) v^{- 2s+|n|} dv,  \label{splitintstar}
  \end{align}
 and 
   \begin{align}
   &  \int_{N(\lambda^{m-4} r)^{-1}}^{\infty} f(v; \widehat{\psi},n) v^{2s+|n|- 2} dv \nonumber \\
    &  =  \frac{(-1)^n}{N(\lambda^{m-4}r)^{|n|}} \Big( \frac{\overline{\zeta^{-1} \lambda^{m-4} r}}{\zeta^{-1} \lambda^{m-4} r}  \Big)^{-n} \sum_{\zeta} \int_{N(\lambda^{m-4} r)^{-1}}^{\infty}  f \Big ( \frac{1}{v |\lambda^{m-4} r|^2} ; \psi^{\#}_{\zeta^{-1}},-n \Big )  v^{2s-|n| - 2} dv
     \nonumber \\ 
    &=(-1)^n N(\lambda^{m-4} r)^{1-2s} \Big( \frac{\overline{\zeta^{-1} \lambda^{m-4} r}}{\zeta^{-1} \lambda^{m-4} r}  \Big)^{-n} \sum_{\zeta} \int_{0}^{1} f (v ;\psi^{\#}_{\zeta^{-1}},-n ) v^{- 2s+|n|} dv.  \label{splitintstar2}
  \end{align} 
  The result follows after substituting \eqref{splitintstar} and \eqref{splitintstar2} into \eqref{splitint1}.
  \end{proof} 
  
\subsection{Functional equation 2: $m \in \mathbb{Z}_{\geq 0}$ absolutely bounded (in particular, $0 \leq m \leq 5$)} 
Let $m \in \mathbb{Z}_{\geq 0}$.
The functional equation we prove in this section is valid for all $m \in \mathbb{Z}_{\geq 0}$, but is really only useful when $m$ is bounded
by an absolute constant.

Recall that $\Gamma:=\operatorname{SL}_2(\mathbb{Z}[\omega])$. Let $\Gamma^{\prime}$ 
be a subgroup of $\Gamma$
with $[\Gamma:\Gamma^{\prime}]<\infty$ and $\Gamma^{\prime} \subseteq \Gamma_1(9)$.
Then by \cite[Theorem~0.3.1]{Pro}
each cusp $\sigma \infty$ ($\sigma \in \Gamma$) of 
$\Gamma^{\prime}$ is essential with respect to $\chi$,
and if $\Gamma^{\prime}:=\Gamma_1(C)$ with $C \equiv 0 \pmod{9}$,
then
\begin{equation*}
\Gamma_1(C)_{\sigma}=C \mathbb{Z}[\omega] \quad 
\text{and} \quad \Gamma_1(C)_{\sigma}^{*}=(C \lambda)^{-1} \mathbb{Z}[\omega]. 
\end{equation*}

Suppose that $\psi^{\prime}: \mathbb{Z}[\omega] \rightarrow \mathbb{C}$ is periodic modulo $\lambda^m$, and that
$\psi^{\prime \prime}: \mathbb{Z}[\omega] \rightarrow \mathbb{C}$ is periodic modulo $r$ where
$r \equiv 1 \pmod{3}$. Let
\begin{equation} \label{underlinedefn}
\psi^{\prime \prime, \star}(u):=\frac{1}{N(r)} \sum_{\substack{a,d \pmod{r} \\ (\lambda^{2 m+4}a)(\lambda^{2 m+4} d) \equiv 1 \pmod{r} }}
\psi^{\prime \prime}(-d) \Big (\frac{\lambda^{2 m+4} d}{r} \Big )_{3}
\check{e} \Big ({\frac{a u}{r}} \Big ), \quad u \in \mathbb{Z}[\omega].
\end{equation}
The function $\psi^{\prime \prime,\star}$ is periodic modulo $r$.

Let $\gamma_{m,j} \in \Gamma_2$ for $j=1,\ldots,[\Gamma_2:\Gamma_1(\lambda^{2 m+4})]$,
be a fixed complete set of representatives
for $\Gamma_1(\lambda^{2 m+4}) \backslash \Gamma_2$. We have the convention that
$\gamma_{m,1}:=I$ for all $m \in \mathbb{Z}_{\geq 0}$. For each $j=1,\ldots,[\Gamma_2:\Gamma_1(\lambda^{2 m+4})]$,
let
\begin{equation}  \label{Flj}
(f \otimes \widehat{\psi^{\prime}})_j(w):=(f \otimes \widehat{\psi^{\prime}})(\gamma_{m,j}w), \quad w \in \mathbb{H}^3,
\end{equation}
each having Fourier expansion
\begin{equation} \label{Fljfourierexp}
(f \otimes \widehat{\psi}^{\prime})_j(w):=\sum_{\substack{\nu \neq 0 \\ \nu \in \lambda^{-2 m-3} \mathbb{Z}[\omega] }} 
\rho_{f \otimes \widehat{\psi}^{\prime} ,j}(\nu) v K_{\tau-1} (4 \pi \lvert \nu \rvert v) 
\check{e}(\nu z), \quad w \in \mathbb{H}^3,
\end{equation}
where $\rho_{f \otimes \widehat{\psi}^{\prime} ,j}(\nu) \in \mathbb{C}$.

If $g \in \Gamma_2$, then 
\begin{align} \label{commuterel}
& \gamma_{m,j} g=g_{m,j}(g) \gamma_{m,k_{m,j}(g)},  \\
& \quad \text{for some unique } g_{m,j}(g) \in \Gamma_1(\lambda^{2 m+4}) 
\text{ and } 1 \leq k_{m,j}(g) \leq [\Gamma_2:\Gamma_1(\lambda^{2 m+4})]. \nonumber
\end{align}
For any $g,h \in \Gamma_2$ we have
\begin{equation*}
g_{m,j}(gh)=g_{m,j}(g) g_{m,k_{m,j}(g)}(h) \quad \text{and} \quad k_{m,j}(gh)=k_{m,k_{m,j}(g)}(h).
\end{equation*}

\begin{remark}
Using \eqref{commuterel} we see that for $g \in \Gamma$ we have,
\begin{equation} \label{commauto}
(f \otimes \widehat{\psi}^{\prime})_j(gw)=\chi(g_{m,j}(g)) (f \otimes \widehat{\psi}^{\prime})_{k_{m,j}(g)}(w), \quad w \in \mathbb{H}^3.
\end{equation}
Since $\Gamma_1(\lambda^{2m+4})$ is a normal subgroup of $\Gamma_2$ for $m \in \mathbb{Z}_{\geq 0}$
(it is also a normal subgroup of $\Gamma$) 
we have
\begin{equation} \label{normalspec}
k_{m,j}(g)=j \quad \text{for} \quad g \in \Gamma_1(\lambda^{2m+4}) \quad \text{and all} \quad j.
\end{equation}
Then by \cite[Lemma~2.1]{Pat4} we have
\begin{equation*}
\chi(\gamma g \gamma^{-1})=\chi(g) \quad \text{for} \quad g \in \Gamma_1(\lambda^{2m+4}) \quad \text{and} \quad \gamma \in \Gamma_2.
\end{equation*}
Thus for each $j$ we have
\begin{equation} \label{twistcuspj}
(f \otimes \widehat{\psi}^{\prime})_{j} \in L^2(\Gamma_1(\lambda^{2m+4}) \backslash \mathbb{H}^3,\chi,\tau) \quad \text{is a cusp form}.
\end{equation}
\end{remark}

Following \eqref{twistdef2}--\eqref{twistdef2a} we also have the functions 
$(f \otimes \widehat{\psi}^{\prime})_{j}(\cdot,\Psi,n)$ and their associated Fourier expansions for each $n \in \mathbb{Z}$.

\begin{prop} \label{funceqprop2}
Let $f \in L^2(\Gamma_2 \backslash \mathbb{H}^3,\chi)$ be a cusp form with spectral parameter $\tau_f \in 1+i \mathbb{R}$,
$m \in \mathbb{Z}_{\geq 0}$,
$1 \leq j \leq [\Gamma_2:\Gamma_1(\lambda^{2m+4})]$ an integer,
$r \in \mathbb{Z}[\omega]$ with $r \equiv 1 \pmod{3}$, 
and
$\psi^{\prime}:\mathbb{Z}[\omega] \rightarrow \mathbb{C}$ (resp. $\psi^{\prime \prime}:\mathbb{Z}[\omega] \rightarrow \mathbb{C}$) 
be periodic functions
modulo $\lambda^m$ (resp. $r$).
Further assume that $\psi^{\prime \prime}$ is supported only on residue 
classes coprime to $r$. 

Then there exist an integer 
$1 \leq \mathfrak{c}(m,j;r) \leq [\Gamma_2:\Gamma_1(\lambda^{2m+4})]$, 
and cube root of unity $\omega(m,j;r)$ such that,
\begin{align} \label{funceqalpha2}
(f \otimes \widehat{\psi}^{\prime})_j(w;\widehat{\psi}^{\prime \prime})=\omega(m,j;r) (f \otimes \widehat{\psi}^{\prime})_{\mathfrak{c}(m,j;r)}
& \Big ({-\frac{\overline{z}}{r^2(|z|^2+v^2)}}, \frac{v}{|r|^2(|z|^2+v^2)}; \psi^{\prime \prime,\star} \Big), \nonumber  \\
&\quad w=(z,v) \in \mathbb{H}^3,
\end{align}
where $\psi^{\prime \prime,\star}$ is given in \eqref{underlinedefn}.
Both $\mathfrak{c}(m,j;r)$ and $\omega(m,j;r)$ depend only on $m \in \mathbb{Z}_{\geq 0}$, $j \in \mathbb{Z}_{\geq 1}$, 
and the residue class $r \pmod{\lambda^{2 m+4}}$. 
\end{prop}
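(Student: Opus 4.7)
The plan is to adapt the strategy of Proposition \ref{funceqprop} to the setting where $(f \otimes \widehat{\psi^{\prime}})_j$ transforms under the smaller group $\Gamma_1(\lambda^{2m+4})$ rather than $\Gamma_2$, using the transformation rule \eqref{commauto} in place of the direct automorphy of $f$. First, opening the Fourier transform of $\widehat{\psi^{\prime\prime}}$ in $(f \otimes \widehat{\psi^{\prime}})_j(w; \widehat{\psi^{\prime\prime}})$ yields
\begin{equation*}
(f \otimes \widehat{\psi^{\prime}})_j(w; \widehat{\psi^{\prime\prime}}) = \frac{1}{N(r)} \sum_{\substack{d \pmod{r} \\ (d,r)=1}} \psi^{\prime\prime}(-d) \, (f \otimes \widehat{\psi^{\prime}})_j \Big(z - \frac{d \lambda^{2m+4}}{r},\, v\Big),
\end{equation*}
the factor $\lambda^{2m+4}/r$ arising because $(f \otimes \widehat{\psi^{\prime}})_j$ has Fourier coefficients supported on $\lambda^{-2m-3}\mathbb{Z}[\omega]$ at level $\lambda^{2m+4}$; cf.\ \eqref{Fljfourierexp}.

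Second, for each $d$ coprime to $r$ one selects $a, b \in \mathbb{Z}[\omega]$ satisfying $a d \lambda^{2m+4} + b r = 1$, producing
\begin{equation*}
\gamma_d := \pMatrix{d\lambda^{2m+4}}{b}{-r}{a} \in \operatorname{SL}_2(\mathbb{Z}[\omega]).
\end{equation*}
A direct calculation with \eqref{coordinates}, parallel to \eqref{gammarel1}, gives
\begin{equation*}
\Big(z - \frac{d\lambda^{2m+4}}{r},\, v\Big) = \gamma_d \Big(\frac{a}{r} - \frac{\overline{z}}{r^2(|z|^2+v^2)},\; \frac{v}{|r|^2(|z|^2+v^2)}\Big).
\end{equation*}
Choosing $a, b$ in appropriate residue classes (using $r \equiv 1 \pmod{3}$ and the decomposition $\Gamma_2 = \Gamma_1(3) \operatorname{SL}_2(\mathbb{Z})$), one arranges $\gamma_d \in \Gamma_2$. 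The transformation rule \eqref{commauto} then rewrites each summand as $\chi(g_{m,j}(\gamma_d)) \cdot (f \otimes \widehat{\psi^{\prime}})_{k_{m,j}(\gamma_d)}$ evaluated at the new point.

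The principal obstacle is verifying that the index $k_{m,j}(\gamma_d)$ and the character value $\chi(g_{m,j}(\gamma_d))$ factor as asserted. For the index: since $d \lambda^{2m+4} \equiv 0 \pmod{\lambda^{2m+4}}$ and the determinant relation forces $b \equiv r^{-1} \pmod{\lambda^{2m+4}}$, once $a$ is fixed modulo $\lambda^{2m+4}$ in a way determined by the residue class of $r$, the matrix $\gamma_d \pmod{\lambda^{2m+4}}$ depends only on $r \pmod{\lambda^{2m+4}}$. Because $\Gamma_1(\lambda^{2m+4})$ is normal in $\Gamma_2$ (cf.\ \eqref{normalspec}), the corresponding index $k_{m,j}(\gamma_d)$ is then independent of $d$, and one sets $\mathfrak{c}(m,j;r) := k_{m,j}(\gamma_d)$.

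For the character value, expanding $g_{m,j}(\gamma_d) = \gamma_{m,j} \gamma_d \gamma_{m,\mathfrak{c}(m,j;r)}^{-1} \in \Gamma_1(\lambda^{2m+4})$, applying the Kubota formula \eqref{cubekubota}, and using cubic reciprocity \eqref{cuberep} together with the supplementary laws \eqref{supp1}--\eqref{supp2} to isolate the $d$-dependence (which sits entirely in the diagonal entry $d\lambda^{2m+4}$) should yield
\begin{equation*}
\chi(g_{m,j}(\gamma_d)) = \omega(m,j;r) \Big(\frac{\lambda^{2m+4} d}{r}\Big)_3
\end{equation*}
for some cube root of unity $\omega(m,j;r)$ determined by $r \pmod{\lambda^{2m+4}}$. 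Substituting back, opening the Fourier expansion of $(f \otimes \widehat{\psi^{\prime}})_{\mathfrak{c}(m,j;r)}$ at the transformed point, and reassembling the sum over $d$ produces precisely the Fourier expansion of $(f \otimes \widehat{\psi^{\prime}})_{\mathfrak{c}(m,j;r)}(\,\cdot\,;\psi^{\prime\prime,\star})$ with $\psi^{\prime\prime,\star}$ as in \eqref{underlinedefn}, which is \eqref{funceqalpha2}.
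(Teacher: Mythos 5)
Your proposal is correct and follows essentially the same route as the paper: open the Fourier transform of $\widehat{\psi}^{\prime\prime}$, attach to each $d$ a matrix $\gamma_d\in\Gamma_2$ realizing the coordinate identity, transport via \eqref{commauto}, and note that $k_{m,j}(\gamma_d)$ together with the $d$-free part of $\chi(g_{m,j}(\gamma_d))$ depend only on $\gamma_d \bmod \lambda^{2m+4}$, hence only on $r \bmod \lambda^{2m+4}$ by normality of $\Gamma_1(\lambda^{2m+4})$. The one divergence is presentational: where you extract $\chi(g_{m,j}(\gamma_d))=\omega(m,j;r)\big(\tfrac{\lambda^{2m+4}d}{r}\big)_3$ from the homomorphism property of $\chi$ on $\Gamma_2$ (writing $\gamma_d=E^3M$ with $M\in\Gamma_1(3)$, so $\chi(\gamma_d)=\big(\tfrac{\lambda^{2m+4}d}{r}\big)_3$ by \eqref{cubekubota} once $a$ is normalized), the paper instead factorizes $\gamma_d=E^3S(r,b)^{-1}\widetilde{\gamma}$ explicitly with $\widetilde{\gamma}\in\Gamma_1(\lambda^{2m+4})$ and evaluates $\chi(\widetilde{\gamma})$ separately; both yield the same $\mathfrak{c}(m,j;r)$ and $\omega(m,j;r)$.
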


\begin{remark}
The reason why the functional equation proved in this section is only useful for $m$ bounded by an absolute constant is
because we use the automorphy for each
$(f \otimes  \widehat{\psi}^{\prime})_j \in L^2(\Gamma_1(\lambda^{2m+4}) \backslash \mathbb{H}^3,\chi)$.
\end{remark}

\begin{proof}
We adapt the proof of \cite[Lemma~5.2]{DR}.
We open the definition of the Fourier transform and obtain
\begin{equation} \label{congtwist2}
(f \otimes \widehat{\psi}^{\prime})_j(w; \widehat{\psi}^{\prime \prime})=\frac{1}{N(r)} \sum_{\substack{d \pmod{r}  \\ (d,r)=1 }} \psi^{\prime \prime}(-d)
(f \otimes \widehat{\psi}^{\prime})_j \Big(z-\frac{\lambda^{2 m+4} d}{r},v \Big).
\end{equation}
Given $r \equiv 1 \pmod{3}$ and
each $d \in \mathbb{Z}[\omega]$ in \eqref{congtwist2}, 
we have $(r,\lambda^{2m+4}d)=1$. Thus
there exists a matrix
\begin{equation*}
\begin{pMatrix}
{r} {-\lambda^{2m+4} a}
{\lambda^{2m+4} d } {b} \in \Gamma_1(3),
\end{pMatrix}
\end{equation*}
and hence there exists 
\begin{equation} \label{gammadefn2}
\gamma:=\begin{pMatrix}
{0} {1}
{-1} {0}
\end{pMatrix}
\begin{pMatrix}
{r} {-\lambda^{2m+4} a}
{\lambda^{2m+4} d } {b}
\end{pMatrix}
=
\begin{pMatrix}
{\lambda^{2 m+4} d}  {b} 
{-r}  {\lambda^{2 m+4} a}
\end{pMatrix} \in \Gamma_2. 
\end{equation} 
Note that we implicitly we used \eqref{Gamma2} in the above display.
Also note we have the determinant equation
\begin{equation} \label{deter}
\lambda^{4m+8} ad+br=1.
\end{equation}

A straightforward computation using \eqref{coordinates} shows that
\begin{equation} \label{gammarel1b}
\Big(z-\frac{\lambda^{2 m+4} d}{r},v \Big)=\gamma \Big(\frac{\lambda^{2 m+4} a}{r}-\frac{\overline{z}}{r^2(|z|^2+v^2)},
\frac{v}{|r|^2(|z|^2+v^2)} \Big).
\end{equation}

We now carefully factorise the $\gamma$ in \eqref{gammadefn2} as a word in
$P$, $T$ and $E$ so that \eqref{gammarel1b} and automorphy of $(f \otimes \widehat{\psi}^{\prime})_j$ can be used in 
\eqref{congtwist2}. 
For each $x+y \omega \in \mathbb{Z}[\omega]$, $x,y \in \mathbb{Z}$,
let
\begin{equation*}
A(x+y \omega):=PT^{-x} P T^{-x+y}P
=\begin{pMatrix}
1  {x+y \omega} 
0  1 
\end{pMatrix}.
\end{equation*}
For each $r,b \in \mathbb{Z}[\omega]$ occurring in \eqref{gammadefn2}, let
\begin{equation*}
S(r,b):=E^3 A(r) E A(b) E A(r) = 
\begin{pMatrix}
  {b}  {-1 + br} 
  {1 - br}  {2 r - b r^2}
\end{pMatrix} \in \Gamma_1(3).
\end{equation*}
Then
\begin{align*} \label{gammatildedef}
S(r,b) E \gamma &= 
\begin{pMatrix}
  {-\lambda^{2 m+4} d + br + \lambda^{2 m+4} bdr}  {- b - \lambda^{2 m+4} a b+ b^2 r} 
  {r + 2 \lambda^{2 m+4} dr - br^2 - \lambda^{2 m+4} b d r^2}  {- \lambda^{2 m+4} a + 2 br +\lambda^{2 m+4} a b r - b^2 r^2}
\end{pMatrix} \nonumber \\
&=: \widetilde{\gamma}.
\end{align*}
Using \eqref{deter} we see that $\widetilde{\gamma} \in \Gamma_1(\lambda^{2 m+4})$
and we write 
\begin{equation} \label{gammarel3}
\gamma=E^3 S(r,b)^{-1} \widetilde{\gamma}. 
\end{equation}
We use \eqref{gammarel1b}, \eqref{gammarel3}, \eqref{commauto}, and \eqref{twistcuspj}
to obtain
\begin{align} \label{Fjtransform2}
(f \otimes \widehat{\psi}^{\prime})_j \Big(z-\frac{\lambda^{2 m+4} d}{r},v \Big)&=
\chi \big(g_{m,j}(E^3 S(r,b)^{-1}) \big) \cdot \chi(\widetilde{\gamma}) \nonumber  \\
& \times( f \otimes \widehat{\psi}^{\prime})_{k_{m,j}(E^3 S(r,b)^{-1})}
\Big(\frac{\lambda^{2 m+4} a}{r}-\frac{\overline{z}}{r^2(|z|^2+v^2)}, \frac{v}{|r|^2(|z|^2+v^2)} \Big). 
\end{align}

By \eqref{normalspec} the integer $k_{m,j}(E^3 S(r,b)^{-1})$ depends only on $m,j \in \mathbb{N}$ 
and matrix residue class
\begin{equation*}
E^3 S(r,b)^{-1}=\begin{pMatrix}
{-1+br} {b} {-2r+br^2} {-1+br}
\end{pMatrix}
\pmod{\lambda^{2m+4}}.
\end{equation*}
Thus the integer $k_{m,j}(E^3 S(r,b)^{-1})$ depends only on
$m,j \in \mathbb{N}$ and the residue class $r \pmod{\lambda^{2 m+4}}$, since $b \pmod{\lambda^{2m+4}}$
is determined by \eqref{deter}. By \eqref{commuterel} we have 
\begin{equation*}
g_{m,j}(E^3 S(r,b)^{-1})= \gamma_{m,j} E^3 S(r,b)^{-1} \gamma_{m,k_{m,j}(E^3 S(r,b)^{-1})}^{-1} \in \Gamma_1(\lambda^{2 m+4}),
\end{equation*}
and each matrix in the product on the right side is an element of $\Gamma_2$. 
Thus
\begin{align*}
& \chi(g_{m,j}(E^3 S(r,b)^{-1})) \nonumber \\
&=\chi(\gamma_{m,j}) \chi(E^3) \chi(S(r,b)^{-1}) \chi(\gamma_{m,k_{m,j}(E^3 S(r,b)^{-1})}^{-1}) \nonumber \\
&=\chi(\gamma_{m,j}) \overline{\chi(\gamma_{m,k_{m,j(E^3 S(r,b)^{-1})}})},
\end{align*}
is a cube root of unity depending only on $m \in \mathbb{Z}_{\geq 0}$, $j \in \mathbb{Z}_{\geq 1}$ and the residue class $r \pmod{\lambda^{2 m+4}}$.
For ease of notation we re-label 
\begin{align}
\mathfrak{c}(m,j;r)&:=k_{m,j}(E^3 S(r,b)^{-1}); \label{Qdef} \\
\omega(m,j;r)&:=\chi(\gamma_{m,j}) \overline{\chi(\gamma_{m,k_{m,j(E^3 S(r,b)^{-1})}})}. \label{xidef}
\end{align}
A computation following \cite[pg.~23]{DR} establishes that
\begin{align} \label{tildefac2}
\chi(\widetilde{\gamma})=\Big( \frac{\lambda^{2 m+4} d}{r} \Big)_3. 
\end{align}

We combine \eqref{Fjtransform2}--\eqref{tildefac2} 
in \eqref{congtwist2}.
We then use the Fourier expansion \eqref{Fljfourierexp}
to open $(f \otimes \widehat{\psi}^{\prime})_{\mathfrak{c}(m,j;r)}$,
and assembling the sum over $d$ (equivalently $a$) shows that
\begin{equation*} 
(f \otimes \widehat{\psi}^{\prime})_j(w;\widehat{\psi}^{\prime \prime})=\omega(m,j;r) (f \otimes \widehat{\psi}^{\prime})_{\mathfrak{c}(m,j;r)} \Big ({-\frac{\overline{z}}{r^2(|z|^2+v^2)}}, \frac{v}{|r|^2(|z|^2+v^2)};\psi^{\prime \prime,\star} \Big),
\end{equation*}
as required.
\end{proof}

\begin{corollary} \label{funceqcor2}
Let the notation be as in Proposition \ref{funceqprop2} and $n \in \mathbb{Z}$. For $v>0$ we have 
\begin{equation} \label{funceqalphazero2}
(f \otimes \widehat{\psi}^{\prime})_{j}(w;\widehat{\psi}^{\prime \prime},n)=\frac{(-1)^n  \omega(m,j;r)}{N(r)^{|n|} v^{2 |n|}}
\Big( \frac{\overline{r}}{r}  \Big)^{-n} (f \otimes \widehat{\psi}^{\prime})_{\mathfrak{c}(m,j;r)}
\Big( \frac{1}{|r|^2 v}; \psi^{\prime \prime,\star},-n \Big).
\end{equation}
\end{corollary}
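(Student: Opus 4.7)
The plan is to mimic the proof of Corollary \ref{funceqcor} in the new setting. The only differences between Propositions \ref{funceqprop} and \ref{funceqprop2} are that the modulus $\zeta^{-1} \lambda^{m-4} r$ is replaced by $r$, there is no sum over sixth roots of unity on the right-hand side, and there is a cube root of unity $\omega(m,j;r)$ together with a cusp-index shift $j \mapsto \mathfrak{c}(m,j;r)$. None of these involves the variable $z$, so they pass through any Wirtinger differentiation unchanged. The case $n=0$ follows by setting $z=0$ in \eqref{funceqalpha2}, noting that $-\overline{z}/(r^2(|z|^2+v^2)) \to 0$ and $v/(|r|^2(|z|^2+v^2)) \to 1/(|r|^2 v)$ as $z \to 0$.

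For $n>0$, I would apply the operator $\frac{1}{(2\pi i)^n}(\partial/\partial z)^n$ at $z=0$ to both sides of \eqref{funceqalpha2}, after rewriting $|z|^2 = z\overline{z}$ so that $z$ and $\overline{z}$ may be treated as independent Wirtinger variables. Denote the arguments of the inner function on the right side by $\widetilde{z} := -\overline{z}/(r^2(z\overline{z}+v^2))$, $\overline{\widetilde{z}} := -z/(\overline{r}^2(z\overline{z}+v^2))$, and $\widetilde{v} := v/(|r|^2(z\overline{z}+v^2))$. A direct Wirtinger computation shows that at $z=0$ all $z$-derivatives of $\widetilde{z}$ and $\widetilde{v}$ vanish (each derivative carries an $\overline{z}$ in its numerator), while for $\overline{\widetilde{z}}$ only the first derivative survives: $(\partial/\partial z)\overline{\widetilde{z}}|_{z=0} = -1/(\overline{r}^2 v^2)$, and $(\partial/\partial z)^k \overline{\widetilde{z}}|_{z=0}=0$ for $k \geq 2$ (each such derivative retains a factor of $\overline{z}^{k-1}$). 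By Fa\`a di Bruno, the $n$-fold chain rule therefore collapses to the single surviving term, yielding $(\partial/\partial z)^n [(f \otimes \widehat{\psi}^{\prime})_{\mathfrak{c}(m,j;r)}(\widetilde{z},\widetilde{v};\psi^{\prime\prime,\star})]|_{z=0} = (-1/(\overline{r}^2 v^2))^n \cdot (\partial/\partial \overline{\widetilde{z}})^n F|_{(\widetilde{z},\widetilde{v})=(0,1/(|r|^2 v))}$. Termwise differentiation of the Fourier expansion \eqref{Fljfourierexp} in $\overline{\widetilde{z}}$ extracts $(2\pi i \overline{\nu})^n$ from each mode $\check{e}(\nu \widetilde{z}) = e(\nu \widetilde{z} + \overline{\nu\widetilde{z}})$, so after dividing by $(2\pi i)^n$ one recovers precisely $(f \otimes \widehat{\psi}^{\prime})_{\mathfrak{c}(m,j;r)}(1/(|r|^2 v); \psi^{\prime\prime,\star}, -n)$ in the notation of \eqref{twistdef2a}. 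Using $\overline{r}^{-2n} = N(r)^{-n}(\overline{r}/r)^{-n}$ gives the claimed identity.

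The case $n<0$ is symmetric: one instead applies $\frac{1}{(2\pi i)^{|n|}}(\partial/\partial \overline{z})^{|n|}$ at $z=0$, in which case the roles of $\widetilde{z}$ and $\overline{\widetilde{z}}$ swap so that only $(\partial/\partial \overline{z})\widetilde{z}|_{z=0} = -1/(r^2 v^2)$ contributes, while all higher $\overline{z}$-derivatives of $\widetilde{z}$, $\overline{\widetilde{z}}$, and $\widetilde{v}$ vanish at $z=0$. This yields a factor $r^{-2|n|} = N(r)^{-|n|}(\overline{r}/r)^{|n|} = N(r)^{-|n|}(\overline{r}/r)^{-n}$, and differentiating the Fourier expansion in $\widetilde{z}$ now extracts $\nu^{|n|}$ per mode, matching the definition of $(f \otimes \widehat{\psi}^{\prime})_{\mathfrak{c}(m,j;r)}(\cdot\,; \psi^{\prime\prime,\star}, -n)$ for $-n > 0$. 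The main (mild) obstacle I anticipate is the bookkeeping of the $(2\pi i)$ normalizations together with the Wirtinger chain rule; once the key observation that higher $z$-derivatives of $\overline{\widetilde{z}}$ vanish at $z=0$ is isolated, the Fa\`a di Bruno collapse reduces the computation to a direct transcription of the proof of Corollary \ref{funceqcor}.
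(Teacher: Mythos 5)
Your proof is correct and follows exactly the route the paper indicates for Corollary \ref{funceqcor2} (and spells out for Corollary \ref{funceqcor}): write $|z|^2 = z\overline{z}$, apply $\frac{1}{(2\pi i)^{|n|}}(\partial/\partial z)^n$ (resp.\ $(\partial/\partial\overline{z})^{|n|}$) at $z=0$ to both sides of \eqref{funceqalpha2}, and compute the chain rule. Your key observation that at $z=0$ only $\partial_z\overline{\widetilde{z}} = -1/(\overline{r}^2 v^2)$ (resp.\ $\partial_{\overline{z}}\widetilde{z} = -1/(r^2v^2)$) survives, with all higher and all cross derivatives of $\widetilde{z},\overline{\widetilde{z}},\widetilde{v}$ vanishing, together with the identity $\overline{r}^{-2n}=N(r)^{-n}(\overline{r}/r)^{-n}$, is precisely the ``computation with the chain rule'' the paper leaves to the reader.
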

The proof is analogous to that of Corollary \ref{funceqcor} so we omit it.

\begin{prop} \label{entirefunc2}
Let the notation be as in Proposition \ref{funceqprop2} and $n \in \mathbb{Z}$.
The completed Dirichlet series $\Lambda(s,(f \otimes \widehat{\psi}^{\prime})_j;\widehat{\psi}^{\prime \prime},n)$
and $\Lambda(s,(f \otimes \widehat{\psi}^{\prime})_j;\psi^{\prime \prime,\star},n)$ both
admit meromorphic continuation to an entire function, and satisfy
\begin{equation} \label{levlefunc2}
(-1)^n N(r)^{2s-1} \Big( \frac{\overline{r}}{r} \Big)^n \Lambda(s,(f \otimes \widehat{\psi}^{\prime})_j;\widehat{\psi}^{\prime \prime},n) 
 =\omega(m,j;r) \Lambda(1 - s, (f \otimes \widehat{\psi}^{\prime})_{\mathfrak{c}(m,j;r)}; \psi^{\prime \prime,\star},-n).
 \end{equation}
\end{prop}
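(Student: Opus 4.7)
The plan is to follow the same template used for Proposition \ref{entirefunc}, with Corollary \ref{funceqcor2} playing the role that Corollary \ref{funceqcor} played there. First I would establish analyticity. By \eqref{twistcuspj} and a further application of \cite[Theorem~0.3.12]{Pro}, both $(f \otimes \widehat{\psi^{\prime}})_j \otimes \widehat{\psi^{\prime\prime}}$ and $(f \otimes \widehat{\psi^{\prime}})_{\mathfrak{c}(m,j;r)} \otimes \psi^{\prime\prime,\star}$ are cusp forms on a suitable congruence subgroup, so their Fourier expansions at $\infty$ have vanishing constant term. Termwise differentiation of \eqref{Fljfourierexp} via \eqref{twistformdiff} then gives exponential decay of $(f \otimes \widehat{\psi^{\prime}})_j(v;\widehat{\psi^{\prime\prime}},n)$ as $v \to \infty$, while the transformation in Corollary \ref{funceqcor2} identifies this function (up to a scalar and the factor $v^{-2|n|}$) with $(f \otimes \widehat{\psi^{\prime}})_{\mathfrak{c}(m,j;r)}(1/(|r|^2 v);\psi^{\prime\prime,\star},-n)$, yielding exponential decay as $v \to 0^+$ as well. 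The Mellin transform is thus entire in $s$, and the same argument (with the roles of the two sides swapped) handles $\Lambda(s,(f \otimes \widehat{\psi^{\prime}})_{\mathfrak{c}(m,j;r)};\psi^{\prime\prime,\star},-n)$.

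For the functional equation itself, I would split at the self-dual point $v_0 = |r|^{-1}$,
\[
\Lambda(s,(f \otimes \widehat{\psi^{\prime}})_j;\widehat{\psi^{\prime\prime}},n) = \int_0^{|r|^{-1}} + \int_{|r|^{-1}}^\infty,
\]
and substitute the identity \eqref{funceqalphazero2} into the first integral. The change of variable $v \mapsto 1/(|r|^2 v)$ sends $(0,|r|^{-1})$ to $(|r|^{-1},\infty)$ with $v^{2s+|n|-2} dv \mapsto |r|^{-2(2s+|n|-2)} v^{-2s-|n|} \cdot |r|^{-2} v^{-2} dv$, and after collecting prefactors one obtains
\[
(-1)^n \omega(m,j;r) N(r)^{1-2s} \Big(\tfrac{\overline r}{r}\Big)^{-n} \int_{|r|^{-1}}^\infty (f \otimes \widehat{\psi^{\prime}})_{\mathfrak{c}(m,j;r)}(v;\psi^{\prime\prime,\star},-n) v^{2(1-s)+|-n|-2} dv.
\]
Applying the same process to the second integral (using Corollary \ref{funceqcor2} read in the reverse direction, since the underlying transformation is an involution up to the phase $\omega(m,j;r)$) supplies the complementary piece $\int_0^{|r|^{-1}}$. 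Summing the two halves yields $\Lambda(1-s,(f \otimes \widehat{\psi^{\prime}})_{\mathfrak{c}(m,j;r)};\psi^{\prime\prime,\star},-n)$, and rearranging produces the scalar $(-1)^n N(r)^{2s-1} (\bar r/r)^n$ on the left side of \eqref{levlefunc2}, exactly as required.

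The main obstacle is the bookkeeping of the five prefactors $(-1)^n$, $(\bar r/r)^n$, $N(r)^{2s-1}$, $\omega(m,j;r)$, and the index swap $j \leftrightarrow \mathfrak{c}(m,j;r)$. In particular one needs the transformation in Corollary \ref{funceqcor2} to be a genuine involution on the pair (data at $\infty$, data at the cusp $0$), so that applying it twice returns to $j$ and trivialises the phase; this is implicit in the proof of Proposition \ref{funceqprop2}, where $\mathfrak{c}(m,\cdot;r)$ arises from right multiplication by a fixed element of $\Gamma_2$ on the finite coset space $\Gamma_1(\lambda^{2m+4}) \backslash \Gamma_2$, and $\omega(m,j;r)$ factors through the Kubota character. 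Apart from this careful tracking, the argument is a line-for-line adaptation of the proof of Proposition \ref{entirefunc}.
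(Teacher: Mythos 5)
Your overall strategy is the one the paper intends: Proposition \ref{entirefunc2} is proved exactly like Proposition \ref{entirefunc}, by splitting the Mellin integral, substituting the pointwise identity from Corollary \ref{funceqcor2}, and changing variables $v \mapsto 1/(|r|^2 v)$. Your analyticity argument and your bookkeeping of the prefactors on the first integral are correct (the exponent arithmetic $v^{2s+|n|-2}\cdot v^{-2|n|}\,dv \mapsto N(r)^{1-2s+|n|}\,w^{2(1-s)+|n|-2}\,dw$ checks out, and the scalar $(-1)^nN(r)^{1-2s}(\overline{r}/r)^{-n}\omega(m,j;r)$ is what \eqref{levlefunc2} requires).

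The gap is in your treatment of the second integral. You propose to use ``Corollary \ref{funceqcor2} read in the reverse direction'' and you lean on the claim that the transformation is a genuine involution on the pair $(j,\mathfrak{c}(m,j;r))$ with trivialising phase. That claim is not established anywhere in the paper: it would require checking that $\mathfrak{c}(m,\mathfrak{c}(m,j;r);r)=j$, that $\omega(m,j;r)\,\omega(m,\mathfrak{c}(m,j;r);r)=1$, and that $(\psi^{\prime\prime,\star})^{\star}$ recovers $\widehat{\psi}^{\prime\prime}$ up to the correct normalisation --- none of which is immediate from \eqref{Qdef}--\eqref{xidef} and \eqref{underlinedefn}, and all of which is unnecessary. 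The identity \eqref{funceqalphazero2} holds pointwise for \emph{every} $v>0$, not merely on $(0,|r|^{-1})$, so the correct (and the paper's) move is to substitute the same forward identity into \emph{both} integrals: the change of variable then carries $(|r|^{-1},\infty)$ onto $(0,|r|^{-1})$, producing the complementary piece of $\Lambda(1-s,(f\otimes\widehat{\psi}^{\prime})_{\mathfrak{c}(m,j;r)};\psi^{\prime\prime,\star},-n)$ with the identical prefactor, exactly as in \eqref{splitintstar}--\eqref{splitintstar2} of the proof of Proposition \ref{entirefunc}. Replace your second step with this and the argument closes with no involution property needed.
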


Proposition \ref{entirefunc2} follows from Corollary \ref{funceqcor2}, and the proof is analogous to 
that of Proposition \ref{entirefunc}. We omit the proof.

\subsection{Level aspect Voronoi formula}
We now prove a Voronoi summation formula 
for the Fourier coefficients for the form $f(w;\lambda^{\ell} q,\eta)$
given in \eqref{modqtwist}.

We recall some basic facts concerning the complex Mellin transform.
Let $\mathbb{C}^{\times}:=\mathbb{C} \setminus \{0\}$. Let $K,M \geq 1$ and
$V_{K,M} \in {C}^{\infty}_{c}(\mathbb{C}^{\times})$ have compact support 
contained in the disc of radius $100$ (say), and also satisfy
\begin{equation} \label{complexderiv}
\frac{\partial^{i+j}}{{\partial x^{i} \partial x^{j}}} V_{K,M}(z) \ll_{i,j} M K^{i+j} \quad \text{for all} \quad z \in \mathbb{C}^{\times}.
\end{equation}
The complex Mellin transform is given by 
\begin{equation}
\widehat{V}_{K,M}(s,n):=\int_{\mathbb{C^{\times}}} V_{K,M}(z) |z|^{2s} (z/|z|)^{-n} d_{\times} z,
\end{equation}
for $s \in \mathbb{C}$ and $n \in \mathbb{Z}$, where $d_{\times} z:=|z|^{-2} dx dy$. 
Note that $\widehat{V}_{K,M}(s,n)$ is entire with respect to $s$ for each $n \in \mathbb{Z}$.
After making a change of variables $z=r e(\theta/2)$ with $r \in (0,\infty)$ and 
$\theta \in [0,2 \pi)$, we obtain
\begin{equation}
\widehat{V}_{K,M}(s,n)=\int_{0}^{\infty} \int_{0}^{2 \pi} V_{K,M}(re(\theta/2)) r^{2s-1} e(-n \theta/2) d \theta dr.
\end{equation}
After repeated integration by parts, we obtain
\begin{equation*}
\widehat{V}_{K,M}(s,n) \ll_{j,k} M \cdot \min \Big \{1,  \frac{K^{j+k}}{|(2s)_j| (1+|n|)^k} \Big \},
\end{equation*}
for $j,k \in \mathbb{Z}_{\geq 0}$, $s \in \mathbb{C}$ in a fixed vertical strip, and
$n \in \mathbb{Z}$,
It follows for $D_1,D_2 \geq 0$, we have 
\begin{equation} \label{Kmellinbound}
\widehat{V}_{K,M}(s,n) \ll_{D_1,D_2} \frac{M K^{D_1+D_2}}{(1+|s|)^{D_1}(1+|n|)^{D_2}},
\end{equation}
for $s \in \mathbb{C}$ in a fixed vertical strip, and $n \in \mathbb{Z}$.
The complex Mellin inversion formula is given by 
\begin{equation} \label{mellininvert}
V_{K,M}(z)=\frac{1}{2 \pi^2 i} \sum_{n \in \mathbb{Z}} \int_{(\sigma)} \widehat{V}_{K,M}(s,n) |z|^{-2s} (z/|z|)^{n} ds,
\end{equation}
for $\sigma>0$, $z \in \mathbb{C}$, and $n \in \mathbb{Z}$.

\begin{remark} \label{radial}
Suppose further that $V_{K,M}$ is radial i.e. $V_{K,M}(re(\theta))=V_K(r)$ for all $\theta \in \mathbb{R}$. 
Then 
\begin{equation}
\widehat{V}_{K,M}(s,n)=\delta_{n=0} 2 \pi \cdot \int_{0}^{\infty} V_{K,M}(r) r^{2s-1} dr =\delta_{n=0}  2 \pi \cdot \widehat{V}_{K,M}(2s)=\delta_{n=0}  \pi \cdot \widehat{W}_{K,M}(s)
\end{equation}
where $\widehat{V}_{K,M}(s)$ denotes the usual Mellin transform for functions on $(0,\infty)$,
and $W_{K,M}$ is such that $W_{K,M}(r)=V_{K,M}(\sqrt{r})$.
Then
\eqref{mellininvert} becomes the standard Mellin inversion formula for functions on $(0,\infty)$ after a change of variable in $s$.
\end{remark}

\begin{prop} \label{funceqvor}
Let $f \in L^2(\Gamma_2 \backslash \mathbb{H}^3,\chi)$ be a cusp form with spectral parameter $\tau_f \in 1+i \mathbb{R}$,
$\ell \in \mathbb{Z}_{\geq 0}$, $q \in \mathbb{Z}[\omega]$ with $q \equiv 1 \pmod{3}$, 
$\eta \in \mathbb{Z}[\omega]/\lambda^{\ell} q \mathbb{Z}[\omega]$, 
and 
$V_{K,M} \in C^{\infty}_{c}(\mathbb{C}^{\times})$ be a smooth function 
with compact support in the disc of radius 100 satisfying \eqref{complexderiv} for some $K,M \geq 1$.
Then for $X>0$  we have
\begin{align} \label{mastervoronoi}
 & \sum_{\substack{\nu \in \lambda^{-3} \mathbb{Z}[\omega] \\  \lambda^{3} \nu \equiv \eta \pmod{\lambda^{\ell} q} }}
\rho_{f}(\nu)
V_{K,M}(\nu/\sqrt{X}) \nonumber \\
&= \frac{X}{N(\lambda^{\ell+1} q)} 
\sum_{r \mid q} \sum_{n \in \mathbb{Z}} (-1)^n \sum_{m=0}^{\ell+1} \sum_{p=1}^2 Z_{pf}(X,\lambda^m r,\eta,n;\dot{V}_{K,M})
\end{align}
where 
 \begin{align}
& Z_{1f}(X,\lambda^m r,\eta,n;\dot{V}_{K,M}) \nonumber \\
&:= \delta_{0 \leq m \leq \min \{5,\ell+1\}} \cdot N(\lambda^m) \Big( \frac{\overline{r}}{r} \Big)^{-n} \omega(m,1;r) \nonumber \\
& \times \sum_{\nu \in \lambda^{-2m-3} \mathbb{Z}[\omega]}  
\rho_{f \otimes \widehat{\psi_{\lambda^m}(\cdot)_{\lambda \eta}},\mathfrak{c}(m,1;r)}(\nu) \Big( \frac{\nu}{|\nu|} \Big)^{-n} \nonumber \\
& \times \psi^{\star}_r(\cdot)_{\lambda^{2m+1} \eta}(\lambda^{2m+4} \nu ) \dot{V}_{K,M} \Big( \frac{N(\nu)}{N(r)^2/X},n \Big), \label{Z1r} \\
 & Z_{2f}(X,\lambda^m r,\eta,n;\dot{V}_{K,M}) \nonumber \\
&:= \delta_{6 \leq m \leq \ell+1} \cdot N(\lambda^4) \sum_{\zeta}  \Big( \frac{\overline{\zeta^{-1} \lambda^{m-4} r}}{\zeta^{-1} \lambda^{m-4} r}  \Big)^{-n}
\sum_{\nu \in \lambda^{-3} \mathbb{Z}[\omega]} \rho_f(\nu) \Big( \frac{\nu}{|\nu|} \Big)^{-n}  \nonumber \\
& \times \psi^{\#}_{\lambda^m r}(\cdot)_{\lambda \eta,\zeta^{-1}}(\lambda^4 \nu) \dot{V}_{K,M} \Big( \frac{N(\nu)}{N(\lambda^{m-4} r)^2/X},n \Big), \label{Z2r}
\end{align}
where $\psi^{\#}_{\zeta}$ and $\psi^{\star}$ are given in 
\eqref{Psidefn} and \eqref{underlinedefn} (with $\psi^{\prime \prime} \rightarrow \psi$) respectively,
$\dot{V}_{K,M}(\cdot,n): (0,\infty) \rightarrow \mathbb{R}$ is given by
\begin{equation} \label{Wtrans}
\dot{V}_{K,M}(Y,n):=\frac{1}{2 \pi^2 i} \int_{(2)} Y^{-s}
\frac{G_{\infty}(s,\tau_f,n)}{G_{\infty}(1-s,\tau_f,n)} \widehat{V}_{K,M}(1-s,n) ds,
\end{equation}
$G_{\infty}(s,\tau,n)$ is given in \eqref{Ginfty}, and $\omega(m,j,r)$ and $\mathfrak{c}(m,j,r)$
are both as in Proposition \ref{funceqprop2}.
\end{prop}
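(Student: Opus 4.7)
The plan is to execute a standard Voronoi-from-functional-equation argument, running two parallel functional equations (Propositions \ref{entirefunc} and \ref{entirefunc2}) for the two ranges of $m$, and carefully collecting powers of the level, root numbers, and archimedean gamma factors. First, I would apply the complex Mellin inversion formula \eqref{mellininvert} to $V_{K,M}(\nu/\sqrt{X})$ and interchange with the $\nu$-summation; the decay of $\widehat{V}_{K,M}(s,n)$ in both $s$ and $n$ from \eqref{Kmellinbound}, together with the absolute convergence of the twisted Dirichlet series on $\Re s = 2$ provided by Lemma \ref{L1bound}, justifies the interchange. After multiplying and dividing by $G_\infty(s,\tau_f,n)$, Lemma \ref{mellin1} yields
\begin{equation*}
\text{LHS} = \frac{1}{2\pi^2 i} \sum_{n \in \mathbb{Z}} \int_{(2)} \frac{\widehat{V}_{K,M}(s,n)\, X^s}{G_\infty(s,\tau_f,n)} \Lambda(s, f; \lambda^\ell q, \eta, n)\, ds.
\end{equation*}

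Next I would apply Lemma \ref{orthog} to decompose $\Lambda(s, f; \lambda^\ell q, \eta, n)$ as a sum over $r \mid q$ and $0 \le m \le \ell+1$ of $N(\lambda^m r) \Lambda(s, f; \widehat{\psi_{\lambda^m r}(\cdot)_{\lambda\eta}}, n)/N(\lambda^{\ell+1}q)$, and split the $m$-sum at $m = 5$. For $6 \le m \le \ell+1$, Proposition \ref{entirefunc} applies directly to $\psi := \psi_{\lambda^m r}(\cdot)_{\lambda\eta}$, which is supported on residues coprime to $\lambda^m r$, and produces the root number $(-1)^n N(\lambda^{m-4}r)^{1-2s}(\overline{\zeta^{-1}\lambda^{m-4}r}/\zeta^{-1}\lambda^{m-4}r)^{-n}$ together with the dual $\sum_\zeta \Lambda(1-s, f; \psi^{\#}_{\lambda^m r, \zeta^{-1}}, -n)$. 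For $0 \le m \le 5$, I would first factor the character via the Chinese remainder theorem, writing $\widehat{\psi_{\lambda^m r}(\cdot)_{\lambda\eta}}$ as a product of a Fourier transform modulo $\lambda^{m+1}$ and one modulo $r$; the twist of $f$ by the $\lambda$-part is a cusp form on $\Gamma_1(\lambda^{2m+4})$ with Fourier coefficients indexed on $\lambda^{-2m-3}\mathbb{Z}[\omega]$, and Proposition \ref{entirefunc2} applied with $j=1$ then produces the root number $(-1)^n N(r)^{1-2s}(\bar r/r)^{-n} \omega(m,1;r)$ and the dual completed series $\Lambda(1-s, (f \otimes \widehat{\psi_{\lambda^m}(\cdot)_{\lambda\eta}})_{\mathfrak{c}(m,1;r)}; \psi_r^{\star}(\cdot)_{\lambda^{2m+1}\eta}, -n)$.

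Third, in each of the resulting integrals I would use Lemma \ref{mellin1} in reverse to write the dual $\Lambda$ as $G_\infty(1-s,\tau_f,-n) = G_\infty(1-s,\tau_f,n)$ times the dual Dirichlet series, shift the Mellin contour from $\Re s = 2$ to $\Re s = -1$ (permissible since all relevant completed $L$-functions are entire), and swap the integral with the dual $\nu$-summation, which now converges absolutely. The $s$-dependent factors collapse exactly to $\dot V_{K,M}$: for example in the large-$m$ case one isolates $N(\nu)^{s-1}X^s N(\lambda^{m-4}r)^{1-2s} = X N(\lambda^{m-4}r)^{-1} \cdot (N(\nu)/(N(\lambda^{m-4}r)^2/X))^{s}$, and after the change of variable $s \mapsto 1-s$ the resulting Mellin integral matches the defining formula \eqref{Wtrans} for $\dot V_{K,M}(N(\nu)/(N(\lambda^{m-4}r)^2/X), n)$; an identical calculation with $N(\lambda^{m-4}r)$ replaced by $N(r)$ handles the small-$m$ case. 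Combining the prefactor $N(\lambda^m r)/N(\lambda^{\ell+1}q)$ from Lemma \ref{orthog} with the $N(\lambda^{m-4}r)^{-1}$ (resp.\ $N(r)^{-1}$) just produced yields the overall factors $N(\lambda^4)$ and $N(\lambda^m)$ appearing in \eqref{Z2r} and \eqref{Z1r}.

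The main obstacle I expect is the small-$m$ bookkeeping: the CRT factorisation of $\widehat{\psi_{\lambda^m r}(\cdot)_{\lambda\eta}}$, together with the lattice shift from $\lambda^{-3}\mathbb{Z}[\omega]$ to $\lambda^{-2m-3}\mathbb{Z}[\omega]$ caused by twisting $f$ by a character of conductor a power of $\lambda$ before invoking Proposition \ref{entirefunc2}, conspire to produce both the shift $\lambda^{2m+1}\eta$ inside $\psi_r^{\star}$ and the argument $\lambda^{2m+4}\nu$ at which it is evaluated in \eqref{Z1r}. Verifying these identifications requires tracking the precise form in which $\lambda^{2m+4}$ (the level of the pre-twisted form) enters the definition \eqref{underlinedefn} of $\psi^{\star}$. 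Once this correspondence is pinned down, the remaining manipulations reduce to standard Mellin-Barnes calculations with the two functional equations already in hand, and no further analytic input is needed.
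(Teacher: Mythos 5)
Your proposal is correct and follows essentially the same route as the paper: complex Mellin inversion, the orthogonality decomposition of Lemma \ref{orthog}, the split of the $m$-range at $5/6$ with the CRT factorisation $\widehat{\psi_{\lambda^m r}(\cdot)_{\lambda\eta}}=\widehat{\psi_{\lambda^m}(\cdot)_{\lambda\eta}}\,\widehat{\psi_{r}(\cdot)_{\lambda\eta}}$ feeding Proposition \ref{entirefunc2} (with $j=1$) for small $m$ and Proposition \ref{entirefunc} for large $m$, followed by the contour shift to $\Re(s)=-1$, the substitution $s\mapsto 1-s$, and the collection of the archimedean factors into $\dot V_{K,M}$. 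The only cosmetic discrepancy is that the $\lambda$-part of the character is taken modulo $\lambda^{m}$ (not $\lambda^{m+1}$), with the rescaling \eqref{rescale} producing the shift $\lambda^{2m+1}\eta$ exactly as you anticipated.
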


\begin{remark} \label{radial2}
From Remark \ref{radial} we see that if $V_K$ is radial then only $n=0$ is relevant on the right side
of \eqref{mastervoronoi}. In this case $n$ is omitted from the notation.
\end{remark}

\begin{proof}
Recall the definition of the function $\psi_{\lambda^m r}(\cdot)_{\eta}$ in \eqref{modqeta}, and its 
Fourier transform $\widehat{\psi}_{\lambda^m r}(\cdot)_{\eta}$ in \eqref{normram}.
We apply complex Mellin inversion \eqref{mellininvert} to the smooth function $V_K$,
Lemma \ref{orthog}, and then 
interchange of the order of integration and summation by absolute convergence. 
This yields
\begin{align} \label{mellininversion}
& \sum_{\substack{\nu \in \lambda^{-3} \mathbb{Z}[\omega] \\  \lambda^{3} \nu \equiv \eta \pmod{\lambda^{\ell} q} }}
\rho_{f}(\nu)
V_{K,M}(\nu/\sqrt{X}) \nonumber \\
&=\frac{1}{2 \pi^2 i} \frac{1}{N(\lambda^{\ell+1} q)} \sum_{r \mid q} \Big( \sum_{m=0}^{\min \{5,\ell+1\}}+\sum_{m=6}^{\ell+1} \Big) N(\lambda^m r) \nonumber \\
& \times \sum_{n \in \mathbb{Z}} \int_{(2)} \widehat{V}_{K,M}(s,n) X^{s} \mathcal{D}(s,f; \widehat{\psi_{\lambda^{m} r}(\cdot)_{\lambda \eta}},n) ds.
\end{align}

The Chinese Remainder Theorem implies that
\begin{equation} \label{mult}
\widehat{\psi_{\lambda^m r}(\cdot)_{{\lambda \eta}}}(u)=\widehat{\psi_{\lambda^m}(\cdot)_{\lambda \eta}}(u) \widehat{\psi_{r}(\cdot)_{\lambda \eta}}(u), \quad u \in \mathbb{Z}[\omega],
\end{equation}
and by a change of variables we have
\begin{equation} \label{rescale}
\widehat{\psi_{r}(\cdot)_{\lambda^{2m+1} \eta}}(\lambda^{2m} u)=\widehat{\psi_r(\cdot)_{\lambda \eta}}(u), \quad u \in \mathbb{Z}[\omega].
\end{equation}
Recall the definition of twisting \eqref{twistdef2} and the convention in Remark \ref{twistview}.
Using \eqref{cuspform} we see that $f \otimes  \widehat{\psi_{\lambda^m}(\cdot)_{\lambda \eta}} \in L^2(\Gamma_1(\lambda^{2m+4}) \backslash \mathbb{H}^3,\chi,\tau)$ 
is a cusp form. Using \eqref{mult} and \eqref{rescale} we obtain
\begin{align*}
f(w;\widehat{\psi_{\lambda^m r}(\cdot)_{\lambda \eta}},n)&=f(w;\widehat{\psi_{\lambda^m}(\cdot)_{\lambda \eta}} \widehat{\psi_r (\cdot)_{\lambda^{2m+1} \eta}} \circ \lambda^{2m},n) \\
&=(f \otimes \widehat{\psi_{\lambda^m}(\cdot)_{\lambda \eta}})(w;\widehat{\psi_r(\cdot)_{\lambda^{2m+1} \eta}},n),
\end{align*}
for all $0 \leq m \leq \ell+1$, $r \mid q$, and $\eta \in \mathbb{Z}[\omega]$. The
analogous Dirichlet series identity reads
 \begin{equation} \label{Didentity}
\mathcal{D}(s,f; \widehat{\psi_{\lambda^m r}(\cdot)_{\lambda \eta}},n)
=\mathcal{D}(s,f \otimes \widehat{\psi_{\lambda^m}(\cdot)_{\lambda \eta}}; \widehat{\psi_{r}(\cdot)_{\lambda^{2m+1} \eta}},n), \quad \Re(s)>1.
\end{equation}

After substituting \eqref{Didentity} into \eqref{mellininversion} for each $0 \leq m \leq \min \{5,\ell+1\}$, we deduce that
the right side of \eqref{mellininversion} is equal to 
\begin{align} \label{mellininversion2}
&\frac{ 1}{2 \pi^2 i} \frac{1}{N(\lambda^{\ell+1} q)} \sum_{r \mid q} \sum_{n \in \mathbb{Z}} 
\Big( \sum_{m=0}^{\min \{5,\ell+1\}} N(\lambda^m r) \nonumber \\
& \times \int_{(2)} \widehat{V}_{K,M}(s,n) X^{s} \mathcal{D}(s,f \otimes \widehat{\psi_{\lambda^m}(\cdot)_{\lambda \eta}}; \widehat{\psi_{r}(\cdot)_{\lambda^{2m+1} \eta}},n) ds  \nonumber \\ 
&+\sum_{m=6}^{\ell+1} N(\lambda^m r)
\int_{(2)} \widehat{V}_{K,M}(s,n) X^{s} \mathcal{D}(s,f; \widehat{\psi_{\lambda^m r}(\cdot)_{\lambda \eta}},n) ds \Big).
\end{align}

Both of the integrands in \eqref{mellininversion2} are entire by Proposition \ref{entirefunc}, Proposition \ref{entirefunc2} and Lemma
\ref{mellin1}.
We shift the contour in \eqref{mellininversion}
to $\Re(s)=-1$ and then use
the functional equations \eqref{levelfunc} and \eqref{levlefunc2}. We see that \eqref{mellininversion2} 
is equal to
\begin{align} \label{funcmellin}
& \frac{1}{2 \pi^2 i} \frac{1}{N(\lambda^{\ell+1} q)} \sum_{r \mid q} \sum_{n \in \mathbb{Z}} (-1)^n
\Big( \sum_{m=0}^{\min \{5,\ell+1\}} N(\lambda^m) N(r)^2 \Big( \frac{\overline{r}}{r} \Big)^{-n} \omega(m,1;r) \nonumber \\
& \times \int_{(-1)} \widehat{V}_{K,M}(s,n)  \Big( \frac{X}{N(r)^2} \Big)^{s} 
\frac{G_{\infty}(1-s,\tau_f,-n)}{G_{\infty}(s,\tau_f,n)} \nonumber \\
& \times \mathcal{D}(1-s,(f \otimes \widehat{\psi_{\lambda^m}(\cdot)_{\lambda \eta}})_{\mathfrak{c}(m,1;r)}; \psi^{\star}_{r}(\cdot)_{\lambda^{2m+1} \eta},-n ) ds  \nonumber \\ 
&+\sum_{m=6}^{\ell+1} \sum_{\zeta} N(\lambda^{2m-4}) N(r)^2 \Big( \frac{\overline{\zeta^{-1} \lambda^{m-4} r}}{\zeta^{-1} \lambda^{m-4} r}  \Big)^{-n}
\int_{(-1)} \widehat{V}_{K,M}(s,n) \Big(\frac{X}{N(\lambda^{m-4} r)^2} \Big)^{s} \nonumber \\
& \times \frac{G_{\infty}(1-s,\tau_f,-n)}{G_{\infty}(s,\tau_f,n)} 
\mathcal{D}(1-s,f;\psi^{\#}_{\lambda^m r}(\cdot)_{\lambda \eta,\zeta^{-1}},-n) ds \Big).
\end{align}
We make the change of variable $s \rightarrow 1-s$ in both integrals in \eqref{funcmellin}.
We then open both of the Dirichlet series in the region of absolute convergence,
and then interchange the order of summation and integration to obtain
\eqref{mastervoronoi}, with the transforms given by \eqref{Z1r}--\eqref{Wtrans}.
\end{proof}

We now compute the Archimedean and non-Archimedean transforms on the dual side of the Voronoi
formula in Proposition \ref{funceqvor}. Recall that $K_{\Gamma^{\prime},\sigma,\xi}(m,n,c)$
denotes a cubic Kloosterman attached to the cusp pair $(\sigma,\xi)$ of $\Gamma^{\prime}$, 
see \eqref{cubickloosterman}.

\begin{lemma} \label{unpack1}
Let $m \in \mathbb{Z}_{\geq 6}$, $r \in \mathbb{Z}[\omega]$ with $r \equiv 1 \pmod{3}$, 
$\eta \in \mathbb{Z}[\omega]/\lambda^m r \mathbb{Z}[\omega]$,
$\psi_{\lambda^{m} r}(\cdot)_{\eta}$ be as in \eqref{modqeta}, and
$\zeta$ be such that $\zeta^6=1$. Then for $\nu \in \lambda^{-3} \mathbb{Z}[\omega]$ we have
\begin{equation*}
\psi^{\#}_{\lambda^{m} r}(\cdot)_{\lambda \eta,\zeta}(\lambda^4 \nu)
=\frac{1}{N(\lambda^{m+3} r)} K_{\Gamma_1(3),\sigma,\sigma}(\lambda^3 \nu,\eta,\zeta \lambda^{m-1} r),
\end{equation*}
where $\psi^{\#}_{\zeta}$ is given in \eqref{Psidefn}, $\sigma=\pmatrix 1 0 0 1$, and the cubic Kloosterman sum is given in \eqref{cubickloosterman}.
\end{lemma}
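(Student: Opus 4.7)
The plan is to write out both sides explicitly and show they agree up to the stated scalar factor. Beginning with the left side, I would substitute $\psi(u)=\psi_{\lambda^m r}(u)_{\lambda \eta}=\mathbf{1}_{\lambda^m r}(u)\check{e}(-\lambda\eta u/(\lambda^m r))$ into \eqref{Psidefn}; the coprimality indicator evaluated at $-\zeta^{-1} d$ is trivial since the condition $ad\equiv 1\pmod{\lambda^m r}$ forces $(d,\lambda^m r)=1$. Using the elementary identity $\check{e}(x/(\zeta y)) = \check{e}(\zeta^{-1} x/y)$ for units $\zeta$ (direct from $\check{e}(z)=e^{2\pi i(z+\bar z)}$) to place both exponential factors over the common denominator $\lambda^{m-1} r$, one arrives at
\[
\psi^{\#}_{\lambda^m r}(\cdot)_{\lambda\eta,\zeta}(\lambda^4 \nu) = \frac{1}{N(\lambda^m r)} \sum_{\substack{a,d\pmod{\lambda^m r}\\ a,d\equiv 1\pmod 3\\ ad\equiv 1\pmod{\lambda^m r}}} \Big(\frac{\zeta\lambda^{m-1} r}{d}\Big)_3 \check{e}\Big(\frac{\zeta^{-1}(\lambda^3\nu\,a+\eta\,d)}{\lambda^{m-1} r}\Big).
\]
On the right side, applying Lemma \ref{kloostermanexp1} with $c=\zeta\lambda^{m-1} r$ and the same $\check{e}$-identity yields a sum with an \emph{identical} summand but with $(a,d)$ ranging modulo $\lambda^{m+1} r$ under the weaker congruence $ad\equiv 1\pmod{\lambda^{m-1} r}$.

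Next I would reduce both sums to the common range $a,d\pmod{\lambda^{m-1} r}$ with $ad\equiv 1\pmod{\lambda^{m-1} r}$ and $a,d\equiv 1\pmod 3$, exploiting that the summand is constant on fibres of the relevant reduction map. Three observations justify this: (a) $\check{e}$ is trivial on $\mathbb{Z}[\omega]$, so shifting $a\mapsto a+\lambda^{m-1} r\alpha$ or $d\mapsto d+\lambda^{m-1} r\beta$ with $\alpha,\beta\in\mathbb{Z}[\omega]$ preserves the exponential factor (since $\zeta^{-1}\lambda^3\nu,\zeta^{-1}\eta\in\mathbb{Z}[\omega]$); (b) using the supplementary laws \eqref{supp2} and cubic reciprocity \eqref{cuberep}, the cubic symbol factors as $(\zeta\lambda^{m-1} r/d)_3=(\zeta/d)_3(\lambda/d)_3^{m-1}(d/r)_3$, and each piece depends on $d$ only modulo $\mathrm{lcm}(\lambda^4, r)=\lambda^4 r$, which contains $\lambda^{m-1} r$ as an ideal once $m\geq 5$; (c) $3\mid\lambda^{m-1}$ for $m\geq 3$, so the congruence $a,d\equiv 1\pmod 3$ is preserved under lifting.

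The final step is to count the fibres. On the Kloosterman side, the condition $ad\equiv 1\pmod{\lambda^{m-1} r}$ places no constraint on the lifts to $\pmod{\lambda^{m+1} r}$, giving $N(\lambda^2)^2=81$ valid pairs per base pair $(a_0, d_0)$. On the $\psi^{\#}$ side, expanding $ad\equiv a_0 d_0+\lambda^{m-1} r(a_0\beta+d_0\alpha)\pmod{\lambda^m r}$ (the cross term $(\lambda^{m-1} r)^2\alpha\beta$ vanishes modulo $\lambda^m r$ for $m\geq 2$) and writing $a_0 d_0-1=\lambda^{m-1} r\gamma$ converts the condition $ad\equiv 1\pmod{\lambda^m r}$ into the single linear constraint $\gamma+a_0\beta+d_0\alpha\equiv 0\pmod{\lambda}$ on $(\alpha,\beta)\in(\mathbb{Z}[\omega]/\lambda)^2$; since $d_0\equiv 1\pmod 3$ is a unit modulo $\lambda$, this has exactly $N(\lambda)=3$ solutions. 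Letting $S$ denote the common reduced sum, the Kloosterman sum equals $81S$, while the left side equals $3S/N(\lambda^m r)$. Taking the ratio gives $\psi^{\#}_{\lambda^m r}(\cdot)_{\lambda\eta,\zeta}(\lambda^4 \nu)=K/(27\,N(\lambda^m r))=K/N(\lambda^{m+3} r)$ since $27=N(\lambda^3)$, which is the claimed identity.

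The main technical point is observation (b)—verifying that $(\zeta\lambda^{m-1} r/d)_3$ descends to a function of $d\pmod{\lambda^{m-1} r}$; this is precisely where the hypothesis on $m$ enters. Everything else is bookkeeping.
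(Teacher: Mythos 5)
Your proof is correct and follows essentially the same route as the paper: unfold \eqref{Psidefn} with $\psi=\psi_{\lambda^m r}(\cdot)_{\lambda\eta}$, match the summand against Lemma \ref{kloostermanexp1} with $c=\zeta\lambda^{m-1}r$, and account for the normalisation by comparing the two sums over a common set of residues modulo $\lambda^{m-1}r$ (the paper phrases this as extending the range to $\lambda^{m+1}r$ and relaxing the congruence to $ad\equiv 1\pmod{\lambda^{m-1}r}$, which is the same $81$-versus-$27$ fibre count you carry out). Your explicit verification that the cubic symbol descends to $d\pmod{\lambda^{m-1}r}$ for $m\geq 5$ and the linear-constraint count of lifts are both accurate.
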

\begin{proof}
We have
\begin{align} \label{cubicintermed}
\psi^{\#}_{\lambda^{m} r}(\cdot)_{\lambda \eta,\zeta}(\lambda^4 \nu)&=
\frac{1}{N(\lambda^{m} r)} \sum_{\substack{a,d \pmod{\lambda^m r} \\  a,d \equiv 1 \pmod{3} \\ ad \equiv 1 \pmod{\lambda^m r} }}
\Big (\frac{\zeta \lambda^{m-1} r}{d} \Big )_{3}
\check{e} \Big (\frac{a \lambda^3 \nu+d  \eta}{\zeta \lambda^{m-1} r} \Big )  \nonumber \\
&=\frac{1}{N(\lambda^{m+2} r)} \sum_{\substack{a,d \pmod{\lambda^{m+1} r} \\  a,d \equiv 1 \pmod{3} \\ ad \equiv 1 \pmod{\lambda^{m} r} }}
\Big (\frac{\zeta \lambda^{m-1} r}{d} \Big )_{3}
\check{e} \Big (\frac{a \lambda^3 \nu+d  \eta}{\zeta \lambda^{m-1} r} \Big ) \nonumber \\ 
&=\frac{1}{N(\lambda^{m+3} r)} \sum_{\substack{a,d \pmod{\lambda^{m+1} r} \\  a,d \equiv 1 \pmod{3} \\ ad \equiv 1 \pmod{\lambda^{m-1} r} }}
\Big (\frac{\zeta \lambda^{m-1} r}{d} \Big )_{3}
\check{e} \Big (\frac{a \lambda^3 \nu+d  \eta}{\zeta \lambda^{m-1} r} \Big ), 
\end{align}
and the result follows from Lemma \ref{kloostermanexp1}.
\end{proof}

\begin{lemma} \label{unpack2}
Let the notation be as in Lemma \ref{unpack1} and $m \in \mathbb{Z}_{\geq 0}$.
Then for $\nu \in \lambda^{-2m-3} \mathbb{Z}[\omega]$ we have
\begin{equation*}
\psi^{\star}_{r}(\cdot)_{\lambda^{2m+1} \eta}(\lambda^{2m+4} \nu)
=\frac{1}{N(r)} K_{\Gamma_1(3),\sigma,\xi}(\overline{\lambda^{2m+3}} (\lambda^{2m+3} \nu), \overline{\lambda^3} \eta,r),
\end{equation*}
where $\psi^{\star}$ is given in \eqref{underlinedefn}, $\sigma=\pmatrix 1 0 0 1$, $\xi=\pmatrix 0 {-1} 1 0 $,
and $\overline{\lambda^{\ell}} \in \mathbb{Z}[\omega]$ is such that $\overline{\lambda^{\ell}} \lambda^{\ell} \equiv 1 \pmod{r}$ for $\ell \in \mathbb{Z}_{\geq 0}$.
\end{lemma}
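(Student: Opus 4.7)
The proof runs parallel to that of Lemma~\ref{unpack1}, but using Lemma~\ref{kloostermanexp2} in place of Lemma~\ref{kloostermanexp1}. I first substitute $\psi^{\prime\prime}=\psi_{r}(\cdot)_{\lambda^{2m+1}\eta}$ into the definition~\eqref{underlinedefn} of $\psi^{\prime\prime,\star}$. Since $\psi^{\prime\prime}$ is supported on classes coprime to $r$, one has $\psi^{\prime\prime}(-d)=\mathbf{1}_r(d)\check{e}(\lambda^{2m+1}\eta d/r)$; combining this with $\check{e}(au/r)$ at $u=\lambda^{2m+4}\nu$ yields
\begin{equation*}
\psi_{r}^{\star}(\cdot)_{\lambda^{2m+1}\eta}(\lambda^{2m+4}\nu)
=\frac{1}{N(r)}\sum_{\substack{a,d\pmod{r}\\ (d,r)=1\\ \lambda^{4m+8}ad\equiv 1\pmod{r}}}
\Bigl(\frac{\lambda^{2m+4}d}{r}\Bigr)_{3}\,\check{e}\!\left(\frac{\lambda^{2m+4}\nu a+\lambda^{2m+1}\eta d}{r}\right).
\end{equation*}

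Next I expand the summation range from residues mod $r$ to residues mod $3r$. Under $a\mapsto a+rk$ the character changes by $\check{e}(\lambda^{2m+4}\nu k)$, and since $\nu\in\lambda^{-2m-3}\mathbb{Z}[\omega]$ we have $\lambda^{2m+4}\nu=\lambda\cdot\lambda^{2m+3}\nu\in\lambda\mathbb{Z}[\omega]\subset\lambda^{-1}\mathbb{Z}[\omega]$, on which $\check{e}$ is trivial; an analogous check for $d\mapsto d+rk$ uses $\lambda^{2m+1}\eta\in\lambda\mathbb{Z}[\omega]$. The cubic symbol and invertibility condition are clearly invariant. This expansion costs a compensating factor $N(3)^{-2}$.

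I then make the change of variables $A\equiv 3a\pmod{3r}$ and $D\equiv 3d\pmod{3r}$ (intersected with $A,D\equiv 0\pmod{3}$), which is a bijection between the index sets. Using $3=-\lambda^{2}$ together with $(-1/r)_{3}=1$ and the multiplicativity \eqref{twistmult} of the cubic symbol, the weight $(\lambda^{2m+4}d/r)_{3}$ becomes $(D/r)_{3}$ times an explicit cube-root-of-unity factor which is absorbed by cubic reciprocity \eqref{cuberep} and the supplementary laws \eqref{supp1}--\eqref{supp2}. The condition $\lambda^{4m+8}ad\equiv 1\pmod{r}$ is matched with $AD\equiv 1\pmod{r}$ by inserting the correct powers of $\overline{\lambda^{k}}$ into the identification of character coefficients: writing $\mu=\lambda^{2m+3}\nu\in\mathbb{Z}[\omega]$ and using $\overline{\lambda^{k}}\lambda^{k}\equiv 1\pmod{r}$ twice, the shift coefficients transform into $\overline{\lambda^{2m+3}}\mu$ and $\overline{\lambda^{3}}\eta$. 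The representative-dependence issue is harmless because the difference between two representatives lies in $r\mathbb{Z}[\omega]$, and multiplied by $A$ or $D$ (both in $\mathbb{Z}[\omega]$) it contributes an element of $\mathbb{Z}[\omega]\subset\lambda^{-1}\mathbb{Z}[\omega]$ to the $\check{e}$-argument, where $\check{e}$ is trivial.

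The main obstacle is the bookkeeping of units, powers of $\lambda$, and cube-root-of-unity factors: the cubic symbol is only defined up to cubes, and $\check{e}$ is only trivial on $\lambda^{-1}\mathbb{Z}[\omega]$, so at each step one must verify that the chosen representatives in $\mathbb{Z}[\omega]$ are compatible. The cleanest route is probably to first reduce each side to a single sum over $d\in(\mathbb{Z}[\omega]/r\mathbb{Z}[\omega])^{\times}$ by using the invertibility constraint to solve for $a$ (resp.\ $A$), and then match character coefficients modulo $r$ using only the fact that $\mu\in\mathbb{Z}[\omega]$.
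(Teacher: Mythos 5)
Your opening move is right: substituting $\psi^{\prime\prime}=\psi_r(\cdot)_{\lambda^{2m+1}\eta}$ into \eqref{underlinedefn} and evaluating at $u=\lambda^{2m+4}\nu$ gives exactly your first display, and the endgame (pass to residues mod $3r$ with $a,d\equiv 0\pmod{3}$ and invoke Lemma \ref{kloostermanexp2}) is the right target. The gap is in the middle. The change of variables $A=3a$, $D=3d$ does not do the job: since $9=\lambda^4$, the condition $\lambda^{4m+8}ad\equiv 1\pmod{r}$ becomes $\lambda^{4m+4}AD\equiv 1\pmod{r}$, not $AD\equiv 1\pmod{r}$, and no adjustment of the additive character coefficients of the Kloosterman sum can repair a mismatch in the multiplicative index set. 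Likewise the cubic symbol picks up the factor $(\lambda/r)_3^{2m+2}$ (from $(\lambda^{2m+4}\overline{3}/r)_3$, using $3=-\lambda^2$ and $(-1/r)_3=1$), a genuinely nontrivial cube root of unity depending on $r\pmod{9}$ and $m$; it cannot be ``absorbed'' by reciprocity or the supplementary laws, because the asserted identity carries no such factor on its right side.

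The correct middle step is a multiplicative rescaling modulo $r$: replace $a\to\overline{\lambda^{2m+4}}a$ and $d\to\overline{\lambda^{2m+4}}d$. This simultaneously (i) converts $(\lambda^{2m+4}a)(\lambda^{2m+4}d)\equiv 1\pmod{r}$ into $ad\equiv 1\pmod{r}$; (ii) converts $\bigl(\tfrac{\lambda^{2m+4}d}{r}\bigr)_3$ into $\bigl(\tfrac{d}{r}\bigr)_3$ exactly, with no leftover root of unity, since the symbol depends only on its argument mod $r$; and (iii) turns the exponential into $\check{e}\bigl((\overline{\lambda^{2m+3}}(\lambda^{2m+3}\nu)a+\overline{\lambda^{3}}\eta d)/r\bigr)$, using $\lambda^{2m+4}\nu\,\overline{\lambda^{2m+4}}\equiv\overline{\lambda^{2m+3}}(\lambda^{2m+3}\nu)$ and $\lambda^{2m+1}\overline{\lambda^{2m+4}}\equiv\overline{\lambda^{3}}\pmod{r}$. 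After this, the passage to $a,d\pmod{3r}$ with $a,d\equiv 0\pmod{3}$ is a bijection of index sets (each class mod $r$ has a unique such lift because $(3,r)=1$) under which the summand is unchanged, as it depends only on $a,d\pmod{r}$; in particular no compensating factor $N(3)^{-2}$ appears. Lemma \ref{kloostermanexp2} then gives the claim directly. Your closing suggestion to solve for $a$ in terms of $d$ is unnecessary once the rescaling is done correctly.
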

\begin{proof}
By definition
\begin{equation} \label{underlineintermed}
\psi^{\star}_{r}(\cdot)_{\lambda^{2m+1} \eta}(\lambda^{2m+4} \nu) 
=\frac{1}{N(r)} \sum_{\substack{a,d \pmod{r} \\ (\lambda^{2m+4} a) (\lambda^{2m+4} d) \equiv 1 \pmod{r} }}
\Big (\frac{\lambda^{2m+4} d}{r} \Big )_{3}
\check{e} \Big ({\frac{a (\lambda^{2m+4} \nu)+d (\lambda^{2m+1} \eta)}{r} } \Big ).
\end{equation}
The change of variables $a \rightarrow \overline{\lambda^{2m+4}} a \pmod{r}$ and 
$d \rightarrow \overline{\lambda^{2m+4}} d \pmod{r}$ shows that the right side of \eqref{underlineintermed}
is equal to
\begin{equation} \label{underlineintermed2}
\frac{1}{N(r)} \sum_{\substack{a,d \pmod{r} \\ a d \equiv 1 \pmod{r}}}
\Big (\frac{d}{r} \Big )_{3}
\check{e} \Big ({{\frac{\overline{\lambda^{2m+3}} a (\lambda^{2m+3} \nu)+\overline{\lambda^3} \eta d}{r} }} \Big ),
\end{equation}
and we can lift this to the sum
\begin{equation*}
\frac{1}{N(r)} \sum_{\substack{a,d \pmod{3r} \\ a,d \equiv 0 \pmod{3} \\ a d \equiv 1 \pmod{r}}}
\Big (\frac{d}{r} \Big )_{3}
\check{e} \Big (\frac{\overline{\lambda^{2m+3}} a (\lambda^{2m+3} \nu)+\overline{\lambda^3} \eta d)}{r} \Big ),
\end{equation*}
and the result now follows from Lemma \ref{kloostermanexp2}.
\end{proof}

\begin{lemma} \label{unpack3}
Let $K,M \geq 1$ and $V_{K,M} \in C^{\infty}_{c}(\mathbb{C}^{\times})$ be a smooth function with compact support in $[1,2]$
whose derivatives satisfy \eqref{complexderiv}. Let $\tau \in 1+i \mathbb{R}$, $n \in \mathbb{Z}$, $G_{\infty}(s,\tau,n)$ be as in $\eqref{Ginfty}$, and 
$\dot{V}_{K,M}(\cdot,n):(0,\infty) \rightarrow \mathbb{C}$
be as in \eqref{Wtrans}. Then for $D_1>0$ and $D_2 \geq 0$ we have
\begin{equation*}
\dot{V}_{K,M}(Y,n) \ll_{\tau,D_1,D_2}  M K^{4(D_1+D_2)} Y^{-D_1} (|n|+1)^{4D_1-4D_2-2},
\end{equation*}
for all $Y>0$.
\end{lemma}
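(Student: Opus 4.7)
The plan is to prove the lemma by shifting the contour in \eqref{Wtrans} to the line $\Re(s) = D_1$ and bounding the integrand via Stirling's formula together with the decay estimate \eqref{Kmellinbound} for $\widehat{V}_{K,M}(1-s,n)$. First I verify that the integrand is holomorphic in the half-plane $\Re(s) > 0$. By Remark \ref{cubicshimurarem}, the shifts $\pm(\tau_f-1)/2$ are purely imaginary, so the poles of the numerator factors $\Gamma(s + |n|/2 \pm (\tau_f-1)/2)$ lie in $\Re(s) = -|n|/2 \leq 0$, while the denominator $\Gamma(1-s + |n|/2 \pm (\tau_f-1)/2)$ is nowhere zero, and $\widehat{V}_{K,M}(1-s,n)$ is entire in $s$. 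Combined with the rapid vertical decay from \eqref{Kmellinbound} and Stirling, this allows me to shift the line of integration from $\Re(s) = 2$ to $\Re(s) = D_1$ for any $D_1 > 0$ without crossing singularities.

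On the shifted contour, write $s = D_1 + i\tau$ and $\tau_f = 1 + it$, and apply the uniform Stirling asymptotic $|\Gamma(\sigma + iy)| \asymp (\sigma + |y|)^{\sigma - 1/2} e^{-\pi|y|/2}$ for $\sigma > 0$ to each of the four Gamma factors. The crucial cancellation is that, because $t$ is real, the two shifts $\pm it/2$ are complex conjugates, so the exponential factors $e^{-\pi|\tau \mp t/2|/2}$ appearing in the numerator Gammas cancel exactly against those in the denominator. The polynomial factors then combine: each of the two conjugate pairs contributes the exponent $(D_1 + |n|/2 - 1/2) - (1/2 - D_1 + |n|/2) = 2D_1 - 1$, yielding
\begin{equation*}
\left|\frac{G_\infty(s, \tau_f, n)}{G_\infty(1-s, \tau_f, n)}\right| \ll_{\tau_f, D_1} (1 + |\tau| + |n|)^{4D_1 - 2}.
\end{equation*}

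Inserting this bound into \eqref{Wtrans} and applying \eqref{Kmellinbound} with parameters $D_1' = 4D_1$ and $D_2' = 4D_2$ (taking $D_1'$ slightly larger if needed to ensure absolute convergence when $D_1$ is small, with the resulting factor depending only on $D_1,D_2$ and absorbed into the implicit constant) gives
\begin{equation*}
\dot V_{K,M}(Y,n) \ll_{\tau_f, D_1, D_2} Y^{-D_1} M K^{4(D_1+D_2)} (1+|n|)^{-4D_2} \int_{\mathbb{R}} \frac{(1+|\tau|+|n|)^{4D_1 - 2}}{(1+|\tau|)^{4D_1}} d\tau.
\end{equation*}
Splitting the $\tau$-integral at $|\tau| \asymp |n|$ and estimating each piece trivially bounds it by $(1+|n|)^{4D_1 - 2}$, which yields the claimed inequality.

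The main obstacle is the Stirling estimate, specifically the verification that the exponential decays in the four Gamma factors cancel exactly; this is what crucially requires the spectral parameter $\tau_f$ to lie on the critical line $\Re(\tau_f) = 1$. Once that cancellation is secured and the polynomial exponent $4D_1 - 2$ is extracted uniformly in $|\tau|$ and $|n|$, the remaining contour and integral estimates are routine.
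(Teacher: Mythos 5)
Your proof is correct and follows essentially the same route as the paper: shift the contour in \eqref{Wtrans} to $\Re(s)=D_1$, apply Stirling to the quotient of $G_\infty$'s to extract the polynomial exponent $4D_1-2$ (the paper records this as \eqref{quotbd}), and insert the decay bound \eqref{Kmellinbound} with parameters $4D_1, 4D_2$. You spell out the exponential cancellation (which hinges on $\tau_f \in 1 + i\mathbb{R}$) and the holomorphy check more explicitly than the paper does, but the argument is the same.
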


\begin{proof}
In the definition \eqref{Wtrans} we move the contour to $\Re(s)=D_1$.
Stirling's formula \cite[(5.11.1)]{NIST:DLMF} implies that
\begin{equation} \label{quotbd}
\frac{G_{\infty}(s,\tau,-n)}{G_{\infty}(1-s,\tau,n)} \asymp |s+|n|/2-\tfrac{1}{2}(\tau-1)|^{2D_1-1} \cdot |s+|n|/2+\tfrac{1}{2}(\tau-1)|^{2D_1-1},
\end{equation}
as $|\Im (s \pm \frac{1}{2}(\tau-1))| \rightarrow \infty$.
Using \eqref{Kmellinbound} (with $D_1 \rightarrow 4D_1$ and $D_2 \rightarrow 4D_2$) and \eqref{quotbd}
in \eqref{Wtrans} we obtain 
\begin{align*}
\dot{V}_{K,M}(Y,n) & \ll_{D_1,D_2} M K^{4(D_1+D_2)} Y^{-D_1} (1+|n|)^{-4D_2}  \nonumber \\
& \times \Big(\int_{(D_1)} \frac{|s+|n|/2-\tfrac{1}{2}(\tau-1)|^{2D_1-1} \cdot |s+|n|/2+\tfrac{1}{2}(\tau-1)|^{2D_1-1}}{(1+|1-s|)^{4D_1}} |ds| \Big) \nonumber \\
& \ll_{\tau,D_1,D_2}  M K^{4(D_1+D_2)} Y^{-D_1} (1+|n|)^{4D_1-4D_2-2} ,
\end{align*} 
as required.
\end{proof}

\subsection{Level aspect Voronoi summation for multiple sums}
Here we record a Voronoi formula that is an iterated version of 
Proposition \ref{funceqvor}. Let
$\boldsymbol{z}=(z_1,z_2)=(x_{11}+i y_{12},x_{21}+i y_{22}) \in (\mathbb{C}^{\times})^2$, $x_{11}, y_{12},x_{21},y_{22} \in \mathbb{R}$. Let
$K,M \geq 1$, and $H_{K,M} \in C^{\infty}_{c}((\mathbb{C}^{\times})^2)$ be a smooth function 
with compact support in a ball of radius 100 such that for any $\boldsymbol{i}=(i_{11},i_{12},i_{21},i_{22}) \in (\mathbb{Z}_{\geq 0})^4$ we have
\begin{equation} \label{complexderiv2}
\partial^{\boldsymbol{i}} H_{K,M}(\boldsymbol{z}) \ll_{\boldsymbol{i}} M K^{\sum_{1 \leq j,k \leq 2} i_{jk}} \quad
\boldsymbol{z} \in (\mathbb{C}^{\times})^2.
\end{equation}
If $M=1$ then $M$ is omitted from the notation and we write $H_{K}$.
For each $\boldsymbol{n}=(n_1,n_2) \in \mathbb{Z}^2$, consider the double complex Mellin transform
\begin{align} \label{doublemellin}
\widehat{\widehat{H}}_{K,M}(\boldsymbol{s},\boldsymbol{n}) 
&:=\iint_{(\mathbb{C}^{\times})^2} H_{K,M}(\boldsymbol{z}) \Big( \prod_{i=1}^{2}
|z_i|^{2s_i}\Big( \frac{z_i}{|z_i|} \Big)^{-n_i} \Big)  d_{\times} \boldsymbol{z},\quad \boldsymbol{s}=(s_1,s_2) \in \mathbb{C}^2,
\end{align}
where $d_{\times} \boldsymbol{z}:=d x_1 dy_1 dx_2  dy_2 /|z_1 z_2|^2$.
For $\boldsymbol{D}:=(D_{11},D_{12},D_{21},D_{22}) \in (\mathbb{R}_{\geq 0})^4$,
repeated integration by parts using polar coordinates yields the bound
\begin{equation} \label{Kmellinbound2}
\widehat{\widehat{H}}_{K,M}(\boldsymbol{s},\boldsymbol{n}) \ll_{\tau,\boldsymbol{D}} 
M K^{\sum_{1 \leq i,j \leq 2} D_{ij}} \cdot \prod_{i=1}^2 (1+|s_i|)^{-D_{i1}}(1+|n_i|)^{-D_{i2}}.
\end{equation}
Consider the function $\ddot{H}_{K,M}(\cdot,\boldsymbol{n}): (0,\infty)^2 \rightarrow \mathbb{R}$ 
given by 
\begin{align} \label{doubletilde}
 \ddot{H}_{K,M}(\boldsymbol{Y},\boldsymbol{n}) 
&:=\frac{1}{(2 \pi^2 i)^2} \int_{(2)} \int_{(2)} \Big(\prod_{i=1}^2 Y_i^{-s_i}
\frac{G_{\infty}(s_i,\tau_f,-n_i)}{G_{\infty}(1-s_i,\tau_f,n_i)} \Big)   
\widehat{\widehat{H}}_{K,M}(\boldsymbol{1}-\boldsymbol{s},\boldsymbol{n}) d \boldsymbol{s}, \nonumber \\
& \quad \boldsymbol{Y}=(Y_1,Y_2) \in (0,\infty)^2,
\end{align}
where $G_{\infty}(s,\tau,n)$ is given by \eqref{Ginfty} and  $d \boldsymbol{s}=ds_1 ds_2$. 
After moving the contours in \eqref{doubletilde} to $\Re(s_1)=D_{11}>0$ and $\Re(s_2)=D_{21}>0$,
observe that \eqref{quotbd} and \eqref{Kmellinbound2} applied to \eqref{doubletilde}
imply that
\begin{align} \label{doubletildebd}
\ddot{H}_{K,M}(\boldsymbol{Y},\boldsymbol{n}) & \ll_{\tau,\boldsymbol{D}} M K^{4 (\sum_{1 \leq i,j \leq 2} D_{ij})}  
\cdot \prod_{i=1}^{2} Y_i^{-D_{i1}}   (|n_i|+1)^{4D_{i1}-4D_{i2}-2}, \quad \boldsymbol{Y} \in (0,\infty)^2.
\end{align}

Mellin inversion and an iterated application of the functional equation in Proposition 
\ref{funceqvor} yields the following result. We omit the proof for the sake of brevity.

\begin{prop} \label{vor2}
Let $f \in L^2(\Gamma_2 \backslash \mathbb{H}^3,\chi)$ be a cusp form with spectral parameter $\tau_f \in 1+i \mathbb{R}$,
$\boldsymbol{\ell}= (\ell_1,\ell_2) \in (\mathbb{Z}_{\geq 0})^2$, $\boldsymbol{q}=(q_1,q_2) \in (\mathbb{Z}[\omega])^2$
with $q_1,q_2 \equiv 1 \pmod{3}$, and 
$\boldsymbol{\eta}=(\eta_1,\eta_2) \in \mathbb{Z}[\omega]/\lambda^{\ell_1} q_1 \mathbb{Z}[\omega] \times \mathbb{Z}[\omega]/\lambda^{\ell_2} q_2 \mathbb{Z}[\omega]$.
Let $H_{K,M} \in C^{\infty}_{c}((\mathbb{C}^{\times})^2)$ be a smooth function 
with compact support in the disc of radius 100 satisfying \eqref{complexderiv2} for some $K,M \geq 1$.
Then for $\boldsymbol{X}=(X_1,X_2) \in (0,\infty)^2$  we have
\begin{align}
& \sum_{\substack{ \boldsymbol{\nu} \in (\lambda^{-3} \mathbb{Z}[\omega])^2 \\ \forall i: \lambda^3 \nu_i \equiv \eta_i \pmod{ \lambda^{\ell_i} q_i}  } }
\rho_{f}(\nu_1) \overline{\rho_{f}(\nu_2)} H_{K,M} \Big( \frac{\nu_1}{\sqrt{X_1}}, \frac{\nu_2}{\sqrt{X_2}} \Big) \nonumber \\
& = \frac{X_1 X_2}{N(\lambda^{\ell_1+1} q_1) N(\lambda^{\ell_2+1} q_2) } 
\sum_{\substack{ \boldsymbol{k} \in (\mathbb{Z}[\omega]/\lambda^{14} \mathbb{Z}[\omega])^2 \\ \forall i: k_i \equiv 1 \pmod{3} }}  
\sum_{ \substack{\boldsymbol{m}, \boldsymbol{r} \\ \forall i:  0 \leq m_i \leq \ell_i+1 \\ \forall i: r_i \mid q_i \\ \forall i: r_i \equiv k_i \pmod{\lambda^{14}}  }}  \nonumber \\
& \sum_{\boldsymbol{n} \in \mathbb{Z}^2} (-1)^{n_1+n_2}
\sum_{p=1}^{4} \mathscr{D}_{pf}(\boldsymbol{X},\boldsymbol{\lambda}^{\boldsymbol{m}} \boldsymbol{r}, \boldsymbol{\eta}, \boldsymbol{n};\ddot{H}_{K,M}), 
\end{align}
where 
\begin{align} \label{Y1}
& \mathscr{D}_{1f}(\boldsymbol{X},\boldsymbol{\lambda}^{\boldsymbol{m}} \boldsymbol{r}, \boldsymbol{\eta}, \boldsymbol{n} ;\ddot{H}_{K,M}) \nonumber \\
&:=\delta_{m_1 \in [0, \min \{5,\ell_1+1\}]} \cdot \delta_{m_2 \in [0, \min \{5,\ell_2+1\}]} \cdot
N(\lambda^{m_1}) N(\lambda^{m_2})   \nonumber \\
& \times \Big( \frac{\overline{r_1}}{r_1} \Big)^{-n_1} \Big( \frac{\overline{r_2}}{r_2} \Big)^{n_2}  \omega(m_1;1,k_1) \overline{\omega(m_2,1,k_2)} 
\nonumber \\
& \times \sum_{\substack{ \nu_1 \in \lambda^{-2m_1-3} \mathbb{Z}[\omega] \\ \nu_2 \in \lambda^{-2m_2-3} \mathbb{Z}[\omega] } } 
\rho_{f \otimes \widehat{\psi_{\lambda^{m_1}}(\cdot)_{\lambda \eta_1}},\mathfrak{c}(m_1,1;k_1)}(\nu_1) \nonumber \\
& \times \overline{\rho_{f \otimes \widehat{\psi_{\lambda^{m_2}}(\cdot)_{\lambda \eta_2}},\mathfrak{c}(m_2,1;k_2)}(\nu_2)}
\Big(\frac{\nu_1}{|\nu_1|} \Big)^{-n_1} \Big( \frac{\overline{\nu_2}}{|\nu_2|} \Big)^{-n_2}  \nonumber \\ 
& \times \psi^{\star}_{r_1}(\cdot)_{\lambda^{2m_1+1} \eta_1}(\lambda^{2m_1+4} \nu_1 )
\overline{\psi^{\star}_{r_2}(\cdot)_{\lambda^{2m_2+1} \eta_2}(\lambda^{2m_2+4} \nu_2 )} \nonumber \\
& \times \ddot{H}_{K,M} \Big(\frac{N(\nu_1)}{N(r_1)^2/X_1},\frac{N(\nu_2)}{N(r_2)^2/X_2},\boldsymbol{n} \Big) ;
\end{align}

\begin{align} \label{Y2}
& \mathscr{D}_{2f}(\boldsymbol{X},\boldsymbol{\lambda}^{\boldsymbol{m}} \boldsymbol{r}, \boldsymbol{\eta}, \boldsymbol{n};\ddot{H}_{K,M} ) \nonumber \\
&:=\delta_{m_1 \in [0, \min \{5,\ell_1+1\}]} \cdot \delta_{m_2 \in [6,\ell_2+1]} N(\lambda^{m_1}) N(\lambda^4) 
\Big(\frac{\overline{r_1}}{r_1}  \Big)^{-n_1}  \omega(m_1,1;k_1)  \nonumber \\
& \times \sum_{\substack{ \nu_1 \in \lambda^{-2m_1-3} \mathbb{Z}[\omega] \\ \nu_2 \in \lambda^{-3} \mathbb{Z}[\omega] }}
\rho_{f \otimes \widehat{\psi_{\lambda^{m_1}}(\cdot)_{\lambda \eta_1}},\mathfrak{c}(m_1,1;k_1)}(\nu_1) \overline{\rho_f(\nu_2)}
\Big( \frac{\nu_1}{|\nu_1|} \Big)^{-n_1} \Big( \frac{\overline{\nu_2}}{|\nu_2|} \Big)^{-n_2} \nonumber \\
& \times \sum_{\zeta_2}  \Big( \frac{\overline{\zeta_2^{-1} \lambda^{m_2-4} r_2 }}{\zeta_2^{-1} \lambda^{m_2-4} r_2 }  \Big)^{n_2}  \psi^{\star}_{r_1}(\cdot)_{\lambda^{2m_1+1} \eta_1}(\lambda^{2m_1+4} \nu_1 ) 
\cdot \overline{\psi^{\#}_{\lambda^{m_2} r_2}(\cdot)_{\lambda \eta_2,\zeta_2^{-1}}(\lambda^4 \nu_2)} \nonumber \\
& \times \ddot{H}_{K,M} \Big(\frac{N(\nu_1)}{N(r_1)^2/X_1},\frac{N(\nu_2)}{N(\lambda^{m_2-4} r_2)^2/X_2},\boldsymbol{n} \Big);
\end{align}

\begin{align} \label{Y3}
& \mathscr{D}_{3f}(\boldsymbol{X},\boldsymbol{\lambda}^{\boldsymbol{m}} \boldsymbol{r}, \boldsymbol{\eta}, \boldsymbol{\eta};\ddot{H}_{K,M}) \nonumber \\
&:= \delta_{m_1 \in [6,\ell_1+1]} \cdot \delta_{m_2 \in [0, \min \{5,\ell_2+1\}]} \cdot 
N(\lambda^4) N(\lambda^{m_2}) \Big( \frac{\overline{r_2}}{r_2} \Big)^{n_2} \overline{\omega(m_2,1;k_2)}   \nonumber \\
& \times \sum_{\substack{ \nu_1 \in \lambda^{-3} \mathbb{Z}[\omega] \\ \nu_2 \in \lambda^{-2m_2-3} \mathbb{Z}[\omega] }}
\rho_f(\nu_1)  \overline{\rho_{f \otimes \widehat{\psi_{\lambda^{m_2}}(\cdot)_{\lambda \eta_2}},\mathfrak{c}(m_2,1;k_2)}(\nu_2)}
\Big( \frac{\nu_1}{|\nu_1|} \Big)^{-n_1} \Big( \frac{\overline{\nu_2}}{|\nu_2|} \Big)^{-n_2} \nonumber \\
& \times \sum_{\zeta_1}  \Big( \frac{\overline{\zeta_1^{-1} \lambda^{m_1-4} r_1 }}{\zeta_1^{-1} \lambda^{m_1-4} r_1 }  \Big)^{-n_1}
 \psi^{\#}_{\lambda^{m_1} r_1}(\cdot)_{\lambda \eta_1,\zeta_1^{-1}}(\lambda^4 \nu_1)
\cdot \overline{\psi^{\star}_{r_2}(\cdot)_{\lambda^{2m_2+1} \eta_2}(\lambda^{2m_2+4} \nu_2 )} \nonumber \\ 
& \times \ddot{H}_{K,M} \Big(\frac{N(\nu_1)}{N(\lambda^{m_1-4} r_1)^2/X_1},\frac{N(\nu_2)}{N(r_2)^2/X_2}, \boldsymbol{n}  \Big);
\end{align}
 
\begin{align} \label{Y4}
& \mathscr{D}_{4f}(\boldsymbol{X},\boldsymbol{\lambda}^{\boldsymbol{m}} \boldsymbol{r}, \boldsymbol{\eta},\boldsymbol{n};\ddot{H}_{K,M}) \nonumber \\
&:= \delta_{m_1 \in [6,\ell_1+1]} \cdot  \delta_{m_2 \in [6,\ell_2+1]} \cdot
N(\lambda^8) \nonumber \\
& \times \sum_{\boldsymbol{\nu} \in (\lambda^{-3} \mathbb{Z}[\omega])^2} \rho_f(\nu_1) \overline{\rho_f(\nu_2)} 
\Big( \frac{\nu_1}{|\nu_1|} \Big)^{-n_1} \Big( \frac{\overline{\nu_2}}{|\nu_2|} \Big)^{-n_2}  \nonumber \\
& \times \mathop {\sum}_{\boldsymbol{\zeta}}  
\Big( \frac{\overline{\zeta_1^{-1} \lambda^{m_1-4} r_1} }{\zeta_1^{-1} \lambda^{m_1-4} r_1 }  \Big)^{-n_1} 
\Big( \frac{\overline{\zeta_2^{-1} \lambda^{m_2-4} r_2 }}{\zeta_2^{-1} \lambda^{m_2-4} r_2 }  \Big)^{n_2} \nonumber \\
& \times \psi^{\#}_{\lambda^{m_1} r_1}(\cdot)_{\lambda \eta_1,\zeta_1^{-1}}(\lambda^4 \nu_1) \cdot
\overline{\psi^{\#}_{\lambda^{m_2} r_2}(\cdot)_{\lambda \eta_2,\zeta_2^{-1}}(\lambda^4 \nu_2)} \nonumber \\
& \times \ddot{H}_{K,M} \Big(\frac{N(\nu_1)}{N(\lambda^{m_1-4} r_1)^2/X_1},\frac{N(\nu_2)}{N(\lambda^{m_2-4} r_2)^2/X_2},\boldsymbol{n}  \Big),
\end{align}
$\psi^{\#}_{\zeta}$ and $\psi^{\star}$ are given in 
\eqref{Psidefn} and \eqref{underlinedefn} (with $\psi^{\prime \prime} \rightarrow \psi$) respectively,
and $\ddot{H}_{K,M}(\cdot, \boldsymbol{n}):(0,\infty)^2 \rightarrow \mathbb{R}$ is given by \eqref{doubletilde},
and $\omega(m,j,r)$ and $\mathfrak{c}(m,j,r)$
are both as in Proposition \ref{funceqprop2}.
\end{prop}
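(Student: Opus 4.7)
The plan is to apply the single-variable argument behind Proposition \ref{funceqvor} separately in each of the two variables $\nu_1$ and $\nu_2$. Since the Fourier coefficients $\rho_f(\nu_1)$ and $\overline{\rho_f(\nu_2)}$ sit under the joint test function $H_{K,M}$, the opening move is to disentangle the two variables, which I would do by applying the double complex Mellin inversion formula to $H_{K,M}$ via \eqref{doublemellin}. This introduces a double contour integral over $\boldsymbol{s} \in \mathbb{C}^2$ and a sum over $\boldsymbol{n} \in \mathbb{Z}^2$; interchanging summation and integration is justified on the initial contours $\Re(s_1) = \Re(s_2) = 2$ by absolute convergence (Lemma \ref{L1bound}).

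With the variables separated, I would next detect each congruence condition $\lambda^3 \nu_i \equiv \eta_i \pmod{\lambda^{\ell_i} q_i}$ independently using Lemma \ref{orthog}. This produces parameters $r_i \mid q_i$ and $0 \leq m_i \leq \ell_i + 1$, and expresses the $\nu_1$-sum and the complex conjugate of the $\nu_2$-sum as Dirichlet series of the form $\mathcal{D}(s_i, f; \widehat{\psi_{\lambda^{m_i} r_i}(\cdot)_{\lambda \eta_i}}, \pm n_i)$. Each variable can then be handled independently by the appropriate functional equation: Proposition \ref{entirefunc2} when $m_i \in [0, \min\{5, \ell_i+1\}]$ and Proposition \ref{entirefunc} when $m_i \in [6, \ell_i + 1]$. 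Shifting each contour from $\Re(s_i) = 2$ to $\Re(s_i) = -1$ is justified because both integrands are entire (the functional-equation propositions together with Lemma \ref{mellin1}), and applying the functional equation in each variable gives four combinations corresponding exactly to the four terms $\mathscr{D}_{1f}, \ldots, \mathscr{D}_{4f}$ in the statement.

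The final step is to substitute $s_i \mapsto 1 - s_i$ in each integral, expand the dualised Dirichlet series in their new region of absolute convergence, and assemble the Archimedean part into the transform $\ddot{H}_{K,M}(\cdot, \boldsymbol{n})$ of \eqref{doubletilde}, while the non-Archimedean part is repackaged using Lemmas \ref{unpack1} and \ref{unpack2} to produce the characters $\psi^{\#}$ and $\psi^{\star}$ that appear in \eqref{Y1}--\eqref{Y4}. The main obstacle will be the bookkeeping of root numbers, local factors, and residue-class dependence in the small-$m$ regime. Because Proposition \ref{entirefunc2} only determines $\omega(m, 1; r)$ and $\mathfrak{c}(m, 1; r)$ as functions of $r \pmod{\lambda^{2m+4}}$, and because $m \leq 5$ forces $\lambda^{2m+4} \mid \lambda^{14}$, the cleanest way to state the result is to pre-split the $r_i$-sum according to residue classes $k_i \pmod{\lambda^{14}}$; this is precisely the role of the outer $\boldsymbol{k}$-sum and the constraint $r_i \equiv k_i \pmod{\lambda^{14}}$, and it allows the root numbers and cusp indices to be written uniformly as $\omega(m_i, 1; k_i)$ and $\mathfrak{c}(m_i, 1; k_i)$. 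Finally, the conjugated characters $\overline{\psi^{\#}}$, $\overline{\psi^{\star}}$ and conjugated orientation $(\overline{\nu_2}/|\nu_2|)^{-n_2}$ appearing in the second slot of each $\mathscr{D}_{pf}$ arise naturally from applying the single-variable functional equation to the Dirichlet series built out of $\overline{\rho_f(\nu_2)}$ rather than $\rho_f(\nu_2)$.
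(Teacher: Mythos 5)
Your proposal is correct and takes essentially the same approach the paper intends: the paper omits the proof of this proposition, describing it only as ``Mellin inversion and an iterated application of the functional equation in Proposition \ref{funceqvor}'', and your outline is precisely the two-variable iteration of the proof of Proposition \ref{funceqvor} (double complex Mellin inversion, congruence detection via Lemma \ref{orthog} in each variable, the case split on $m_i$ through Propositions \ref{entirefunc} and \ref{entirefunc2} after the Chinese-Remainder factorisation of the character into its $\lambda$-part and $r$-part, and reassembly via Lemmas \ref{unpack1} and \ref{unpack2}). Your explanation of the outer $\boldsymbol{k}$-sum --- that $\omega(m,1;r)$ and $\mathfrak{c}(m,1;r)$ depend only on $r \pmod{\lambda^{2m+4}}$ and $2m+4 \leq 14$ in the small-$m$ regime --- is exactly the right justification for that feature of the statement.
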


\section{Type-I estimates} \label{type1sec}
Recall the notation from \S \ref{introduction},
in particular \eqref{type1point} and \eqref{type1avg}.

\begin{remark} \label{uniquefac}
We can uniquely factorise $v=\lambda^{e_v} \zeta_{v} v_0$ where $e_v \in \mathbb{Z}_{\geq 2}$, 
$\zeta_{v}^6=1$, and $v_0 \equiv 1 \pmod{3}$. 
In view of the congruence condition $ab \equiv u \pmod{v}$, we can 
assume without loss of generality that $v=\lambda^{e_v} v_0$ with $v_0 \equiv 1 \pmod{3}$.
In particular, since $(u,v)=1$ and $ab \equiv u \pmod{v}$ in \eqref{type1point} and \eqref{type1avg}, we have $(a,v)=1$.
\end{remark}

\begin{proof}[Proof of Lemma \ref{trivpoint}]
We write \eqref{type1point} as
\begin{equation} \label{type1trivavg}
\mathscr{S}_f(a,X,v,u;W_K)=\sum_{\substack{\nu \in \lambda^{-3} \mathbb{Z}[\omega] 
\\ \lambda^3 \nu \equiv 0 \pmod{a} \\ \lambda^3 \nu \equiv u \pmod{\lambda^{e_v} v_0}}} \rho_{f} (\nu) W_K \Big( \frac{N(\nu)}{X} \Big).
\end{equation}
Since $(a,\lambda^{e_v}  v_0)=1$, we let
$\overline{a} \in \mathbb{Z}[\omega]$ be such that $a \overline{a} \equiv 1 \pmod{\lambda^{e_v}  v_0}$.
The congruence conditions placed on $\nu$ in \eqref{type1trivavg} are equivalent
to $\lambda^3 \nu \equiv u a \overline{a} \pmod{\lambda^{e_v} v_0 a}$
by the Chinese Remainder Theorem.

\subsection{Application of Voronoi summation}
Applying Voronoi summation (Proposition \ref{funceqvor})
we obtain
\begin{align} \label{Aprimedefn} 
& \mathscr{S}_f(a,X;v,u;W_K) \nonumber \\
&=\frac{X}{N(\lambda^{e_v+1} v_0 a)} 
\sum_{\substack{ k \pmod{\lambda^{14}} \\ k \equiv 1 \pmod{3} }} \sum_{\substack{m,r,t \\ 0 \leq m \leq e_v+1 \\ r \mid a, \hspace{0.05cm} t \mid v_0  \\ rt \equiv k \pmod{\lambda^{14}} }} \sum_{p=1}^{2} Z_{pf}(X,\lambda^m rt,\eta,0;\dot{W}_K),
\end{align}
where $Z_{pf}(\cdots)$ for $p=1,2$ are given in \eqref{Z1r} and \eqref{Z2r} respectively.
The weight functions involved are radial, see Remarks \ref{radial} and \ref{radial2}, so only $n=0$ occurs on the dual side of Voronoi summation.

\subsection{Evaluation and bounds for arithmetic exponential sums}
We now consider the arithmetic exponential sum $\psi^{\#}_{\lambda^m r t}(\cdot)_{\lambda \eta,\zeta^{-1}}(\lambda^4 \nu)$ for $\nu \in \lambda^{-3} \mathbb{Z}[\omega]$
that occurs in $Z_{2f}(\cdots)$.
Throughout this computation we will repeatedly use the facts $\eta \equiv u a \overline{a} \pmod{\lambda^{e_v} v_0 a}$,
$a \overline{a} \equiv 1 \pmod{\lambda^{e_v} v_0}$,
$0 \leq m \leq e_v+1$, $r \mid a$, and $t \mid v_0$, without further reference. 
Using Lemma \ref{unpack1} we have 
\begin{equation} \label{tildeeval}
\psi^{\#}_{\lambda^{m} r t}(\cdot)_{\lambda \eta,\zeta^{-1}}(\lambda^4 \nu)
=\frac{1}{N(\lambda^{m+3} r t)} K_{\Gamma_1(3),\sigma,\sigma}(\zeta(\lambda^3 \nu), \zeta \eta,\lambda^{m-1} rt),
\end{equation}
where $\sigma=\pmatrix 1001$.
After opening the cubic Kloosterman sum in \eqref{tildeeval}, we then perform a computation using the Chinese Remainder Theorem
(with coprime moduli $\lambda^{m-1}t$ and $r$), \eqref{cuberep}, and \eqref{coprimerel}, to obtain
\begin{align} \label{simp1}
\psi^{\#}_{\lambda^{m} rt}(\cdot)_{\lambda \eta,\zeta^{-1}}(\lambda^4 \nu)&=\frac{1}{N(r)^{1/2} N(\lambda^{m+3} t)} \overline{\Big( \frac{\zeta^{-1} \lambda^{m-1} t}{r} \Big)_3} 
\overline{\widetilde{g}(\lambda^3 \nu,r)}  \nonumber \\
& \times K_{\Gamma_1(3),\sigma,\sigma}( \zeta \overline{r} (\lambda^3 \nu),\zeta \overline{r} u,\lambda^{m-1} t).
\end{align}
The bound
\begin{equation} \label{intermedbd1}
|\psi^{\#}_{\lambda^{m} rt}(\cdot)_{\lambda \eta,\zeta^{-1}}(\lambda^4 \nu) | \ll N(\lambda^{m} r t)^{-1/2+\varepsilon} \cdot N((\lambda^3 \nu,r))^{1/2}
\end{equation}
for $\nu \in \lambda^{-3} \mathbb{Z}[\omega]$ and $m \in \mathbb{Z}_{\geq 6}$
follows from using Lemma \ref{sqrootbd} and Lemma \ref{weilbd} in \eqref{simp1}, \eqref{distinctprime}, and the fact
\begin{equation*}
(\zeta \overline{r} (\lambda^3 \nu),\zeta \overline{r} u,\lambda^{m-1} t)=1. 
\end{equation*}

We now give a similar treatment of the
arithmetic sum $\psi^{\star}_{r t}(\cdot)_{\lambda^{2m+1} \eta}(\lambda^{2m+4} \nu )$
that occurs in $Z_{1f}(\cdots)$.
Using Lemma \ref{unpack2} we have 
\begin{equation} \label{simp2}
\psi^{\star}_{rt}(\cdot)_{\lambda^{2m+1} \eta}(\lambda^{2m+4} \nu)
=\frac{1}{N(rt)} K_{\Gamma_1(3),\sigma,\xi}(\overline{\lambda^{2m+3}} (\lambda^{2m+3} \nu), \overline{\lambda^3} \eta,rt),
\end{equation}
where $\sigma$ is as above, and $\xi:=\pmatrix 0 {-1} 1 0$.
After opening the cubic Kloosterman sum in \eqref{simp2}, we then perform a computation using the Chinese Remainder Theorem
(with coprime moduli $t$ and $r$) and \eqref{coprimerel}, to obtain 
\begin{align} \label{simp2b}
\psi^{\star}_{rt}(\cdot)_{\lambda^{2m+1} \eta}(\lambda^{2m+4} \nu)&=\frac{1}{N(r)^{1/2} N(t)} 
\overline{\Big( \frac{\lambda^{2m} t }{r} \Big)_3} \overline{\widetilde{g}(\lambda^{2m+3} \nu,r)}  \nonumber \\
& \times K_{\Gamma_1(3),\sigma,\xi}(\overline{\lambda^{2m+3}} \overline{r} (\lambda^{2m+3} \nu), \overline{\lambda^3} \overline{r} u,t).
\end{align}
The bound
\begin{equation} \label{intermedbd2}
|\psi^{\star}_{rt}(\cdot)_{\lambda^{2m+1} \eta}(\lambda^{2m+4} \nu)| \ll N(rt)^{-1/2+\varepsilon} N((\lambda^{2m+3} \nu,r))^{1/2},
\end{equation}
for $\nu \in \lambda^{-2m-3} \mathbb{Z}[\omega]$ follows from using Lemma \ref{sqrootbd} and Lemma \ref{weilbd} in \eqref{simp2}, \eqref{distinctprime},
and the fact $(\overline{\lambda^{2m+3}} \overline{r} (\lambda^{2m+3} \nu), \overline{\lambda^3} \overline{r} u,t)=1$.

\subsection{Truncations and conclusion}
We substitute \eqref{simp2b} and \eqref{simp1} into 
$Z_{pf}(\cdots)$ for $p=1,2$ respectively.
We recall Remark \ref{radial2}. We then use Lemma \ref{unpack3}
(with $D_1>0$ large and fixed and $D_2=0$), Lemma \ref{L1bound}, and
\eqref{intermedbd2} (resp. \eqref{intermedbd1}) to
truncate the $\nu$-sums in $Z_{pf}(\cdots)$ for $p=1$ (resp. $p=2$) to
\begin{equation} \label{Pdef}
N(\nu) \ll (XKN(v))^{\varepsilon} \cdot K^4 N(\lambda^m rt)^2 X^{-1}=:P,
\end{equation}
with negligible error $O((XKN(v))^{-2000})$. Denote the truncated expressions by
$Z^{\prime}_{pf}(\cdots,P)$ for $p=1,2$.
Without loss of generality, we can restrict our attention to the case 
$P \gg (XKN(v))^{-\varepsilon}$ otherwise both
$Z^{\prime}_{pf}(\cdots,P)$ for $p=1,2$
are $O((XKN(v))^{-2000})$ by the above argument.
Thus 
\begin{align} \label{Aprimedefn2} 
& \mathscr{S}_f(a,X;v,u;W_K) \nonumber \\
&=\frac{X}{N(\lambda^{e_v+1} v_0 a)}
\sum_{\substack{k \pmod{\lambda^{14}} \\ k \equiv 1 \pmod{3} } } 
\sum_{\substack{m,r,t \\ 0 \leq m \leq e_v+1 \\ r \mid a,\hspace{0.05cm} t \mid v_0  \\  rt \equiv k \pmod{\lambda^{14}} \\ P\gg (XKN(v))^{-\varepsilon}  } } 
\sum_{p=1}^{2} Z^{\prime}_{pf}(\cdots,P)  \nonumber \\
&+O((XKN(v))^{-1000}).
\end{align}
Using the triangle inequality and \eqref{intermedbd1}, \eqref{intermedbd2},
and Lemma \ref{unpack3} (with $D_1=\varepsilon$ and $D_2=0$) we obtain
\begin{align}
Z^{\prime}_{1f}(\cdots,P) & \ll (XK)^{\varepsilon} N(\lambda^{m})^{1+\varepsilon}
N(rt)^{-1/2+\varepsilon}  \nonumber \\
& \times \sum_{\substack{ \nu \in \lambda^{-2m-3} \mathbb{Z}[\omega] \\ N(\nu) \ll P}} 
|\rho_{f \otimes \widehat{\psi_{\lambda^m}(\cdot)_{\lambda u}},\mathfrak{c}(m,1;k)}(\nu) |
N((\lambda^{2m+3} \nu,r))^{1/2}; \label{Z1bound} \\
& \text{for} \quad 0 \leq m \leq \min \{5,e_v+1\}, \nonumber
\end{align}
and 
\begin{align}
Z^{\prime}_{2f}(\cdots,P) & \ll (XK)^{\varepsilon} N(\lambda^m rt)^{-1/2+\varepsilon} 
\sum_{\substack{ \nu \in \lambda^{-3} \mathbb{Z}[\omega] \\ N(\nu) \ll P}} | \rho_f(\nu) | N((\lambda^3 \nu,r))^{1/2}  \label{Z2bound} \\
& \text{for} \quad 6 \leq m \leq e_v+1. \nonumber
\end{align}

We apply the Cauchy-Schwarz inequality to the $\nu$-sums in \eqref{Z1bound} and \eqref{Z2bound}.
We then bound \eqref{Z1bound} and \eqref{Z2bound} using
Lemma \ref{gcdflat}, Lemma \ref{rankinselbound}, and \eqref{divis}.  
Substitution of the result into \eqref{Aprimedefn2} gives
\begin{align} \label{Aprimedefn3} 
& \mathscr{S}_f(a,X;v,u;W_K) \nonumber \\
& \ll \frac{X (XK N(v))^{\varepsilon}}{N(\lambda^{e_v+1} v_0 a)} 
\sum_{\substack{m,r,t \\ 0 \leq m \leq e_v+1 \\ r \mid a, \hspace{0.1cm} t \mid v_0  \\ P \gg (XKN(v))^{-\varepsilon}  }} \Big( N(\lambda^m rt)^{-1/2} \cdot \frac{K^4N(\lambda^{m} rt)^2}{X}  \Big)
+ (XKN(v))^{-1000} \nonumber \\
& \ll  (XKN(v))^{\varepsilon} K^{4} N(v)^{1/2} N(a)^{1/2}, 
\end{align}
as required. 
\end{proof}

\begin{proof}[Proof of Proposition \ref{type1avgbd}]
We multiply \eqref{type1trivavg} by $\mu^2(a) \alpha_a$ 
and sum over $a \in \mathbb{Z}[\omega]$. We repeat the same steps on the $\nu$ sum as in the proof of Lemma \ref{trivpoint}
up to the display \eqref{Aprimedefn}. We then insert a smooth dyadic 
partition of unity in $r$ variable. We obtain 
\begin{equation} \label{Afdecompose}
\mathscr{A}_f(\cdots )=\sum_{\substack{1 \ll R \ll A  \\ R \text{ dyadic}}} 
\mathscr{A}_f(\cdots,R),
\end{equation}
where
\begin{align} \label{Adefn3}
&\mathscr{A}_f(\cdots,R) \nonumber \\
&:=\frac{X}{N(\lambda^{e_v+1} v_0)} \sum_{a \in \mathbb{Z}[\omega]} \frac{\mu^2(a) \alpha_a}{N(a)} 
\sum_{\substack{k \pmod{\lambda^{14}} \\ k \equiv 1 \pmod{3} } } 
\mathop{\sum}_{\substack{m,r,t \\ 0 \leq m \leq e_v+1 \\ r \mid a,\hspace{0.05cm} t \mid v_0  \\  rt \equiv k \pmod{\lambda^{14}} } } U \Big( \frac{N(r)}{R} \Big) \nonumber \\
& \times \sum_{p=1}^2 Z_{pf}(X,\lambda^m rt,\eta,0;\dot{W}_K),
\end{align}
where the $Z_{pf}(\cdots)$ for $p=1,2$
are given by \eqref{Z1r} and \eqref{Z2r} respectively. We recall Remark \ref{radial2}. We then use Lemma
\ref{unpack3}
(with $D_1>0$ large and fixed and $D_2=0$),
Lemma \ref{L1bound}, and
\eqref{intermedbd2} (resp. \eqref{intermedbd1}) to
truncate the $\nu$-sums in $Z_{pf}(\cdots)$ for $p=1$ (resp. $p=2$)
with $N(r) \sim R$ to 
\begin{equation} \label{P0def}
N(\nu) \ll (XKN(v))^{\varepsilon} \cdot K^4 R^2 N(\lambda^m t)^2 X^{-1}=:P_0,
\end{equation}
with negligible error $O((XKN(v))^{-2000})$. 
Denote the truncated expressions
by $Z^{\prime}_{pf}(\cdots,P_0)$ for $p=1,2$. 
Without loss of generality, we can restrict our attention to the case that $P_0 \gg (XKN(v))^{-\varepsilon}$
otherwise both
$Z^{\prime}_{pf}(\cdots,P_0)$
are $O((XKN(v))^{-2000})$ by the above argument.
Thus \eqref{Adefn3} becomes
\begin{align} \label{Adefn4}
&\mathscr{A}_f(\cdots,R) \nonumber \\
&=\frac{X}{N(\lambda^{e_v+1} v_0)} \sum_{a \in \mathbb{Z}[\omega]} \frac{\mu^2(a) \alpha_a}{N(a)} 
\sum_{\substack{k \pmod{\lambda^{14}} \\ k \equiv 1 \pmod{3} } } 
\sum_{\substack{m,r,t \\ 0 \leq m \leq e_v+1 \\ r \mid a,\hspace{0.05cm} t \mid v_0 \\ rt \equiv k \pmod{\lambda^{14}} \\ P_0 \gg (XKN(v))^{-\varepsilon} } } U \Big( \frac{N(r)}{R} \Big)  
\sum_{p=1}^{2} Z^{\prime}_{pf}(\cdots,P_0) \nonumber \\
&+O((XKN(v))^{-1000} \| \boldsymbol{\mu^2} \boldsymbol{\alpha} \|_2).
\end{align}

\subsection{Further simplification using the squarefree support of $\boldsymbol{\alpha}$}
We further open each $Z^{\prime}_{pf}(\cdots,P_0)$
in \eqref{Adefn4} and manipulate them 
by further simplifying \eqref{simp1} and \eqref{simp2b}
under the assumption that $r \equiv 1 \pmod{3}$ is squarefree (as is the case in \eqref{Adefn4}).
For $r$ squarefree and $\mu \in \mathbb{Z}[\omega]$,
Lemma \ref{elemlem} guarantees that $g(\mu,r)=0$
unless $(\mu,r)=1$. 
When $(\mu,r)=1$ we note that \eqref{coprimerel} implies that
$\widetilde{g}(\mu,r)=\overline{\big(\tfrac{\mu }{r} \big)_3} \widetilde{g}(r)$.
Thus \eqref{simp1} becomes
\begin{align} \label{simp1a}
& \psi^{\#}_{\lambda^{m} rt}(\cdot)_{\lambda \eta,\zeta^{-1}}(\lambda^4 \nu) \nonumber \\
&=\begin{cases}
N(r)^{-1/2} & \hspace{-0.3cm} \overline{\big( \frac{\zeta^{-1} \lambda^{m-1} t}{r} \big)_3} 
\overline{\widetilde{g}(r)} \big(\frac{\lambda^3 \nu}{r}  \big)_3 
\cdot N(\lambda^{m+3} t)^{-1} K_{\Gamma_1(3),\sigma,\sigma}( \zeta \overline{r}(\lambda^3 \nu),\zeta \overline{r} u,\lambda^{m-1} t) \\
& \text{if} \quad (\lambda^3 \nu,r)=1, \\
0 & \text{otherwise}
\end{cases}
\end{align}
for all $m \in \mathbb{Z}_{\geq 6}$ and $\nu \in \lambda^{-3} \mathbb{Z}[\omega]$. Similarly,
\eqref{simp2b} becomes 
\begin{align} \label{simp2bc}
& \psi^{\star}_{rt}(\cdot)_{\lambda^{2m+1} \eta}(\lambda^{2m+4} \nu) \nonumber \\
&=\begin{cases} 
N(r)^{-1/2} & \hspace{-0.3cm}
\overline{\big( \frac{\lambda^{2m} t }{r} \big)_3} \overline{\widetilde{g}(r)} \big( \frac{\lambda^{2m+3} \nu}{r} \big)_3 
\cdot N(t)^{-1} K_{\Gamma_1(3),\sigma,\xi}(\overline{\lambda^{2m+3}} \overline{r} (\lambda^{2m+3} \nu), \overline{\lambda^3} \overline{r} u,t) \\
&  \text{if} \quad (\lambda^{2m+3} \nu,r)=1, \\
0 & \text{otherwise}
\end{cases},
\end{align}
for all $m \in \mathbb{Z}_{\geq 0}$ and $\nu \in \lambda^{-2m-3} \mathbb{Z}[\omega]$.

\subsection{Preparations for the cubic large sieve}
We substitute \eqref{simp2bc} and \eqref{simp1a} into the expressions for
$Z^{\prime}_{pf}(\cdots,P_0)$ for $p=1$ (resp. $p=2$) in \eqref{Adefn4}.
We then insert a smooth dyadic partition of unity in the $\nu$ variable and
open the transforms $\dot{W}_K(\cdot)$ with \eqref{Wtrans}. We then move the resulting contour integral to 
$\Re(s)=\varepsilon$, 
resolve the $r,\lambda^3 \nu, \lambda^{2m+3} \nu$ variables into congruence classes modulo $\lambda^{\max \{4,m-1\}} t$,
and interchange the order of summation/integration by absolute convergence (see \eqref{Kmellinbound} and \eqref{quotbd}). Then \eqref{Adefn4} becomes
\begin{align}  \label{Asum}
\mathscr{A}_f(\cdots,R) &=\mathscr{A}^{\prime}_f(\cdots,R)+\mathscr{A}^{\prime \prime}_f(\cdots,R) 
+O((XKN(v))^{-1000} \| \boldsymbol{\mu^2} \boldsymbol{\alpha} \|_2),
\end{align}
where
\begin{align} \label{Afprime}
& \mathscr{A}^{\prime}_f(\cdots,R) \nonumber \\
&:=\frac{X}{N(\lambda^{e_v+1} v_0)}
\sum_{\substack{m,t \\ 0 \leq m \leq \min \{5,e_v+1\} \\  t \mid v_0  \\ P_0 \gg (XKN(v))^{-\varepsilon} }}    \frac{N(\lambda^m)}{N(t)} 
\sum_{\substack{ k \pmod{\lambda^{14}} \\ k \equiv 1 \pmod{3} }} \omega(m,1;k) 
\sum_{\substack{ \boldsymbol{j} \in (\mathbb{Z}[\omega]/9t \mathbb{Z}[\omega])^2 \\ j_1 \equiv 1 \pmod{3} \\ (j_1,t)=1 }}  
\overline{\Big( \frac{\lambda^{2m} t }{j_1} \Big)_3} \nonumber \\
& \times K_{\Gamma_1(3),\sigma,\xi}(\overline{\lambda^{2m+3}} \overline{j_1} j_2, \overline{\lambda^3} \overline{j_1} u,t) 
\cdot \frac{1}{2 \pi i} \int_{(\varepsilon)} 
\frac{G_{\infty}(s,\tau_f,0)}{G_{\infty}(1-s,\tau_f,0)} \widehat{W}_K(1-s) X^{-s} \nonumber \\
& \times \sum_{\substack{1 \ll S \ll P_0 \\ S \text{ dyadic}}}
\Big(\sum_{\substack{ \nu \in \lambda^{-2m-3} \mathbb{Z}[\omega] \\  \lambda^{2m+3} \nu \equiv j_2 \pmod{9t} \\ N(\nu) \sim S}} 
\sum_{\substack{r \in \mathbb{Z}[\omega] \\ r \equiv j_1 \pmod{9t} \\ rt \equiv k \pmod{\lambda^{14}} \\ N(r) \sim R }}
\Omega^{\prime}_\nu(s,\lambda^m,k,S) \Psi_r(s,R) \Big( \frac{\lambda^{2m+3} \nu}{r} \Big)_3 \Big) ds;
\end{align}
\begin{align} \label{Afprimeprime}
& \mathscr{A}^{\prime \prime}_f(\cdots,R) \nonumber \\
&:=\frac{X}{N(\lambda^{e_v+1} v_0)}
\mathop{ \sum}_ {\substack{\zeta,m,t \\ 6 \leq m \leq e_v+1 \\ t \mid v_0 \\ P_0 \gg (XKN(v))^{-\varepsilon} }} \frac{1}{N(\lambda^{m-1} t)} 
\sum_{\substack{ \boldsymbol{j} \in (\mathbb{Z}[\omega]/\lambda^{m-1} t \mathbb{Z}[\omega])^2 \\ j_1 \equiv 1 \pmod{3} \\ (j_1,t)=1 }} 
\overline{\Big( \frac{\zeta^{-1} \lambda^{m-1} t }{j_1} \Big)_3} \nonumber \\
& \times K_{\Gamma_1(3),\sigma,\sigma}(\zeta \overline{j_1} j_2, \zeta \overline{j_1} u, \lambda^{m-1} t) 
\cdot \frac{1}{2 \pi i} \int_{(\varepsilon)} \frac{G_{\infty}(s,\tau_f,0)}{G_{\infty}(1-s,\tau_f,0)} \widehat{W}_K(1-s) X^{-s} N(\lambda^{m-4})^{2s}  \nonumber \\
& \times \sum_{\substack{1 \ll S \ll P_0  \\ S \text{ dyadic}}} 
\Big(\sum_{\substack{ \nu \in \lambda^{-3} \mathbb{Z}[\omega]  \\ \lambda^{3} \nu \equiv j_2 \pmod{\lambda^{m-1} t} \\ N(\nu) \sim S }} 
 \sum_{\substack{ r \in \mathbb{Z}[\omega] \\ r \equiv j_1 \pmod{\lambda^{m-1} t} \\ N(r) \sim R }}
 \Omega^{\prime \prime}_\nu(s,S) \Psi_r(s,R) \Big( \frac{\lambda^{3} \nu}{r} \Big)_3 \Big) ds;
\end{align}
and
\begin{align}
\Psi_r(s,R)&:=N(r)^{-1/2} N(r)^{2s} \overline{\widetilde{g}(r)} U \Big( \frac{N(r)}{R} \Big) \sum_{a \equiv 0 \pmod{r}} \frac{\mu^2(a) \alpha_a}{N(a)}; \label{rweights} \\
\Omega_{\nu}^{\prime}(s,\lambda^m,k,S)
&:= N(\nu)^{-s} U \Big(\frac{N(\nu)}{S}  \Big) \rho_{f \otimes \widehat{\psi_{\lambda^m}(\cdot)_{\lambda u}},\mathfrak{c}(m,1;k)}(\nu); \nonumber \\
\Omega_{\nu}^{\prime \prime}(s,S)&:= N(\nu)^{-s} U \Big(\frac{N(\nu)}{S}  \Big) \rho_{f}(\nu). \nonumber
\end{align}

Observe that the weights $\Psi_r(s,R)$ in \eqref{rweights} are supported on 
squarefree $r$ (see \eqref{sqrootcancel}). 

\subsection{Application of the cubic large sieve and conclusion}
Consider the bilinear form in $\nu$ and $r$ and  in the last display of \eqref{Afprime}.
Using Corollary \ref{cubiccor} (the cubic large sieve) we obtain 
\begin{align} \label{sieveapply}
& \sum_{\substack{ \nu \in \lambda^{-2m-3} \mathbb{Z}[\omega] \\  \lambda^{2m+3} \nu \equiv j_2 \pmod{9t} \\ N(\nu) \sim S}}  \sum_{\substack{r \in \mathbb{Z}[\omega] \\ r \equiv j_1 \pmod{9t} \\ rt \equiv k \pmod{\lambda^{14}} \\ N(r) \sim R }}
\Omega^{\prime}_\nu(s,\lambda^m,k,S) \Psi_r(s,R) \Big( \frac{\lambda^{2m+3} \nu}{r} \Big)_3 \nonumber \\
& \ll (RS)^{\varepsilon} S^{1/6}(S^{1/2}+R^{1/2}) 
\Big( \sum_{\substack{\nu \in \lambda^{-2m-3} \mathbb{Z}[\omega]}} |\Omega^{\prime}_\nu(s,\lambda^m,k,S)|^2 \Big)^{1/2} 
\Big( \sum_{\substack{r \in \mathbb{Z}[\omega] \\ r \equiv 1 \pmod{3} }} \mu^2(r) | \Psi_r(s,R) |^2  \Big)^{1/2},
\end{align}
where we dropped some of the congruence conditions in the $L^2$-norms by positivity.
Lemma \ref{rankinselbound} gives
\begin{equation} \label{fac1}
\sum_{\substack{\nu \in \lambda^{-2m-3} \mathbb{Z}[\omega]}} |\Omega^{\prime}_\nu(s,\lambda^m,k,S)|^2 \ll S^{1+\varepsilon},
\end{equation}
for each $0 \leq m \leq \min \{5,e_v+1\}$ and $S \gg 1$. Using \eqref{sqrootcancel} and \eqref{normalised} we compute
\begin{align} \label{gammabd}
& \sum_{\substack{r \in \mathbb{Z}[\omega] \\ r \equiv 1 \pmod{3} }} \mu^2(r) | \Psi_r(s,R) |^2  \nonumber \\
&= \sum_{\substack{ r \in \mathbb{Z}[\omega] \\ r \equiv 1 \pmod{3} }} \frac{\mu^2(r)}{N(r)^{1-4 \Re(s)}} \Big | U \Big( \frac{N(r)}{R} \Big) \Big |^2
\cdot
\Big | \sum_{\substack{a \in \mathbb{Z}[\omega] \\ a \equiv 0 \pmod{r}}} \frac{\mu^2(a) \alpha_a}{N(a)} \Big |^2 \nonumber  \\
& \ll R^{-1+\varepsilon} \sum_{\substack{r \in \mathbb{Z}[\omega] \\ r \equiv 1 \pmod{3} \\ N(r) \sim R }} 
\Big | \sum_{\substack{a \in \mathbb{Z}[\omega] \\ a \equiv 0 \pmod{r}}} \frac{\mu^2(a) \alpha_a}{N(a)} \Big |^2 \nonumber \\
& \ll A^{-1} R^{-2+\varepsilon}   \sum_{\substack{u,r \in \mathbb{Z}[\omega] \\ u,r \equiv 1 \pmod{3}}} \mu^2(ur) |\alpha_{ur}|^2 \nonumber \\
& \ll (AR)^{\varepsilon} A^{-1} R^{-2} \| \boldsymbol{\mu}^2 \boldsymbol{\alpha} \|_2^{2}.
\end{align}
The penultimate display follows from the Cauchy-Schwarz inequality and a change of variables.
The last display follows from \eqref{divis}. We substitute \eqref{fac1} and \eqref{gammabd} 
into \eqref{sieveapply}, and sum both sides of the resulting inequality 
over dyadic values of $S$ (for each $0 \leq m \leq \min \{5,e_v+1\}$).
We obtain
\begin{align} \label{bilinearbd}
& \sum_{\substack{1 \ll S \ll P_0 \\ S \text{ dyadic}}}
\Big |  \sum_{\substack{ \nu \in \lambda^{-2m-3} \mathbb{Z}[\omega] \\  \lambda^{2m+3} \nu \equiv j_2 \pmod{9t} \\ N(\nu) \sim S}}  \sum_{\substack{r \in \mathbb{Z}[\omega] \\ r \equiv j_1 \pmod{9t} \\ rt \equiv k \pmod{\lambda^{14}} \\ N(r) \sim R }} \Omega^{\prime}_\nu(s,\lambda^m,k,S) \Psi_r(s,R) \Big( \frac{\lambda^{2m+3} \nu}{r} \Big)_3 \Big |  \nonumber \\
& \ll (XKN(v))^{\varepsilon}
(K^{14/3} N(t)^{7/3} R^{4/3} A^{-1/2} X^{-7/6}  + K^{8/3} N(t)^{4/3} R^{5/6} A^{-1/2} X^{-2/3} )  \| \boldsymbol{\mu}^2 \boldsymbol{\alpha} \|_2,
\end{align}
where \eqref{P0def} was used to obtain the last display.
We insert the bound \eqref{bilinearbd} into \eqref{Afprime}, and then use \eqref{Kmellinbound}, \eqref{quotbd},
and Lemma \ref{weilbd} to obtain
\begin{align} \label{ar1}
& \mathscr{A}^{\prime}_f(\cdots,R) \nonumber \\
& \ll (XKN(v))^{\varepsilon}  
(K^{14/3} N(v)^{5/6} R^{4/3} A^{-1/2} X^{-1/6}  +K^{8/3} N(v)^{-1/6} R^{5/6} A^{-1/2} X^{1/3}  )  \| \boldsymbol{\mu}^2 \boldsymbol{\alpha} \|_2.
 \end{align}
 An analogous computation shows that 
$ \mathscr{A}^{\prime \prime}_f(\cdots,R)$ satisfies the same bound as that
in \eqref{ar1}.
After substituting these bounds into \eqref{Asum}, we then substitute 
the result into \eqref{Afdecompose} to obtain 
\begin{align} \label{combinedisplay}
\mathscr{A}_f(\cdots) 
& \ll (XKN(v))^{\varepsilon}( K^{14/3} N(v)^{5/6} X^{-1/6} A^{5/6}  +K^{8/3} N(v)^{-1/6} (AX)^{1/3} )
\| \boldsymbol{\mu}^2 \boldsymbol{\alpha} \|_2 \nonumber \\
& \ll (XKN(v))^{\varepsilon} K^{14/3} N(v)^{5/6} (AX)^{1/3} \| \boldsymbol{\mu}^2 \boldsymbol{\alpha} \|_2,
\end{align}
where the last inequality follows since $A \ll X$.
The result follows. 
\end{proof}

\section{Type-II estimates via average (homogenous) convolution } \label{type2convsec}
Recall the notation from \S \ref{introduction},
in particular \eqref{type2}.
The first result in this section bounds the Type-II sum in terms 
of a homogeneous average convolution problem.

\begin{lemma} \label{convolutionsums}
Let the notation be as above and $X \asymp AB$. Then 
\begin{equation*}
|\mathscr{B}_f(\boldsymbol{\alpha},\boldsymbol{\beta},X,v,u;W_K)| \leq \| \boldsymbol{\beta} \|_2 \cdot \Big(  \sum_{\boldsymbol{a}}
\mu^2(a_1) \alpha_{a_1} \mu^2(a_2)
\overline{\alpha_{a_2}}
\mathscr{L}_{f}(\boldsymbol{a},X,v,u;W_K) \Big)^{1/2},
\end{equation*}
where 
\begin{align} \label{Lfdef}
& \mathscr{L}_{f}(\boldsymbol{a},X,v,u;W_K) \nonumber \\
&:=\sum_{\substack{b \in \mathbb{Z}[\omega] \\ a_1 b \equiv u \pmod{v} \\ a_2 b \equiv u \pmod{v} }}
\rho_f(\lambda^{-3} a_1 b) \overline{\rho_f(\lambda^{-3} a_2 b)} W_K \Big(\frac{N(\lambda^{-3} a_1 b)}{X} \Big) 
\overline{W_K \Big(\frac{N(\lambda^{-3} a_2 b)}{X} \Big)}.
\end{align}
\end{lemma}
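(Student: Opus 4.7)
The proof is a single application of the Cauchy--Schwarz inequality in the $b$-variable, followed by an interchange of the order of summation. The plan is as follows.

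First I would rewrite $\mathscr{B}_f(\boldsymbol{\alpha},\boldsymbol{\beta},X,v,u;W_K)$ by pulling the $b$-summation outside. That is, define
\begin{equation*}
S(b) := \sum_{\substack{a \in \mathbb{Z}[\omega] \\ ab \equiv u \pmod{v}}} \mu^2(a) \alpha_a \rho_f(\lambda^{-3} ab) W_K\Big(\frac{N(\lambda^{-3} ab)}{X}\Big),
\end{equation*}
so that $\mathscr{B}_f(\boldsymbol{\alpha},\boldsymbol{\beta},X,v,u;W_K) = \sum_b \beta_b S(b)$, where $b$ ranges over $\mathbb{Z}[\omega]$ with $N(b) \asymp B$. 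The congruence condition on $a$ ensures that $S(b)$ vanishes unless $(b,v)$ divides $u$, but for the Cauchy--Schwarz step no such bookkeeping is required.

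Next I would apply the Cauchy--Schwarz inequality in the $b$-summation to obtain
\begin{equation*}
\big| \mathscr{B}_f(\boldsymbol{\alpha},\boldsymbol{\beta},X,v,u;W_K) \big|^2 \leq \| \boldsymbol{\beta} \|_2^2 \cdot \sum_b |S(b)|^2.
\end{equation*}
Opening $|S(b)|^2 = S(b) \overline{S(b)}$ introduces two $a$-variables, $a_1$ and $a_2$, each carrying its own congruence condition $a_i b \equiv u \pmod{v}$. Interchanging the order of summation so that the $b$-sum becomes innermost yields
\begin{equation*}
\sum_b |S(b)|^2 = \sum_{a_1,a_2} \mu^2(a_1) \alpha_{a_1} \mu^2(a_2) \overline{\alpha_{a_2}} \cdot \mathscr{L}_f(\boldsymbol{a},X,v,u;W_K),
\end{equation*}
where the inner sum over $b$ is precisely $\mathscr{L}_f(\boldsymbol{a},X,v,u;W_K)$ as defined in \eqref{Lfdef}. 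Taking square roots gives the claimed inequality.

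There is essentially no obstacle here; the lemma is a preparatory manœuvre that converts the bilinear form $\mathscr{B}_f$ into an averaged homogeneous convolution problem amenable to the circle method treatment described in \S \ref{sketch}. The genuine work lies in estimating $\mathscr{L}_f(\boldsymbol{a},X,v,u;W_K)$, which will occupy the remainder of the section.
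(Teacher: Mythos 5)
Your proposal is correct and coincides with the paper's own proof: Cauchy--Schwarz in the $b$-variable, expand the square modulus, and interchange the order of summation so that the inner $b$-sum becomes $\mathscr{L}_f(\boldsymbol{a},X,v,u;W_K)$. Nothing further is needed.
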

\begin{proof}
We apply the Cauchy-Schwarz inequality to the $b$-sum in \eqref{type2} to obtain
\begin{equation*}
 |\mathscr{B}_f(\cdots)| 
\leq \|\boldsymbol{\beta} \|_2 \cdot \Big( \sum_{b \in \mathbb{Z}[\omega]} \Big | \sum_{\substack{ a \in \mathbb{Z}[\omega] \\ ab \equiv u \pmod{v} }}
\mu^2(a) \alpha_{a} \rho_f(\lambda^{-3}ab) W_K \Big( \frac{N(\lambda^{-3} ab)}{X} \Big) \Big|^2 \Big)^{1/2}.
\end{equation*}
The result follows from expanding the square modulus in the above expression
and interchanging the order of summation.
\end{proof}

\begin{prop} \label{conv1}
Let the notation be as above and $X \asymp AB$. Then
\begin{align*}
& \sum_{\boldsymbol{a}} \mu^2(a_1) \alpha_{a_1} \mu^2(a_2)
\overline{\alpha_{a_2}} \mathscr{L}_{f}(\boldsymbol{a},X,v,u;W_K) \nonumber \\
& \ll_{\varepsilon,f} (XKN(v))^{\varepsilon} K^{16} N(v)^{8} (AB+ A^{3} B^{1/2}) \|\boldsymbol{\mu^2} \boldsymbol{\alpha}\|^2_{\infty}.
\end{align*}
\end{prop}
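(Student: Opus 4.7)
Following the strategy of \S\ref{sketch}, the plan is to apply the Browning--Vishe circle method (Theorem \ref{DFIcirc}) to $L=\mathbb{Q}(\omega)$ (so $d=2$) with parameter $C \asymp B^{1/4}$ in order to detect the equality $\nu_1/a_1 = \nu_2/a_2$, after the change of variables $\nu_1 = a_1 b$, $\nu_2 = a_2 b$ in $\mathscr{L}_f(\boldsymbol{a},X,v,u;W_K)$. This choice of $C$ places the dual moduli $c$ in the range $N(c) \ll B^{1/2}$, matching the heuristic \eqref{circlexp}. The congruences $a_i \mid \nu_i$ together with $a_i b \equiv u \pmod{v}$ can be packaged into a single congruence $\nu_i \equiv \eta_i \pmod{v a_i}$ in a form suitable for Proposition \ref{vor2}. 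The moduli with $(c,\lambda a_1 a_2) = 1$ are the principal case; those sharing common factors with $\lambda a_1 a_2$ require only an additional local computation using the cubic Gauss sum evaluations of \S 2.

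The second step is to apply two-variable Voronoi summation (Proposition \ref{vor2}) to the resulting $\rho_f(\nu_1)\overline{\rho_f(\nu_2)}$ double sum. This dualizes both variables, producing shorter ranges $N(\nu_i) \ll N(s_i)^2/A$, where $s_i \mid a_i$ arises as the gcd of the original $\nu_i$ with the Voronoi modulus. The arithmetic transforms on the dual side (unpacked via Lemmas \ref{unpack1} and \ref{unpack2}) combine with the primitive additive characters from the circle method, and the squarefreeness of $a_1, a_2$ (via Lemma \ref{elemlem}) allows the cubic twists arising from the Voronoi functional equation to cancel in a structured way, producing an expression of the shape \eqref{dualsummary}. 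For the off-diagonal part $s_2^2 a_1 \nu_1 \neq s_1^2 a_2 \nu_2$, the average flatness of Ramanujan sums (Lemma \ref{ramflat}) controls the $c$-sum, after which Cauchy--Schwarz in $(\nu_1, \nu_2)$ combined with Rankin--Selberg (Lemma \ref{rankinselbound}) yields a bound of size $(XKN(v))^\varepsilon A B^{1/2}$ per pair $(a_1, a_2)$. Multiplying by $\|\boldsymbol{\mu}^2\boldsymbol{\alpha}\|_\infty^2$ and summing trivially over the $\asymp A^2$ pairs produces the $A^3 B^{1/2}$ contribution in the stated bound.

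The main obstacle is the diagonal $s_2^2 a_1 \nu_1 = s_1^2 a_2 \nu_2$. Squarefreeness of the $a_i$ together with $(\nu_i, s_i) = 1$ forces $s_1 = s_2 =: s$ with $s \mid (a_1, a_2)$, and the Ramanujan sum becomes $\varphi(c)$; summing over $c$ gives the sparse convolution \eqref{diagreduceexp}. Since no pointwise (Deligne-type) bound on $\rho_f$ is known, \eqref{diagreduceexp} cannot be estimated per pair $(a_1, a_2)$ in any useful way. As flagged in \S\ref{sketch}, the resolution is to \emph{not} estimate \eqref{diagreduceexp} per pair: instead, substitute it back into the full $\boldsymbol{a}$-sum, pass to absolute values and extract $\|\boldsymbol{\mu}^2\boldsymbol{\alpha}\|_\infty^2$, and then apply Cauchy--Schwarz in $(a_1, a_2)$ together with Lemma \ref{rankinselbound} to exploit the extra averaging, yielding the $AB$ contribution. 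The polynomial factors $K^{16} N(v)^8$ trace back to the $K$-derivative bounds on $\ddot H_{K,M}$ (appearing squared via the earlier Cauchy--Schwarz in $\boldsymbol{\beta}$ in Lemma \ref{convolutionsums}) together with the $N(v)$-dependent constants in the arithmetic transforms from Lemmas \ref{unpack1}--\ref{unpack2} and the Chinese Remainder Theorem encoding of $ab \equiv u \pmod v$; summation over the dyadic and divisor decompositions in $c$, $s_i$, and $N(\nu_i)$ then yields the claimed bound.
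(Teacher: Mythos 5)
Your proposal follows essentially the same route as the paper: the Browning--Vishe delta method with $C^4 \asymp X/A \asymp B$, double Voronoi summation, factorisation of the moduli $c$ according to their common factors with $v_0$ and $a_1a_2$, the off-diagonal treatment via flatness of Ramanujan sums plus Cauchy--Schwarz and Rankin--Selberg giving $A^3B^{1/2}$, and — crucially — the refusal to estimate the diagonal per pair $(a_1,a_2)$, instead exploiting the extra averaging to reach $AB$. This matches the paper's argument in structure and in every key step.
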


\begin{remark}
It will be helpful to remember the normalisation in \eqref{normram} throughout the proof.
We also use the same notation and convention as Remark \ref{uniquefac}.
\end{remark}

\begin{proof}

We begin by separating oscillations using the circle method.

\subsection{Application of the circle method} \label{circlemethodapp}

Re-writing \eqref{Lfdef} we obtain
\begin{align} \label{deltappl}
&   \mathscr{L}_{f}(\boldsymbol{a},\cdots) \nonumber \\
&=\sum_{\substack{ \boldsymbol{\nu} \in (\lambda^{-3} \mathbb{Z}[\omega])^2 \\ \forall i: \lambda^3 \nu_i \equiv 0 \pmod{a_i}  \\ \forall i: \lambda^3 \nu_i \equiv u \pmod{\lambda^{e_v} v_0} }} \rho_f(\nu_1) \overline{\rho_f(\nu_2)} W_K \Big(\frac{N(\nu_1)}{X} \Big)  \overline{W_K \Big(\frac{N(\nu_2)}{X} \Big)} \delta_{\mathbb{Q}(\omega)} \Big(\frac{\lambda^3 \nu_2}{a_2}-\frac{\lambda^3 \nu_1}{a_1} \Big).
\end{align}
After noting Remark \ref{oscillation} we choose $C>0$ such that
\begin{equation} \label{Cchoice}
 C^4:=X/A \asymp B.
\end{equation}
We use Theorem \ref{DFIcirc}
and Remark \ref{elementindicator} to obtain
\begin{align} \label{dfidelta}
 \delta_{\mathbb{Q}(\omega)} \Big( \frac{\lambda^3 \nu_2}{a_2}- \frac{\lambda^3 \nu_1}{a_1} \Big) 
&=\frac{k_C}{C^{4}} \sum_{1 \leq \ell \ll \log C} \sum_{\substack{c \in \mathbb{Z}[\omega] \\ c \equiv 1 \pmod{3}}} N(\lambda^{\ell} c) 
\widehat{\psi}_{\lambda^{\ell} c} \Big( \frac{\lambda^3 \nu_2}{a_2}-\frac{\lambda^3 \nu_1}{a_1} \Big) \nonumber \\
& \times h \Big( \frac{N(\lambda^{\ell} c)}{C^2}, \frac{N( \lambda^3 \nu_2/a_2- \lambda^3 \nu_1/a_1 )}{C^4} \Big),
\end{align}
for any $\boldsymbol{\nu} \in (\lambda^{-3} \mathbb{Z}[\omega])^2$ such that $\lambda^3 \nu_i \equiv 0 \pmod{a_i}$
for $i=1,2$, and where $\psi_{\lambda^{\ell} c}$ denotes the principal character modulo $\lambda^{\ell} c$. 
Let $\ell_0:=\max \{\ell,e_v\}$.  We
substitute \eqref{dfidelta} into \eqref{deltappl}, interchange the order of summation,
and resolve $\lambda^3 \nu_i$ 
into congruence classes $\pmod{\lambda^{\ell_0} a_i v_0 c}$ for $i=1,2$.
We obtain 
\begin{align} \label{doublesum}
&  \mathscr{L}_{f}(\boldsymbol{a},\cdots) \nonumber \\
&= \frac{k_C}{C^4} \sum_{1 \leq \ell \ll \log C} \sum_{\substack{c \in \mathbb{Z}[\omega] \\ c \equiv 1 \pmod{3}}} 
\sum_{\substack{\boldsymbol{j} \in \prod_{i=1}^2 \mathbb{Z}[\omega]/ \lambda^{\ell_0} a_i v_0 c \mathbb{Z}[\omega] \\ \forall i:  j_i \equiv u \pmod{\lambda^{e_v} v_0}  }} N(\lambda^{\ell} c) \widehat{\psi}_{\lambda^{\ell} c} \Big( \frac{j_2}{a_2}-\frac{j_1}{a_1} \Big) \nonumber \\
& \times \sum_{\substack{\boldsymbol{\nu} \in (\lambda^{-3} \mathbb{Z}[\omega])^2 \\ \forall i: \lambda^3 \nu_i \equiv j_i \pmod{ \lambda^{\ell_0} a_i v_0 c}
 }}
\rho_{f}(\nu_1) \overline{\rho_{f}(\nu_2)} 
H_{K,C^2/N(\lambda^{\ell} c)}\Big(\frac{\nu_1}{\sqrt{X}},\frac{\nu_2}{\sqrt{X}} \Big),
\end{align}
where $H_{K,C^2/N(\lambda^{\ell}c)}(\boldsymbol{z}):=H_{K,C^2/N(\lambda^{\ell}c)}(\boldsymbol{z};\boldsymbol{a},\lambda^{\ell} c,X,C)$ is given by
\begin{equation} \label{HKdef}
H_{K,C^2/N(\lambda^{\ell}c)}(\boldsymbol{z}) 
=W_K (|z_1|^2) W_K (|z_2|^2) h \Big( \frac{N(\lambda^{\ell} c)}{C^2}, \frac{X |\lambda^3 z_1/a_1- \lambda^3 z_2/a_2|^2 }{C^4} \Big).
\end{equation}
We now justify the subscripts for the function $H_{K,C^2/N(\lambda^{\ell} c)}(\boldsymbol{z})$ (see \eqref{complexderiv}).
Recall that $C^4:=X/A \asymp B$, $N(a_i) \asymp A$, and $|z_i| \asymp 1$ for $i=1,2$. Thus 
\begin{equation} \label{sizeob}
X |\lambda^3 z_1/a_1- \lambda^3 z_2/a_2|^2/C^4 \ll 1.
\end{equation}
Observe that \eqref{sizeob} and \eqref{hsupport} imply that  
\begin{equation} \label{hnonvanish}
h \Big( \frac{N(\lambda^{\ell} c)}{C^2}, \frac{X |\lambda^3 z_1/a_1- \lambda^3 z_2/a_2|^2 }{C^4} \Big) \neq 0 \quad \text{only if} \quad N(\lambda^{\ell}c) \ll C^2. 
\end{equation}
The chain rule, \eqref{Kderivativebound} (with $M=1$), \eqref{hbound}, Corollary \ref{yderivcor}, \eqref{hnonvanish} 
and the fact that $K \geq 1$
together imply that
for any $\boldsymbol{i}=(i_{11},i_{12},i_{21},i_{22})  \in (\mathbb{Z}_{\geq 0})^4$ we have 
\begin{equation} \label{HKderivbound}
\partial^{\boldsymbol{i}}
H_{K,C^2/N(\lambda^{\ell} c)} (\boldsymbol{z}) \ll_{\boldsymbol{i}} \frac{C^2}{N(\lambda^{\ell}c)} \cdot K^{i_{11}+i_{12}+i_{21}+i_{22}}.
\end{equation}

\subsection{Double application of Voronoi summation} \label{doublevorapp}
We apply (double)-Voronoi summation (Proposition \ref{vor2}).
By abuse of notation we denote $(X,X)$ by $X$.
We obtain
\begin{align} \label{nusum}
& \sum_{\substack{\boldsymbol{\nu} \in (\lambda^{-3} \mathbb{Z}[\omega])^2 \\ \forall i: \lambda^3 \nu_i \equiv j_i \pmod{ \lambda^{\ell_0} a_i v_0 c}
 }} 
\rho_{f}(\nu_1) \overline{\rho_{f}(\nu_2)} H_{K,C^2/N(\lambda^{\ell}c)} \Big(\frac{\nu_1}{\sqrt{X}},\frac{\nu_2}{\sqrt{X}} \Big)  \nonumber \\
&= \frac{X^2}{N(\lambda^{\ell_0+1} v_0 c)^2 N(a_1 a_2 ) } 
\sum_{\substack{ \boldsymbol{k} \in (\mathbb{Z}[\omega]/\lambda^{14} \mathbb{Z}[\omega])^2 \\ \forall i: k_i \equiv 1 \pmod{3} }}
\sum_{\substack{\boldsymbol{m},\boldsymbol{r} \\ \forall i: 0 \leq m_i \leq \ell_0+1 \\ \forall i: r_i \mid a_i v_0 c \\  \forall i: r_i \equiv k_i \pmod{\lambda^{14}}   }} \nonumber \\
& \sum_{\boldsymbol{n} \in \mathbb{Z}^2} (-1)^{n_1+n_2}
\sum_{p=1}^{4} \mathscr{D}_{pf}(X,\boldsymbol{\lambda}^{\boldsymbol{m}} \boldsymbol{r},\boldsymbol{j}, \boldsymbol{n} ;\ddot{H}_{K,C^2/N(\lambda^{\ell}c)}), 
\end{align}
where the $\mathscr{D}_{pf}(\cdots)$ are given by \eqref{Y1}--\eqref{Y4}.
We substitute \eqref{nusum} into \eqref{doublesum} to obtain 
\begin{equation} \label{Lsum}
\mathscr{L}_{f}(\boldsymbol{a},\cdots)=\sum_{p=1}^4 \mathscr{M}_{pf}(\boldsymbol{a},\cdots),
\end{equation}
where
\begin{align} \label{resultingexp}
&\mathscr{M}_{pf}(\boldsymbol{a},\cdots) \nonumber \\
&:= \frac{k_C}{C^4} \frac{X^2}{N(a_1 a_2) N(v_0)^2} \sum_{1 \leq \ell \ll \log C} 
\sum_{\substack{c \in \mathbb{Z}[\omega] \\ c \equiv 1 \pmod{3} }} \frac{N(\lambda^{\ell})}{N(\lambda^{\ell_0+1})^2} \frac{1}{N(c)} \nonumber \\
& \times \sum_{\substack{\boldsymbol{j} \in \prod_{i=1}^2 \mathbb{Z}[\omega]/ \lambda^{\ell_0} a_i v_0 c \mathbb{Z}[\omega]  \\ \forall i:  j_i \equiv 0 \pmod{a_i} \\ \forall i: j_i \equiv u \pmod{\lambda^{e_v} v_0}  }}
\widehat{\psi}_{\lambda^{\ell} c} \Big( \frac{j_2}{a_2}-\frac{j_1}{a_1} \Big) 
\sum_{\substack{ \boldsymbol{k} \in (\mathbb{Z}[\omega]/ \lambda^{14} \mathbb{Z}[\omega])^2 \\ \forall i: k_i \equiv 1 \pmod{3} }}   
\sum_{\substack{\boldsymbol{m},\boldsymbol{r} \\ \forall i: 0 \leq m_i \leq \ell_0+1 \\  \forall i: r_i \mid a_i v_0 c   \\ \forall i: r_i \equiv k_i \pmod{\lambda^{14}} }} \nonumber \\
& \sum_{\boldsymbol{n} \in \mathbb{Z}^2} (-1)^{n_1+n_2} \mathscr{D}_{pf}(X,\boldsymbol{\lambda}^{\boldsymbol{m}} \boldsymbol{r},\boldsymbol{j},\boldsymbol{n} ;\ddot{H}_{K,C^2/N(\lambda^{\ell}c)}).
\end{align}

We now make a sequence of manipulations to $\mathscr{M}_{pf}(\boldsymbol{a},\cdots)$ in
\eqref{resultingexp}.
First we make a change of variable $j_i \rightarrow a_i j_i$ for $i=1,2$ (the new $j_i$ variables run
$\pmod{\lambda^{\ell_0} v_0 c}$). We then uniquely factorise
each $c \in \mathbb{Z}[\omega]$ with $c \equiv 1 \pmod{3}$ as
$c=t q^{\prime} q^{\prime \prime}$
where  $t, q^{\prime}, q^{\prime \prime} \in \mathbb{Z}[\omega]$ satisfy 
\begin{equation} \label{tqqcond}
t,q^{\prime},q^{\prime \prime} \equiv 1 \pmod{3}, \quad
t \mid \text{rad}(v_0)^{\infty}, \quad q^{\prime} \mid  \text{rad}(a_1 a_2)^{\infty}, 
\text{ and } (q^{\prime \prime},a_1 a_2 v_0)=1.
\end{equation} 
Note this factorisation exists and is unique since $(v_0,a_1 a_2)=1$.
We also uniquely factorise each $r_i \mid a_i v_0 c$ with $r_i \equiv k_i \pmod{\lambda^{14}}$
 as $r_i=t_i r_i^{\prime} r_i^{\prime \prime}$
where $t_i, r_i^{\prime}, r_i^{\prime \prime}$ satisfy
\begin{align} \label{trrcond}
t_i,r^{\prime}_i, r_i^{\prime \prime} \equiv 1 \pmod{3}, \quad  t_i \mid v_0 t, \quad  r_i^{\prime} \mid a_i q^{\prime},
\quad r_i^{\prime \prime} \mid q^{\prime \prime},
\text{ and } t_i r^{\prime}_i r^{\prime \prime}_i \equiv k_i \text{ for } i=1,2.
\end{align}

We use the Chinese Remainder Theorem on the new $j_i$ variables (with the pairwise coprime moduli 
$\lambda^{\ell_0} v_0 t$, $q^{\prime}$, and $q^{\prime \prime}$) \text{and} $i=1,2$ to write
\begin{align} \label{jidefn}
 \boldsymbol{j}&:=q^{\prime} q^{\prime \prime} \overline{q^{\prime} q^{\prime \prime}} \boldsymbol{J}+ \lambda^{\ell_0} v_0 t q^{\prime \prime} \overline{\lambda^{\ell_0} v_0
t  q^{\prime \prime}} \boldsymbol{J^{\prime}}+\lambda^{\ell_0} v_0 t q^{\prime} \overline{\lambda^{\ell_0}  v_0 t q^{\prime}} \boldsymbol{J^{\prime \prime}}, \\
& J_i \equiv \overline{a_i} u \pmod{\lambda^{e_v} v_0} \quad \text{for} \quad i=1,2, \nonumber
 \end{align}
where $\overline{a_i}, \overline{q^{\prime} q^{\prime \prime}},\overline{\lambda^{\ell_0} v_0 t q^{\prime \prime}},\overline{\lambda^{\ell_0} v_0 t q^{\prime}}
 \in \mathbb{Z}[\omega]$
are such that $a_i \overline{a_i} \equiv 1 \pmod{\lambda^{e_v} v_0}$, $\overline{q^{\prime} q^{\prime \prime}} q^{\prime} q^{\prime \prime} \equiv 1 \pmod{\lambda^{\ell_0} v_0 t}$,
$\overline{\lambda^{\ell_0} v_0 t q^{\prime}} \lambda^{\ell_0} v_0 t q^{\prime} \equiv 1 \pmod{q^{\prime \prime}}$, and
$\overline{\lambda^{\ell_0} v_0 t q^{\prime \prime}} \lambda^{\ell_0} v_0 t q^{\prime \prime} \equiv 1 \pmod{q^{\prime}}$.
Without loss of generality we may assume that $e_v \geq 14$.
We further make the change of variable
\begin{equation} \label{subsvariablechange}
\boldsymbol{J} \rightarrow \lambda^{e_v} v_0 \boldsymbol{J}+ (Y_1 u,Y_2 u)
\end{equation}
in \eqref{jidefn}, where $Y_i \in \mathbb{Z}[\omega]$ is such that $Y_i \equiv \overline{a_i} \pmod{\lambda^{e_v} v_0}$.
Observe that the new $J_1,J_2$ variables run $\pmod{\lambda^{\ell_0-e_v} t}$.
We also use the multiplicativity of Ramanujan sums
$\widehat{\psi}_{\lambda^{\ell}c}(\cdot)=\widehat{\psi}_{\lambda^{\ell} t}(\cdot) 
\widehat{\psi}_{q^{\prime}}(\cdot) \widehat{\psi}_{q^{\prime \prime}}(\cdot)$,
and interchange the order of summation by absolute convergence.
The net result is 
\begin{align} \label{Midef}
& \mathscr{M}_{pf}(\boldsymbol{a},\cdots) \nonumber \\
&:=\frac{k_C}{C^4} \frac{X^2}{N(a_1 a_2) N(v_0)^2} \sum_{1 \leq \ell \ll \log C}
\sum_{\substack{t q^{\prime} q^{\prime \prime} \in \mathbb{Z}[\omega] \\ \eqref{tqqcond}  }}   
\frac{N(\lambda^{\ell})}{N(\lambda^{\ell_0+1})^2} 
\frac{1}{N(t q^{\prime} q^{\prime \prime} )}  \nonumber \\
& \times \sum_{\substack{ \boldsymbol{k} \in (\mathbb{Z}[\omega]/\lambda^{14} \mathbb{Z}[\omega])^2 \\ \forall i: k_i \equiv 1 \pmod{3} }} 
\sum_{\substack{\boldsymbol{m},\boldsymbol{t},\boldsymbol{r}^{\prime},\boldsymbol{r}^{\prime \prime} \\ \forall i:  0 \leq  m_i \leq \ell_0+1 \\ \eqref{trrcond} }}
 \sum_{\boldsymbol{n} \in \mathbb{Z}^2} (-1)^{n_1+n_2} \nonumber \\
& \times S_{pf}(\boldsymbol{a},\lambda^{\ell} t q^{\prime} q^{\prime \prime}, \boldsymbol{\lambda}^{\boldsymbol{m}} \boldsymbol{t} \boldsymbol{r^{\prime}} \boldsymbol{ r^{\prime \prime}},\boldsymbol{n};
\ddot{H}_{K,C^2/N(\lambda^{\ell}t q^{\prime} q^{\prime \prime})});
\end{align}
where
\begin{align} \label{S1defn}
& S_{1f}(\cdots) \nonumber \\
&:=\delta_{\boldsymbol{m} \in [0, \min \{5,\ell_0+1\}]^2 } 
N(\lambda^{m_1}) N(\lambda^{m_2}) \nonumber \\
& \times  \Big( \frac{\overline{r_1}}{r_1} \Big)^{-n_1} \Big( \frac{\overline{r_2}}{r_2} \Big)^{n_2} 
\omega(m_1;1,k_1) \overline{\omega(m_2,1,k_2)} \nonumber \\
& \times \sum_{\substack{ \nu_1 \in \lambda^{-2m_1-3} \mathbb{Z}[\omega] \\ \nu_2 \in \lambda^{-2m_2-3} \mathbb{Z}[\omega] } }
\rho_{f \otimes \widehat{\psi_{\lambda^{m_1}}(\cdot)_{\lambda u}},\mathfrak{c}(m_1,1;k_1)}(\nu_1)
\overline{\rho_{f \otimes \widehat{\psi_{\lambda^{m_2}}(\cdot)_{\lambda u}},\mathfrak{c}(m_2,1;k_2)}(\nu_2)}  \nonumber \\
& \times \Big(\frac{\nu_1}{|\nu_1|} \Big)^{-n_1} \Big( \frac{\overline{\nu_2}}{|\nu_2|} \Big)^{-n_2}  \nonumber \\
& \times \ddot{H}_{K,C^2/N(\lambda^{\ell}t q^{\prime} q^{\prime \prime} ) } \Big(\frac{N(\nu_1)}{N(t_1 r^{\prime}_1 r^{\prime \prime}_2)^2/X},\frac{N(\nu_2)}{N(t_2 r^{\prime}_2 r^{\prime \prime}_2)^2/X},\boldsymbol{n} \Big)  \nonumber \\
& \times \mathscr{C}_1(\boldsymbol{a},\boldsymbol{\nu},\lambda^{\ell} t q^{\prime} q^{\prime \prime}, \boldsymbol{\lambda}^{\boldsymbol{m}} \boldsymbol{t} \boldsymbol{r^{\prime}} \boldsymbol{r^{\prime \prime}});
\end{align}

\begin{align} \label{S2defn}
& S_{2f}(\cdots) \nonumber \\
&:=\delta_{\boldsymbol{m} \in [0, \min \{5,\ell_0+1\}] \times [6,\ell_0+1]}
N(\lambda^{m_1}) N(\lambda^4) \Big(\frac{\overline{r_1}}{r_1}  \Big)^{-n_1} \omega(m_1;1,k_1)   \nonumber \\
& \times \sum_{\substack{ \nu_1 \in \lambda^{-2m_1-3} \mathbb{Z}[\omega] \\ \nu_2 \in \lambda^{-3} \mathbb{Z}[\omega] } }
\rho_{f \otimes \widehat{\psi_{\lambda^{m_1}}(\cdot)_{\lambda u}},\mathfrak{c}(m_1,1;k_1)}(\nu_1)
\overline{\rho_f(\nu_2)}  \nonumber \\
& \times \Big(\frac{\nu_1}{|\nu_1|} \Big)^{-n_1} \Big( \frac{\overline{\nu_2}}{|\nu_2|} \Big)^{-n_2} \nonumber \\
& \times \ddot{H}_{K,C^2/N(\lambda^{\ell}t q^{\prime} q^{\prime \prime}) } \Big(\frac{N(\nu_1)}{N(t_1 r^{\prime}_1 r^{\prime \prime}_2)^2/X},\frac{N(\nu_2)}{N(\lambda^{m_2-4} t_2 r^{\prime}_2 r^{\prime \prime}_2)^2/X},\boldsymbol{n} \Big)  \nonumber \\
& \times \mathscr{C}_2(\boldsymbol{a},\boldsymbol{\nu},\lambda^{\ell} t q^{\prime} q^{\prime \prime}, \boldsymbol{\lambda}^{\boldsymbol{m}} \boldsymbol{t} \boldsymbol{r^{\prime}} \boldsymbol{r^{\prime \prime}});
\end{align}

\begin{align} \label{S3defn}
& S_{3f}(\cdots) \nonumber \\
&:=  \delta_{\boldsymbol{m} \in [6,\ell_0+1] \times [0, \min \{5,\ell_0+1\}]}  
N(\lambda^{4}) N(\lambda^{m_2})
\Big( \frac{\overline{r_2}}{r_2} \Big)^{n_2} \overline{\omega(m_2;1,k_2)} \nonumber \\
& \times 
\sum_{\substack{ \nu_1 \in \lambda^{-3} \mathbb{Z}[\omega] \\ \nu_2 \in \lambda^{-2m_2-3} \mathbb{Z}[\omega] } }
\rho_f(\nu_1)
\overline{\rho_{f \otimes \widehat{\psi_{\lambda^{m_2}}(\cdot)_{\lambda u}},\mathfrak{c}(m_2,1;k_2)}(\nu_2)}  \nonumber \\
& \times \Big(\frac{\nu_1}{|\nu_1|} \Big)^{-n_1} \Big( \frac{\overline{\nu_2}}{|\nu_2|} \Big)^{-n_2}  \nonumber \\
& \times \ddot{H}_{K,C^2/N(\lambda^{\ell} t q^{\prime} q^{\prime \prime} )} \Big(\frac{N(\nu_1)}{N(\lambda^{m_1-4} t_1 r^{\prime}_1 r^{\prime \prime}_2)^2/X},\frac{N(\nu_2)}{N(t_2 r^{\prime}_2 r^{\prime \prime}_2)^2/X},\boldsymbol{n} \Big)  \nonumber \\
& \times  \mathscr{C}_3(\boldsymbol{a}, \boldsymbol{\nu},\lambda^{\ell} t q^{\prime} q^{\prime \prime}, \boldsymbol{\lambda}^{\boldsymbol{m}} \boldsymbol{t} \boldsymbol{r^{\prime}} \boldsymbol{r^{\prime \prime}});
\end{align}

\begin{align} \label{S4defn}
& S_{4f}(\cdots) \nonumber \\
& := \delta_ {\boldsymbol{m} \in [6,\ell_0+1]^2} \cdot 
N(\lambda^8) \nonumber \\
& \times \sum_{\substack{ \nu_1,\nu_2 \in \lambda^{-3} \mathbb{Z}[\omega] } }
\rho_f(\nu_1) \overline{\rho_f(\nu_2)} \Big(\frac{\nu_1}{|\nu_1|} \Big)^{-n_1} \Big( \frac{\overline{\nu_2}}{|\nu_2|} \Big)^{-n_2}  \nonumber \\
& \times \ddot{H}_{K,C^2/N(\lambda^{\ell} t q^{\prime} q^{\prime \prime}) } \Big(\frac{N(\nu_1)}{N(\lambda^{m_1-4} t_1 r^{\prime}_1 r^{\prime \prime}_2)^2/X},\frac{N(\nu_2)}{N(\lambda^{m_2-4} t_2 r^{\prime}_2 r^{\prime \prime}_2)^2/X},\boldsymbol{n} \Big)  \nonumber \\
& \times \mathscr{C}_4(\boldsymbol{a},\boldsymbol{\nu},\lambda^{\ell} t q^{\prime} q^{\prime \prime}, \boldsymbol{\lambda}^{\boldsymbol{m}} \boldsymbol{t} \boldsymbol{r^{\prime}} \boldsymbol{r^{\prime \prime}});
\end{align}

\begin{align} \label{C1def}
& \mathscr{C}_1(\cdots) \nonumber \\
&=\sum_{\substack{ \boldsymbol{J} \in (\mathbb{Z}[\omega]/ \lambda^{\ell_0-e_v} t \mathbb{Z}[\omega])^2  \\ \boldsymbol{J^{\prime}} \in (\mathbb{Z}[\omega]/ q^{\prime} \mathbb{Z}[\omega])^2 \\ \boldsymbol{J^{\prime \prime}} \in (\mathbb{Z}[\omega]/ q^{\prime \prime} \mathbb{Z}[\omega])^2  }} 
\widehat{\psi}_{\lambda^{\ell}t}(\lambda^{e_v}v _0( J_2-J_1)+u(Y_2-Y_1))  
\widehat{\psi}_{q^{\prime}}(J^{\prime}_2- J^{\prime}_1) \widehat{\psi}_{q^{\prime \prime}}(J^{\prime \prime}_2- J^{\prime \prime}_1)   \nonumber \\
& \times \psi^{\star}_{t_1 r^{\prime}_1 r^{\prime \prime}_1}(\cdot)_{\lambda^{2m_1+1} a_1 j_1}(\lambda^{2m_1+4} \nu_1 )
\overline{\psi^{\star}_{t_2 r^{\prime}_2 r^{\prime \prime}_2 }(\cdot)_{\lambda^{2m_2+1} a_2 j_2}(\lambda^{2m_2+4} \nu_2 )}; 
\end{align}

\begin{align} \label{C2def}
& \mathscr{C}_2(\cdots) \nonumber \\
&:=\sum_{\zeta_2}  \Big( \frac{\overline{\zeta_2^{-1} \lambda^{m_2-4} r_2 }}{\zeta_2^{-1} \lambda^{m_2-4} r_2 }  \Big)^{n_2} \nonumber \\
& \times  \sum_{\substack{ \boldsymbol{J} \in (\mathbb{Z}[\omega]/ \lambda^{\ell_0-e_v} t \mathbb{Z}[\omega])^2  \\ \boldsymbol{J^{\prime}} \in (\mathbb{Z}[\omega]/ q^{\prime} \mathbb{Z}[\omega])^2 \\ \boldsymbol{J^{\prime \prime}} \in (\mathbb{Z}[\omega]/ q^{\prime \prime} \mathbb{Z}[\omega])^2  }}   
\widehat{\psi}_{\lambda^{\ell}t}(\lambda^{e_v} v_0 (J_2- J_1)+u(Y_2-Y_1))  
\widehat{\psi}_{q^{\prime}}(J^{\prime}_2 - J^{\prime}_1) 
\widehat{\psi}_{q^{\prime \prime}}( J^{\prime \prime}_2- J^{\prime \prime}_1)   \nonumber \\
& \times \psi^{\star}_{t_1 r^{\prime}_1 r^{\prime \prime}_1}(\cdot)_{\lambda^{2m_1+1} a_1  j_1}(\lambda^{2m_1+4} \nu_1 ) 
\overline{\psi^{\#}_{\lambda^{m_2} t_2 r^{\prime}_2 r^{\prime \prime}_2}(\cdot)_{\lambda a_2 j_2,\zeta_2^{-1}}(\lambda^4 \nu_2)}; 
\end{align}

\begin{align} \label{C3def}
& \mathscr{C}_3(\cdots) \nonumber \\
&:=\sum_{\zeta_1} \Big( \frac{\overline{\zeta_1^{-1} \lambda^{m_1-4} r_1 }}{\zeta_1^{-1} \lambda^{m_1-4} r_1 }  \Big)^{-n_1} \nonumber \\
& \times \sum_{\substack{ \boldsymbol{J} \in (\mathbb{Z}[\omega]/ \lambda^{\ell_0-e_v} t \mathbb{Z}[\omega])^2  \\ \boldsymbol{J^{\prime}} \in (\mathbb{Z}[\omega]/ q^{\prime} \mathbb{Z}[\omega])^2 \\ \boldsymbol{J^{\prime \prime}} \in (\mathbb{Z}[\omega]/ q^{\prime \prime} \mathbb{Z}[\omega])^2  }}  
\widehat{\psi}_{\lambda^{\ell}t}(\lambda^{e_v} v_0(J_2- J_1)+u(Y_2-Y_1))  
\widehat{\psi}_{q^{\prime}}( J^{\prime}_2- J^{\prime}_1) \widehat{\psi}_{q^{\prime \prime}}( J^{\prime \prime}_2- J^{\prime \prime}_1)  \nonumber \\
& \times \psi^{\#}_{\lambda^{m_1} t_1 r^{\prime}_1 r^{\prime \prime}_1}(\cdot)_{\lambda a_1 j_1,\zeta_1^{-1}}(\lambda^4 \nu_1) 
\overline{\psi^{\star}_{t_2 r^{\prime}_2 r^{\prime \prime}_2 }(\cdot)_{\lambda^{2m_2+1} a_2 j_2}(\lambda^{2m_2+4} \nu_2 )}; 
\end{align}

\begin{align} \label{C4def}
& \mathscr{C}_4(\cdots) \nonumber \\
 &:=\sum_{\boldsymbol{\zeta}}
 \Big( \frac{\overline{\zeta_1^{-1} \lambda^{m_1-4 r_1} }}{\zeta_1^{-1} \lambda^{m_1-4} r_1 }  \Big)^{-n_1} 
\Big( \frac{\overline{\zeta_2^{-1} \lambda^{m_2-4} r_2 }}{\zeta_2^{-1} \lambda^{m_2-4} r_2 }  \Big)^{n_2} \nonumber \\
& \times \sum_{\substack{ \boldsymbol{J} \in (\mathbb{Z}[\omega]/ \lambda^{\ell_0-e_v} t \mathbb{Z}[\omega])^2  \\ \boldsymbol{J^{\prime}} \in (\mathbb{Z}[\omega]/ q^{\prime} \mathbb{Z}[\omega])^2 \\ \boldsymbol{J^{\prime \prime}} \in (\mathbb{Z}[\omega]/ q^{\prime \prime} \mathbb{Z}[\omega])^2  }}   
\widehat{\psi}_{\lambda^{\ell}t}(\lambda^{e_v} v_0(J_2-J_1)+u(Y_2-Y_1)) 
 \widehat{\psi}_{q^{\prime}}( J^{\prime}_2-  J^{\prime}_1) 
 \widehat{\psi}_{q^{\prime \prime}}( J^{\prime \prime}_2- J^{\prime \prime}_1) \nonumber \\  
& \times \psi^{\#}_{\lambda^{m_1} t_1 r^{\prime}_1 r^{\prime \prime}_1}(\cdot)_{\lambda a_1 j_1,\zeta_1^{-1}}(\lambda^4 \nu_1)  
\overline{\psi^{\#}_{\lambda^{m_2} t_2 r^{\prime}_2 r^{\prime \prime}_2}(\cdot)_{\lambda a_2 j_2,\zeta_2^{-1}}(\lambda^4 \nu_2)},
\end{align}
and $\boldsymbol{j}$ is given by \eqref{jidefn} with subsequent change of variable \eqref{subsvariablechange}.

\begin{remark} \label{mutmand}
Recalling \eqref{Lsum} and the averaging over $\boldsymbol{a}$, we have 
\begin{equation} \label{averagestatement}
\sum_{\boldsymbol{a}} \mu^2(a_1) \alpha_{a_1} \mu^2(a_2)
\overline{\alpha_{a_2}} \mathscr{L}_{f}(\boldsymbol{a},\cdots)=\sum_{p=1}^4 \sum_{\boldsymbol{a}} \mu^2(a_1) \alpha_{a_1} \mu^2(a_2) \overline{\alpha_{a_2}} 
\mathscr{M}_{pf}(\boldsymbol{a},\cdots).
\end{equation}
The following arguments focus on the case $p=4$ on the right side of \eqref{averagestatement}. 
The cases $p=1,2,3$ will follow mutatis mutandis,
and will be omitted for the sake of brevity.
\end{remark}

\subsection{Evaluation and bounds for arithmetic exponential sums} \label{arithexpsum}
We first compute and bound $\mathscr{C}_4(\cdots)$ in \eqref{C4def}.

A computation using Lemma \ref{unpack1}, \eqref{cubicintermed}, \eqref{jidefn},
the Chinese Remainder Theorem 
(with pairwise co-prime moduli $\zeta_i \lambda^{m_i-1} t_i$, $r^{\prime}_i$ and $r^{\prime \prime}_i$ for $i=1,2$), 
cubic reciprocity, and Lemma \ref{kloostermanexp2} yields
\begin{equation}  \label{exact1}
\mathscr{C}_4(\cdots)=\sum_{\boldsymbol{\zeta}} \Big( \frac{\overline{\zeta_1 \lambda^{m_1-4 r_1} }}{\zeta_1 \lambda^{m_1-4} r_1 }  \Big)^{-n_1} 
\Big( \frac{\overline{\zeta_2 \lambda^{m_2-4} r_2 }}{\zeta_2 \lambda^{m_2-4} r_2 }  \Big)^{n_2}
\prod_{i=1}^{3} G_{4i}(\boldsymbol{a},\boldsymbol{\nu},\lambda^{\ell} t q^{\prime} q^{\prime \prime}, \boldsymbol{\zeta} \boldsymbol{\lambda}^{\boldsymbol{m}} \boldsymbol{t} \boldsymbol{r^{\prime}} \boldsymbol{r^{\prime \prime}}),
\end{equation}
where 
\begin{align} 
& G_{41}(\cdots)   \nonumber \\
&:= \frac{1}{N(\lambda^{m_1+3} t_1) N(\lambda^{m_2+3} t_2)} 
\sum_{\boldsymbol{J} \in (\mathbb{Z}[\omega]/ \lambda^{\ell_0-e_v} t \mathbb{Z}[\omega])^2}
\widehat{\psi}_{\lambda^{\ell}t}(\lambda^{e_v} v_0(J_2-J_1)+u(Y_2-Y_1)) \nonumber \\
& \times K_{\Gamma_1(3),\sigma,\sigma}(\overline{r^{\prime}_1 r^{\prime \prime}_1}(\lambda^3 \nu_1), \overline{r^{\prime}_1 r^{\prime \prime}_1} (a_1 \lambda^{e_v+1} v_0 J_1+\lambda a_1 Y_1 u), \zeta_1 \lambda^{m_1-1} t_1)  \nonumber \\
& \times \overline{K_{\Gamma_1(3),\sigma,\sigma}(\overline{r^{\prime}_2 r^{\prime \prime}_2}(\lambda^3 \nu_2), \overline{r^{\prime}_2 r^{\prime \prime}_2}( a_2 \lambda^{e_v+1} v_0 J_2+\lambda a_2 Y_2 u), \zeta_2 \lambda^{m_2-1} t_2)}; \label{Gdef} \\
& G_{42}(\cdots)  \nonumber \\
&:= \frac{1}{N(r^{\prime}_1 r^{\prime}_2) }  \sum_{\boldsymbol{J^{\prime}} \in (\mathbb{Z}[\omega]/q^{\prime} \mathbb{Z}[\omega])^2  }  \widehat{\psi}_{q^{\prime}}( J^{\prime}_2-  J^{\prime}_1) \nonumber \\ & \times K_{\Gamma_1(3),\sigma,\xi}(\overline{\zeta_1 \lambda^{m_1-1} t_1 r^{\prime \prime}_1 } (\lambda^3 \nu_1),\overline{\zeta_1 \lambda^{m_1-1} t_1 r^{\prime \prime}_1 }( \lambda a_1 J^{\prime}_1), r^{\prime}_1)  \nonumber \\
& \times \overline{K_{\Gamma_1(3),\sigma,\xi}(\overline{\zeta_2 \lambda^{m_2-1} t_2 r^{\prime \prime}_2 } (\lambda^3 \nu_2), \overline{\zeta_2 \lambda^{m_2-1} t_2 r^{\prime \prime}_2 }(\lambda a_2 J^{\prime}_2), r^{\prime}_2)};  \label{Gprimedef} \\
& G_{43}(\cdots)  \nonumber \\
&:=\frac{1}{N(r^{\prime \prime}_1 r^{\prime \prime}_2 )}   \sum_{\substack{ \boldsymbol{J^{\prime \prime}} \in (\mathbb{Z}[\omega]/ q^{\prime \prime} \mathbb{Z}[\omega])^2 }} 
\widehat{\psi}_{q^{\prime \prime}}( J^{\prime \prime}_2- J^{\prime \prime}_1) \nonumber \\
& \times K_{\Gamma_1(3),\sigma,\xi}( \overline{\zeta_1 \lambda^{m_1-1} t_1 r^{\prime}_1 }(\lambda^3 \nu_1), \overline{\zeta_1 \lambda^{m_1-1} t_1 r^{\prime}_1 }(\lambda a_1 J^{\prime \prime}_1), r^{\prime \prime}_1) \nonumber \\ 
& \times \overline{K_{\Gamma_1(3),\sigma,\xi}(\overline{\zeta_2 \lambda^{m_2-1} t_2 r^{\prime}_2 }(\lambda^3 \nu_2), \overline{\zeta_2 \lambda^{m_2-1} t_2 r^{\prime}_2 }(\lambda a_2 J^{\prime \prime}_2), r^{\prime \prime}_2)}. \label{Gprimeprimedef}
\end{align}
We now evaluate and bound each \eqref{Gdef}--\eqref{Gprimeprimedef}.

\subsubsection{Treatment of \eqref{Gprimedef}} \label{Gprimesec}
We open the normalised Ramanujan sums and cubic Kloosterman sums in \eqref{Gprimedef} and
use orthogonality in $\boldsymbol{J^{\prime}}$. We then reassemble the result
to obtain
\begin{align} \label{orthog1}
& G_{42}(\cdots)  \nonumber \\
&=N(q^{\prime}) \Big(\prod_{i=1}^{2} \delta_{(a_i q^{\prime}/r^{\prime}_i,q^{\prime})=1} \frac{1}{N(r^{\prime}_i)}  
\Big(\frac{ \overline{\zeta_i \lambda^{m_i-1} t_i r^{\prime \prime}_i} }{r^{\prime}_1} \Big)_3  \Big( \frac{ \zeta_i \lambda^{m_i-1} t_i r^{\prime \prime}_i  }{r^{\prime}_2} \Big)_3  \Big)  \nonumber \\
& \times \sum_{\substack{\boldsymbol{x} \in (\mathbb{Z}[\omega]/ r^{\prime}_1 \mathbb{Z}[\omega]) \times (\mathbb{Z}[\omega]/r_2^{\prime} \mathbb{Z}[\omega]) \\ \zeta_2 \lambda^{m_2-1} t_2 r^{\prime \prime}_2   (a_1 q^{\prime}/r^{\prime}_1) x_2 \equiv \\
\zeta_1 \lambda^{m_1-1} t_1 r^{\prime \prime}_1 (a_2 q^{\prime}/r^{\prime}_2)  x_1 \pmod{q^{\prime}  } }}  
\overline{\Big( \frac{x_1}{r^{\prime}_1} \Big)_3} \Big( \frac{x_2}{r^{\prime}_2} \Big)_3  \nonumber \\
& \times \check{e} \Big( \frac{ \zeta_2 \lambda^{m_2-1} t_2 r^{\prime \prime}_2  \lambda^3 \nu_1  x_1}{r^{\prime}_1} 
-\frac{\zeta_1 \lambda^{m_1-1} t_1 r^{\prime \prime}_1  \lambda^3 \nu_2 x_2}{r^{\prime}_2} \Big).  
\end{align}
The delta conditions in \eqref{orthog1} are non-zero only if
$q^{\prime} \mid r^{\prime}_i$ for $i=1,2$.
We make the change of variables $\boldsymbol{r^{\prime}} \rightarrow q^{\prime} \boldsymbol{s^{\prime}}$ where $s^{\prime}_i \mid a_i$ for $i=1,2$.
We detect the congruence with additive characters
and reassemble to obtain
\begin{align*}
& G_{42}(\cdots) \nonumber \\
&=\frac{1}{N(q^{\prime})} \Big( \prod_{i=1}^{2} \delta_{(a_i/s^{\prime}_i,q^{\prime})=1} \frac{1}{N(s^{\prime}_i)^{1/2}}  \Big( \frac{ \overline{\zeta_i \lambda^{m_i-1} t_i r^{\prime \prime}_i } }{q^{\prime} s^{\prime}_1} \Big)_3 \Big( \frac{\zeta_i \lambda^{m_i-1} t_i r^{\prime \prime}_i  }{q^{\prime} s^{\prime}_2} \Big)_3 \Big) \nonumber \\
 & \times \sum_{k \pmod{q^{\prime}}} \overline{\widetilde{g}(y_1  \lambda^3 \nu_1+k z_1, q^{\prime} s^{\prime}_1)} 
\widetilde{g}(y_2  \lambda^3 \nu_2+k z_2, q^{\prime} s^{\prime}_2),
\end{align*}
where
\begin{equation} \label{y1z1}
y_1=\zeta_2 \lambda^{m_2-1} t_2 r^{\prime \prime}_2, \quad z_1 = \zeta_1 \lambda^{m_1-1} t_1 r^{\prime \prime}_1  (a_2/s^{\prime}_2) s^{\prime}_1,
\end{equation}
and
\begin{equation} \label{y2z2}
y_2=\zeta_1 \lambda^{m_1-1} t_1 r^{\prime \prime}_1, \quad
z_2 = \zeta_2 \lambda^{m_2-1} t_2 r^{\prime \prime}_2  (a_1/s^{\prime}_1) s^{\prime}_2.
\end{equation}

We then factorise $q^{\prime} s^{\prime}_i=q^{\prime} (s^{\prime}_i,q^{\prime}) \cdot (s^{\prime}_i/(s^{\prime}_i,q^{\prime}))$.
Since $a_i$ is squarefree and $s^{\prime}_i \mid a_i$ for $i=1,2$, the pair of moduli $q^{\prime}(s^{\prime}_i,q^{\prime})$
and $s^{\prime}_i/(s^{\prime}_i,q^{\prime})$ are coprime.
Thus \eqref{twistmult}, Lemma \ref{elemlem},
and \eqref{coprimerel} imply that
\begin{align} \label{Gprimeintermed}
& G_{42}(\cdots) \nonumber \\
&=\frac{1}{N(q^{\prime})} \Big( \prod_{i=1}^2 \delta_{(a_i/s^{\prime}_i,q^{\prime})=1} \cdot \delta_{(\lambda^3 \nu_i,s^{\prime}_i/(s^{\prime}_1,q^{\prime}))=1 } \frac{1}{N(s^{\prime}_i )^{1/2}} \Big( \frac{ \overline{\zeta_i \lambda^{m_i-1} t_i r^{\prime \prime}_i } }{q^{\prime} s^{\prime}_1} \Big)_3 \Big( \frac{\zeta_i \lambda^{m_i-1} t_i r^{\prime \prime}_i  }{q^{\prime} s^{\prime}_2} \Big)_3 \Big) 
  \nonumber \\
& \times \overline{\widetilde{g}(s^{\prime}_1/(s^{\prime}_1,q^{\prime}))} \widetilde{g}(s^{\prime}_2/(s^{\prime}_2,q^{\prime})) \nonumber \\
 & \times \Big( \frac{q^{\prime} (s^{\prime}_1,q^{\prime})}{s^{\prime}_1/(s^{\prime}_1,q^{\prime})} \Big)_3 
\overline{ \Big( \frac{q^{\prime} (s^{\prime}_2,q^{\prime})}{s^{\prime}_2/(s^{\prime}_2,q^{\prime})} \Big)_3}  
\Big( \frac{\zeta_2 \lambda^{m_2-1} t_2 r^{\prime \prime}_2  \lambda^3 \nu_1 }{s^{\prime}_1/(s^{\prime}_1,q^{\prime})} \Big)_3
 \overline{\Big( \frac{\zeta_2 \lambda^{m_2-1} t_2 r^{\prime \prime}_2 \lambda^3 \nu_2}{s^{\prime}_2/(s^{\prime}_2,q^{\prime})} \Big)_3}   \nonumber \\
 & \times \sum_{k \pmod{q^{\prime}}} \overline{ \widetilde{g}(y_1  \lambda^3 \nu_1+k z_1, q^{\prime} (s^{\prime}_1,q^{\prime}))} 
\widetilde{g}(y_2 \lambda^3 \nu_2+ k z_2, q^{\prime}(s^{\prime}_2,q^{\prime})).
\end{align}

Observe that Lemma \ref{sqsupport} applied to the last two Gauss sums in the previous display 
imply that $G_{42}(\boldsymbol{\nu},\lambda^{\ell} t q^{\prime} q^{\prime \prime},\boldsymbol{\zeta} \boldsymbol{\lambda}^{\boldsymbol{m}} \boldsymbol{t} q^{\prime} \boldsymbol{s^{\prime}} \boldsymbol{r^{\prime \prime}}) \neq 0$ only if
$\lambda^3 \nu_i \equiv 0 \pmod{(s^{\prime}_i,q^{\prime})}$ for $i=1,2$.
Thus
\begin{align} \label{Gprimeintermed2}
& G_{42}(\cdots) \nonumber \\
&= \frac{1}{N(q^{\prime})} \Big( \prod_{i=1}^{2} \delta_{(a_i/s^{\prime}_i,q^{\prime})=1} \cdot  \delta_{(\lambda^3 \nu_i,s^{\prime}_i/(s^{\prime}_i,q^{\prime}))=1 }
\cdot \delta_{\lambda^3 \nu_i \equiv 0 \pmod{(s^{\prime}_i,q^{\prime})}} \nonumber \\
& \times \frac{1}{N(s^{\prime}_i)^{1/2}} \Big( \frac{ \overline{\zeta_i \lambda^{m_i-1} t_i r^{\prime \prime}_i } }{q^{\prime} s^{\prime}_1} \Big)_3  \Big( \frac{\zeta_i \lambda^{m_i-1} t_i r^{\prime \prime}_i  }{q^{\prime} s^{\prime}_2} \Big)_3 \Big) \overline{\widetilde{g}(s^{\prime}_1/(s^{\prime}_1,q^{\prime}))} \widetilde{g}(s^{\prime}_2/(s^{\prime}_2,q^{\prime}))
\nonumber \\
& \times
\Big( \frac{q^{\prime} (s^{\prime}_1,q^{\prime})}{s^{\prime}_1/(s^{\prime}_1,q^{\prime})} \Big)_3 
\overline{ \Big( \frac{q^{\prime} (s^{\prime}_2,q^{\prime})}{s^{\prime}_2/(s^{\prime}_2,q^{\prime})} \Big)_3}  
\Big( \frac{\zeta_2 \lambda^{m_2-1} t_2 r^{\prime \prime}_2  \lambda^3 \nu_1 }{s^{\prime}_1/(s^{\prime}_1,q^{\prime})} \Big)_3
 \overline{\Big( \frac{\zeta_2 \lambda^{m_2-1} t_2 r^{\prime \prime}_2 \lambda^3 \nu_2}{s^{\prime}_2/(s^{\prime}_2,q^{\prime})} \Big)_3}   \nonumber \\
 & \times \sum_{k \pmod{q^{\prime}}} \overline{ \widetilde{g}(y_1  \lambda^3 \nu_1+k z_1, q^{\prime} (s^{\prime}_1,q^{\prime}))} 
\widetilde{g}(y_2 \lambda^3 \nu_2+ k z_2, q^{\prime}(s^{\prime}_2,q^{\prime})).
\end{align}

Using Lemma \ref{sqrootbd} (noting the normalisation in \eqref{normalised})
gives 
\begin{align} \label{Gprimeintbd}
& \sum_{k \pmod{q^{\prime}}} | \widetilde{g}(y_1  \lambda^3 \nu_1+k z_1, q^{\prime} (s^{\prime}_1,q^{\prime}))| 
\cdot |\widetilde{g}(y_2  \lambda^3 \nu_2+ k z_2, q^{\prime}(s^{\prime}_2,q^{\prime})) |  \nonumber \\
& \leq  \Big(\prod_{i=1}^2 \delta_{\lambda^3 \nu_i \equiv 0 \pmod{(s^{\prime}_i,q^{\prime})}} \cdot N((s^{\prime}_i,q^{\prime} ))^{1/2} \Big) \nonumber \\
& \times  \sum_{k \pmod{q^{\prime}}} N((y_1  \tfrac{\lambda^3 \nu_1}{(s^{\prime}_1,q^{\prime})}+k \tfrac{z_1}{(s^{\prime}_1,q^{\prime})},q^{\prime}))^{1/2} N((y_2  \tfrac{\lambda^3 \nu_2}{(s^{\prime}_2,q^{\prime})}+k \tfrac{z_2}{(s^{\prime}_2,q^{\prime})},q^{\prime}))^{1/2} \nonumber \\
& \ll N(q^{\prime})^{1+\varepsilon} \Big(\prod_{i=1}^2 \delta_{\lambda^3 \nu_i \equiv 0 \pmod{(s^{\prime}_i,q^{\prime})}} \cdot N((s^{\prime}_i,q^{\prime} ))^{1/2} \Big),
\end{align}
where the last display follows from using Cauchy-Schwarz in $k$ 
and then a change of variable to $k \pmod {q^{\prime}}$ 
in each resulting braket
(the change of variable is valid since $(z_1/(s^{\prime}_1,q^{\prime}),q^{\prime})=
(z_2/(s^{\prime}_2,q^{\prime}),q^{\prime})=1$).
We use the triangle inequality in \eqref{Gprimeintermed},  substitute \eqref{Gprimeintbd},
and then change variables back to $\boldsymbol{s^{\prime}} \rightarrow (1/q^{\prime}) \boldsymbol{r^{\prime}}$. We obtain
\begin{align} \label{orthog1bd}
|G_{42}(\cdots)| 
& \ll N(q^{\prime})^{1+\varepsilon}
\Big( \prod_{i=1}^2 \delta_{(a_i q^{\prime}/r^{\prime}_i,q^{\prime})=1 } \cdot 
\delta_{(\lambda^3 \nu_i,(r^{\prime}_i/q^{\prime})/(r^{\prime}_i/q^{\prime},q^{\prime}))=1 } \nonumber \\
& \times \delta_{\lambda^3 \nu_i \equiv 0 \pmod{(r^{\prime}_i/q^{\prime},q^{\prime})}}   \frac{N((r^{\prime}_i/q^{\prime},q^{\prime} ))^{1/2}}{N(r^{\prime}_i)^{1/2}} \Big). 
 \end{align}

\subsubsection{Treatment of \eqref{Gprimeprimedef}} \label{Gprimeprimesec}
We open the normalised Ramanujan sums and cubic Kloosterman sums in \eqref{Gprimeprimedef} and
use orthogonality in the $J^{\prime \prime}_1,J^{\prime \prime}_2$ variables. By a similar argument to the above
we reassemble the result to obtain
\begin{align} \label{orthog2}
& G_{43}(\cdots) \nonumber \\
&:=\Big(\prod_{i=1}^{2} \delta_{r^{\prime \prime}_i=q^{\prime \prime} } \Big)
\cdot \overline{\Big(\frac{a_1}{q^{\prime \prime}} \Big)_3} \Big( \frac{\zeta_1 \lambda^{m_1-1} t_1 r^{\prime}_1 }{q^{\prime \prime}} \Big)_3 \Big( \frac{a_2}{q^{\prime \prime}} \Big)_3
\overline{\Big( \frac{\zeta_2 \lambda^{m_2-1} t_2 r^{\prime}_2 }{q^{\prime \prime}} \Big)_3} \nonumber \\
& \times \widehat{\psi}_{q^{\prime \prime}}(P_1 \lambda^3 \nu_1-P_2 \lambda^3 \nu_2),
\end{align}
where
\begin{equation}
P_1:=(\zeta_2 \lambda^{m_2-1} t_2 r^{\prime}_2)^2 a_1 \quad \text{and} \quad P_2:=(\zeta_1 \lambda^{m_1-1} t_1 r^{\prime}_1)^2 a_2.
\end{equation}
We have the bound
\begin{align} \label{orthog2bd}
|G_{43}(\cdots)|
& \leq \Big(\prod_{i=1}^{2} \delta_{r^{\prime \prime}_i=q^{\prime \prime} } \Big) \cdot |\widehat{\psi}_{q^{\prime \prime}}(P_1 \lambda^3 \nu_1-P_2 \lambda^3 \nu_2)|.
\end{align}

\subsubsection{Treatment of \eqref{Gdef}} \label{Gsec}
Recall that $\ell_0:=\max \{\ell,e_v\}$.
We open the normalised Ramanujan sums and cubic Kloosterman sums in \eqref{Gdef} and
use orthogonality in the $J_1,J_2$ variables. We then reassemble the result
to obtain
\begin{align} \label{orthog3}
&G_{41}(\cdots) \nonumber \\
&=\frac{N(\lambda^{\ell_0-e_v} t)^2}{N(\lambda^{\ell} t)} \Big( \prod_{i=1}^2 \frac{1}{N(\lambda^{m_i+3} t_i) } \Big)  
\sum_{\substack{ k \pmod{\lambda^{\ell} t} \\ (k,\lambda^{\ell} t)=1}} \check{e} \Big(\frac{ku(Y_2-Y_1)}{\lambda^{\ell}t} \Big)  \nonumber \\
& \sum_{\boldsymbol{x} \in \mathscr{B}_1(k) \times \mathscr{B}_2(k) }
\Big(\frac{\zeta_1 \lambda^{m_1-1} t_1 }{x_1}  \Big)_3 \overline{\Big(\frac{ \zeta_2 \lambda^{m_2-1}t_2 }{x_2}  \Big)_3} \nonumber \\
& \times \check{e} \Big(\frac{ \overline{r^{\prime}_1 r^{\prime \prime}_1} (\lambda^3 \nu_1 \overline{x_1}+\lambda a_1 Y_1 u x_1)}{\zeta_1 \lambda^{m_1-1} t_1 }- \frac{\overline{r^{\prime}_2 r^{\prime \prime}_2}(\lambda^3 \nu_2 \overline{x_2}+\lambda a_2 Y_2 u x_2)}{\zeta_2 \lambda^{m_2-1} t_2 } \Big),
\end{align}
where for $i=1,2$ we have
\begin{align} \label{Bidef}
\mathscr{B}_i(k):=\{ x_i & \pmod{\lambda^{m_i+1} t_i} : (x_i,\lambda t_i)=1, \quad x_i \equiv 1 \pmod{3}, \nonumber  \\
& \overline{r^{\prime}_i r^{\prime \prime}_i \zeta_i} a_i \lambda^{\ell_0-m_i+2}  (v_0 t/t_i) x_i \equiv k \lambda^{\ell_0-\ell} v_0  \pmod{\lambda^{\ell_0-e_v} t} \}.
\end{align}
For a given $k \in \mathbb{Z}[\omega]$ with $(k,\lambda^{\ell} t)=1$,
any solution $y_i \pmod{\lambda^{\ell_0-e_v} t}$ to the congruence
\begin{equation} \label{samplecong}
\overline{r^{\prime}_i r^{\prime \prime}_i \zeta_i} a_i \lambda^{\ell_0-m_i+2}  (v_0 t/t_i) y_i \equiv k \lambda^{\ell_0-\ell} v_0  \pmod{\lambda^{\ell_0-e_v} t}.
\end{equation}
corresponds to $N(\lambda^{\max \{0,m_1+1-\ell_0+e_v \} }) N(t_i/(t,t_i))$ 
distinct solutions $x_i \pmod{ \lambda^{m_i+1} t_i}$.
The congruence in \eqref{samplecong} has a solution $y_i \pmod{\lambda^{\ell_0-e_v} t}$ if and only if 
\begin{equation} \label{gcddiv}
(\overline{r^{\prime}_i r^{\prime \prime}_i \zeta_i} a_i \lambda^{\ell_0-m_i+2} (v_0 t/t_i),\lambda^{\ell_0-e_v} t) \mid k \lambda^{\ell_0-\ell} v_0.
\end{equation}
 Since $t \mid \text{rad}(v_0)^{\infty}$, $t_i \mid v_0 t$,
 $(\overline{r^{\prime}_i r^{\prime \prime}_i \zeta_i} a_i, \lambda v_0)=(\lambda,v_0)=1$,
 we have 
 \begin{align}
(\overline{r^{\prime}_i r^{\prime \prime}_i \zeta_i} a_i \lambda^{\ell_0-m_i+2} (v_0 t/t_i),\lambda^{\ell_0-e_v} t)
&=\lambda^{\min \{\ell_0-m_i+2,\ell_0-e_v \}} ((v_0 t/t_i),t) \nonumber \\  
&=\lambda^{\min \{\ell_0-m_i+2,\ell_0-e_v \}} t (v_0,t_i)/t_i,  \label{gcdeval1}
 \end{align}
 for $i=1,2$.
Observe that \eqref{gcdeval1} and the fact $(k,\lambda^{\ell} t )=1$ (recall that $\ell \geq 1$) imply that
\eqref{gcddiv} is equivalent to the two conditions
\begin{align} \label{gcddiv2}
t \mid [v_0,t_i] \quad \text{and} \quad 
\text{min} \{\ell_0-m_i+2,\ell_0-e_v \}  \leq \ell_0-\ell 
\end{align}
for $i=1,2$.
Under the restriction $0 \leq m_i \leq \ell_0+1$, the conditions in \eqref{gcddiv2} are equivalent to 
\begin{equation} \label{conditions}
 t \mid [v_0,t_i] \quad \text{and} \quad 1 \leq \ell \leq e_v=\ell_0
\end{equation}
for $i=1,2$.
Thus \eqref{orthog3} becomes 
\begin{align} \label{orthog3b}
&G_{41}(\cdots) 
\nonumber \\
&=\delta_{1 \leq \ell \leq e_v} \cdot
\frac{N(t)^2}{N(\lambda^{\ell} t) }  \cdot \Big(\prod_{i=1}^{2} \delta_{t \mid [v_0,t_i]} \cdot \frac{1}{N(\lambda^{m_i+3} t_i)} \Big)  \nonumber \\
& \times \sum_{\substack{ k \pmod{\lambda^{\ell} t} \\ (k,\lambda^{\ell} t)=1}}   \check{e} \Big(\frac{ku(Y_2-Y_1)}{\lambda^{\ell}t} \Big)
\sum_{\boldsymbol{x} \in \mathscr{B}_1(k) \times \mathscr{B}_2(k) } 
\Big(\frac{\zeta_1 \lambda^{m_1-1}t_1 }{x_1}  \Big)_3 \overline{\Big(\frac{\zeta_2 \lambda^{m_2-1}t_2 }{x_2}  \Big)_3}  \nonumber \\
& \times \check{e} \Big(\frac{\overline{r^{\prime}_1 r^{\prime \prime}_1}(\lambda^3 \nu_1 \overline{x_1}+\lambda a_1 Y_1 u x_1)}{\zeta_1 \lambda^{m_1-1} t_1 }- \frac{\overline{r^{\prime}_2 r^{\prime \prime}_2}(\lambda^3 \nu_2 \overline{x_2}+\lambda a_2 Y_2 u x_2)}{\zeta_2 \lambda^{m_2-1} t_2 } \Big).
\end{align}

Furthermore, under the conditions in \eqref{conditions} and $0 \leq m_i \leq e_v+1$ for $i=1,2$,
\eqref{gcdeval1} and the sentence containing \eqref{samplecong}, imply that for each $k \in \mathbb{Z}[\omega]$ with $(k,\lambda^{\ell} t)=1$
we have 
\begin{equation} \label{trivB}
 |\mathscr{B}_1(k) \times \mathscr{B}_2(k)| 
 \leq \Big( \prod_{i=1}^2  \delta_{t \mid [v_0,t_i]} N(\lambda^{m_i+1})  N \Big(\frac{ t (v_0 ,t_i)}{(t,t_i)} \Big)  \Big).
\end{equation}
Using \eqref{trivB}, we bound \eqref{orthog3b} trivially by 
\begin{align} \label{orthog3bbd}
|G_{41}(\cdots)|
& \leq \delta_{1 \leq \ell \leq e_v} \cdot \Big( \prod_{i=1}^2 \delta_{t \mid [v_0,t_i]} \cdot  N \Big(\frac{ t^2 (v_0 ,t_i)}{t_i (t,t_i)} \Big)   \Big) \nonumber \\
& \leq \delta_{1 \leq \ell \leq e_v} \cdot N(v_0)^6 \Big( \prod_{i=1}^2 \delta_{t \mid [v_0,t_i]}  \Big).
\end{align}

\subsection{Further technical manipulations and insertion of smooth dyadic partitions of unity} \label{posturing}
We substitute \eqref{S4defn} into  \eqref{Midef} to obtain 
\begin{align} \label{M4fcompact}
& \mathscr{M}_{4f}(\boldsymbol{a},\cdots) \nonumber \\
&=\frac{N(\lambda^8) k_C}{C^4} \frac{X^2}{N(a_1 a_2) N(v_0)^2} \sum_{1 \leq \ell \ll \log C}  \sum_{\substack{t q^{\prime} q^{\prime \prime} \in \mathbb{Z}[\omega] \\ \eqref{tqqcond}   }}   
\frac{N(\lambda^{\ell})}{N(\lambda^{\ell_0+1})^2} 
\frac{1}{N(t q^{\prime} q^{\prime \prime} )} \sum_{\substack{ \boldsymbol{k} \in (\mathbb{Z}[\omega]/ \lambda^{14} \mathbb{Z}[\omega])^2 
\\ \forall i: k_i \equiv 1 \pmod{3} }} \nonumber \\
& \times \sum_{\substack{\boldsymbol{m},\boldsymbol{t},\boldsymbol{r}^{\prime},\boldsymbol{r}^{\prime \prime} \\ \forall i: 6 \leq  m_i \leq \ell_0+1 \\ \eqref{trrcond} }}   
\sum_{\boldsymbol{n} \in \mathbb{Z}^2} (-1)^{n_1+n_2}
\sum_{\substack{ \boldsymbol{\nu} \in (\lambda^{-3} \mathbb{Z}[\omega])^2 } }
\rho_f(\nu_1) \overline{\rho_f(\nu_2)} \Big(\frac{\nu_1}{|\nu_1|} \Big)^{-n_1} \Big( \frac{\overline{\nu_2}}{|\nu_2|} \Big)^{-n_2}  \nonumber \\
& \times \ddot{H}_{K,C^2/N(\lambda^{\ell} t q^{\prime} q^{\prime \prime}) } \Big(\frac{N(\nu_1)}{N(\lambda^{m_1-4} t_1 r^{\prime}_1 r^{\prime \prime}_2)^2/X},\frac{N(\nu_2)}{N(\lambda^{m_2-4} t_2 r^{\prime}_2 r^{\prime \prime}_2)^2/X},\boldsymbol{n} \Big)  \nonumber \\
& \times \mathscr{C}_4(\boldsymbol{a},\boldsymbol{\nu},\lambda^{\ell} t q^{\prime} q^{\prime \prime}, \boldsymbol{\lambda}^{\boldsymbol{m}} \boldsymbol{t} \boldsymbol{r^{\prime}} \boldsymbol{r^{\prime \prime}}),
\end{align}
where $\mathscr{C}_4(\cdots)$ is given by \eqref{C4def} (and \eqref{exact1}). Note that
the summands $\mathscr{M}_{4f}(\cdots)$ do not depend on the congruence classes $k_i \pmod{\lambda^{14}}$
(unlike the other $\mathscr{M}_p(\cdots)$ for $p=1,2,3$). Thus the 
sum over $\boldsymbol{k}$ in \eqref{M4fcompact}, and the last condition in \eqref{trrcond} can be dropped.
Equality \eqref{exact1} and 
the delta conditions in \eqref{orthog1} (resp. \eqref{orthog2})
imply that we can make the change of variable 
$\boldsymbol{r^{\prime}} \rightarrow q^{\prime} \boldsymbol{s^{\prime}}$ where $s^{\prime}_i \mid a_i$ and $(a_i/s^{\prime}_i,q^{\prime})=1$ (resp.
 $\boldsymbol{r^{\prime \prime}} \rightarrow \boldsymbol{q^{\prime \prime}}$ where $\boldsymbol{q}^{\prime \prime}=(q^{\prime \prime},q^{\prime \prime})$)
 in \eqref{M4fcompact}. The delta conditions in \eqref{orthog3b} tells us that $1 \leq \ell \leq e_v$ and $t \mid [v_0,t_i]$.
Thus the multiple summation $\sum_{\boldsymbol{m},\boldsymbol{t},\boldsymbol{r},\boldsymbol{r}^{\prime \prime}}$ 
in \eqref{M4fcompact}
subject to $6 \leq m_i \leq \ell_0+1$ for $i=1,2$ and \eqref{trrcond}, 
can be written as $\sum_{\boldsymbol{m},\boldsymbol{t},\boldsymbol{s}^{\prime}}$
subject to $6 \leq m_i \leq e_v+1$ for $i=1,2$, and
\begin{equation} \label{newcond}
t_i \mid v_0t, \quad t \mid [v_0,t_i],  \quad s^{\prime}_i \mid a_i, \quad (a_i /s^{\prime}_i,q^{\prime})=1 \quad \text{for} \quad i=1,2.
\end{equation}
We further note that
the delta conditions in \eqref{Gprimeintermed2} imply that $\boldsymbol{\nu}$ sum in \eqref{M4fcompact}
is supported on the conditions
\begin{equation} \label{nucond}
\lambda^{3} \nu_i \equiv 0 \pmod{(s^{\prime}_i,q^{\prime})} \quad \text{and} \quad 
\Big (\lambda^3 \nu_i, \frac{s^{\prime}_i}{(s^{\prime}_i,q^{\prime})} \Big)=1 \quad \text{for} \quad i=1,2.
\end{equation}
We then insert a smooth partition of unity in the variables $t,q^{\prime}$, and $q^{\prime \prime}$ in \eqref{M4fcompact}. 
Thus
\begin{equation} \label{dyadicdecomp}
\mathscr{M}_{4f}(\boldsymbol{a},\cdots) 
=\sum_{\substack{1 \leq \ell \leq e_v \\ 1/2 \leq T,Q^{\prime},Q^{\prime \prime} \text{dyadic} \\ 
N(\lambda^{\ell}) T Q^{\prime} Q^{\prime \prime} \ll C^2 }}
\mathscr{M}_{4f}(\boldsymbol{a},\cdots,N(\lambda^\ell)TQ^{\prime}Q^{\prime \prime}),
\end{equation}
where 
\begin{align} \label{M4fexact1}
& \mathscr{M}_{4f}(\boldsymbol{a},\cdots,N(\lambda^{\ell})TQ^{\prime}Q^{\prime \prime}) \nonumber \\
&:=\frac{N(\lambda^8) k_C}{C^4} \frac{X^2 N(\lambda^{\ell})}{N(a_1 a_2) N(\lambda^{e_v+1} v_0)^2}  \nonumber \\
& \times \sum_{\substack{t q^{\prime} q^{\prime \prime} \in \mathbb{Z}[\omega] \\ \eqref{tqqcond}} } \frac{1}{N(t q^{\prime} q^{\prime \prime} )}   
U \Big( \frac{N(t)}{T} \Big)  U \Big( \frac{N(q^{\prime})}{Q^{\prime}} \Big)  U \Big( \frac{N(q^{\prime \prime})}{Q^{\prime \prime}} \Big)  \nonumber \\
& \times \sum_{\substack{\boldsymbol{m}, \boldsymbol{t},\boldsymbol{s}^{\prime}  \\ 6 \leq m_i \leq e_v+1 \\ \eqref{newcond} }}  
\sum_{\boldsymbol{n} \in \mathbb{Z}^2} (-1)^{n_1+n_2}
\sum_{\substack{\boldsymbol{\nu} \in (\lambda^{-3} \mathbb{Z}[\omega])^2 \\ \eqref{nucond}  } }
\rho_f(\nu_1) \overline{\rho_f(\nu_2)} \Big(\frac{\nu_1}{|\nu_1|} \Big)^{-n_1} \Big( \frac{\overline{\nu_2}}{|\nu_2|} \Big)^{-n_2} \nonumber \\
 & \times 
\ddot{H}_{K,C^2/N(\lambda^{\ell} t q^{\prime} q^{\prime \prime} )} \Big(\frac{N(\nu_1)}{N(\lambda^{m_1-4} t_1 s^{\prime}_1 q^{\prime} q^{\prime \prime})^2/X},\frac{N(\nu_2)}{N(\lambda^{m_2-4} t_2 s^{\prime}_2 q^{\prime} q^{\prime \prime})^2/X},\boldsymbol{n} \Big) \nonumber \\
& \times \mathscr{C}_i(\boldsymbol{a},\boldsymbol{\nu},\lambda^{\ell} t q^{\prime} q^{\prime \prime},\boldsymbol{\lambda}^{\boldsymbol{m}} \boldsymbol{t} q^{\prime} \boldsymbol{s^{\prime}} \boldsymbol{q^{\prime \prime}}).
\end{align}
The restriction 
\begin{equation} \label{restriction}
N(\lambda^{\ell})TQ^{\prime} Q^{\prime \prime} \ll C^2
\end{equation}
in 
\eqref{dyadicdecomp} follows from \eqref{hnonvanish}.

Using \eqref{HKderivbound}, \eqref{doubletildebd} (with $M \rightarrow C^2/(N(\lambda^{\ell}) T Q^{\prime} Q^{\prime \prime})$,
$D_{i1}=D_{i2}>0$ large and fixed, and $ D_{(i+1)1}=D_{(i+1)2}=\varepsilon$ small and fixed) 
Lemma \ref{L1bound}, \eqref{orthog1bd}, \eqref{orthog2bd}, and \eqref{orthog3bbd},
we truncate the $\nu_{i}$-sum in \eqref{M4fexact1} by 
\begin{equation} \label{truncation}
N(\nu_i) \ll (X K N(v))^{\varepsilon} K^8 \cdot  (N(\lambda^{m_i} t_i s^{\prime}_i) Q^{\prime} Q^{\prime \prime})^2 X^{-1}=:\Xi_i,
\end{equation}
with negligible error $O((XKN(v))^{-2000})$. Without loss of generality we can restrict our attention to 
the case $\Xi_i \gg (XKN(v))^{-\varepsilon}$, otherwise $\mathscr{M}_{4f}(\boldsymbol{a},\cdots,N(\lambda^{\ell})TQ^{\prime}Q^{\prime \prime})$ is
a negligible $O((XKN(v))^{-2000})$.  Observe that
\eqref{doubletildebd} with  
$D_{11}=D_{12}=D_{21}=D_{22}=\varepsilon>0$ small and fixed,
\eqref{Cchoice}, \eqref{restriction}, and \eqref{truncation} imply that
\begin{align} \label{Habbrevbd}
 & \ddot{H}_{K,C^2/N(\lambda^{\ell} t q^{\prime} q^{\prime \prime} )} \Big(\frac{N(\nu_1)}{N(\lambda^{m_1-4} t_1 s^{\prime}_1 q^{\prime} q^{\prime \prime})^2/X},\frac{N(\nu_2)}{N(\lambda^{m_2-4} t_2 s^{\prime}_2 q^{\prime} q^{\prime \prime})^2/X},\boldsymbol{n} \Big) \nonumber \\
& \ll (XKN(v))^{\varepsilon}  \cdot \frac{C^2}{N(\lambda^{\ell} t q^{\prime} q^{\prime \prime})}
\cdot \prod_{i=1}^2 (|n_i|+1)^{-2+\varepsilon}.
\end{align}

We apply the triangle inequality in \eqref{M4fexact1}, and then use \eqref{truncation}, \eqref{exact1}, \eqref{orthog1bd}, \eqref{orthog2bd}, \eqref{orthog3bbd}, and \eqref{Habbrevbd}
to obtain 
\begin{align} \label{L4fintermed}
& \mathscr{M}_{4f}(\boldsymbol{a},\cdots,N(\lambda^{\ell})TQ^{\prime}Q^{\prime \prime}) \nonumber \\
&\ll (XKN(v))^{\varepsilon} \cdot \Big( \frac{X N(v_0)^2 }{CA T Q^{\prime} Q^{\prime \prime} N(\lambda^{e_v+1}) } \Big)^2  \nonumber \\
& \times \sum_{\substack{tq^{\prime} \in \mathbb{Z}[\omega] \\N(t) \sim T, N(q^{\prime}) \sim Q^{\prime}  \\ t \mid \text{rad}(v_0)^{\infty} \\ q^{\prime}  \mid {\text{rad}(a_1 a_2)}^{\infty} } } 
\sum_{\substack{\boldsymbol{\zeta}, \boldsymbol{m},\boldsymbol{t},\boldsymbol{s^{\prime}} \\ 6 \leq m_i \leq e_v+1  \\  \eqref{newcond} }}  \prod_{i=1}^2
\frac{N((s^{\prime}_i,q^{\prime}))^{1/2}}{N(s^{\prime}_i)^{1/2}  }
\sum_{\substack{\boldsymbol{\nu} \in (\lambda^{-3} \mathbb{Z}[\omega])^2 \\ \forall i: N(\nu_i) \ll \Xi_i  \\ \eqref{nucond}  }}
|\rho_f(\nu_1)|  |\rho_f(\nu_2)| \nonumber \\
& \times \sum_{\substack{ q^{\prime \prime} \in \mathbb{Z}[\omega] \\ q^{\prime \prime} \equiv 1 \pmod{3} \\ N(q^{\prime \prime}) \sim Q^{\prime \prime} \\ (q^{\prime \prime},a_1 a_2 v_0)=1 }} |\widehat{\psi}_{q^{\prime \prime}}(P_1 \lambda^3 \nu_1-P_2 \lambda^3 \nu_2)|  
+O((XKN(v))^{-2000}),
\end{align}
where
\begin{equation} \label{hdef}
P_1:=(\zeta_2 \lambda^{m_2-1} t_2 q^{\prime} s^{\prime}_2 )^2  a_1 \quad \text{and} \quad P_2:=(\zeta_1 \lambda^{m_1-1} t_1 q^{\prime} s^{\prime}_1)^2 a_2.
\end{equation}
We drop the condition $(q^{\prime \prime},a_1 a_2 v_0)=1$ in \eqref{L4fintermed} by positivity, and use Lemma  
\ref{ramflat} to obtain
\begin{align} \label{Vsumbd}
& \sum_{\substack{q^{\prime \prime} \in \mathbb{Z}[\omega] \\ q^{\prime \prime} \equiv 1 \pmod{3} \\ N(q^{\prime \prime}) \sim Q^{\prime \prime}  }}
 |\widehat{\psi}_{q^{\prime \prime}}(P_1 \lambda^3 \nu_1-P_2 \lambda^3 \nu_2)|  \nonumber \\
& \ll \delta_{P_1 \lambda^3 \nu_1=P_2 \lambda^3 \nu_2} \cdot Q^{\prime \prime}+\delta_{P_1 \lambda^3 \nu_1 \neq P_2 \lambda^3 \nu_2} \cdot (XKN(v))^{\varepsilon}.
\end{align}

We substitute the bound \eqref{Vsumbd} into \eqref{L4fintermed}, and obtain
\begin{equation} \label{summary}
 \mathscr{M}_{4f}(\boldsymbol{a},\cdots,N(\lambda^{\ell})TQ^{\prime}Q^{\prime \prime})
 \ll 
\mathscr{N}_{4f}(\boldsymbol{a},\cdots, 
N(\lambda^{\ell})TQ^{\prime}Q^{\prime \prime})+\mathscr{E}_{4f}(\boldsymbol{a},\cdots, 
N(\lambda^{\ell})TQ^{\prime}Q^{\prime \prime}),
\end{equation}
where the terms on the right correspond to the diagonal and off-diagonal respectively. 
Using \eqref{dyadicdecomp} and \eqref{summary}
it suffices to estimate 
\begin{equation} \label{diagavg}
\sum_{\substack{1 \leq \ell \leq e_v \\ 1/2 \leq T,Q^{\prime},Q^{\prime \prime} \text{dyadic} \\ 
N(\lambda^{\ell}) T Q^{\prime} Q^{\prime \prime} \ll C^2 }} \sum_{\boldsymbol{a}} \mu^2(a_1) |\alpha_{a_1}| \mu^2(a_2) |\alpha_{a_2}| \mathscr{N}_{4f}(\boldsymbol{a},\cdots, 
N(\lambda^{\ell})TQ^{\prime}Q^{\prime \prime}),
\end{equation}
and 
\begin{equation} \label{offdiagavg}
\sum_{\substack{1 \leq \ell \leq e_v \\ 1/2 \leq T,Q^{\prime},Q^{\prime \prime} \text{dyadic} \\ 
N(\lambda^{\ell}) T Q^{\prime} Q^{\prime \prime} \ll C^2 }} \sum_{\boldsymbol{a}} \mu^2(a_1) |\alpha_{a_1}| \mu^2(a_2) |\alpha_{a_2}| \mathscr{E}_{4f}(\boldsymbol{a},\cdots, 
N(\lambda^{\ell})TQ^{\prime}Q^{\prime \prime}),
\end{equation}
with $C$ given by \eqref{Cchoice}.

\subsection{Off-diagonal: \eqref{offdiagavg}} \label{offdiagbounds}
We drop the condition
$P_1 \lambda^3 \nu_1 \neq P_2 \lambda^3 \nu_2$ and 
$(\lambda^3 \nu_i,s^{\prime}_i/(s^{\prime}_i,q^{\prime}))=1$ for $i=1,2$ (see \eqref{nucond})
by positivity, and
then use the Cauchy-Schwarz inequality, $\rho_f(0)=0$, and Lemma \ref{rankinselbound} to obtain
\begin{align} \label{rhoibound}
& \sum_{\substack{\nu_i \in \lambda^{-3} \mathbb{Z}[\omega] \\ N(\nu_i) \ll \Xi_i \\ \lambda^3 \nu_i \equiv 0 \pmod{(s^{\prime}_i,q^{\prime})} }}
|\rho_f(\nu_i)|  \nonumber \\
& \leq \Big( \sum_{\substack{0 \neq \nu_i \in \lambda^{-3} \mathbb{Z}[\omega] \\ N(\nu_i) \ll \Xi_i \\ \lambda^3 \nu_i \equiv 0 \pmod{(s^{\prime}_i,q^{\prime})} }} 1  \Big)^{1/2}
\Big( \sum_{\substack{\nu_i \in \lambda^{-3} \mathbb{Z}[\omega] \\ N(\nu_i) \ll \Xi_i  }} |\rho_f(\nu_i)|^2  \Big)^{1/2} 
\ll \frac{\Xi_i^{1+\varepsilon}}{N((s^{\prime}_i,q^{\prime}))^{1/2}},
\end{align}
for $i=1,2$. 

We use \eqref{rhoibound}, \eqref{newcond}, and Lemma \ref{radlemma} to conclude 
that 
\begin{equation} \label{offdiagcontr}
\mathscr{E}_{4f}(\boldsymbol{a},\cdots,N(\lambda^{\ell})TQ^{\prime}Q^{\prime \prime}) \ll (XKN(v))^{\varepsilon} K^{16} N(v_0)^8 N(\lambda^{e_v})^2 A C^{-2} (TQ^{\prime} Q^{\prime \prime})^2.
\end{equation}
We substitute \eqref{offdiagcontr} into \eqref{offdiagavg}, and then apply Cauchy-Schwarz. We deduce that
\eqref{offdiagavg} is
\begin{align} \label{offdiagres}
& \ll (XKN(v))^{\varepsilon} K^{16} N(v)^8 A^2 B^{1/2} \| \boldsymbol{\mu^2} \boldsymbol{\alpha} \|_2^2 \nonumber \\
& \ll (XKN(v))^{\varepsilon} K^{16} N(v)^8 A^3 B^{1/2} \| \boldsymbol{\mu^2} \boldsymbol{\alpha} \|^2_{\infty}.
\end{align}

\subsection{Diagonal: \eqref{diagavg}}  \label{diagbounds}
Consulting \eqref{nucond} we make the change of variable 
\begin{equation} \label{changeofvar}
\lambda^3 \nu_i=(s^{\prime}_i,q^{\prime}) \lambda^3 \mu_i, \quad \text{such that} \quad 0 \neq \mu_i \in \lambda^{-3} \mathbb{Z}[\omega]
\quad \text{and} \quad \Big ((s^{\prime}_i,q^{\prime})  \lambda^3 \mu_i, \frac{s^{\prime}_i}{(s^{\prime}_i,q^{\prime})} \Big)=1
\end{equation}
for $i=1,2$. Since $a_i$ is squarefree and $s^{\prime}_i \mid a_i$, the coprimality condition
in \eqref{changeofvar} is equivalent to
\begin{equation} \label{mucoprimerel}
\Big (\lambda^3 \mu_i, \frac{s^{\prime}_i}{(s^{\prime}_i,q^{\prime})} \Big)=1
\end{equation}
for $i=1,2$. The diagonal equation
$P_1 \lambda^3 \nu_1=P_2 \lambda^3 \nu_2$
with $P_1$ and $P_2$ given in \eqref{hdef}
is equivalent to
\begin{equation} \label{diagreduce}
(\zeta_2 \lambda^{m_2-1} t_2 )^2 \frac{s^{\prime}_2 }{(s^{\prime}_2,q^{\prime})} \frac{a_1}{s^{\prime}_1} \lambda^3 \mu_1 
= (\zeta_1 \lambda^{m_1-1} t_1 )^2  \frac{s^{\prime}_1}{(s^{\prime},q^{\prime})} \frac{a_2}{s^{\prime}_2} \lambda^3 \mu_2,
\end{equation}
where $0 \neq \lambda^3 \mu_i$ satisfies \eqref{mucoprimerel} for $i=1,2$.
The hypothesis that the $a_i$ are squarefree for $i=1,2$ guarantees that
\begin{equation} \label{squarerel}
\Big(\frac{s^{\prime}_1}{(s^{\prime}_1,q^{\prime})},\lambda^{m_2-1} t_2  \frac{a_1}{s^{\prime}_1}  \Big)=
\Big(\frac{s^{\prime}_2}{(s^{\prime}_2,q^{\prime})},\lambda^{m_1-1} t_1  \frac{a_2}{s^{\prime}_2}  \Big)=1.
\end{equation}
Using \eqref{mucoprimerel} and \eqref{squarerel}
we conclude from \eqref{diagreduce} that
\begin{equation} \label{sgcdeq}
\widetilde{s}:=\frac{s^{\prime}_1}{(s^{\prime}_1,q^{\prime})}=\frac{s^{\prime}_2}{(s^{\prime}_2,q^{\prime})} \mid (a_1,a_2),
\end{equation}
and thus \eqref{diagreduce} is equivalent to 
\begin{equation} \label{diagreducereduce}
(\zeta_2 \lambda^{m_2-1} t_2 )^2  \frac{a_1}{s^{\prime}_1} \lambda^3 \mu_1 
= (\zeta_1 \lambda^{m_1-1} t_1 )^2  \frac{a_2}{s^{\prime}_2} \lambda^3 \mu_2
\end{equation}
where $0 \neq \lambda^3 \mu_i$ satisfies \eqref{mucoprimerel} for $i=1,2$.

We use \eqref{changeofvar}--\eqref{diagreducereduce} to re-write \eqref{diagavg},
set $g_i:=(s_i,q^{\prime})$, and release this condition using M\"{o}bius inversion. We then interchange the order of summation. We obtain 
 \begin{align} \label{netintermed}
& \sum_{\boldsymbol{a}} \mu^2(a_1) |\alpha_{a_1}| \mu^2(a_2) |\alpha_{a_2}| \mathscr{N}_{4f}(\boldsymbol{a},\cdots, 
N(\lambda^{\ell})TQ^{\prime}Q^{\prime \prime}) \nonumber \\
&= (XKN(v))^{\varepsilon} \cdot \Big( \frac{X N(v_0)^2 }{CA T Q^{\prime} N(\lambda^{e_v+1}) } \Big)^2 \frac{1}{Q^{\prime \prime}}  
\sum_{\substack{ \boldsymbol{\zeta}, \boldsymbol{m}, \boldsymbol{t} \\ \forall i: 6 \leq m_i \leq e_v+1 \\ \forall i: t_i \mid \text{rad}(v_0)^{\infty} }} 
\sum_{\substack{\boldsymbol{h}, \boldsymbol{d}, \boldsymbol{g}, \boldsymbol{r} \\
\forall i: h_i,d_i,g_i,r_i \equiv 1 \pmod{3}  \\
 h_1 d_1=h_2 d_2  }} \nonumber \\
 & \frac{\mu(h_1) \mu(h_2)}{N(h_1 d_1 h_2 d_2)^{1/2}}
\mu^2(h_1 d_1 g_1 r_1) |\alpha_{h_1 d_1 g_1 r_1 }|  \mu^2(h_2 d_2 g_2 r_2 )  |\alpha_{h_2 d_2 g_2 r_2}| 
 \nonumber \\
& \times \sum_{\substack{ \boldsymbol{\nu} \in (\lambda^{-3} \mathbb{Z}[\omega])^2 \\
\forall i: N(\nu_i )  \ll \Xi^{\prime}_i \\
\eqref{nucond1} \\ \eqref{linearcond} } }
|\rho_f (  \nu_1 ) | \cdot |\rho_f ( \nu_2 ) |
\sum_{\substack{t q^{\prime} \in \mathbb{Z}[\omega] \\ N(t) \sim T, N(q^{\prime}) \sim Q^{\prime} \\
 [t_1,t_2] \mid v_0 t \\
 t \mid ([v_0,t_1],[v_0,t_2])  \\  [h_1 g_1, h_2 g_2] \mid q^{\prime}  \mid {\text{rad}(h_1 d_1 g_1 h_2 d_2 g_2)}^{\infty}    } } 
 1,
\end{align}
where (see \eqref{truncation})
\begin{equation} \label{Xiprime}
\Xi^{\prime}_i:=(X K N(v))^{\varepsilon} K^8 \cdot  ( N(\lambda^{m_i} t_i h_i d_i g_i) Q^{\prime} Q^{\prime \prime})^2 X^{-1} \quad \text{for} \quad i=1,2,
\end{equation}
\begin{align}
& (\lambda^3 \nu_i,h_i d_i)=1 \quad \text{and} \quad \lambda^3 \nu_i \equiv 0 \pmod{g_i} \quad \text{for} \quad i=1,2, \label{nucond1} \\ 
& \quad \quad \quad \text{and} \quad (\zeta_2 \lambda^{m_2-1} t_2 )^2 r_1 \frac{\lambda^3 \nu_1}{g_1}=(\zeta_1 \lambda^{m_1-1} t_1 )^2 r_2 \frac{\lambda^3 \nu_2}{g_2}.  \label{linearcond}
\end{align}
We dyadically partition all of the auxiliary variables i.e.
\begin{equation*}
N(h_i) \sim H_i, \quad N(d_i) \sim D_i, \quad N(g_i) \sim G_i, \quad N(r_i) \sim R_i, \quad N(t_i) \sim T_i,
\end{equation*}
such that 
\begin{equation} \label{dyadiclist}
H_i D_i G_i R_i \asymp A, \quad H_i G_i \ll Q^{\prime}, \quad \text{and} \quad T_i \ll  N(v_0) T \quad \text{for} \quad i=1,2.
\end{equation}
We estimate the sum over $t$ and $q^{\prime}$ in \eqref{netintermed}
by $(XKN(v))^{\varepsilon}$
using \eqref{divis} and Lemma \ref{radlemma} respectively.
We then apply the bound $|\mu^2(a) \alpha_{a}| \leq \| \boldsymbol{\mu}^2 \boldsymbol{\alpha} \|_{\infty}$.
We see that the entirety of \eqref{netintermed} is
\begin{align} \label{netintermed2}
& \ll (XKN(v))^{\varepsilon} \|\boldsymbol{\mu}^2 \boldsymbol{\alpha} \|^2_{\infty} \cdot \Big( \frac{X N(v_0)^2}{CA T Q^{\prime} N(\lambda^{e_v+1}) } \Big)^2 \frac{1}{Q^{\prime \prime}}  
\sum_{\substack{\boldsymbol{\zeta},\boldsymbol{m} \\ \forall i:  6 \leq m_i \leq e_v+1 }}  
\nonumber \\
& \times \sum_{\substack{ \forall i: H_i,D_i,G_i,R_i,T_i \\ \text{ dyadic} \\ \eqref{dyadiclist} }}
\frac{1}{(H_1D_1H_2 D_2)^{1/2} }
 \sum_{\substack{ \boldsymbol{t} \\ \forall i: N(t_i) \sim T_i \\ t_i \mid \text{rad}(v_0)^{\infty}  }}  
 \sum_{\substack{ \boldsymbol{r} \\ \forall i: N(r_i) \sim R_i \\ r_i \equiv 1 \pmod{3}  }}
\sum_{\substack{\boldsymbol{h}, \boldsymbol{d} \\ h_1 d_1=h_2 d_2 \\  \forall i: N(h_i) \sim H_i \\ \forall i: N(d_i) \sim D_i \\ \forall i: h_i,d_i \equiv 1 \pmod{3}  }}
  \nonumber \\
& \sum_{\substack{\boldsymbol{g} \\ \forall i: N(g_i) \sim G_i \\ \forall i: g_i \equiv 1 \pmod{3}}} \sum_{\substack{ \boldsymbol{\nu} \in (\lambda^{-3} \mathbb{Z}[\omega])^2 \\ \forall i:N(\nu_i ) \ll \Xi^{\prime \prime}_i
 \\  \eqref{nucond1} \\ \eqref{linearcond}  }}
 |\rho_f (  \nu_1 ) | \cdot |\rho_f ( \nu_2 ) |
\end{align}
where (see \eqref{Xiprime})
\begin{equation} \label{Xiprimeprime}
\Xi^{\prime \prime}_i:=(X K N(v))^{\varepsilon} K^8 \cdot  ( N(\lambda^{m_i}) T_i H_i D_i G_i Q^{\prime} Q^{\prime \prime})^2 X^{-1}, 
\end{equation}
for $i=1,2$.

We apply Cauchy-Schwarz to the sum over $\boldsymbol{g}$ and $\boldsymbol{\nu}$ in \eqref{netintermed2},
and then rearrange the order of summation to obtain
\begin{align}
& \sum_{\substack{\boldsymbol{g} \\ \forall i: N(g_i) \sim G_i \\ \forall i: g_i \equiv 1 \pmod{3} }}
\sum_{\substack{ \boldsymbol{\nu} \in (\lambda^{-3} \mathbb{Z}[\omega])^2 \\ \forall i:  N(\nu_i) \ll \Xi^{\prime \prime}_i \\ \eqref{nucond1} \\ \eqref{linearcond}   }}
 |\rho_f (  \nu_1 ) | \cdot |\rho_f ( \nu_2 ) |  \nonumber \\
& \leq  \Big(   
 \sum_{\substack{ \nu_1 \in \lambda^{-3} \mathbb{Z}[\omega] \\  N(\nu_1) \ll \Xi^{\prime \prime}_1 \\ (\lambda^3 \nu_1,h_1 d_1)=1 }}
  |\rho_f (  \nu_1 ) |^2
 \sum_{\substack{ N(g_1) \sim G_1 \\  g_1 \mid \lambda_3 \nu_1  \\  g_1 \equiv 1 \pmod{3} }}
 \sum_{\substack{ N(g_2) \sim G_2 \\ g_2 \equiv 1 \pmod{3} }} \sum_{\substack{ \nu_2 \in \lambda^{-3} \mathbb{Z}[\omega] \\ N(\nu_2) \ll \Xi^{\prime \prime}_2   \\ (\lambda^3 \nu_2,h_2 d_2)=1 \\ \lambda^{3} \nu_2 \equiv 0 \pmod{g_2} \\ \eqref{linearcond}   } }   1 \Big)^{1/2} \label{bracket1}  \\
& \times 
\Big(   
 \sum_{\substack{ \nu_2 \in \lambda^{-3} \mathbb{Z}[\omega] \\  N(\nu_2) \ll \Xi^{\prime \prime}_2 \\ (\lambda^3 \nu_2,h_2 d_2)=1 }}
  |\rho_f (  \nu_2 ) |^2
 \sum_{\substack{ N(g_2) \sim G_2 \\ g_2 \mid \lambda_3 \nu_2  \\ g_2 \equiv 1 \pmod{3} }}
 \sum_{\substack{ N(g_1) \sim G_1 \\ g_1 \equiv 1 \pmod{3} }} \sum_{\substack{ \nu_1 \in \lambda^{-3} \mathbb{Z}[\omega] \\ N(\nu_1) \ll \Xi^{\prime \prime}_1   \\ (\lambda^3 \nu_1,h_1 d_1)=1 \\ \lambda^{3} \nu_1 \equiv 0 \pmod{g_1} \\ \eqref{linearcond}   } }   1 \Big)^{1/2} \label{bracket2}.
 \end{align} 

Consider the bracketed expression in \eqref{bracket1}. The conditions on the $\nu_2$-sum imply that the $\nu_2$-sum
is bounded by $1$. We then estimate the sum over $g_2$ trivially, and then apply the divisor bound \eqref{divis} to estimate the sum over $g_1$.
Thus the sum over $g_2,g_1$ and $\nu_2$ satisfies $\ll X^{\varepsilon} G_2$.
We use this bound, drop the 
condition $(\lambda^3 \nu_1,h_1 d_1)=1$ by positivity, and then apply Lemma \ref{rankinselbound} to estimate the $\nu_1$-sum. We obtain
that the entire bracketed expression in \eqref{bracket1} satisfies $\ll X^{\varepsilon} G_2 \Xi_1$. 
The analogous argument can be applied to obtain a bound of $\ll X^{\varepsilon} G_1 \Xi_2$ 
for the bracketed expression in \eqref{bracket2}.
We deduce that 
\begin{equation} \label{keydiagbd}
\sum_{\substack{ \boldsymbol{g} \\ \forall i: N(g_i) \sim G_i \\ \forall i: g_i \equiv 1 \pmod{3} }}
\sum_{\substack{ \boldsymbol{\nu} \in (\lambda^{-3} \mathbb{Z}[\omega])^2 \\ \forall i:  N(\nu_i) \ll \Xi^{\prime \prime}_i \\ \eqref{nucond1} \\ \eqref{linearcond}   }}
 |\rho_f (  \nu_1 ) | \cdot |\rho_f ( \nu_2 ) | \ll X^{\varepsilon}  (G_2 \Xi_1)^{1/2} (G_1 \Xi_2)^{1/2}.
\end{equation}

We substitute \eqref{keydiagbd} into \eqref{netintermed2}, and then bound the remaining sums trivially (using Lemma \ref{radlemma} for the $t_1,t_2$ sums).
After recalling that $X \asymp AB$, we deduce that  
\begin{align} \label{dyadiccalcs}
& \sum_{\boldsymbol{a}} \mu^2(a_1) |\alpha_{a_1}| \mu^2(a_2) |\alpha_{a_2}| \mathscr{N}_{4f}(\boldsymbol{a},\cdots, 
N(\lambda^{\ell})TQ^{\prime}Q^{\prime \prime}) \nonumber  \\
& \ll (XKN(v))^{\varepsilon} \|\boldsymbol{\mu}^2 \boldsymbol{\alpha} \|^2_{\infty} K^8 N(v_0)^4 X Q^{\prime \prime} C^{-2} A^{-2} T^{-2} \nonumber \\
& \times \sum_{\substack{ \forall i: H_i,D_i,G_i,R_i,T_i  \\ \text{ dyadic} \\ \eqref{dyadiclist} }}
( H_1 D_1 R_1 T_1 G^{3/2}_1 H_2 D_2 R_2 T_2 G^{3/2}_2) \nonumber \\
& \ll (XKN(v))^{\varepsilon}  K^8 N(v_0)^6 AB C^{-2} Q^{\prime} Q^{\prime \prime } \|\boldsymbol{\mu}^2 \boldsymbol{\alpha} \|^2_{\infty}.
\end{align}
Substituting \eqref{dyadiccalcs} into \eqref{diagavg} we see that \eqref{diagavg} is
\begin{equation} \label{diagres}
\ll (XKN(v))^{\varepsilon} K^8 N(v_0)^6  AB  \|\boldsymbol{\mu}^2 \boldsymbol{\alpha} \|^2_{\infty}.
\end{equation}
We combine \eqref{offdiagres} and \eqref{diagres}, and then apply the inequality $N(v_0) \leq N(v)$.
The result follows after recalling \eqref{averagestatement} and Remark \ref{mutmand}.
\end{proof}

\begin{proof}[Proof of Theorem~\ref{type2a}]
This follows immediately from Lemma \ref{convolutionsums} and Proposition \ref{conv1}.
\end{proof}

\bibliographystyle{amsalpha}
\bibliography{weylcubic} 
\end{document}